\documentclass[11pt]{amsart}

\usepackage[T1]{fontenc}
\usepackage[utf8]{inputenc}
\usepackage[mathscr]{eucal}
\usepackage{dsfont}
\usepackage{textcomp}

\usepackage{amssymb, amsfonts, amsthm}
\usepackage{mathtools} 

\usepackage[dvipsnames]{xcolor} 
\usepackage{tikz}
\usetikzlibrary{positioning}

\usepackage[a4paper, twoside=false, vmargin={2cm,3cm}, includehead]{geometry}
\usepackage{comment}
\usepackage{mdwlist}
\usepackage{enumitem} 

\usepackage{hyperref}
\hypersetup{
    colorlinks=true,
    linktocpage=true,
    linkcolor=red,
    filecolor=blue,
    citecolor=blue,
    urlcolor=cyan,
}

\newtheorem{lemma}{Lemma}[section]
\newtheorem{theorem}[lemma]{Theorem}
\newtheorem*{theorem*}{Theorem}
\newtheorem{corollary}[lemma]{Corollary}
\newtheorem{question}{Question}
\newtheorem{proposition}[lemma]{Proposition}
\newtheorem*{proposition*}{Proposition}

\newtheorem*{problem*}{Problem}

\theoremstyle{definition}

\newtheorem*{claim*}{Claim}
\newtheorem*{convention}{Convention}

\newtheorem{definition}[lemma]{Definition}

\newtheorem*{remark}{Remark}

\DeclareMathOperator*{\E}{\mathbb{E}}

\newcommand{\C}{{\mathbb C}}
\newcommand{\D}{{\mathbb D}}

\newcommand{\N}{{\mathbb N}}
\newcommand{\NN}{{\mathcal N}}

\newcommand{\Q}{{\mathbb Q}}
\newcommand{\R}{{\mathbb R}}
\renewcommand{\S}{\mathbb{S}}

\newcommand{\Z}{{\mathbb Z}}
\newcommand{\U}{{\mathbb U}}

 \newcommand{\OK}{{\mathcal{O}_K}}

\newcommand{\mf}{\mathfrak}

\newcommand{\e}{\epsilon}

\newcommand{\norm}[1]{\left\Vert #1\right\Vert}

\newcommand{\FM}[1]{{\color{violet}{{#1}}}}
\newcommand{\WS}[1]{{\color{blue}{{#1}}}}

\newcommand{\AK}[1]{{\color{ForestGreen}{{#1}}}}

\author{Sebasti\'an Donoso, Andreu Ferr\'e Moragues, Andreas Koutsogiannis, and Wenbo Sun}

\address[Sebasti{\'a}n Donoso]{Departamento de Ingenier\'{\i}a Matem\'atica and Centro de Modelamiento Matem{\'a}tico, Universidad de Chile \& IRL 2807 - CNRS, Beauchef 851, Santiago, Chile} \email{sdonoso@dim.uchile.cl}
\address[Andreu Ferr\'e Moragues]{Department of Mathematical Analysis and Applied Mathematics, Faculty of Mathematics, Complutense University of Madrid, 28040 Madrid, Spain
}
\email{anferre@ucm.es}

\address[Andreas Koutsogiannis]{
Department of Mathematics, Aristotle University of Thessaloniki, Thessaloniki 54124, Greece}
\email{akoutsogiannis@math.auth.gr}

\address[Wenbo Sun]{Department of Mathematics, Virginia Tech, 225 Stanger Street, Blacksburg, VA, 24061, USA}
\email{swenbo@vt.edu}

\thanks{The first author was partially funded by ANID/Fondecyt/1241346 and Centro de Modelamiento Matemático (CMM) FB210005, BASAL funds for centers of excellence from ANID-Chile. This project was implemented in the framework “3rd Call for H.F.R.I.’s Research Projects to Support Faculty Members \& Researchers” (H.F.R.I. Project Number: 24979). The fourth author was supported by the NSF Grant DMS-2247331.}

\subjclass[2020]{Primary: 05D10, Secondary:11N37, 11B30, 37A44}

\keywords{Partition regularity, multiplicative functions, number fields, pythagorean equations.} 
\title{Partition regularity in imaginary quadratic rings of integers}

\begin{document}
\begin{abstract}

We obtain partition regularity results for homogeneous quadratic equations whose parametrized solutions admit nice factorizations into linear forms over rings of integers of imaginary quadratic fields. To do so, we develop number-theoretic results of independent interest on such fields, such as  a characterization for aperiodic completely multiplicative functions, the Tur\'an-Kubilius inequality, and a new concentration estimate for multiplicative functions.

\end{abstract}
\maketitle
\tableofcontents

\section{Introduction}\label{s1}
\subsection{Partition and density regularity}
A central problem in Ramsey theory is to determine when the existence of solutions of a polynomial equation is preserved under finitely many partitions. For example, let $q(x,y,z)=0$ be a polynomial equation with 3 variables. Is it true that for any finite coloring of the domain $D$ of the polynomial $q$, there exist $x,y,z$ of the same color with $q(x,y,z)=0$? We say that the equation $q(x,y,z)=0$ is \emph{partition regular over $D$} if the answer to this question is affirmative. A famous example due to Schur \cite{Schur} is that the linear equation $x+y=z$ is partition regular over $\N$. This result was extended by Rado in \cite{Rado} where he characterized all the linear equations that are partition regular.

On the other hand, the partition regularity problem for non-linear equations is significantly harder. In \cite{Gr07} and \cite{Gr08}, Erd\H{o}s and Graham asked whether the Pythagorean equation $x^{2}+y^{2}=z^{2}$ is partition regular over $\N$. 
Although this question remains open, significant progress has been made in the last decade. Specifically, for the Pythagorean equation and variations of it, one can find pairs from the set $\{x,y,z\}$ that are of the same color. We formalize this relaxation of the problem with the following definition. 

\begin{definition}
    Let $D \subset \C$ be a set and $q\colon D^{3}\to \C$ be a function. We say that $q(x,y,z)=0$ is \emph{partition regular over $D$ in $(x,y)$} if for every finite coloring of $D$, there exist distinct nonzero $x,y,z\in D$ with $x,y$ having the same color  such that $q(x,y,z)=0$. Partition regularity over $D$ in  $(x,z)$ and $(y,z)$ is defined analogously.
\end{definition} 
Pioneer work achieving quadratic partition regularity results includes that of S\'ark\"{o}zy in \cite{sarkozy_1978a}, who showed that the equation $x-y=z^{2}$ is partition regular over $\N$ in $(x,y)$, as well as the work of Khalfalah and Szemer\'{e}di (see \cite{KS06}), who showed that the equation $x+y=z^{2}$ is partition regular over $\N$ in  $(x,y)$. 

Using a decomposition result for multiplicative functions in terms of Gowers norms (introduced in \cite{Go01}, and which have been successfully employed in the study of patterns and very general partition regularity results, e.g., \cite{GT08b}, \cite{BMR20}, \cite{Fr21}, \cite{Fr22}, \cite{FHo17}, among many others), Frantzikinakis and Host showed in \cite{FHo17} that given $a,b,c\in\Z$, if $\sqrt{-ac}, \sqrt{-bc}, \sqrt{-(a+b)c}\in\Z$, then the equation $ax^{2}+by^{2}+cz^{2}=0$ is partition regular over $\N$ in $(x,y)$ (such an example is the equation $9x^{2}+16y^{2}-z^{2}=0$). 
This result was extended in \cite{S18, S23}, where it was shown that if $\sqrt{-ac}, \sqrt{-bc}, \sqrt{-(a+b)c}$ belong to some number field $K$, then
the equation $ax^{2}+by^{2}+cz^{2}=0$ is partition regular over $\mathcal{O}_{K}$ (the ring of integers of $K$) in $(x,y)$  (for example $x^{2}+y^{2}-z^{2}=0$ is partition regular over $\Z[\sqrt{2}]$ in $(x,y)$, and is partition regular over $\Z[i]$ in $(x,z)$ and $(y,z)$).

The result of \cite{FHo17} was improved recently by Frantzikinakis, Klurman and Moreira \cite{FKM1} where the original assumption on the square roots was weakened to $\sqrt{-ac}, \sqrt{-bc}\in\Z$. This was further improved to $\sqrt{-ac}$ or $\sqrt{-bc}\in\Z$ by the same authors in \cite{FKM2}. As a special case, they showed that $x^{2}+y^{2}-z^{2}=0$ is partition regular over $\N$ in any two of the variables. One of the key innovations of \cite{FKM2,FKM1} is that the authors replaced the usage of a decomposition result for multiplicative functions introduced in \cite{FHo17} by a convenient splitting of the set of multiplicative functions into aperiodic and non-aperiodic, and then studied each of the components separately (as opposed to the previous approach, which decomposed all functions simultaneously).
Indeed, using this new methodology, it was shown in \cite{FKM1} that $x^{2}+y^{2}-z^{2}=0$ is also partition regular over $\N$ in $(x,z)$ and $(y,z)$. It is important to highlight that, in order to do so, a quadratic concentration estimate for the non-aperiodic functions was developed and used. 

In this paper, we extend the splitting approach in \cite{FKM2,FKM1} in the setting of imaginary quadratic number fields (i.e., $K=\Q(\sqrt{-d})$ for some squarefree $d\in\N$) and  provide novel applications to various partition regularity problems, extending many results from \cite{FHo17,FKM2,FKM1,S18,S23}. 
  In this setting, the ring of integers $\OK$ of $K$ is given by $\OK=\Z[\tau_{d}]$, where $\tau_{d}=\sqrt{-d}$ if $d \equiv 2,3 \pmod 4$, and $\tau_d=\frac{1+\sqrt{-d}}{2}$ if $d \equiv 1 \pmod 4$.
  The next result is an extension we obtain of the aforementioned partition result \cite[Corollary~1.3]{FKM1} to imaginary quadratic number fields.


\begin{theorem}\label{th1}
    Let  $d\in\N$ be squarefree. For $a,b,c\in\Z\backslash\{0\}$, if $\sqrt{-ac}, \sqrt{-bc}\in \Z[\tau_{d}]$, then the equation $ax^{2}+by^{2}+cz^{2}=0$ is partition regular over $\Z[\tau_{d}]$ in $(x,y)$.
\end{theorem}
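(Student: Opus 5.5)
We follow the strategy of \cite[Corollary~1.3]{FKM1}, transplanted to $\OK=\Z[\tau_d]$. The first step is a reduction to a pattern in linear forms. Write $\sqrt{-ac}=\alpha$ and $\sqrt{-bc}=\beta$ with $\alpha,\beta\in\OK$. Then the triples
\[
x=k\,\alpha\, m(m+n),\qquad y=k\,\beta\, n(m+n),\qquad z=k\,\bigl(\text{quadratic form in } m,n\bigr),
\]
with $k,m,n\in\OK$, should parametrize solutions after rescaling. To find them, we divide the equation by $c$ and substitute $x=\alpha u$, $y=\beta v$. This turns it into the conic $-c^{2}(u^{2}+v^{2})\cdot(\text{const})+c^2 z'^2=0$ type relation. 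We then use the standard rational parametrization of $x^2+y^2=z^2$-like conics, which factors into linear forms over $\OK$: one form equals $m(m+n)$, the other $n(m+n)$, after a linear change of $(m,n)$. The only requirement for $(x,y)$ is that $x$ and $y$ have the same color. Multiplying through by the dilation parameter $k$, the problem therefore reduces to the following. For every finite coloring of $\OK\setminus\{0\}$, find $k,m,n$ with $k\,\ell_1(m,n)\ell_2(m,n)$ and $k\,\ell_3(m,n)\ell_4(m,n)$ monochromatic, where the $\ell_i$ are linear forms over $\OK$ with the pattern of \cite{FKM1}, for instance $\ell_1\ell_2=\alpha m(m+n)$ and $\ell_3\ell_4=\beta n(m+n)$. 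We also need non-degeneracy, namely distinct and nonzero $x,y,z$.

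The second step is an ergodic/Fourier reformulation over multiplicative functions. By the multiplicative Furstenberg correspondence on the semigroup $(\OK\setminus\{0\},\cdot)$ and the Bochner–Herglotz representation, it suffices to show positivity of the average
\[
\E_{k}\,\E_{m,n\in\Phi_N}\,\int f\bigl(\ell_1\ell_2(m,n)\bigr)\,\overline{f\bigl(\ell_3\ell_4(m,n)\bigr)}\,d\sigma(f),
\]
where the average over $m,n$ is taken with a weight $w(m,n)$ supported where $m,n$ lie in a suitable residue class $1 \bmod Q$ along a Følner sequence of $\OK$ viewed as a lattice in $\C$. Here $\sigma$ is a positive measure on completely multiplicative functions $f\colon\OK\setminus\{0\}\to\mathbb{S}^1$ with $\sigma(\{1\})>0$ coming from the color class. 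We split $\sigma$ into its aperiodic part and its non-aperiodic (pretentious) part. For aperiodic $f$ we use the characterization of aperiodic completely multiplicative functions on $\OK$ developed in this paper, together with the Gowers-uniformity or concentration estimates, to show that the correlation averages vanish as $N\to\infty$ after the $W$-trick. Only the pretentious part then contributes.

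For pretentious $f$, meaning $f$ close in the $\OK$-analogue of the Granville–Soundararajan distance to a twisted character $\chi(\cdot)\,|\cdot|^{it}$ or its Hecke-character analogue, we apply the paper's concentration estimate. This estimate, obtained from the Tur\'an–Kubilius inequality over $\OK$, says that for most $m,n$ in the residue class $1\bmod Q$, the value $f(\ell(m,n))$ is approximately $\exp(F_N(f))\,\chi$-adjusted for each linear form $\ell$, with an error that is small when $Q$ is a highly divisible modulus. Because $\ell_1\ell_2$ and $\ell_3\ell_4$ have the same degree and the same ``archimedean size'', the products $f(\ell_1\ell_2)\overline{f(\ell_3\ell_4)}$ are then close to $1$ on a positive-density set. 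The integral against the positive measure is then at least $c\,\sigma(\{1\})>0$. We handle the archimedean twist $|\cdot|^{it}$ by restricting $(m,n)$ to a small sector where $|\ell_1\ell_2/\ell_3\ell_4|$ is near a constant, which is possible because a sector in $\C^2$ of positive measure still has positive density.

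The main obstacle is the pretentious case: unlike over $\Z$, pretentious functions on $\OK$ may pretend to be Hecke Grössencharacters with an angular component $(\xi/|\xi|)^{\ell}$, not only to $n^{it}\chi$. The concentration estimate must therefore carry the angular twist uniformly, and the phases of $\ell_1\ell_2$ and $\ell_3\ell_4$ must be made to match. I would handle this by choosing $m,n$ in a narrow cone in which $\arg(\ell_1\ell_2)-\arg(\ell_3\ell_4)$ is nearly constant, and by absorbing the constant into a pigeonholing over finitely many dilates $k$. Finally, we discard degenerate solutions, which have density zero, to obtain distinct nonzero $x,y,z$.
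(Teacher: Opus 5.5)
Your overall plan is the paper's: parametrize, reduce to Theorem~\ref{thmain}, apply the multiplicative Furstenberg correspondence and Bochner--Herglotz, split $\sigma$ into aperiodic and pretentious parts, use the Tur\'an--Kubilius concentration estimate, and add a weight for Archimedean characters. Two steps fail as written, however.

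\textbf{The parametrization.} With $x=k\alpha m(m+n)$ and $y=k\beta n(m+n)$ you get $ax^2+by^2=-ck^2(m+n)^2(a^2m^2+b^2n^2)$. So $z$ exists only when $a^2m^2+b^2n^2$ is a square in $\OK$. No linear change of $(m,n)$ repairs this. In a genuine parametrization the four linear factors of $x$ and $y$ are pairwise non-proportional, because they correspond to the four distinct points where the conic meets $x=0$ and $y=0$. Yours share the factor $m+n$, which could simply be absorbed into $k$, making the pattern essentially linear. The paper uses $x=kc\sqrt{-bc}\,(m-n)(m+n)$, $y=2kc\sqrt{-ac}\,mn$, $z=k\sqrt{-ac}\sqrt{-bc}\,(m^2+n^2)$. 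Pairwise independence of the four forms is exactly the hypothesis of Theorem~\ref{thmain}.

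\textbf{Positivity on the pretentious side (the more serious gap).} It is false that $f(\ell_1\ell_2)\overline{f(\ell_3\ell_4)}$ is close to $1$ for pretentious $f$.
\begin{itemize}
\item The coefficient factor, e.g.\ $f(c\sqrt{-bc})\overline{f(2c\sqrt{-ac})}$, can be any unimodular number.
\item Character values on your residue classes, and the factors $\exp(F(\cdot))$ from Proposition~\ref{p25}, need not be near $1$ either.
\item Since $\sigma$ may charge such $f$, nothing bounds the integral below by $c\,\sigma(\{1\})$.
\item Pigeonholing over dilates cannot help, because $f(ku)\overline{f(kv)}=f(u)\overline{f(v)}$; your $\E_k$ is vacuous.
\item If both $m,n\equiv 1\pmod Q$, then for the correct factors $m-n\in Q\OK$. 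Proposition~\ref{p25} needs $Qu+a$ with $(a,Q)=1$, so it does not apply to that factor.
\end{itemize}
The missing idea is the paper's $Q$-averaging. Work on the grid $(Qm+1,Qn)$, so every factor except $Qn$ lies in $1+Q\OK$. Then $f(Qn)=f(Q)f(n)$ isolates the factor $\overline{f(Q)}\NN(Q)^{i\tau}$. Since $F(\tilde{f'},M,N;Q)$ is the same for all $Q\in\Phi_M$, the rest is essentially constant on $\Phi_M$. Averaging $Q$ over the multiplicative F\o lner sequence $\Phi_M$ then kills every pretentious $f\notin\mathcal{A}$ (Proposition~\ref{zeroaveragesadeltabdelta}, via Lemma~\ref{multav0}). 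Only $\mathcal{A}=\{\NN(u)^{i\tau}\}$ survives. There, positivity comes from truncating to $|\tau|\le T_0$ and using $w_\delta$ together with $\sigma(\{1\})>0$ (Proposition~\ref{fffss}).

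\textbf{The angular twist.} The paper does not treat your angular obstacle on the pretentious side. With symmetric ball averages, functions such as $u\mapsto(u/|u|)^j$ that are trivial on units are aperiodic in the sense of Definition~\ref{aperiodicdef}. Theorem~\ref{characterization} only produces $\chi\,\NN^{i\tau}$ on the pretentious side.

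Your worry is nonetheless substantive, because such functions are not Gowers-uniform. On $\Z[\sqrt{-2}]$, $f(u)=u/\bar u$ is aperiodic, yet $\E_{\NN(m),\NN(n)\le N}f(m)\overline{f(m+n)}\to 1/4$. This is at odds with Proposition~\ref{P: appendix} as stated; take $s=3$, $L_1=m$, $L_2=m+n$, $L_3=n$, $h_1=f$, $h_2=\bar f$, $h_3=1$.

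If you treat these functions as pretentious, what neutralizes them is a weight localizing the complex ratio $\ell_1\ell_2/\ell_3\ell_4$ near $1$. This is a nonempty open condition on $m/n$. Localizing near some other constant and absorbing it into dilates does not work, since dilates cancel. You would then also need a classification theorem and a concentration estimate covering functions that pretend to be $\chi$ times such a character. Neither your proposal nor the paper supplies these.
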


In fact, we obtain a density version of Theorem \ref{th1}.
Recall that a \emph{multiplicative F\o lner sequence in} $\OK$ is a sequence of finite subsets of $\mathcal{O}_K^{\times}$ such that
\[
\lim_{M \to \infty} \frac{|\Phi_M \cap (x^{-1} \cdot \Phi_M)|}{|\Phi_M|} = 1, \text{ for all } x \in \mathcal{O}_K^{\times}.
\]
The precise F\o lner sequences that we will use (cf. \eqref{folnerMult}) are a bit intricate to define, so for now, the reader can assume that F\o lner sequences look like the following example when $\OK=\Z$, which contains all the main ingredients for the generalization that appears in \eqref{folnerMult}:
\[
\Phi_M:=\{p_1^{a_1}\dots p_k^{a_k} : M \leq a_i \leq 2M, 1 \leq i \leq M\}.
\]
where $(p_i)_{i\in \N}$ are the prime numbers. 

For each F\o lner sequence we have a notion of largeness for subsets $\Lambda \subseteq R^{\times}$, defined via
\begin{equation}\label{eq: largeness}
\overline{d}_{\Phi}(\Lambda):=\limsup_{M \to \infty} \frac{|\Lambda \cap \Phi_M|}{|\Phi_M|},
\end{equation}
where the upper bar is dropped if the limit does actually exist. We say that $\Lambda \subseteq \mathcal{O}_K^{\times}$ has positive \emph{multiplicative upper Banach density} if there exists some multiplicative F\o lner sequence $\Phi=(\Phi_M)_{M}$ along which we have $\overline{d}_{\Phi}(\Lambda)>0$. 


\begin{definition}[Density regularity]\label{def: density reg}
    Let $q\colon D^{3}\to \C$ be a function. We say that $q(x,y,z)=0$ is \emph{density regular over $D$ in $(x,y)$} if for every subset $\Lambda \subseteq D$ with positive multiplicative upper Banach density, there exist distinct nonzero $x,y,z\in D$ with $x,y\in \Lambda$ such that $q(x,y,z)=0$. Density regularity over $D$ in $(x,z)$ and $(y,z)$ is defined analogously.
\end{definition}

We have the following result, which is stronger than (and easily implies) Theorem \ref{th1}, given the subadditivity of the density we just introduced.

\begin{theorem}\label{th1density}
    Let  $d\in\N$ be squarefree. For $a,b,c\in\Z\backslash\{0\}$, if $\sqrt{-ac}, \sqrt{-bc}\in \Z[\tau_{d}]$, then the equation $ax^{2}+by^{2}+cz^{2}=0$ is density regular over $\Z[\tau_{d}]$ in $(x,y)$.
\end{theorem}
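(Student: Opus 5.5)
Following the strategy of \cite{FKM1}, we first reduce Theorem~\ref{th1density} to a multiplicative recurrence statement. Write $\sqrt{-ac}=\alpha$ and $\sqrt{-bc}=\beta$ with $\alpha,\beta\in\Z[\tau_d]$. We look for solutions of the parametrized form
\[
x=k\,\ell_1(m,n)\ell_2(m,n),\qquad y=k\,\ell_3(m,n)\ell_4(m,n),\qquad z=k\,Q(m,n),
\]
where $k,m,n\in\OK$ and the $\ell_i$ are linear forms with coefficients in $\OK$ built from $\alpha$ and $\beta$. The model is the Pythagorean case: multiplying $ax^2+by^2+cz^2=0$ by $-c$ turns it into $\alpha^2x^2+\beta^2y^2=(cz)^2$, which is solved by $\alpha x=2mn\cdot(\cdot)$, $\beta y=(m-n)(m+n)\cdot(\cdot)$, with denominators cleared using suitable constants. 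The quadratic forms defining $x$ and $y$ then factor into linear forms over $\OK$. It therefore suffices to show that for $\Lambda\subseteq\OK^\times$ with $\overline d_\Phi(\Lambda)>0$, there are $k,m,n$ with $k\ell_1\ell_2(m,n)\in\Lambda$ and $k\ell_3\ell_4(m,n)\in\Lambda$, and with $x,y,z$ distinct and nonzero. By the multiplicative Furstenberg correspondence principle on the multiplicative semigroup $\OK^\times$, together with the spectral theorem (Bochner–Herglotz for the dual group), this reduces to the following statement. For a finitely generated measure-preserving action $(T_u)_{u\in\OK^\times}$ and $F=\mathbf 1_{A}$ with $\mu(A)>0$, some weighted average over $m,n$ in a suitable set is positive:
\[
\E_{m,n}\, w(m,n)\int \int_{\mathcal M} \chi(\ell_1\ell_2(m,n))\,\overline{\chi(\ell_3\ell_4(m,n))}\,d\sigma(\chi),
\]
where $\sigma$ is the spectral measure of $F$ on completely multiplicative functions $\chi\colon\OK^\times\to\S^1$.

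Next we split $\sigma=\sigma_a+\sigma_p$ into its aperiodic and non-aperiodic (pretentious) parts, using the characterization of aperiodic completely multiplicative functions on $\OK$ stated earlier in the paper. This characterization says that $\chi$ is aperiodic iff it does not pretend to be a twisted Dirichlet-type character, i.e.\ a character $\psi$ of a finite quotient $(\OK/\mathfrak q)^\times$ times an archimedean character $z\mapsto (z/|z|)^{j}|z|^{it}$.

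For aperiodic $\chi$, we use the Gowers-uniformity machinery over $\OK$, via an $\OK$-analogue of the Green–Tao/Frantzikinakis–Host results invoked from earlier sections. Since the $\ell_i$ are pairwise independent linear forms, the averages of $\chi(\ell_1\ell_2)\overline{\chi(\ell_3\ell_4)}$ along a suitable progression tend to $0$, so the aperiodic part contributes nothing in the limit.

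For the pretentious part, the weight $w$ is chosen as a product restricting $m,n$ to $m\equiv n\equiv 1 \bmod Q$ for a highly divisible $Q$ (say a product of small primes of $\OK$ raised to large powers), and to $m,n$ in a sector/annulus adapted to the archimedean part. The concentration estimate for multiplicative functions on $\OK$, which rests on the $\OK$-Turán–Kubilius inequality, shows that a pretentious $\chi$ is approximately constant, close to $\chi(\ell_i(m,n))\approx \exp(F_N(\chi))\cdot$ (archimedean twist), on most such $m,n$. Consequently $\chi(\ell_1\ell_2)\overline{\chi(\ell_3\ell_4)}$ is close to a quantity of the form $|c_N(\chi)|^2\cdot(\text{archimedean factor}\approx 1)$. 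Here the sector restriction ensures that $\ell_1\ell_2/(\ell_3\ell_4)$ has argument and modulus near fixed values, and an extra averaging over dilations removes the residual phase. Integrating then gives a lower bound $\gtrsim \sigma_p(\{1\})\ge \mu(A)^2>0$, via a positivity argument as in \cite{FKM1}. Finally, a positive-density count of $(m,n)$ excludes the degenerate solutions where $x,y,z$ are not distinct or are zero.

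The main obstacle is the concentration step for the pretentious part. The issue is that over $\OK$, multiplicative functions can pretend to be $\psi(z)(z/|z|)^{j}|z|^{it}$, with both a unit-angle twist $j$ and a modulus twist. We must show that these twists behave uniformly, with bounds independent of $\chi$, when evaluated along the linear forms restricted to sectors and congruence classes. My first attempt would be to prove an $\OK$-version of the Klurman–Mangerel concentration inequality by applying Turán–Kubilius over prime ideals. The archimedean characters would then be handled by choosing the sector width as a function of the approximation parameter, and the averaging over $m,n$ would be combined with the limit over the Følner sequence only at the end.
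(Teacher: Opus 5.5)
\textbf{There is a genuine gap in the pretentious step.} Your outer reduction matches the paper: the parametrization, the multiplicative correspondence principle, Bochner--Herglotz, the aperiodic/pretentious split, and the Gowers-norm treatment of the aperiodic part. The problem is your claim that after restricting to $m\equiv n\equiv 1\pmod Q$ all four linear forms concentrate, so that $\chi(\ell_1\ell_2)\overline{\chi(\ell_3\ell_4)}\approx|c_N(\chi)|^2\cdot(\text{archimedean factor})$ is essentially nonnegative. This fails for three reasons.
\begin{enumerate}
\item With $m\equiv n\equiv 1$ the form $m-n$ lies in $Q\OK$. Hence $\chi(m-n)=\chi(Q)\chi\bigl((m-n)/Q\bigr)$ with $(m-n)/Q$ unrestricted, and Proposition~\ref{p25} says nothing about this factor.
\item No other choice of residues repairs this in general. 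The concentration estimate needs $Q$ to lie in every prime ideal of norm $\le M$. Modulo a prime of norm $2$ or $3$ (e.g.\ $(1+i)$, $(\sqrt{-2})$, $(\sqrt{-3})$), the values $m,n,m\pm n$ cannot all be units.
\item The fixed coefficients contribute a phase $\chi(2\beta)\overline{\chi(\alpha)}$ that no congruence or sector restriction touches. Over $\Z[i]$ with $a=b=1$, $c=-1$, take $\chi(1+i)=i$ and $\chi(\mathfrak p)=1$ at all other primes. This $\chi$ is pretentious and not archimedean, yet the phase is $\chi(2)=-1$.
\end{enumerate}
So individual pretentious contributions are not nonnegative, and the lower bound $\gtrsim\sigma_p(\{1\})$ cannot be obtained the way you describe.

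\textbf{The missing mechanism.} In the variables $(m,m-n)$, your restriction is exactly the paper's grid $(Qm'+1,Qn')$, with $m-n$ playing the role of $Qn$. What you need is the correct treatment of that factor. Take $Q\in\Phi_M$; then the three forms $\equiv 1\pmod Q$ concentrate, and the fourth contributes $f(Q)f(n)$. Two things are independent of $Q\in\Phi_M$: the quantity $F(\tilde{f'},M,N;Q)$, and the uncontrolled factor $f(\ell/(\ell'n))$ (which also carries the constant phases). Hence the $(m,n)$-average equals $\tilde L_\delta(Q_M)\,\overline{f}(Q)\NN(Q)^{i\tau}+O(\varepsilon)$, with $\tilde L_\delta$ essentially constant on $\Phi_M$. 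Averaging over the multiplicative F\o lner sequence $\Phi_M$ (Lemma~\ref{multav0}) then makes every pretentious $f\notin\mathcal A$ contribute \emph{zero}, whatever its phases. Positivity comes only from $\mathcal A=\{\NN(u)^{i\tau}\}$: truncate so that $\sigma(\mathcal A\setminus\mathcal A_{T_0})\le\sigma(\{1\})/2$, then take $\delta$ small in $w_\delta$ (Lemma~\ref{weights}, Proposition~\ref{fffss}). Your ``extra averaging over dilations'' has to be this annihilation argument, not a device producing $|c_N(\chi)|^2$.

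\textbf{Misstated characterization.} For symmetric ball averages, $\bigl(u/\NN(u)^{1/2}\bigr)^4$ on $\Z[i]$ is trivial on units and pretends to itself, yet it is aperiodic. Theorem~\ref{characterization} only yields pretension, through an extension to $\mathfrak I(K)$, to a modified Dirichlet character times $\NN(u)^{i\tau}$. So the angular-twist obstacle you single out does not arise in the paper's framework.
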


It is important to note that more standard additive versions of density will not be helpful to deal with the equation $x^2+y^2=z^2$, since if, for example, we look at $\Lambda=\{ u \in \Z[i]^{\times} : u \equiv 1 \pmod{2}\}$, then having $x, y \in \Lambda$, implies that $x^2+y^2$ can never be a square (we can still look at it modulo $4$ and argue similarly as in the case of $\N$).
It turns out that the more convenient form of density will be one that is multiplicative instead of additive, given the nature of the quadratic equations under consideration.


We remark that Theorems \ref{th1} and \ref{th1density} improve the results in \cite{S18,S23} by dropping the requirement that $\sqrt{-(a+b)c}\in \Z[\tau_{d}]$. 
For example, Theorem \ref{th1} implies that if $d\in\N$ is squarefree, then the equation $x^{2}+dy^{2}-z^{2}=0$ is partition regular over the ring of integers of $\Z[\tau_d]$. This result was previously known only over the ring of integers coming from the number field $\Q(\sqrt{-d},\sqrt{-(d+1)})$ (see \cite{S23}).

Our method applies to not only quadratic forms of the form $ax^{2}+by^{2}+cz^{2}$, but to any quadratic forms $q(x,y,z)=0$ for which the parametrized solutions for $x$ and $y$ factorize linearly in  $\Z[\tau_{d}]$. We refer the reader to Theorem \ref{thmain} for details.

It is natural to ask whether Theorems \ref{th1} and \ref{th1density} hold when $d<0$, or whether they hold when only one of $\sqrt{-ac}, \sqrt{-bc}$ belongs to $\Z[\tau_{d}]$.
The discussion of these natural questions is postponed to Section~\ref{SEC: further}.

\subsection{Number-Theoretic tools} In order to prove the above-mentioned partition regularity results, we obtain a series of results in analytic number theory for imaginary quadratic number fields. To the best of our knowledge, these results were previously unknown, and many of them require substantial new ideas compared to the classical results over integers. We believe these results are of independent interest, and may have potential applications in other number-theoretic problems.

Here is a summary of the number-theoretic tools we develop in this paper. We postpone their precise statements to the next Section.

\subsubsection{Characterizations for completely multiplicative functions}
 It is a classical result of Daboussi and Delange (see \cite[Corollary~1]{daboussidelange82}) that a completely multiplicative function $f\colon \Z^{\times}\to\U$ is either \emph{aperiodic}, meaning that its average along every arithmetic progression is zero, or \emph{pretentious}, meaning that it behaves similarly to the product of a Dirichlet character and an Archimedean character. 
This is a crucial tool for the splitting method introduced in \cite{FKM2,FKM1}.
In this paper, we obtain an analog of \cite[Corollary~1]{daboussidelange82} in the setting of imaginary quadratic number fields (i.e., Theorem \ref{characterization}) by providing a list of equivalent definitions of aperiodic completely multiplicative functions (see Proposition \ref{prop: dircharequivalence}). 
An interesting feature is that to define a pretentious multiplicative function, one needs to consider its extension to ideal domained functions. We refer the readers to Section \ref{ss5} for details.

\subsubsection{The Tur\'an-Kubilius inequality}
Another important step in proving the partition regularity results of this paper is obtaining an analog of \cite[Proposition~2.5]{FKM1} (namely, Theorem \ref{p25}), which is a linear concentration estimate that allows us to easily treat the pretentious multiplicative functions. As concentration estimates are based on the Tur\'an-Kubilius inequality, in order to prove  Theorem \ref{p25}, we obtain an analog of the inequality for imaginary quadratic fields in Theorem \ref{TK}.
We refer the readers to Section \ref{SECconcest} for all the details.

\subsection{Organization of the paper}
We state all the main results in density regularity and number theory in Section \ref{spps}.
Then in Section~\ref{sec: proofstrat} we explain the strategy  for the proof  of the main density regularity result (i.e., Theorem \ref{thmain}) 
and break it down into several positivity results. A lot of the key reductions that help properly frame the problem are also explained in detail.

Next, in Section~\ref{SEC: numthybackg} we provide some necessary number theoretical background. 
We then prove the characterization result for completely multiplicative functions in Section \ref{ss5}, and the Tur\'an-Kubilius inequality as well as the main concentration estimate we need in Section \ref{SECconcest} respectively.
Finally, in  Section~\ref{sec: proofmain} we combine all  the tools developed up to that point to prove the positivity results that are needed to complete the proof of Theorem \ref{thmain}.


We finish the paper with some discussion on possible future avenues in Section~\ref{SEC: further}. 
Lastly, the Appendix is used to prove a result that allows us to reconcile ball averages with box averages.
\\ \\
\noindent {\bf{Acknowledgements.}} We thank Diego C\'espedes for bringing the paper \cite{lucht2001} to our attention. This simplified a previous version of our work, in which we had proved a version of Hal\'asz's theorem from scratch, a result already established in \cite{lucht2001}.

\section{Precise statements of the main results}\label{spps}
In this section we will introduce the relevant technical definitions so that the main results can be precisely stated.
\subsection{The main density regularity result and applications}

As was stated in the Introduction, Theorem \ref{th1density} can be generalized to any quadratic equation $q(x,y,z)=0$ for which the parametrized solutions for $x$ and $y$ factorizes linearly in  $\Z[\tau_{d}]$. In order to make this more precise, we introduce the following definition.
 
\begin{definition}
    Let $K$ be a number field and $q\colon\mathcal{O}_{K}^{3}\to\C$ be a map. We say that $q$ admits an \emph{$\mathcal{O}_{K}$-factorization in $(x,y)$} if there exist some map $P\colon \mathcal{O}_K^{2}\to \OK$ and maps $L_{i}\colon \mathcal{O}_{K}^{2}\to \OK, 1\leq i\leq 4,$ of the form $L_{i}(m,n)=a_{i,1}m+a_{i,2}n$ for some $a_{i,1},a_{i,2}\in\mathcal{O}_{K}$ with $L_{1}, L_2, L_3, L_{4}$ being pairwise linearly independent such that for all $m,n,k\in \mathcal{O}_{K}$ the following is a solution to the equation $q(x,y,z)=0$:
    $$x=kL_{1}(m,n)L_{2}(m,n), y=kL_{3}(m,n)L_{4}(m,n) \text{ and } z=kP(m,n).$$
    We say that $p$ admits a \emph{strong $\mathcal{O}_{K}$-factorization in $(x,y)$} if we may further require $a_{1,1}=a_{2,1}=a_{3,1}=a_{4,1}=1$.
\end{definition}

We will proceed by proving the following result which implies Theorems \ref{th1} and ~\ref{th1density}. 
 \begin{theorem}\label{thmain}
     Let $K=\Q(\sqrt{-d})$ with $d\in\N$ squarefree and $q\colon\mathcal{O}_{K}^{3}\to\C$ be a map admitting an $\mathcal{O}_{K}$-factorization in $(x,y)$.
     Then $q(x,y,z)=0$ is density regular over $\OK$ in $(x,y)$.    
 \end{theorem}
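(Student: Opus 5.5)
The plan follows the Frantzikinakis--Klurman--Moreira splitting approach, transported to $\OK$. Since $q$ admits an $\mathcal{O}_K$-factorization, it suffices to show the following: for every $\Lambda\subseteq\OK^{\times}$ with $\overline{d}_{\Phi}(\Lambda)>0$ there exist $m,n,k\in\OK$, with $x=kL_1(m,n)L_2(m,n)$, $y=kL_3(m,n)L_4(m,n)$ and $z=kP(m,n)$ distinct and nonzero, such that $x,y\in\Lambda$.

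First I would apply a multiplicative Furstenberg correspondence principle over the multiplicative semigroup $\OK^{\times}$. This produces a measure-preserving action $(T_u)_{u\in\OK^{\times}}$ on a probability space $(X,\mu)$ and a set $A$ with $\mu(A)>0$ such that $\overline{d}_\Phi(u^{-1}\Lambda\cap v^{-1}\Lambda)\ge \mu(T_u^{-1}A\cap T_v^{-1}A)$. Because $x$ and $y$ share the common multiplier $k$, it is enough to show that
\[
\E_{m,n}\, \mathbf 1_{\mathcal{S}}(m,n)\,\mu\big(T_{L_1L_2(m,n)}^{-1}A\cap T_{L_3L_4(m,n)}^{-1}A\big)>0
\]
along suitable box (or ball) averages in $\OK^2$, where $\mathcal{S}$ is a set excluding the degenerate pairs. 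The positivity of this average then yields some $(m,n)$ and a $k$ in the intersection. By the spectral theorem for the unitary action of the abelian group generated by $\OK^{\times}$, the correlation is $\int \chi(L_1L_2(m,n))\overline{\chi(L_3L_4(m,n))}\,d\sigma(\chi)$, where $\sigma$ is a positive measure on the completely multiplicative functions $\chi\colon\OK^{\times}\to\S^1$ and $\sigma(\{1\})\ge\mu(A)^2>0$.

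Next I would split $\sigma=\sigma_a+\sigma_p$ into its aperiodic and pretentious parts, using the characterization of aperiodic completely multiplicative functions (Theorem~\ref{characterization}, via the extension to ideals and Hecke-type characters). For the aperiodic part, I would show that the averages of $\chi(L_1L_2)\overline{\chi(L_3L_4)}$ along $(m,n)$ in a multiplicatively restricted set (say $m,n\in Q\OK+1$ for a highly divisible $Q$) tend to $0$. This needs a Gowers-uniformity / inverse theorem input for aperiodic multiplicative functions over $\OK$, i.e.\ an analogue of the Frantzikinakis--Host structure theorem. The pairwise linear independence of the $L_i$ is exactly what makes a Cauchy--Schwarz / vdC argument reduce the problem to averages of a single aperiodic function along progressions, and those vanish. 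For the pretentious part, I would use the concentration estimate Theorem~\ref{p25}, built on the Tur\'an--Kubilius inequality Theorem~\ref{TK}. It shows that for $\chi$ pretentious, $\chi(L(m,n))$ is close to $\chi(\text{const})\,\exp(i\,\text{slowly varying phase})$ for most $(m,n)$ with $m,n\equiv 1 \pmod{Q}$ in a box. Hence $\chi(L_1L_2)\overline{\chi(L_3L_4)}$ is close to a quantity with nonnegative real part once one further averages over a dilation $m,n\in Q\OK+1$ with $Q\to\infty$ along a F\o lner-type sequence. This gives $\liminf \operatorname{Re}\int\ldots\,d\sigma_p\ge c\,\sigma(\{1\})>0$.

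Finally, the degenerate solutions (where $x=y$, $x=z$, $y=z$, or any variable is $0$) lie on finitely many proper subvarieties, since the $L_i$ are pairwise independent. They therefore have zero density in the $(m,n)$-box, and removing them does not affect positivity. I expect the main obstacle to be the aperiodic step, namely the vanishing of $\E_{m,n}\chi(L_1L_2)\overline{\chi(L_3L_4)}$ for aperiodic $\chi$ over $\OK$ with general coefficients $a_{i,j}\in\OK$. For this I would try to adapt the Frantzikinakis--Host decomposition and their $U^s$-uniformity of aperiodic multiplicative functions to $\Z^2\cong\OK$, reducing via Hal\'asz's theorem for number fields (Lucht). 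A secondary difficulty is passing between ball averages and box averages, which the Appendix handles.
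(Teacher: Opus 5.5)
Your architecture (correspondence principle, spectral measure with $\sigma(\{1\})>0$, aperiodic/pretentious split via Theorem~\ref{characterization}, Gowers uniformity for the aperiodic part as in Proposition~\ref{P: appendix}) agrees with the paper. The gap is in the pretentious step, which is where positivity has to come from.

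\emph{Coprimality fails.} On the grid $m,n\in Q\OK+1$ you have $L_i(m,n)\equiv a_{i,1}+a_{i,2}\pmod Q$, and Proposition~\ref{p25} requires this residue to be coprime to $Q$. Already for $L_1=m-n$, the parametrization behind Theorem~\ref{th1}, you get $L_1(m,n)\in Q\OK$. Then $f(L_1(m,n))=f(Q)f(m'-n')$ is not concentrated at all.

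\emph{No positivity even when it applies.} Suppose all residues are units and $f$ pretends to be $\chi\,\NN^{i\tau}$. Up to a positive factor close to $1$, the main term you obtain is $\chi\bigl(L_1L_2(1,1)\bigr)\overline{\chi\bigl(L_3L_4(1,1)\bigr)}\,\NN\bigl(L_1L_2/L_3L_4\bigr)^{i\tau}$. This is a fixed unimodular constant times an oscillating Archimedean factor, so as stated nothing forces its real part to be nonnegative. Averaging over $Q$ along a F\o lner sequence does not help, because no factor depends multiplicatively on $Q$.

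The paper's mechanism is different. It first changes variables so that $L_1L_2=\ell(m+\alpha n)(m+\beta n)$ and $L_3L_4=\ell'mn$. It then averages on the grid $(Qm+1,Qn)$: three factors are $\equiv1\pmod Q$ and concentrate, while the fourth produces $\overline{f(Q)}$. Every $Q\in\Phi_M$ is divisible by exactly the prime ideals of norm $\le M$, so $F(\tilde{f'},M,N;Q)$ is the same for all $Q\in\Phi_M$. Hence the inner limit equals $\tilde L_\delta(Q_M)\,\overline{f}(Q)\NN(Q)^{i\tau}+O(\varepsilon)$, and Lemma~\ref{multav0} makes the $Q$-average vanish unless $f\in\mathcal{A}$. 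So non-Archimedean pretentious functions contribute \emph{zero}, not something nonnegative.

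The Archimedean characters $\NN(u)^{i\tau}$, which your proposal never isolates, need separate treatment. The paper inserts the weight $w_\delta$ of Lemma~\ref{weights}; this is admissible because $\mu(T_x^{-1}A\cap T_y^{-1}A)\ge0$ pointwise and $0\le w_\delta\le1$. The weight localizes to pairs where $\NN(L_1L_2/L_3L_4)^{i\tau}\approx1$ for $|\tau|\le T_0$, with $T_0$ chosen so that $\sigma(\mathcal{A}\setminus\mathcal{A}_{T_0})\le\sigma(\{1\})/2$. Positivity then comes only from the term $\sigma(\{1\})\mu_\delta$ (Proposition~\ref{fffss}). Without the $\overline{f(Q)}$-cancellation and the weight, your claimed bound $\liminf\operatorname{Re}\int\cdots\,d\sigma_p\ge c\,\sigma(\{1\})$ is unsupported.
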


It was essentially proved in \cite{S23} that Theorem \ref{thmain} holds for all number fields $K$ if $q$ admits a strong $\mathcal{O}_{K}$-factorization in $(x,y)$. In the present paper, we relax this condition by $\mathcal{O}_{K}$-factorization.
\begin{proof}[Proof of Theorems \ref{th1} and ~\ref{th1density} assuming Theorem~\ref{thmain}]
    Note that the solutions of the equation $ax^{2}+by^{2}+cz^{2}=0$ admit the following parametrization:
    $$x=k\sqrt{-bc^{3}}(m-n)(m+n),\;\; y=2k\sqrt{-ac^{3}}mn,\;\; z=k\sqrt{abc^{2}}(m^{2}+n^{2}),$$
    where $\sqrt{-ac^{3}},\sqrt{-bc^{3}}\in\Z[\tau_{d}]$ and $\sqrt{abc^{2}}=\sqrt{-ac}\cdot\sqrt{-bc}\in\Z[\tau_{d}]$.
   The conclusion follows from Theorem \ref{thmain} and the Pigeonhole Principle.
\end{proof}

Next we present some applications of Theorem \ref{thmain}.
In \cite{FKM2,FKM1}, it was proven that $x^{2}+y^{2}-z^{2}=0$ is partition regular over $\Z$ in any of the three pairs of variables. 
Using Theorem \ref{th1}, in the case where $d=1$, i.e., the Gaussian integers, we obtain examples where partition regularity holds in any of the three pairs of variables.

\begin{corollary}\label{cor: gaussian integers nice}
    The equation $ax^{2}+by^{2}+cz^{2}=0$ is partition regular over $\Z[i]$ in $(x,y)$, $(x,z)$ and $(y,z)$ if $\sqrt{-ac}, \sqrt{-bc},\sqrt{-ab}\in \Z[i]$ (or equivalently if $\sqrt{a}, \sqrt{b},\sqrt{c}\in \Z[i]$).
\end{corollary}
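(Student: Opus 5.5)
The plan is to apply Theorem \ref{th1} (with $d=1$, so $\Z[\tau_1]=\Z[i]$) three times, once for each pair of variables, after permuting the roles of $x,y,z$ in the equation. Theorem \ref{th1} says that if the coefficients $a',b',c'$ of $a'X^2+b'Y^2+c'Z^2=0$ satisfy $\sqrt{-a'c'},\sqrt{-b'c'}\in\Z[i]$, then the equation is partition regular in $(X,Y)$. The equation $ax^2+by^2+cz^2=0$ is symmetric under relabelling variables together with their coefficients, so partition regularity in any pair follows once the two relevant products lie in the right set.

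\textbf{Pair $(x,y)$.} Take $(a',b',c')=(a,b,c)$. The hypotheses needed are $\sqrt{-ac},\sqrt{-bc}\in\Z[i]$, which are given.

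\textbf{Pair $(x,z)$.} Rewrite the equation as $ax^2+cz^2+by^2=0$, i.e.\ $(a',b',c')=(a,c,b)$ with variables $(X,Y,Z)=(x,z,y)$. We need $\sqrt{-ab},\sqrt{-cb}\in\Z[i]$, both of which are assumed. Theorem \ref{th1} then gives monochromatic distinct nonzero $x,z$ together with some $y$ solving the equation.

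\textbf{Pair $(y,z)$.} Take $(a',b',c')=(b,c,a)$. This requires $\sqrt{-ba},\sqrt{-ca}\in\Z[i]$, which again holds.

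\textbf{The equivalence in the parenthetical.} The identity $-1=i^2$ shows that $\sqrt{-ac}\in\Z[i]$ iff $\sqrt{ac}\in\Z[i]$, since multiplying by the unit $i$ preserves membership. So the condition is that $ab$, $bc$ and $ac$ are all squares in $\Z[i]$.

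If $\sqrt a,\sqrt b,\sqrt c\in\Z[i]$, the pairwise products are clearly squares. For the converse, work in the UFD $\Z[i]$ and use the nonzero integers $a,b,c$:
\begin{itemize}
\item $ab$ and $ac$ squares imply $a^2bc$ is a square.
\item Then $(ab)(ac)/(bc)=a^2$, which gives nothing new.
\end{itemize}
So the converse genuinely fails without extra care; for instance $a=b=c=3$ has all pairwise products squares, but $\sqrt3\notin\Z[i]$. This is where I expect the only obstacle. Since partition regularity of $ax^2+by^2+cz^2=0$ is unchanged by scaling the equation, I would interpret "equivalently" up to a common factor. Write $a=g a_0^2$-type decompositions: $ab$, $ac$ squares force $a,b,c$ to share the same squarefree part $s$ in $\Z[i]$ up to units. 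Dividing the equation by $s$ yields coefficients whose square roots lie in $\Z[i]$ (units being absorbed since $\pm1,\pm i$ are squares up to $i$, and $i=\left(\frac{1+i}{\sqrt2}\right)^2$ must be handled by noting a $-1$ or $i$ discrepancy can be moved via $-1=i^2$).

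I would state this reduction briefly and note that the main content is the three applications of Theorem \ref{th1}.
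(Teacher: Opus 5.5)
Your proof is correct and follows the paper's route: the paper obtains the corollary by applying Theorem~\ref{th1} with $d=1$ to the relabellings $(a,b,c)$, $(a,c,b)$, $(b,c,a)$ of the coefficients, each of which needs only two of the three assumed square roots. You are also right that the parenthetical ``equivalently'' is not a literal equivalence of hypotheses, as $a=b=c=3$ shows; but no converse is needed for the corollary itself, since $\sqrt a,\sqrt b,\sqrt c\in\Z[i]$ gives $\sqrt{-ab}=i\sqrt a\sqrt b\in\Z[i]$ and similarly for the other two pairs, and if you want the up-to-scaling statement, a cleaner route is to work in $\Z$. There, an integer is a square in $\Z[i]$ iff its absolute value is a perfect square, so $|a|,|b|,|c|$ share a squarefree part $s\in\N$, and dividing by $s$ leaves coefficients $\pm a_0^2,\pm b_0^2,\pm c_0^2$, each a square in $\Z[i]$. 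This avoids your garbled unit discussion ($i$ is not a square in $\Z[i]$, and $(1+i)/\sqrt2\notin\Z[i]$).
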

 
For example, we have that the equation $x^{2}+y^{2}+z^{2}=0$ is partition regular over $\Z[i]$ in any of the three pairs of variables. (Note that this equation cannot be partition regular over $\Z$.)

\begin{remark}
Full partition regularity is impossible for every generalized Pythagorean equation (of the form  $ax^2+by^2+cz^2=0$) with squares over a ring of integers $\OK$. Indeed, if we hope to have full partition regularity of the equation $ax^2+bx^2=cz^2$ over $\OK$, then, at the very least, we will have full partition regularity for the equation $ax+by=cz$ over $\OK$. Rado proved in \cite{rado43} an extension of his result over $\Z$ for subrings of $\C$ (in particular, any $\OK$ fits the bill), which implies that what is now known as Rado's condition is necessary. Whether it is sufficient is wide open for any $\OK$, including, of course, the case where $\OK=\Z$.
\end{remark}




   
 It is natural to ask for examples for which partition regularity holds in any of the three pairs of variables over other number fields. To do so we need to include terms $xy,yz, zx$ (or we can only hope to obtain a similar result in the Gaussian integers; see the remark after Theorem~\ref{th2density} below). Consider the more general quadratic polynomials of the form
\begin{equation}\label{fp}
    q(x,y,z)=ax^{2}+by^{2}+cz^{2}+exy+fxz+gyz
\end{equation}
for some $a,b,c,e,f,g\in\Z$. Denote 
$$\Delta_{1}(q):=f^{2}-4ac,\; \Delta_{2}(q):=g^{2}-4bc,\;\text{and}\; \Delta_{3}(q):=(f+g)^{2}-4(a+b+e)c.$$
It was shown in \cite[Proposition~10.4]{S23} that $q(x,y,z)=0$ is partition regular in $(x,y)$ over $\mathcal{O}_{K}$, where $K=\Q(\sqrt{\Delta_{1}(q)},\sqrt{\Delta_{2}(q)},\sqrt{\Delta_{3}(q)})$. We remark that
\cite[Proposition~10.4]{S23} can be recovered using Theorem~\ref{thmain}, which we will prove below. In addition,
  we show that the equation is partition regular with some other choices of number fields, leading to new applications.
 Let $$\Delta_{4}(q)=c(ce^{2}+bf^{2}+ag^{2}-efg-4abc).$$
We have the following.

\begin{theorem}\label{th2density}
    Let $q$ be given by \eqref{fp} and $d\in\N$ be squarefree. 
     Suppose that $a,b,c\neq 0$ and $\Delta_{4}(q)\neq 0$.\footnote{If $\Delta_{4}(q)=0$, then the proof of Theorem \ref{th2density} implies that $q$ can be factorized into the product of two linear equations with $\Q(\sqrt{-d})$-coefficients, and the problem is reduced to the linear case.} Then the equation $q(x,y,z)=0$ is  
\begin{itemize}
    \item density regular in $(x,z)$ if $\sqrt{\Delta_{2}(q)},\sqrt{\Delta_{4}(q)}\in\Z[\tau_{d}]$;
    \item density regular in $(y,z)$ if $\sqrt{\Delta_{1}(q)},\sqrt{\Delta_{4}(q)}\in\Z[\tau_{d}]$; and
    \item density regular in $(x,y)$ if $\sqrt{\Delta_{1}(q)},\sqrt{\Delta_{2}(q)},\sqrt{\Delta_{4}(q)}\in\Z[\tau_{d}]$.
\end{itemize}
\end{theorem}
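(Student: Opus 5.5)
The plan is to reduce each of the three bullets to Theorem~\ref{thmain}, possibly after renaming the variables. For each bullet we exhibit a parametrization of the solutions of $q=0$ in which the two relevant coordinates are, up to a common factor $k$, products of two pairwise linearly independent linear forms over $\Z[\tau_d]$. The main tool is to regard $q=0$ as a quadratic in one variable and to parametrize the conic given by its discriminant.

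\emph{The discriminant conic.} Solving $q=0$ for $z$ gives
\[
z=\frac{-(fx+gy)\pm w}{2c}, \qquad w^2=D(x,y),
\]
where
\[
D(x,y):=\Delta_1(q)x^2+2(fg-2ce)xy+\Delta_2(q)y^2.
\]
A direct computation gives $(fg-2ce)^2-\Delta_1(q)\Delta_2(q)=4\Delta_4(q)$. Since $\Delta_4(q)\neq0$, the binary form $D$ is nondegenerate, and its linear factors are defined over $\Q(\sqrt{\Delta_4(q)})$. The solutions of $q=0$ therefore correspond to points of the conic $C\colon w^2=D(x,y)$, and $z$ is a linear function of $(x,y,w)$. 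The conic $C$ has the points $(1,0,\pm\sqrt{\Delta_1(q)})$ and $(0,1,\pm\sqrt{\Delta_2(q)})$. If $\sqrt{\Delta_4(q)}\in\Z[\tau_d]$, we can write
\[
D=\Delta_1(q)\,(x-\alpha y)(x-\beta y) \quad\text{with } \alpha,\beta\in K,\ \alpha\neq\beta .
\]
The case $\Delta_1(q)=0$ needs separate but easier handling.

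\emph{Parametrizing and factoring.} Take any $K$-rational point of $C$ and project from it. This gives $x=Q_1(m,n)$, $y=Q_2(m,n)$ and $w=Q_3(m,n)$, with $Q_i$ binary quadratic forms over $K$, and hence $z=Q_4(m,n)$. The available point is $(1,0,\sqrt{\Delta_1})$ or $(0,1,\sqrt{\Delta_2})$, according to which square root is assumed; when $\sqrt{\Delta_4(q)}\in K$ one can instead use the factorization and set $x-\alpha y\propto m^2$ and $x-\beta y\propto n^2$, up to a unit adjustment.

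A coordinate $Q_j$ splits into linear factors over $K$ exactly when the two points of $C$ on which that coordinate vanishes have $K$-rational parameters. The three coordinates are handled as follows.
\begin{itemize}
\item $x=0$ corresponds to $w=\pm\sqrt{\Delta_2(q)}\,y$, so this needs $\sqrt{\Delta_2(q)}$.
\item $y=0$ corresponds to $w=\pm\sqrt{\Delta_1(q)}\,x$, so this needs $\sqrt{\Delta_1(q)}$.
\item $z=0$ means $w=fx+gy$ together with $ax^2+exy+by^2=0$. Substituting into the projection formulas, the parameter values turn out to be governed by $\sqrt{\Delta_4(q)}$, once one of the other square roots is used to obtain the rational parametrization.
\end{itemize}
This produces the three bullet conditions. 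Clearing denominators by multiplying by a suitable $k\in\Z[\tau_d]$ turns the rational factorization into an $\OK$-factorization in the sense of the definition preceding Theorem~\ref{thmain}.

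\emph{Main obstacle.} The hard step is showing that the four linear forms are pairwise linearly independent, and doing the $z$-factorization bookkeeping explicitly. Two of the forms can be proportional only if two of the corresponding zero points of $C$ coincide. That would mean either $D$ degenerates, which is excluded by $\Delta_4(q)\neq0$, or a coordinate axis is tangent to $C$, which is excluded by $a,b,c\neq0$. I would verify this by computing the four parameter values explicitly in each of the three cases and checking that they are distinct.

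Finally, the distinctness and nonvanishing of $x,y,z$ required in the definition of density regularity is already part of the conclusion of Theorem~\ref{thmain}. With the factorization in hand, that theorem, applied with the variables permuted, gives all three bullets.
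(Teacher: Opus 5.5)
\textbf{Gap in the $z$-step.} Your framework is sound. Once $q=0$ has a $K$-point, a $K$-rational parametrization makes each coordinate a binary quadratic form, and that form splits over $K$ exactly when its two zeros on the conic are $K$-rational. Your treatment of $x$ and $y$ (needing $\sqrt{\Delta_2(q)}$ and $\sqrt{\Delta_1(q)}$) is correct.

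The step that fails is the one for $z$. As you wrote yourself, $z=0$ on the conic means $ax^2+exy+by^2=0$. So those two points are $K$-rational iff $\sqrt{e^2-4ab}\in K$, and $\Delta_4(q)$ plays no role. By your identity $(fg-2ce)^2-\Delta_1(q)\Delta_2(q)=4\Delta_4(q)$, the condition $\sqrt{\Delta_4(q)}\in K$ says that $D$ splits. That means the coordinate $w=2cz+fx+gy$ splits, not $z$. This is exactly what your alternative parametrization $x-\alpha y\propto m^2$, $x-\beta y\propto n^2$ produces: $w\propto mn$.

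The two conditions agree when $f=g=0$, since then $\Delta_4(q)=c^2(e^2-4ab)$, but not in general. Take $q=x^2+2y^2+2z^2+3xz$ over $\Z[i]$. Here $\Delta_1(q)=1$, $\Delta_2(q)=-16$ and $\Delta_4(q)=4$, so every hypothesis of the statement holds. Suppose $x=kL_1L_2$, $z=kL_3L_4$, $y=kP$ were an $\OK$-factorization in $(x,z)$. Evaluate at a nonzero zero of $L_3$ in $\OK^2$ with $k=1$. This gives $z=0$ and $x\neq 0$ (by pairwise independence), together with $x^2=-2y^2$ for some $y\in\Z[i]$. That is impossible because $\sqrt{-2}\notin\Q(i)$. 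The same argument rules out $(y,z)$. So in this example no parametrization at all lets Theorem \ref{thmain} give the first two bullets.

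\textbf{The paper's proof has the same defect.} It handles $f=g=0$ by the explicit parametrization $x-\lambda_-y=kcm^2$, $x-\lambda_+y=-kan^2$, $z=kamn$, where the two conditions coincide. It then reduces the general case via $q(2cx,2cy,z-fx-gy)$, which is precisely your completion of the square. The third variable of the new form is $w/(2c)$. So this reduction gives the $(x,y)$ bullet, but for the other two it only gives regularity in $(x,w)$ and $(y,w)$.

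\textbf{What your argument actually proves.} It gives density regularity in:
\begin{itemize}
\item $(x,z)$ when $\sqrt{\Delta_2(q)},\sqrt{e^2-4ab}\in\Z[\tau_d]$;
\item $(y,z)$ when $\sqrt{\Delta_1(q)},\sqrt{e^2-4ab}\in\Z[\tau_d]$;
\item $(x,y)$ when $\sqrt{\Delta_1(q)},\sqrt{\Delta_2(q)}\in\Z[\tau_d]$. Here $\sqrt{\Delta_4(q)}$ is superfluous, and $\Delta_4(q)\neq 0$ is only nondegeneracy.
\end{itemize}

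\textbf{Correction to your independence argument.} The hypothesis $a,b,c\neq 0$ only ensures that different coordinates have disjoint zero sets on the conic; for example, $(0:0:1)$ lies on $q=0$ iff $c=0$. A coordinate line is tangent to the conic iff $\Delta_2(q)=0$, $\Delta_1(q)=0$ or $e^2=4ab$, and the hypotheses do not exclude these. In that case the corresponding form is a square of a linear form. So these discriminants must be assumed nonzero.
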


 As an application of Theorem \ref{th2density}, we have that the equation
$$x^{2}+y^{2}+3z^{2}+xy=0$$
is partition regular over $\Z[\sqrt{-3}]$ in any of the three pairs of variables, which is a result that cannot be obtained with the methods of \cite{S23}. 

\begin{proof}[Proof of Theorem \ref{th2density} assuming Theorem \ref{thmain}]

     We first consider the case when $f=g=0$. In this case, $\Delta_{1}(q), \Delta_{2}(q)$ and $\Delta_{4}(q)$ are equal to $-4ac$, $-4bc$ and $c^2(e^{2}-4ab)$ respectively. 
    We may rewrite $q(x,y,z)=0$ as 
    $$a(x-\lambda_{-}y)(x-\lambda_{+}y)+cz^{2}=0$$
    where
    $$\lambda_{\pm}=\frac{-e\pm\sqrt{e^{2}-4ab}}{2a}.$$
    Since $\Delta_{4}(q)\neq 0$, we have $\lambda_{+}\neq \lambda_{-}$. Since $a,b\neq 0$, we have $\lambda_{\pm}\neq 0$.
    So we have a solution if 
    $$x-\lambda_{-}y=kcm^{2},\;\; x-\lambda_{+}y=-kan^{2}, \;\; \text{and}\;\; z=kamn,$$
    or equivalently,
     $$x=\frac{k}{\lambda_{+}-\lambda_{-}}(\lambda_{-}cm^{2}+\lambda_{+}an^{2}),\;\; y=\frac{k}{\lambda_{+}-\lambda_{-}}(cm^{2}+an^{2}),\;\;  \text{and}\;\; z=kamn.$$

     Note that $\lambda_{+}-\lambda_{-}=\frac{\sqrt{e^{2}-4ab}}{a}=\frac{\sqrt{\Delta_{4}(q)}}{a}$, $\sqrt{-ac}=\frac{1}{1}\sqrt{\Delta_{2}(q)}$ and  $\sqrt{-\lambda_{+}\lambda_{-}ac}=\sqrt{-bc}=\frac{1}{2}\sqrt{\Delta_{2}(q)}$. It follows that $q$ admits an 
     \begin{itemize}
    \item $\OK$-factorization in $(x,z)$ if $\sqrt{\Delta_{2}(q)},\sqrt{\Delta_{4}(q)}\in\Z[\tau_{d}]$.
    \item $\OK$-factorization in $(y,z)$ if $\sqrt{\Delta_{1}(q)},\sqrt{\Delta_{4}(q)}\in\Z[\tau_{d}]$.
    \item $\OK$-factorization in $(x,y)$ if $\sqrt{\Delta_{1}(q)},\sqrt{\Delta_{2}(q)},\sqrt{\Delta_{4}(q)}\in\Z[\tau_{d}]$.
\end{itemize}
 We are done by Theorem \ref{thmain}.

    We now consider the general case.
    Let 
    \begin{eqnarray*}
        p'(x,y,z) & = & p(2cx,2cy,z-fx-gy) \\
        & = &-c\left(\Delta_{1}(q)x^{2}+\Delta_{2}(q)y^{2}-z^{2}+(\Delta_{3}(q)-\Delta_{1}(q)-\Delta_{2}(q))xy\right).
    \end{eqnarray*}
    From the previous case and the identity 
    $$4\Delta_{4}(q)=(\Delta_{3}(q)-\Delta_{1}(q)-\Delta_{2}(q))^{2}-4\Delta_{1}(q)\Delta_{2}(q),$$
    we have the conclusion.
\end{proof}
\begin{remark}
It is interesting to note that if one insists on having a diagonal equation (of the form $ax^2+by^2=cz^2$) and wants to obtain full partition regularity for any of the three possible pairs using Theorem~\ref{thmain}, then after some checking which is left to the interested reader, one can show that, necessarily, the quadratic field extension must be $\Z[i]$, the Gaussian integers, which highlights the special properties that this ring of integers enjoys.
\end{remark}

 \subsection{Characterizations for completely multiplicative functions}


The strategy of the proofs for the partition regularity results of this paper is to replace the decomposition method used in \cite{S18,S23} with the splitting method introduced in \cite{FKM2,FKM1}. 
%
Recall that a function $f\colon \N\to\U$, where $\U:=\{z \in\C: |z| \leq 1\}$ is \emph{multiplicative} if $f(mn)=f(m)\cdot f(n)$ whenever $(m,n)=1$, and is \emph{completely multiplicative} if $f(mn)=f(m)\cdot f(n)$ for all $m,n\in\N$.
For functions $f,g: \N\to\U$, define
$$\D(f,g):=\sum_{p \text{ is a prime number in } \N} \frac{1}{p}(1-\text{Re}f(p)\overline{g}(p)).$$
We say that $\chi : \Z \to \U$ is a \emph{Dirichlet character of modulus $m$} if $\chi$ is completely multiplicative, satisfies $\chi(a)=0$ if and only if $\gcd(a,m)>1$, and is periodic with period $m$. It is a classical result (see \cite[Corollary~1]{daboussidelange82}) that a completely multiplicative function $f\colon \Z^{\times}\to\U$ is either \emph{aperiodic}, meaning that
$$\lim_{N\to\infty}\frac{1}{N}\sum_{n=1}^{N}f(a+bn)=0$$
for all $a,b\in\N$, or \emph{pretentious}, meaning that $\D(f\chi,n^{i\tau})<\infty$ for some Dirichlet character $\chi$ and $\tau\in\R$.


In this paper, we develop an analog of this characterization for completely multiplicative functions over imaginary quadratic number fields.
  We say that $f\colon \mathcal{O}_K^{\times}\to\U$ is \emph{completely multiplicative} if $f(mn)=f(m)\cdot f(n)$ for all $m,n\in \mathcal{O}_K^{\times}$. Let $d\in\N$ be squarefree and $\NN\colon \Q(\sqrt{-d})\to \Q$ be the norm of the field extension (see Section~\ref{SEC: numthybackg} for more details). 
We define the notion of aperiodic functions in this context in the following way.

\begin{definition}[Aperiodic functions]\label{aperiodicdef}
   Let $d\in\N$ be squarefree and let $\mathcal{AP}[\tau_{d}]$ denote the collection  of all  2-dimensional grids in $\Z[\tau_{d}]$, i.e., the collection of sets of the form
   \begin{equation}\label{eq:adcd} 
\{(a_{1}m+b_{1})+(a_{2}n+b_{2})\tau_{d}\in \Z[\tau_{d}]\colon m,n\in\Z\}
\end{equation}
        for some $a_{1},b_{1},a_{2},b_{2}\in\Z$ with $a_{1},a_{2}\in\N$.
   We say that a function $f\colon\Z[\tau_{d}]^{\times}\to\C$ is \emph{aperiodic} if the limit
\begin{equation}\label{eq: aperiodicdef}
\lim_{N\to\infty}\E_{u\in P,\NN(u) \leq N}f(u)
\end{equation}
exists and equals to 0 for all $P\in\mathcal{AP}[\tau_{d}]$. 
\end{definition}

\begin{remark}
    We remark this notion of aperiodicity is only well defined when $d>0$, as the set $\{n\in\Z[\tau_{d}]\colon \NN(n)\leq N\}$ is infinite when $d<0$.
We also caution the reader that Definition~\ref{aperiodicdef} is different from the one that appeared in \cite{S23}, where the averages are taken over boxes instead of balls. 
It is an interesting question to ask if these two definitions are equivalent. While aperiodic functions defined in both ways share similar properties (for example, they admit similar seminorm estimates for multiple ergodic averages, and their Gowers norms both vanish, see Appendix \ref{appapp}), in this paper it is crucial that we work with ball averages. We refer the readers to Section \ref{s:as} for details on the discussion regarding the choice of averging scheme.

\end{remark}

It is then natural to ask if such a classification result holds for completely multiplicative functions over other number fields. While such a result seems feasible for principal ideal domains (or equivalently, a unique factorization domain, since $\Z[\tau_d]$ is a Dedekind domain), for non-principal ideal domains it is a harder task to achieve. In the latter, we need to consider the extensions of the function into ideal-domained functions.
%

Let $\mathfrak{I}(K)$ be the set of all ideals of $\mathcal{O}_K^{\times}$. We say that a map $g\colon \mathfrak{I}(K)\to \U$ is \emph{multiplicative} if $g(IJ)=g(I)\cdot g(J)$ for all coprime  $I,J\in\mathfrak{I}(K)$,\footnote{See Section \ref{SEC: numthybackg} for definition.} and is  \emph{completely multiplicative} if $g(IJ)=g(I)\cdot g(J)$ for all  $I,J\in\mathfrak{I}(K)$.

This allows us to define extensions of completely multiplicative functions.
\begin{definition}\label{def: multfunction extension}
Let $f\colon \mathcal{O}_K^{\times}\to\U$ be a completely multiplicative function with $f(\e)=1$ for all units $\e$. We say that a map $\tilde{f}\colon \mathfrak{I}(K)\to \U$ is an \emph{extension} of $f$ if $\tilde{f}$ is completely multiplicative and $\tilde{f}((n))=f(n)$ for all $n\in \mathcal{O}_K^{\times}.$\footnote{Since $(\e n)=(n)$ for all units $\e$ and $n\in\mathcal{O}_K^{\times}$, $f$ admits an extension only if $f(\e)=1$ for all units $\e$.} 
\end{definition}
For  $f,g\colon\mathfrak{I}(K)\to\C$, define
\begin{equation}\label{ddf}
    \D(f,g):=\sum_{\mathfrak{p} \text{ is a prime ideal of } \mathcal{O}_{K}}\frac{1}{\NN(\mathfrak{p})}(1-\text{Re}f(\mathfrak{p})\overline{g}(\mathfrak{p})).
\end{equation}
It follows from Proposition~\ref{prop: extensions} (whose proof is deferred) that, when $K=\Q(\sqrt{-d}), d\in\N$, the number of extensions of every completely multiplicative function is equal to the ideal class number of $K$, and furthermore, they can be written down explicitly.

\begin{definition}[Dirichlet characters]
     Let $I$ be a non-trivial ideal of $\OK$. We say that a completely multiplicative function $\chi\colon \mathcal{O}_K^{\times}\to\C$ is a \emph{Dirichlet character} over $\OK$ with \emph{period $I$} if $\chi(x+y)=\chi(x)$ for all $x\in \OK, y\in I$ and if $\chi(x)=0$ if and only if $x \mod I\notin (\OK/I)^{\times}$.  
     Given a Dirichlet character $\chi$, we will call the function
     $\chi'\colon \mathcal{O}_K^{\times}\to\C$ a \emph{modified Dirichlet character} over $\OK$ with \emph{period $I$} if $\chi'(x)=\chi(x)$ for $x \mod I\notin (\OK/I)^{\times}$ and $\chi'(x)=1$ otherwise.  
\end{definition}
It is clear from the definition, but important in the sequel, that modified Dirichlet characters are $\S$-valued completely multiplicative functions.

\begin{definition}[Pretentious functions]
    We say that a function $f\colon\mathcal{O}_K^{\times}\to\S$  
  is \emph{pretentious} if $f(\e)=1$ for all units $\e$ and there exist some modified Dirichlet character $\chi$, some $\tau\in\R$, and some extension $\tilde{f'}$ of the function
$$f'(u):=f(u)\overline{\chi}(u)\NN(u)^{-i\tau}\footnote{Note that $f'$ is a multiplicative function taking values in $\S$ that is trivial on units.}$$
such that 
  $\D(\tilde{f'}, 1)<\infty$. We say that $f$ is \emph{super pretentious} if we can further require $\chi=1$ in the construction of $f'$.  
\end{definition}

 We have the following classification theorem.


\begin{theorem}\label{characterization}
     Let  $d\in\N$ be squarefree and $\NN\colon \Q(\sqrt{-d})\to \Q$ be the norm of the field extension. Let $f\colon \Z[\tau_{d}]^{\times}\to\S$ be completely multiplicative. Then either $f$ is aperiodic or  pretentious. 
\end{theorem}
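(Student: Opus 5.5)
We will show that if $f$ is not aperiodic, then it is pretentious. Fix a grid $P\in\mathcal{AP}[\tau_d]$ along which the ball averages $\E_{u\in P,\NN(u)\le N}f(u)$ do not tend to $0$ (or do not converge).

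\emph{Step 1: reduction to twisted averages over all of $\OK$.} Every grid $P$ is a coset of a full-rank sublattice of $\OK$. Any such sublattice contains $q\OK$ for some $q\in\N$, so $P$ is a finite union of cosets $b+q\OK$. Writing $1_{b+q\OK}$ through the additive characters of $\OK/q\OK$ only gives additive twists, so we argue multiplicatively instead. Split according to $\gcd$ with $q$: write $u=g\,v$ with $g$ ranging over the finitely many ideal/element types dividing a power of $q$ and $v$ coprime to $q$. Complete multiplicativity gives $f(u)=f(g)f(v)$. The indicator of a reduced residue class modulo $q\OK$ expands through the characters of $(\OK/q\OK)^\times$, i.e.\ Dirichlet characters $\chi$ over $\OK$ of period $q\OK$. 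The contribution of non-coprime $g$ with large norm is handled by a tail bound, since $\sum_{g}1/\NN(g)$ over $g\mid q^\infty$ converges. The conclusion is that some mean value $\E_{\NN(u)\le N}f(u)\overline{\chi}(u)$, taken over all of $\OK^\times$, fails to tend to $0$ for some Dirichlet character $\chi$.

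\emph{Step 2: passing to ideals.} We now rewrite this average as a sum over ideals. The units $\e$ form a finite group, and every nonzero principal ideal has exactly $|\OK^{\times\text{units}}|$ generators, so $\sum_{\NN(u)\le N}F(u)=w\sum_{(u),\NN\le N}F(u)$ whenever $F$ is unit-invariant. If $f\overline\chi$ is not trivial on units, then averaging over unit multiples already kills the sum. We may therefore assume the relevant function is unit-invariant, which is exactly the normalization in the definition of pretentious after replacing $\chi$ by its modified version; the two differ only on non-units modulo $I$. Next we need to pass from principal ideals to all ideals. Restricting to principal ideals means inserting $1_{[\mathfrak a]=1}=\frac1h\sum_{\psi}\psi(\mathfrak a)$, summed over class group characters $\psi$. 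By Proposition~\ref{prop: extensions}, the extensions $\tilde g$ of $g=f\overline\chi'$ are precisely $\tilde g_0\psi$ for a fixed extension $\tilde g_0$ and $\psi$ running over class group characters. Hence the nonvanishing average gives some extension $\tilde g$ with $\limsup_N\bigl|\frac{1}{\#\{\NN\mathfrak a\le N\}}\sum_{\NN\mathfrak a\le N}\tilde g(\mathfrak a)\bigr|>0$.

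\emph{Step 3: Halász.} We apply Halász's theorem for completely multiplicative functions on ideals of a number field (Lucht \cite{lucht2001}). It says that if the mean value over ideals of norm $\le N$ does not tend to zero, then $\D(\tilde g,\NN^{i\tau})<\infty$ for some $\tau\in\R$. Setting $\tilde{f'}=\tilde g\,\NN^{-i\tau}$, this is an extension of $f'=f\overline{\chi'}\NN^{-i\tau}$ with $\D(\tilde{f'},1)<\infty$, so $f$ is pretentious. Convergence to zero along every grid is needed for the existence of the limit in the definition of aperiodicity, and it follows from the contrapositive of the same chain: if no $\D(\tilde g\overline{\chi},\NN^{i\tau})$ is finite, Halász gives mean value $0$ for all twists, hence all grid averages tend to $0$.

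The main obstacle is Step 1. Grids are not residue classes modulo ideals, and ball averages must be compared with sums over ideals with the right density, so we must verify that the non-coprime part and the lattice-versus-ideal bookkeeping are controlled uniformly in $N$.
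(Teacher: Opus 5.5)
Your route is the paper's: reduce non-aperiodicity to a non-vanishing twisted mean over all of $\OK$, lift to ideals via Proposition~\ref{prop: extensions} and orthogonality of class group characters (exactly the computation behind (v)$\Rightarrow$(iv) in Proposition~\ref{prop: dircharequivalence}), and finish with Theorem~\ref{C22intro}. The genuine gap is the unit normalization in your Step~2, not Step~1.

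A non-zero full-ball mean of $f\overline{\chi}$ only forces $(f\overline{\chi})(\e)=1$ for all units $\e$. The definition of pretentious demands $f(\e)=1$. The two differ as soon as $\chi$ is non-trivial on units, and nothing in your argument bridges them. The paper obtains $f(\e)=1$ from Lemma~\ref{tv}. That proof uses $\e B_N=B_N$, which concerns full balls, while aperiodicity is tested on grids, and $\e P\neq P$ in general.

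In fact this step cannot be closed. On $\Z[i]$ let $\chi$ be the non-trivial character of $(\Z[i]/2\Z[i])^{\times}=\{1,i\}$. Put $f(u):=\chi\bigl(u(1+i)^{-k}\bigr)$, where $(1+i)^{k}$ exactly divides $u$. Then $f$ is completely multiplicative and $\S$-valued. It satisfies $f\equiv 1$ on the grid $1+2\Z[i]=\{(2m+1)+2ni\colon m,n\in\Z\}$, so $f$ is not aperiodic. Yet $f(i)=-1$, so $f$ is not pretentious. Thus the statement and Lemma~\ref{tv} are false as written.

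What your Steps~1--3 can establish is the corrected dichotomy in which the requirement $f(\e)=1$ is dropped from the definition of pretentious. The existence of an extension of $f\overline{\chi'}\NN^{-i\tau}$ already forces $f\overline{\chi'}$ to be trivial on units. In the example, $f\overline{\chi'}\equiv 1$ for the completely multiplicative modification $\chi'$ of $\chi$ with $\chi'(1+i)=1$.

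Even for the corrected statement, two steps need repair.

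First, swapping $\chi$ for $\chi'$ in Step~2 is not harmless. They differ on a union of grids of positive density, so non-vanishing of the mean of $f\overline{\chi}$ does not transfer directly to $f\overline{\chi'}$. Moreover, Proposition~\ref{prop: extensions} needs $f\overline{\chi'}$ to be completely multiplicative, and the recipe ``$\chi'=1$ off $(\OK/I)^{\times}$'' is not multiplicative: for $I=(3)$ in $\Z[i]$ and $\chi\neq 1$, we have $\chi'(3(1+i))=1\neq\chi(1+i)=\chi'(3)\chi'(1+i)$. It is cleaner to do the following. Extend $f\overline{\chi}$ itself to ideals, as a character on the ideals coprime to $I$, set to $0$ at each $\mf p\mid I$. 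Apply Theorem~\ref{C22intro}, which allows $\U$-valued functions. Only then alter the values at the finitely many $\mf p\mid I$, which changes $\D$ by a finite amount.

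Second, in Step~1 the splitting $u=gv$ with $g\mid q^{\infty}$ and $v$ coprime to $q$ does not exist for elements when the class number exceeds $1$, since the $q$-part of $(u)$ need not be principal. This is the precise form of the obstacle you flagged, and it cannot be moved to ideals before $f$ is made unit-trivial. A way through is to twist first:
\begin{enumerate}
\item Enlarge $q$ so that the units inject into $(\OK/q\OK)^{\times}$.
\item Pick a character of $(\OK/q\OK)^{\times}$ agreeing with $f$ on units.
\item Extend it to a completely multiplicative $\S$-valued $\chi'$ on $\OK\setminus\{0\}$.
\item Do the gcd bookkeeping for the unit-trivial $f\overline{\chi'}$ on ideals, using ray class characters.
\end{enumerate}
This works because $\chi'$ is constant on each piece of the grid where $u$ is fixed modulo $q'$ and the $q$-part of $(u)$ is a fixed $\mf a$ with $q'\OK\subseteq q\mf a$.
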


\begin{remark}
Note that in Theorem \ref{characterization} we only classify $\S$-valued functions. We will study $\U$-valued functions in Section 4, but not in Theorem \ref{characterization}. The reason is that in Proposition \ref{prop: dircharequivalence}, we will only characterize $\S$-valued aperiodic functions, to ensure the existence of extensions of $f\chi$.
\end{remark}



As a consequence, we have the following corollary when the domain is a principal ideal domain (which we will shorten to PID). In order to state it we define a notion of distance between two multiplicative functions. We need to recall that $p \in \OK$ is a \emph{prime element} if it is not the product of two non-unit elements in $\OK$.\footnote{Recall that an element of $\OK$ is a unit if and only if its norm is equal to $\pm 1$.}

\begin{definition}
Let $f,g: \Z[\tau_d]\to \U$ be completely multiplicative functions. Then, the pretentious distance between $f$ and $g$ is given by
\[
\D(f,g):= \sum_{p \text{ is a prime element of } \Z[\tau_{d}]}\frac{1}{\NN(p)}(1-\text{Re}f(p)\overline{g}(p))
\]
\end{definition}
We now state a corollary of Theorem~\ref{characterization}.
\begin{corollary} 
     Let  $d\in\N$ be squarefree and $\NN\colon \Q(\sqrt{-d})\to \Q$ be the norm of the field extension. Let $f\colon \Z[\tau_{d}]^{\times}\to\S$ be completely multiplicative. If $\Z[\tau_{d}]$ is a PID, then either $f$ is aperiodic or  $\D(f\chi,\NN(n)^{i\tau})<\infty$ for some Dirichlet character $\chi$, and some $\tau\in\R$.
\end{corollary}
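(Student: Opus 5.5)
Suppose $f$ is not aperiodic. By Theorem~\ref{characterization}, $f$ is pretentious. So $f(\e)=1$ for all units $\e$, and there are a modified Dirichlet character $\chi'$ (coming from a Dirichlet character $\chi$ with period $I$), some $\tau\in\R$, and an extension $\tilde{f'}$ of $f'(u)=f(u)\overline{\chi'}(u)\NN(u)^{-i\tau}$ with $\D(\tilde{f'},1)<\infty$. The task is to turn this ideal-level statement into one about prime elements.

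First, the PID hypothesis. Every prime ideal is $\mathfrak p=(p)$ for a prime element $p$, determined up to units. Since $f'$ is trivial on units, $\tilde{f'}((p))=f'(p)$ is the only possible value, so the extension is unique. Distinct prime ideals correspond to distinct associate classes of prime elements, and each class has $|\OK^\times|\le 6$ members. Hence
\[
\sum_{p}\frac{1-\text{Re} f'(p)}{\NN(p)}=|\OK^\times|\,\D(\tilde{f'},1)<\infty,
\]
where the sum is over prime elements.

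Next I compare $f'$ with $f\overline{\chi}\NN^{-i\tau}$. The functions $\chi'$ and $\chi$ agree except at the finitely many primes $p$ dividing the period $I$ (up to associates). There $\chi(p)=0$, so the corresponding terms contribute at most $1/\NN(p)$ each. Only finitely many terms change, and $\D(f\chi,\NN(n)^{i\tau})<\infty$ follows, after possibly replacing $\chi$ by $\overline{\chi}$ to match the statement's convention. Note that $\overline{\chi}$ is again a Dirichlet character, and that $|\chi'|=1$ elsewhere, so $\overline{\chi'}=\overline{\chi}$ off $I$.

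No step is a real obstacle. The only care needed is the bookkeeping with units: prime elements are counted with multiplicity $|\OK^\times|$, but this only rescales a convergent sum. One should also check that the pretentious distance over elements is insensitive to replacing $p$ by an associate. It is, because $f$, $\chi$ and $\NN$ are all invariant under units on nonzero residues, the first by hypothesis. That $\chi$ is unit-invariant is not automatic, so I would instead note directly that $f(\e p)=f(p)$ and $\NN(\e p)=\NN(p)$. Then $\text{Re}\, f(\e p)\overline{\chi}(\e p)\NN(\e p)^{-i\tau}$ equals $\text{Re}\, f'(\e p)=\text{Re}\, f'(p)$ for $p\nmid I$, using $f'(\e)=1$ (which holds because the extension exists). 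This closes the argument.
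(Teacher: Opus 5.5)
Your argument is correct and is the deduction the paper has in mind (the paper presents the corollary as an immediate consequence of Theorem~\ref{characterization} without writing it out): in a PID the extension is forced to be $(p)\mapsto f'(p)$, so the ideal-level distance equals the element-level one up to the factor given by the number of units, and passing from the modified character to the Dirichlet character $\overline{\chi}$ changes only the finitely many terms with $(p)\supseteq I$. Two cosmetic points: in the paper $\mathcal{O}_K^{\times}$ denotes $\OK\setminus\{0\}$, not the unit group, so write the number of units some other way; and the unit-invariance of $\chi$ you worry about is in fact automatic here, since $f'(\e)=f(\e)=\NN(\e)=1$ forces $\chi(\e)=1$.
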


Theorem~\ref{characterization} is one of the major number theoretical inputs of the paper, which allows us to execute the machinery of \cite{FKM1} for imaginary quadratic number fields, especially when $\Z[\tau_{d}]$ is not a principal ideal domain. We believe that Theorem~\ref{characterization} is interesting in its own right, and may potentially have further applications in number theory.


The proof of Theorem~\ref{characterization} is based on an analog of Hal\'asz's theorem over number fields. The classical Hal\'asz's Theorem is a profound result in analytic number theory that characterizes the asymptotic behavior of the average of a multiplicative function (cf. \cite[Satz~1 and Satz~1$'$]{halasz68}). As it turns out, while this theorem can be relatively easily extended to PIDs such as $\Z[i]$ (see \cite[Thoerems A and 1.2]{dlms24}, which proved a special case of Hal\'asz's theorem over the Gaussian integers for real valued bounded completely multiplicative functions), difficulties arise when the elements in the domain cannot be factorized in a unique way. 
 Nevertheless, by passing to $\mf I(K)$-domained functions instead of $O_K^{\times}$ ones, the following extension of Hal\'asz's theorem is known to hold:

\begin{theorem}[Theorem 6.1, \cite{lucht2001}]\label{C22intro}
     Let $K$ be a number field and $g\colon\mf{I}(K)\to\U$ be a multiplicative function. Then, we have the following.
     \begin{enumerate}
         \item If $\inf_{\tau\in\R}\D(g,\NN(\mf u)^{i\tau})=\infty$, then $\lim_{x \to \infty} \E_{\NN(\mf u)\leq x}g(\mf u)=0$.
         \item If $\D(g,\NN(\mf u)^{i\tau})<\infty$ for some $\tau\in\R$, then
         $$\sum_{1 \leq \NN(\mf u)\leq x}g(\mf u) = \gamma_{K}\cdot\frac{x^{1+i\tau}}{1+i\tau}\prod_{\mf p\colon \text{prime ideal in } \mathcal{O}_K,\NN(\mf p)\leq x}(1-1/\NN(\mf p))(1+h_{1+i\tau}(\mf p))+o(x),$$
         where $h_{s}(\mf p):=\sum_{k=1}^{\infty}g(\mf p^{k})\NN(\mf p^{k})^{-s}$, and $\gamma_{K}$ 
         is the universal constant defined in Corollary \ref{lem: bounds for number of ideals of norm at most x}. 
     \end{enumerate}
\end{theorem}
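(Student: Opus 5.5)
The plan follows the classical Dirichlet-series proof of Hal\'asz's theorem. The only arithmetic input over $\Z$ is the Mertens and prime number theorem for primes, together with the zero-free behaviour of $\zeta$ near $\mathrm{Re}\, s=1$. These will be replaced by their Dedekind-zeta analogues for $K$:
\begin{itemize}
\item the ideal count $\#\{\mf u:\NN(\mf u)\le x\}=\gamma_K x+O(x^{1-1/[K:\Q]})$ from Corollary~\ref{lem: bounds for number of ideals of norm at most x};
\item Landau's prime ideal theorem;
\item the non-vanishing of $\zeta_K$ on $\mathrm{Re}\, s=1$, with its standard zero-free region;
\item the Mertens estimate $\sum_{\NN(\mf p)\le x}\NN(\mf p)^{-1-it}=\log\min(\log x,1/|t|)+O(1)$ for $|t|\le T$.
\end{itemize}
Set $F(s)=\sum_{\mf u}g(\mf u)\NN(\mf u)^{-s}=\prod_{\mf p}\bigl(1+\sum_{k\ge1}g(\mf p^k)\NN(\mf p)^{-ks}\bigr)$ for $\mathrm{Re}\, s>1$. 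Since $\mf u\mapsto\NN(\mf u)$ only produces integer weights, all analytic manipulations take place on Dirichlet series in the variable $n=\NN(\mf u)$.

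For part (1), I would prove a Hal\'asz--Montgomery--Tenenbaum inequality
$$\Bigl|\frac1x\sum_{\NN(\mf u)\le x}g(\mf u)\Bigr|\ll (1+M)e^{-M}+\frac1{\sqrt T},\qquad e^{-M}:=\max_{|t|\le T}\frac{|F(1+1/\log x+it)|}{\log x}.$$
The steps are standard.
\begin{itemize}
\item Compare $S(x)=\sum g(\mf u)$ with the weighted sum $\sum g(\mf u)\log\NN(\mf u)$. The cost is controlled by the ideal-count bound.
\item Write the weighted sum via the identity $-F'=F\cdot(\Lambda_g)$, where $\Lambda_g$ is supported on prime-ideal powers.
\item Apply Perron's formula on the line $\sigma_0=1+1/\log x$ and Parseval (Plancherel) to the two factors.
\end{itemize}
The $L^2$ bound for the prime-power factor uses only the prime ideal theorem, exactly as over $\Z$. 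Next, the Euler product and the Mertens estimate give
$$\log\frac{|F(\sigma_0+it)|}{\log x}\le -\sum_{\NN(\mf p)\le x}\frac{1-\mathrm{Re}\, g(\mf p)\NN(\mf p)^{-it}}{\NN(\mf p)}+O(1)=-\D_x(g,\NN^{it})+O(1).$$
Under the hypothesis $\inf_\tau\D(g,\NN^{i\tau})=\infty$, a compactness argument in $t\in[-T,T]$ shows that $\min_{|t|\le T}\D_x(g,\NN^{it})\to\infty$. The compactness works because $\D_x$ is uniformly continuous in $t$ on scales $1/\log x$, by Mertens for prime ideals. Letting $x\to\infty$ and then $T\to\infty$ gives the mean value $0$.

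For part (2), put $g'=g\,\NN^{-i\tau}$, so that $\D(g',1)<\infty$. I would run a Wirsing/Delange-type argument with a truncation parameter $y$. Factor $g'=g'_{\le y}*g'_{>y}$ according to whether prime ideals have norm at most $y$ or greater than $y$.
\begin{itemize}
\item For the part supported on $\NN(\mf p)\le y$, write it as $1_{\le y}*h$, with $h$ supported on $y$-smooth ideals and satisfying $\sum|h(\mf a)|/\NN(\mf a)<\infty$. The ideal-count asymptotic then gives its mean $\gamma_K\prod_{\NN(\mf p)\le y}(1-\NN(\mf p)^{-1})(1+h_1(\mf p))$.
\item For the part supported on $\NN(\mf p)>y$, use the Tur\'an--Kubilius-type variance bound together with $\sum_{\NN(\mf p)>y}(1-\mathrm{Re}\, g'(\mf p))/\NN(\mf p)\to0$. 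This shows it is close to $\exp\bigl(\sum_{y<\NN(\mf p)\le x}(g'(\mf p)-1)/\NN(\mf p)\bigr)$ on average.
\end{itemize}
The finiteness of $\D$ only controls the real part of $g'(\mf p)-1$. The imaginary part may still oscillate, which is why the product in the statement runs up to $x$ and not to infinity. One must check that $\prod_{y<\NN(\mf p)\le x}$ of the local factors matches this exponential up to $o(1)$, which follows since $\sum|1-g'(\mf p)|^2/\NN(\mf p)\le 2\D(g',1)$. Finally, partial summation converts the mean of $g'$ into the sum of $g=g'\NN^{i\tau}$. This produces the factor $x^{1+i\tau}/(1+i\tau)$ and replaces $h_1$ by $h_{1+i\tau}$. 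Slow variation of the product in $x$ is needed for this step and holds by Mertens.

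I expect the main obstacle to be step (1), specifically the uniform lower control of $\D_x(g,\NN^{it})$ over $|t|\le T$. This requires the number-field Mertens estimate with the correct dependence on $t$ for $|t|\ge 1/\log x$, and hence the zero-free region for $\zeta_K$ near $\mathrm{Re}\, s=1$. I would import it from the standard Landau theory for Dedekind zeta functions. In practice one can simply cite \cite{lucht2001}, where this is carried out for general $K$.
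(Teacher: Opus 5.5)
Your outline is essentially correct, but it is not what the paper does. The paper gives no argument of its own. It quotes Theorem~6.1 of \cite{lucht2001}, which is Hal\'asz's theorem for abstract arithmetical semigroups satisfying Knopfmacher's Axiom~A. The only thing it checks is that $\mf I(K)$ with the ideal norm satisfies Axiom~A, i.e.\ that the number of ideals of norm at most $x$ is $\gamma_K x+O(x^{\eta})$ with $\eta<1$ (Corollary~\ref{lem: bounds for number of ideals of norm at most x}). You instead transplant the classical proof into $\mf I(K)$. For (1) you use the Hal\'asz--Montgomery--Tenenbaum inequality. For (2) you combine an Euler-product computation over small prime ideals, a Tur\'an--Kubilius concentration argument over large ones (the same mechanism as the paper's Proposition~\ref{p25}), and partial summation to restore $\NN^{i\tau}$. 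The citation buys brevity and shows that Axiom~A is the only arithmetic input needed, since prime ideal estimates are then derived inside the abstract theory. Your route is self-contained and transparent. However, it imports Dedekind-zeta facts (prime ideal theorem, zero-free region) that are stronger than necessary; Chebyshev and Mertens bounds for prime ideals suffice.

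In particular, the step you single out as the main obstacle is not one. The bound $\log(|F(\sigma_0+it)|/\log x)\le-\D_x(g,\NN^{it})+O(1)$ is uniform in $t$ and uses only $t$-independent Mertens/Chebyshev estimates. The statement $\min_{|t|\le T}\D_x(g,\NN^{it})\to\infty$ then follows softly. Suppose $\D_{x_j}(g,\NN^{it_j})\le B$ with $x_j\to\infty$ and $t_j\to t^*$. For each fixed $y$, monotonicity in the truncation and continuity of the finite sum $\D_y$ in $t$ give $\D_y(g,\NN^{it^*})\le B$. Hence $\D(g,\NN^{it^*})\le B<\infty$, a contradiction. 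The justification you offer, uniform continuity on scale $1/\log x$, would not give this, because that modulus of continuity degenerates as $x\to\infty$.

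There are also two smaller slips in (2). First, the piece of $g'$ supported on $y$-smooth ideals has mean value $0$. What you want is the multiplicative function equal to $g'$ on $\mf p^k$ with $\NN(\mf p)\le y$ and to $1$ elsewhere; this is the one of the form $1*h$ with $h$ supported on $y$-smooth ideals. It should be multiplied pointwise by its large-prime counterpart. Second, matching $\exp\bigl(\sum_{y<\NN(\mf p)\le x}(g'(\mf p)-1)/\NN(\mf p)\bigr)$ with the Euler factors uses only $\sum_{\mf p}\NN(\mf p)^{-2}<\infty$. The bound $\sum|1-g'(\mf p)|^{2}/\NN(\mf p)\le 2\D(g',1)$ is instead what controls the Tur\'an--Kubilius variance.
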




We remark that Theorem \ref{C22intro} applies to any number field not just the quadratic imaginary ones. When $K=\Q$, Theorem \ref{C22intro} is the standard version of Hal\'asz's theorem.
Although Theorem \ref{C22intro} is only valid for $\mf I(K)$-domained functions, it turns out that this is good enough for the proof of Theorem~\ref{characterization} (for $\OK$-domained functions).

\subsection{The Tur\'an-Kubilius inequality}

The Tur\'an-Kubilius inequality is an important tool in analytic number theory (see for example \cite[Lemma~4.1]{E79book}). In this paper, we need an analog of it for additive functions over imaginary quadratic fields whose domains are ideals.
We say that $h: \mf I(K) \to \C$ is \emph{additive} if   $h(\mf u \mf v)=h(\mf u)+h(\mf v)$ for any coprime ideals $\mf u, \mf v$.
We have the following Tur\'an-Kubilius inequality for additive functions. 
It is worth pointing out that a different version of this result is proved in \cite[Chapter~10]{kubiliusbook} for $\Z[i]$.

\begin{theorem}[Tur\'an-Kubilius inequality]\label{TK}
    Let $K=\Q(\sqrt{-d})$ for some squarefree $d\in\N$, $h\colon\mf I(K)\to\C$ be an additive function, $a,Q\in\mathcal{O}_K^{\times}$ with $\NN(a)\leq \NN(Q)$ and $(a)$ coprime to $(Q)$, and also fix $M,N\in\N$. Put $N':=2\NN(Q)(N+1)$. Suppose that $Q\in \mf p$ for all prime ideal $\mf p$ with $\NN(\mf p)\leq M$.
    Then \begin{equation}\label{ltk}
    \begin{split}
       \E_{u\in\OK\colon\NN(u) \leq N}\left| h((Qu+a))-A_{Q,N}\right|^{2}
       \leq 
       (2+o_{Q;N\to\infty}(1))B_{N'}+O(C_{N}),
    \end{split}
\end{equation}
where the implicit constant depends only on $d$,
$$A_{Q,N}:=\sum_{\NN(\mf p^{k})\leq N,\mf p\nmid Q}h(\mf p^{k})\NN(\mf p)^{-k}(1-\NN(\mf p)^{-1}),$$
$$B_{N}:=\sum_{\NN(\mf p^{k})\leq N, \NN(\mf p)>M, k\geq 1}\vert h(\mf p^{k})\vert^{2}\NN(\mf p^{k})^{-1},$$
and
$$C_{N}:=\frac{1}{\sqrt{N}}\sum_{\NN(\mf p^{k})\leq N', \NN(\mf p)>M}\vert h(\mf p^{k})\vert^{2}\cdot \NN(\mf p^{k})^{-1/2}.$$
\end{theorem}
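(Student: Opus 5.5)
We follow the classical proof of the Turán–Kubilius inequality: expand the square, then count lattice points in arithmetic progressions. First we reduce to the case where $h$ is supported on prime powers $\mf p^k$ with $\NN(\mf p)>M$, $\mf p\nmid Q$, and $\NN(\mf p^k)\le N'$. The reason is that $(Qu+a)$ is coprime to every prime ideal with $\NN(\mf p)\le M$. Indeed, any such $\mf p$ contains $Q$, and if it also contained $Qu+a$ it would contain $a$, contradicting $(a)+(Q)=\OK$. For the same reason $(Qu+a)$ is coprime to $(Q)$. Moreover, $\NN(Qu+a)\le 2\NN(Q)\NN(u)+2\NN(a)\le N'$, so only prime powers of norm at most $N'$ can divide it, and only exactly. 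We therefore write
\[
h((Qu+a))=\sum_{\mf p^k\,\|\,(Qu+a)} h(\mf p^k).
\]
After subtracting $A_{Q,N}$, the terms with $\NN(\mf p)\le M$ in $A_{Q,N}$ are absent because such $\mf p$ divide $Q$. The tail $N<\NN(\mf p^k)\le N'$ will be absorbed into the error terms.

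The key counting input is a lemma: for an ideal $\mf d$ coprime to $(Q)$,
\[
\#\{u:\NN(u)\le N,\ Qu+a\in\mf d\}=\frac{\#\{u:\NN(u)\le N\}}{\NN(\mf d)}+O\Big(\sqrt{N}\,\Big).
\]
This holds because $Q$ is invertible mod $\mf d$, so the condition is that $u$ lies in a single coset of the lattice $\mf d\subset\OK$, a lattice of covolume proportional to $\NN(\mf d)$. Lattice points in an ellipse of area $\asymp N$ are then counted with boundary error $O(\sqrt N/\lambda_1(\mf d)+1)$. Since $\lambda_1(\mf d)\gg 1$, we use the cruder bound $O(\sqrt{N})$ uniformly. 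We would like a better bound, $O(\sqrt{N/\NN(\mf d)})$ or similar, to produce the weights $\NN(\mf p^k)^{-1/2}$ appearing in $C_N$. To get it, we note that the minimal vector of an ideal in an imaginary quadratic field satisfies $\lambda_1(\mf d)\asymp_d \sqrt{\NN(\mf d)}$, because $\lambda_1^2\ge\NN(\mf d)$. The error is then $O(\sqrt{N/\NN(\mf d)}+1)$, with constants depending only on $d$.

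Next we expand $\E|\sum_{\mf p^k\|(Qu+a)}h(\mf p^k)-A|^2$ by Cauchy–Schwarz and the usual manipulation. It suffices to bound $\E|\sum_{\mf p^k\|(Qu+a)} h(\mf p^k)|^2$, the cross term, and $|A|^2$. Exact division is written as $\mf p^k\mid$ minus $\mf p^{k+1}\mid$. The diagonal terms then give $\sum|h(\mf p^k)|^2/\NN(\mf p^k)$ up to a factor tending to $1$. The off-diagonal terms, over distinct $\mf p\ne\mf q$, use the lemma with $\mf d=\mf p^k\mf q^l$ and produce $|A|^2$ plus errors. The main terms cancel against $-2\mathrm{Re}(\bar A\cdot\E\sum)+|A|^2$, exactly as over $\Z$. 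What remains is the diagonal contribution $B_{N'}$, plus the cross-term discrepancy, which is bounded by Cauchy–Schwarz using $\sum_{\mf p}1/\NN(\mf p)^2<\infty$ in the Mertens-type sense. This yields the factor $2$ rather than $1$.

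The main obstacle is controlling the error terms from the lattice counts summed over pairs $\mf p^k\mf q^l$ with possibly large norm, up to $N'^2$. For pairs with $\NN(\mf p^k\mf q^l)>N$ the lemma gives no asymptotic. Our first attempt is to bound the count trivially by $O(N/\NN(\mf d)+\sqrt{N/\NN(\mf d)}+1)$, but the $+1$ is fatal. We avoid it by noting that for fixed $u$ at most one prime power with $\NN(\mf p^k)>\sqrt{N'}$ divides $Qu+a$. Large–large pairs therefore vanish, and large–small pairs are handled by summing over the small factor first, which gives $\sum_{\mf p}|h(\mf p^k)|\cdot\sqrt{N/\NN(\mf p^k)}\cdot N^{-1}$. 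A final Cauchy–Schwarz produces exactly $C_N$. The $o_{Q;N\to\infty}(1)$ term collects the ratio between $\#\{\NN(u)\le N\}$ and $\gamma N$, and the replacement of $N$ by $N'$ in the ranges.
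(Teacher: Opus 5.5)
Your skeleton is the paper's: expand the square, count $u$ in cosets of $\mf d=\mf p^k\mf q^l$, and split diagonal from off-diagonal. Your counting lemma with error $O(\sqrt{N/\NN(\mf d)}+1)$, justified by $\lambda_1(\mf d)^2\ge\NN(\mf d)$, is correct. It is in fact more accurate than Lemma~\ref{countingCRT}, which omits the $+1$.

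\textbf{The gap: the off-diagonal step and the constant $2$.} The off-diagonal part of $\E|h((Qu+a))|^2$ only sees pairs with $\NN(\mf p^k\mf q^l)\le N'$. So, with $\delta_{\mf p^k}:=\NN(\mf p)^{-k}(1-\NN(\mf p)^{-1})$, its main term is
\[
\sum_{\mf p\neq \mf q,\ \NN(\mf p^k\mf q^l)\le N'} h(\mf p^k)\overline{h(\mf q^l)}\,\delta_{\mf p^k}\delta_{\mf q^l}.
\]
This is a sum over a hyperbolic region, while $|A_{Q,N'}|^2$ is the sum over the whole square $\NN(\mf p^k),\NN(\mf q^l)\le N'$. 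The discrepancy contains the bilinear sum over $\{\NN(\mf p^k),\NN(\mf q^l)\le N'<\NN(\mf p^k\mf q^l)\}$. No $u$ is divisible by such a pair, so no counting lemma helps there. By the prime ideal theorem, $\sum\NN(\mf p)^{-1}\NN(\mf q)^{-1}$ over this region tends to $\int_0^1 -\log(1-s)\,\frac{ds}{s}=\pi^2/6$.

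So for signed or complex $h$ this term can be a constant multiple of $B_{N'}$ and has no sign. The main terms do not cancel ``exactly as over $\Z$''; the classical argument also needs extra input at this point. The fact $\sum_{\mf p}\NN(\mf p)^{-2}<\infty$ is irrelevant here. Plain Cauchy--Schwarz only bounds the term by $\sqrt{\pi^2/6}\,B_{N'}$, which gives a constant near $1+\sqrt{\pi^2/6}\approx 2.28$, not $2$.

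The missing idea is the one the paper uses. First prove the bound for $h\ge 0$; there this region can simply be dropped, so the off-diagonal term is at most $A_{Q,N'}^2$ plus errors, giving $(1+o(1))B_{N'}+O(C_N)$. Then write real $h=h_+-h_-$ and split complex $h$ into real and imaginary parts; this splitting is exactly where the factor $2$ comes from. A direct complex-valued argument would instead need a sharper bilinear estimate for this region, e.g.\ a Schur test with weights $\log\NN(\mf p)$ plus Mertens' theorem for prime ideals, and nothing like that is in your sketch.

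\textbf{The obstacle you single out is misdiagnosed.} The $+1$ in the lattice count is harmless off the diagonal. By Cauchy--Schwarz and Lemma~\ref{l2}(iii),
\[
\frac1N\sum_{\NN(\mf p^k\mf q^l)\le N'}|h(\mf p^k)h(\mf q^l)|\le \frac{B_{N'}}{N}\Big(N'\cdot\#\{(\mf p^k,\mf q^l)\colon \NN(\mf p^k\mf q^l)\le N'\}\Big)^{1/2}\ll_Q B_{N'}\Big(\frac{\log\log N}{\log N}\Big)^{1/2}.
\]
This is how the paper, like the classical proof, absorbs it into $o_{Q;N\to\infty}(1)B_{N'}$. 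So your large/small splitting is unnecessary, and its claimed output is not right. The expression $\sum|h(\mf p^k)|\sqrt{N/\NN(\mf p^k)}/N$ is linear in $|h|$, and no Cauchy--Schwarz turns it into the quadratic quantity $C_N$. In the paper, $C_N$ comes from the counting errors in the diagonal terms $|h(\mf p^k)|^2$.

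Finally, with your correct error term the diagonal is not ``$B_{N'}$ up to a factor tending to $1$'' when $N<\NN(\mf p^k)\le N'$. There the $+1$ is only bounded by $\sqrt{N'/N}\,C_N\asymp\sqrt{\NN(Q)}\,C_N$. This $Q$-dependence is hidden in the paper by its version of the counting lemma.
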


We remark that Theorem \ref{TK} differs from the classical  Tur\'an-Kubilius inequality in the following ways. The first is that in Theorem~\ref{TK} we deal with averages along arithmetic progressions $Qu+a$ instead of along intervals. The second is that in Theorem~\ref{TK} there is an extra error term $O(C_{K})$ which does not exist in the case when $K=\Q$. We do not know how to remove this term in Proposition~\ref{TK}. However, this term is harmless in our applications.

\subsection{Notation}
Given a finite set $A$, and a function $f$ on $A$ we use the expectation operator $\E$ to denote its averages:
\[
\E_{u \in A} f(u):=\frac{1}{|A|} \sum_{u \in A} f(u).
\]
For $N \in \N$ we will use the notation $[N]$ to mean the set of integers $\{1,\dots,N\}$.

Given $t \in \R$, we put $e(t):=e^{2\pi i t}$.
For $z \in \C$,
we use $\text{Re}(z)$ and $\text{Im}(z)$ for its real and imaginary parts respectively.   We also write $\exp(z)$ for its series $\sum_{n=0}^{\infty} z^n/n!$.  

We use standard number theory notation, so for example, $a(n) \ll b(n)$, or equivalently, $a(n)=O(b(n)),$ if there exists some constant $C>0$ such that $|a(n)| \leq C|b(n)|$ for large enough $n \in \N$. The little $o$ notation is also used: we write $a(n)=o(b(n))$ if $\lim_{n \to \infty} \frac{a(n)}{b(n)}=0$.

We use $\U$ to denote the closed unit ball in $\C$, so $\U=\{ z \in \C : |z| \leq 1\}$, and $\S^1$ for its boundary, so $\S^1=\{ z \in \C : |z|=1\}$.

We will typically fix a number field $K$, and $\OK$ will stand for its ring of integers. We will always use parentheses for the field and square brackets for the ring of integers, so typical notations will be $\Q(i)$ and $\Z[\sqrt{-2}]$, for example. Since we usually work with the non-zero elements of the ring of integers $\OK$, we often write $\mathcal{O}_K^{\times}$  to denote the set $\OK \setminus \{0\}$. 

To distinguish when we are working with elements $u,v \in \OK$ as opposed to ideals of $\OK$, we use in the latter gothic letters like $\mf u, \mf v$ or $\mf p$ (which we reserve for prime ideals). To denote the set of all ideals of $\OK$ we use $\mf I(K),$ taking only one representative modulo units.

We let $\NN: K \to K$ denote the norm associated to the number field $K$ (see the number theory section below for the relevant definitions). 

Most of our work applies to quadratic imaginary fields. In the case where $K$ is such a field, we will use $\tau_d$ to denote a generator of its ring of integers, so $\OK=\Z[\tau_d]$.

\section{An outline for the proof of the density regularity result}\label{sec: proofstrat}


In this section, we explain the outline of the proof of Theorem \ref{thmain}. For the rest of this Section \ref{sec: proofstrat}, we assume that $K=\Q(\sqrt{-d})$ with $d\in\N$ squarefree.

\subsection{Reductions}


In order to ease the notation for the proof of Theorem \ref{thmain}, we apply some standard reductions on the linear forms. First, by a change of variables, we may assume without loss of generality that $a_{4,2} \neq 0$.  The linear substitution $(m,n) \mapsto (a_{4,2}m,n-a_{4,1}m)$ shows that we may further assume that $a_{4,1}=0$ without loss of generality and without changing the assumptions on our linear forms. 

Next, it follows that $a_{1,1},a_{2,1},a_{3,1}$ are all non-zero so if we now make the change $n \mapsto a_{1,1}a_{2,1}a_{3,1} n$ and factor each $a_{i,1}$ from $L_i$ for $i=1,2,3$, we can further assume that $a_{1,1}=a_{2,1}=a_{3,1}=1$ modulo multiples by elements of $\mathcal{O}_K^{\times}$. Lastly, the change $m \mapsto m-a_{3,2}n$ allows us to reduce Theorem \ref{thmain} to the special case when
$$L_{1}(m,n)L_{2}(m,n)=\ell(m+\alpha n)(m+\beta n) \text{ and } L_{3}(m,n)L_{4}(m,n)=\ell'mn$$
for some $\ell, \ell' \in \mathcal{O}_K^{\times}$ and $\alpha,\beta \in \mathcal{O}_K$.

We first use a variation of the Furstenberg's correspondence principle, that applies to actions of $(\mathcal{O}_K^{\times}, \times)$ (see for example \cite[Theorem~2.8]{bf21b}). This allows us to translate Theorem~\ref{thmain} into the following.

\begin{theorem}\label{firstreductionplus}
Let $(T_u)_{u \in\mathcal{O}_K^{\times}}$ be a measure preserving multiplicative action of $(\mathcal{O}_K^{\times},\times)$ on a probability space $(X,\mathcal{B},\mu)$,\footnote{Meaning that $T_{u}$ is a measure preserving transformation for all $u \in\mathcal{O}_K^{\times}$ with $T_{1}=id$ and $T_{u}\circ T_{v}=T_{uv}$ for all $u,v\in\mathcal{O}_K^{\times}$.} and let $A \in \mathcal{B}$ with $\mu(A)>0$. Let $\ell,\ell' \in \mathcal{O}_K^{\times}$ and $\alpha, \beta \in \OK$. Suppose that $q: \mathcal{O}_K^3 \to \C$ admits an $\OK$-factorization in $(x,y)$. Then, there exists a set of positive lower density\footnote{Recall that a set $E \subset (\mathcal{O}_K^{\times})^2$ is said to have positive lower density if $\liminf_{N \to \infty} \frac{|E \cap B_N|}{|B_N|}>0$, where $B_N=\{ u \in \mathcal{O}_K^{\times} : \NN(u)\leq N\}$} of pairs $(m,n)\in (\mathcal{O}_K^{\times})^2$ so that the elements $\ell(m+\alpha n)(m+\beta n)$ and $\ell'mn$ are distinct, and
\begin{equation}\label{measurerec1plus}
\mu(T_{\ell(m+\alpha n)(m+\beta n)}^{-1}A \cap T_{\ell'mn}^{-1}A)>0.
\end{equation}
\end{theorem}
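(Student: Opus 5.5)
The plan is to follow the splitting strategy of \cite{FKM1}, transported to $\OK$. Apply the spectral theorem to the unitary action $u\mapsto T_u$ of the abelian semigroup $(\mathcal{O}_K^{\times},\times)$ with $F=\mathds{1}_A$. This produces a finite positive measure $\sigma$ on the compact space $\mathcal{M}$ of completely multiplicative functions $f\colon\mathcal{O}_K^{\times}\to\S$ such that
\[
\mu(T_{x}^{-1}A\cap T_{y}^{-1}A)=\int_{\mathcal{M}} f(x)\overline{f(y)}\,d\sigma(f),
\]
and $\sigma(\{1\})\ge\mu(A)^2>0$, the latter from the ergodic decomposition or mean ergodic theorem. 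It then suffices to choose a weight $w(m,n)\ge 0$, concentrated on pairs with $x=\ell(m+\alpha n)(m+\beta n)\neq y=\ell' mn$, and to show
\[
\liminf_{N\to\infty}\E_{\NN(m),\NN(n)\le N}\, w(m,n)\int f\bigl(\ell(m+\alpha n)(m+\beta n)\bigr)\overline{f(\ell'mn)}\,d\sigma(f)>0 .
\]
Positivity of this average forces the integrand to be positive on a set of pairs of positive lower density, since $w$ is bounded. Diagonal pairs, with $x=y$, lie on finitely many curves because the $L_i$ are pairwise independent, so they have zero density and can be discarded. Before that, the unit ambiguity is removed by averaging over the finite unit group (i.e.\ passing to $f$ trivial on units), so that every $f$ in the support can be compared with ideal functions via Proposition~\ref{prop: extensions}.

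Next, restrict to $m,n$ in a multiplicative progression $m=Qm'+1$, $n=Qn'+1$ with $Q$ divisible by all small primes, so that $m,n,m+\alpha n,m+\beta n$ all avoid small prime ideals. Split $\mathcal{M}=\mathcal{M}_{ap}\cup\mathcal{M}_{p}$ using Theorem~\ref{characterization}.

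For aperiodic $f$, the contribution of the correlation average is $o(1)$. This needs a Gowers-uniformity statement: aperiodic completely multiplicative functions on $\OK$ have vanishing Gowers norms along ball averages (cf.\ Appendix~\ref{appapp}), together with a generalized von Neumann bound for the four pairwise independent linear forms. Since these are four forms in two variables, the relevant norm is $U^3$ (or $U^2$ after exploiting the degree structure). The result then follows by dominated convergence in $f$, because $\sigma$ is finite and the bound is pointwise in $f$.

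For pretentious $f$, use the concentration estimate Theorem~\ref{p25}, derived from the Turán–Kubilius inequality Theorem~\ref{TK}. On the progression, for most $u$ one has $f(Qu+1)\approx \chi(1)\NN(Qu+1)^{i\tau}\exp(A_{Q,N})$. The Dirichlet-character part is constant on these progressions because $Q$ is divisible by the period; the prime-ideal twist is handled by the extension. The product $f(x)\overline{f(y)}$ then becomes approximately
\[
\NN\Bigl(\tfrac{(m+\alpha n)(m+\beta n)}{mn}\Bigr)^{i\tau}\cdot c_f ,
\]
with $|c_f|=1$ and the constants $\ell,\ell'$ absorbed. Restricting further to a small multiplicative sector where $m/n$ is close to a fixed ratio $r$ makes the norm-ratio nearly constant. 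A further choice of sector, or averaging over $r$ with weights, makes $\mathrm{Re}$ of the pretentious contribution nonnegative, while keeping $\sigma(\{1\})$'s contribution $\asymp\mu(A)^2\cdot$(density of sector). Letting the Turán–Kubilius error tend to zero in the order $N\to\infty$, then $Q\to\infty$, then the sector width to zero gives positivity.

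The main obstacle is the pretentious part. The Archimedean twist $\NN(\cdot)^{i\tau}$ must be made uniformly nonnegative over all $\tau$ simultaneously, and the error term $C_N$ in Theorem~\ref{TK} must stay controlled uniformly over $f$ in the support of $\sigma$. My first attempt is the \cite{FKM1} device of averaging over dilations $m\mapsto km$ with $k$ from a multiplicative Følner set, which converts the twist into a positive-definite kernel.
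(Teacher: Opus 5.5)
Your spectral reduction, the removal of diagonal pairs, and the treatment of aperiodic $f$ via vanishing Gowers norms all match the paper. The gap is in the pretentious part, and it is more than the Archimedean twist.

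\textbf{Where your progression fails.} With the symmetric progression $m\equiv n\equiv 1 \pmod Q$, the forms $m+\alpha n$ and $m+\beta n$ lie in the classes $1+\alpha$ and $1+\beta$. These need not be coprime to $Q$; in the Pythagorean case $\alpha=-1$ one even has $Q\mid m-n$. So Proposition~\ref{p25} does not apply to them.

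Even when it does apply, the powers $\NN(Q)^{i\tau}$ and the factors $\exp(F)$ cancel between $x$ and $y$. You are left with $f(x)\overline{f(y)}\approx c_f\,\NN\bigl(\tfrac{(m+\alpha n)(m+\beta n)}{mn}\bigr)^{i\tau}$, where $c_f=f(\ell)\overline{f(\ell')}\chi((1+\alpha)(1+\beta))$ and $\chi$ is the character $f$ pretends to be. This $c_f$ is a unimodular constant depending on $f$ but not on $(m,n)$. Restricting $m/n$ to sectors only moves the Archimedean factor, and nothing makes $\mathrm{Re}\int c_f(\cdots)\,d\sigma$ nonnegative.

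This genuinely fails. Let $f_0\colon\mathcal{O}_K^{\times}\to\{\pm1\}$ be completely multiplicative and constant on reduced residue classes mod $Q$ (a suitably modified real character whose period divides $Q$), with $f_0(\ell\ell')f_0(1+\alpha)f_0(1+\beta)=-1$. Take $X=\{\pm1\}$ with uniform measure, $T_uz=f_0(u)z$ and $A=\{1\}$. Then $\mu(T_x^{-1}A\cap T_y^{-1}A)=\frac12\mathds{1}_{f_0(x)=f_0(y)}=0$ for every pair in your progression. So no weight or sector choice inside it can produce positivity.

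\textbf{The missing idea.} The paper builds it in through the asymmetric grid $(Qm+1,Qn)$. Three of the four forms are $\equiv 1 \pmod Q$, while $Q$ occurs exactly once, unbalanced, in $\ell'(Qm+1)Qn$.

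Applying Proposition~\ref{p25} to the three coprime forms shows that the $(m,n)$-average of $A_\delta(f,Q;m,n)$ equals $\overline{f(Q)}\,\NN(Q)^{i\tau}$ times a quantity that is the same, up to $O(\varepsilon)$, for all $Q\in\Phi_M$. This is because $F(\tilde{f'},M,N;Q)$ only sees which primes divide $Q$. Averaging over $Q\in\Phi_M$ and using Lemma~\ref{multav0} then annihilates the contribution of every pretentious $f\notin\mathcal{A}$ individually, whatever its phase.

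Since this is a pointwise-in-$f$ limit, dominated convergence handles the non-uniformity in $f$ you were worried about. No uniform control of the Tur\'an--Kubilius error over $\mathrm{supp}\,\sigma$ is needed. In your scheme, by contrast, the pointwise limit is $c_f\cdot(\cdots)$ rather than $0$, so dominated convergence gives no sign information.

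Only the Archimedean characters survive. For those, your sector idea is essentially the paper's weight $w_\delta$ (Proposition~\ref{fffss}), together with discarding $|\tau|>T_0$ at small $\sigma$-cost.

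The F\o lner averaging you mention at the end is the right tool. Its job, however, is to kill $\overline{f(Q)}$ for non-Archimedean $f$, which requires an unbalanced power of $Q$. Your symmetric progression has no such factor to average away.
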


Our next step is to use the Herglotz-Bochner theorem to reduce Theorem~\ref{firstreductionplus} to a property on the spectrum of the system. Recalling the definition of completely multiplicative function on $\mathcal{O}_K^{\times}$ we introduced before, we denote, through the rest of the paper

\[
\mathcal{M}:= \{ f: \mathcal{O}_K^{\times} \to \S^1 : f \text{ is completely multiplicative}\}.
\]
We give $\mathcal{M}$ the product topology, which makes it into a compact metric space (given that $\S^1$ is). There is a natural identification between the Pontryagin dual of $(K^{\times},\times)$ and $\mathcal{M}$, which we shall use to more easily apply the Herglotz-Bochner theorem. (Indeed, this follows by taking an integral basis and taking common factor, so any term of the form $\frac{p}{q}x_1+\frac{r}{s}x_2$ is of the form $\frac{u}{v}$ for some $u, v \in \mathcal{O}_K^{\times}.$) 

Consider the map $\varphi: K^{\times} \to [0,1]$ given by $\varphi\left(\frac{u}{v}\right):=\mu(T_u^{-1}A \cap T_v^{-1}A)$, for $u, v \in \mathcal{O}_K^{\times}$. It is easy to check that $\varphi$ is well defined and positive definite. Thus, by the Bochner-Herglotz theorem, there exists a finite positive Borel measure $\sigma$ on $\mathcal{M}$ such that $\sigma(\{1\})\geq \mu(A)^2$ (as a consequence of the $L^2$-mean ergodic theorem and the spectral theorem) such that for all $u, v \in \mathcal{O}_K^{\times}$ we have
\[
\int_{\mathcal{M}} f(u) \cdot \overline{f(v)} \ d\sigma(f) = \mu(T_u^{-1}A \cap T_v^{-1}A). 
\]
In particular, provided that none of the linear forms vanish on $m,n$ (which is a set of additive density $0$ in $(m,n)$), we can write
\[
\mu(T_{\ell (m+\alpha n)(m+\beta n)}^{-1}A \cap T_{\ell' mn}^{-1}A) = \int_{\mathcal{M}} f(\ell (m+\alpha n)(m+\beta n)) \cdot \overline{f(\ell' mn)} \ d\sigma(f).
\]
Thus, writing $$S_q:=\left\{(m,n) : mn(m+\alpha n)(m+\beta n) \neq 0 \text{ and } \ell(m+\alpha n)(m+\beta n) \neq \ell'mn\right\}$$ and using the parametrization  discussed above for $x,y$, 
Theorem~\ref{firstreductionplus} follows from the following result.
\begin{theorem}\label{secondreduction}
Let $\sigma$ be a finite positive Borel measure on $\mathcal{M}$ such that 
\begin{equation}\label{positivitymeasures}
    \text{$\sigma(\{1\})>0$ and } \int_{\mathcal{M}} f(u)\cdot \overline{f(v)} \ d\sigma(f)\geq 0 \quad \text{for every } u, v \in \mathcal{O}_K^{\times}.
\end{equation}
Then, for every $\ell, \ell' \in \mathcal{O}_K^{\times}$ and $\alpha, \beta \in \OK$ we have
\begin{equation}\label{multav1}
    \lim_{N \to \infty} \E_{\NN(n),\NN(m) \leq N, \ (m,n) \in S_q}\int_{\mathcal{M}}  f(\ell (m+\alpha n)(m+\beta n)) \cdot \overline{f(\ell' mn)}\ d\sigma(f)>0.
\end{equation}
\end{theorem}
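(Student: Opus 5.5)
The plan follows the splitting strategy of Frantzikinakis--Klurman--Moreira. We decompose $\mathcal{M}=\mathcal{M}_{p}\cup\mathcal{M}_{a}$, where $\mathcal{M}_a$ consists of the aperiodic functions and $\mathcal{M}_p$ of the pretentious ones. By Theorem~\ref{characterization} this is a genuine dichotomy for $\S^1$-valued completely multiplicative $f$.

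Before averaging over all $(m,n)$, we restrict to a structured subset. Fix a large $Q\in\mathcal{O}_K^{\times}$ divisible by all prime ideals of norm at most $M$ (and by the relevant conductors). We then average over $m=Qm'+1$, $n=Qn'$, or more precisely a thin dilation of this pattern. This only costs a positive constant factor, since $S_q$ has full density and the integrand has nonnegative integral pointwise in $(m,n)$ by \eqref{positivitymeasures}. The integral then splits into the contributions of $\mathcal{M}_a$ and $\mathcal{M}_p$.

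\emph{Aperiodic part.} We show that
\[
\E_{m,n}\, f(\ell(m+\alpha n)(m+\beta n))\,\overline{f(\ell' mn)}\to 0
\]
for every aperiodic $f$, uniformly enough to pass the limit under $\int d\sigma$ by dominated convergence. Following \cite{FKM1}, this comes from a Gowers-uniformity statement: aperiodic functions on $\Z[\tau_d]$ (with ball averages) have vanishing Gowers norms of all orders along grids. This is the statement in the appendix, reconciling ball averages with the box averages of \cite{S23}. Since the four linear forms are pairwise independent, the generalized von Neumann theorem bounds the average by a Gowers norm of $f$ restricted to a grid, which is $o(1)$.

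\emph{Pretentious part.} For $f\in\mathcal{M}_p$ we use the concentration estimate, Theorem~\ref{p25}, derived from the Tur\'an--Kubilius inequality (Theorem~\ref{TK}). It says that for most $u$ in $Qu+a$ with $\NN(u)\le N$, the value $f(Qu+a)$ is close to $\chi(a)\NN(Qu+a)^{i\tau}\exp(F_N(f))$, where $F_N(f)$ is a constant independent of $u$ and $\chi$ is the modified character. With $m\equiv1$ and $n\equiv0$ modulo $Q$, each of $m$, $m+\alpha n$, $m+\beta n$ is $\equiv1 \pmod Q$, so the character factors equal $1$. The product $f(m+\alpha n)f(m+\beta n)\overline{f(m)}$ then cancels down to roughly $\overline{f(n)}$ times an Archimedean factor $\NN(\cdot)^{i\tau}$ that is slowly varying on a small sector. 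To control both factors, we restrict $n$ to a multiplicative dilation $n=Q' n''$ and pass to a further subaverage where $n$ is itself close to a fixed value. The cleaner option is to put both $m$ and $n$ in progressions, say $n=Q(Qn'+1)$. The integrand then becomes approximately $|g|^2$-type positive quantities. Using $\sigma(\{1\})>0$ for the trivial function, we obtain a lower bound $\ge c\,\sigma(\{1\})-\varepsilon$, where $\varepsilon\to0$ as $M\to\infty$ and then $N\to\infty$.

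Combining the two parts, the lim inf of the restricted average is positive. Since the full average dominates a positive constant times the restricted one by nonnegativity, \eqref{multav1} follows, provided the limit exists. If it does not, we prove the statement with liminf, which suffices for Theorem~\ref{firstreductionplus}.

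The main obstacle is the pretentious part. We must make the concentration uniform over $f\in\mathcal{M}_p$ and handle the nonprincipal class group: concentration happens at the level of the ideal extensions $\tilde{f'}$, and different extensions have to be reconciled with values on elements. The Archimedean twist $\NN^{i\tau}$ has to be neutralized on the subaverage. I would handle this first by shrinking to sectors of small angular and radial width, where $\NN(m+\alpha n)^{i\tau}\NN(m+\beta n)^{i\tau}\NN(mn)^{-i\tau}$ is nearly constant and equal to $1$ when $n$ is small relative to $m$ in the sector. The quantitative loss is compensated by taking $\tau$ bounded via a compactness argument on $\sigma$.
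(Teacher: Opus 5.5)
\textbf{There is a genuine gap in the pretentious part.} Your aperiodic part agrees with the paper (Proposition~\ref{aperiodiclimit0} via Proposition~\ref{P: appendix}), but the claimed positivity for pretentious $f$ is false.

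Take $m=Qm'+1$ and $n=Q(Qn'+1)$. Proposition~\ref{p25}, together with Lemma~\ref{sequenceavfact}, does make $f$ essentially constant, up to $\NN^{i\tau}$ factors, on $m$, $m+\alpha n$, $m+\beta n$ and $Qn'+1$. However, $n$ still carries the factor $Q$, and $\ell,\ell'$ do not cancel. What you actually get is
\[
f\bigl(\ell(m+\alpha n)(m+\beta n)\bigr)\overline{f(\ell' mn)}\approx c_f\; f(\ell)\overline{f(\ell')}\,\overline{f(Q)}\,\NN(Q)^{i\tau}\;\NN\Bigl(\frac{(m+\alpha n)(m+\beta n)}{mn}\Bigr)^{i\tau},\qquad c_f\ge 0,
\]
where $c_f$ is the product of the concentration constants; this is presumably your ``$|g|^2$-type'' quantity.

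The unimodular phase $f(\ell)\overline{f(\ell')}\,\overline{f(Q)}\,\NN(Q)^{i\tau}$ depends on $f$. Since $Q\in I$, it is essentially $\overline{f'(Q)}$ times $f(\ell)\overline{f(\ell')}$, and nothing forces it near $1$ for $f\in\mathcal{M}_p\setminus\mathcal{A}$. Moreover, \eqref{positivitymeasures} only controls $\int f(u)\overline{f(v)}\,d\sigma$, not integrals against $f$-dependent weights $c_f$. So for any single $Q$, the contribution of $\mathcal{M}_p\setminus\mathcal{A}$ can have any sign and size up to $\sigma(\mathcal{M}_p\setminus\mathcal{A})$, which can swamp $\sigma(\{1\})$. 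The bound $\ge c\,\sigma(\{1\})-\varepsilon$ therefore does not follow.

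Separately, your sector claim is wrong as stated. If $n$ is small relative to $m$, then $\NN\bigl(\frac{(m+\alpha n)(m+\beta n)}{mn}\bigr)\approx\NN(m/n)$ is large, so the Archimedean factor is not near $1$. You must instead localize where the norm of the full ratio $\frac{\ell(m+\alpha n)(m+\beta n)}{\ell' mn}$ is controlled; this is the role of the weight $w_\delta$.

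\textbf{The missing idea is to average over $Q$ instead of fixing it.} Every $Q\in\Phi_M$ is divisible by exactly the primes of norm $\le M$, so $F(\tilde{f'},M,N;Q)$ is independent of $Q$. Hence the $(m,n)$-limit equals $\overline{f(Q)}\NN(Q)^{i\tau}\tilde L_\delta+O(\varepsilon)$, with $\tilde L_\delta$ essentially independent of $Q$. The map $Q\mapsto\overline{f(Q)}\NN(Q)^{i\tau}$ is a nontrivial multiplicative function precisely when $f\notin\mathcal{A}$. Lemma~\ref{multav0} then makes its average over the multiplicative F\o lner sequence $\Phi_M$ vanish (Proposition~\ref{zeroaveragesadeltabdelta}). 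Dominated convergence handles the $\sigma$-integral without any uniformity in $f$.

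Only the Archimedean characters $f=\NN^{it}$ survive. For them the phase collapses to $\NN\bigl(\frac{\ell(m+\alpha n)(m+\beta n)}{\ell' mn}\bigr)^{it}$. On the support of $w_\delta$, and with the truncation $|t|\le T_0$, this is close to $1$. That gives a lower bound $\rho>0$, uniform in $Q$, coming from $\sigma(\{1\})>0$ (Proposition~\ref{fffss}). Positivity of the $Q$-average then produces a good $Q$, which is what Theorem~\ref{thirdreduction}, and hence the statement, requires.
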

\begin{remark}
The limits in \eqref{multav1} exist thanks to \cite[Theorem~1.12]{S23}, which differs from the situation in the $\N$ case, because of the irreducibility of one of the quadratic forms in $m$ and $n$.
\end{remark}

\subsection{A further break down for Theorem \ref{secondreduction}}\label{s23}

The next part of our proof strategy differs from the path taken in \cite{S23}. We do not attempt to use a decomposition result that works for all elements of $\mathcal{M}$ simultaneously (which, as we discussed, would not cover the case of Pythagorean pairs), but instead divide $\mathcal{M}$ into its aperiodic and pretentious parts. In order to ensure that the splitting is a truly disjoint union, we use the analog of Hal\'asz's theorem for number fields we mentioned: Theorem~\ref{C22intro}. 

We can then use linear concentration estimates that we develop in Section~\ref{SECconcest} to deal with the pretentious part, and results from \cite{S23} to deal with the aperiodic part (which will vanish).

In order to prove Theorem~\ref{secondreduction}, we will take the averages over the grid
\[
\{ (Qm+1, Qn) : m, n \in \mathcal{O}_K^{\times}\},
\]
for some $Q \in \OK$ that will be appropriately chosen later, depending only on the measure $\sigma$. Since we are only concerned with positivity, going along this grid is enough to establish \eqref{multav1}. Now, let $\ell,\ell' \in \mathcal{O}_K^{\times}$ and $\alpha,\beta \in \OK$ be fixed. For $\delta>0$, $f \in \mathcal{M}$ and $Q, m, n \in \mathcal{O}_K^{\times}$, we put
\begin{equation}
A_{\delta}(f,Q; m,n):= w_{\delta}(m,n)\cdot  f\Bigl(\frac{\ell(Qm+1+Q\alpha n)(Qm+1+Q\beta n)}{\ell' (Qm+1)Qn}\Bigr),
\end{equation}
where $w_{\delta}: (\mathcal{O}_K^{\times})^2 \to [0,1]$ is the weight defined in Lemma~\ref{weights} below which is supported on $S_{q}$. Since $0 \leq w_{\delta}\leq 1 $, and we also have the positivity property \eqref{positivitymeasures}, Theorem~\ref{secondreduction} will follow from the following.

\begin{theorem}\label{thirdreduction}
Let $\sigma$ be a Borel probability measure on $\mathcal{M}$ such that $\sigma(\{1\})>0$. Then, there exist $\delta>0$ and $Q \in \mathcal{O}_K^{\times}$ (depending only on $\sigma$) such that
\begin{equation}\label{adeltapos}
\lim_{N \to \infty} \E_{\NN(n),\NN(m) \leq N} \int_{\mathcal{M}} A_{\delta}(f,Q; m,n) \ d\sigma(f)>0.
\end{equation}
\end{theorem}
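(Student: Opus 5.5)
We follow the splitting strategy of \cite{FKM1}. We partition $\mathcal{M}=\mathcal{M}_{\rm ap}\sqcup\mathcal{M}_{\rm pr}$ into aperiodic and pretentious functions. Theorem~\ref{characterization} guarantees that this is indeed a partition of $\mathcal{M}$, and Theorem~\ref{C22intro}, applied to the extensions $\tilde f$ of $f\bar\chi\NN^{-i\tau}$, shows that the two classes are disjoint and Borel. We then write the integral in \eqref{adeltapos} as the sum of the integral over $\mathcal{M}_{\rm ap}$ and the integral over $\mathcal{M}_{\rm pr}$. The trivial character $1$ lies in $\mathcal{M}_{\rm pr}$ and carries mass $\sigma(\{1\})>0$.

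\textbf{Aperiodic part.} For each fixed $f\in\mathcal{M}_{\rm ap}$, $\delta$ and $Q$, we aim to show that
\[
\lim_{N\to\infty}\E_{\NN(m),\NN(n)\le N}A_\delta(f,Q;m,n)=0.
\]
Expanding $A_\delta$, we get a correlation of $f$ evaluated at the linear forms $Qm+1+Q\alpha n$, $Qm+1+Q\beta n$, $Qm+1$, $Qn$, twisted by the weight $w_\delta$. The weight $w_\delta$ from Lemma~\ref{weights} should be a finite combination of products of additive characters and indicators of grids, so it can be absorbed into the linear forms. Since the forms are pairwise independent, the generalized von Neumann estimate from \cite{S23} bounds such averages by the Gowers uniformity norms of $f$ along progressions. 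These norms vanish for aperiodic $f$ when the averages are taken over balls; we will use the ball/box comparison from the Appendix here. Dominated convergence then shows that the contribution of $\mathcal{M}_{\rm ap}$ to \eqref{adeltapos} is zero.

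\textbf{Pretentious part.} This is the main work. For a pretentious $f$ we choose $Q$ divisible by the periods of the modified characters involved and by all prime ideals of norm at most $M$. With this choice, $\chi$ is constant on the progressions $Qu+1$, so the character twist essentially cancels in the ratio defining $A_\delta$. The concentration estimate Theorem~\ref{p25}, derived from the Tur\'an–Kubilius inequality Theorem~\ref{TK} applied to $h=\log$ of the extension, shows that for most $u$,
\[
\tilde f'((Qu+1))\approx \exp\bigl(A_{Q,N}\bigr)\,\NN(Qu+1)^{i\tau}.
\]
The leading constant $\exp(A_{Q,N})$ depends only on the progression and cancels between the numerator and the denominator. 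For the factor $Qn$, which is not coprime to $Q$, we must handle the $Q$-part of $n$ separately. The $\NN(\cdot)^{i\tau}$ contributions combine into
\[
\NN\!\left(\frac{\ell(Qm+1+Q\alpha n)(Qm+1+Q\beta n)}{\ell'(Qm+1)Qn}\right)^{i\tau}.
\]
This quantity is close to $1$ on a set of $(m,n)$ of density comparable to $\delta$. The weight $w_\delta$ is designed to localize to exactly such a set, for instance by restricting $n$ to a small multiplicative neighbourhood of a suitable multiple of $m$.

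Consequently $\operatorname{Re}A_\delta(f,Q;m,n)$ is at least about $c\delta$ on average for all $f$ in a set of pretentious functions with $\sigma$-measure arbitrarily close to $\sigma(\mathcal{M}_{\rm pr})$. On the remaining set, which has small measure, we use only $|A_\delta|\le 1$. To make $Q$ uniform in $f$, we take $\varepsilon>0$ and choose finitely many characters $\chi$ and a bounded range of $\tau$ and of $\D(\tilde f',1)$ that together capture a proportion $1-\varepsilon$ of the $\sigma$-mass of $\mathcal{M}_{\rm pr}$; then $Q$ and $M$ are chosen to work for all of them simultaneously. If $\varepsilon$ is small relative to $\delta\,\sigma(\{1\})$, the pretentious integral is positive, which proves \eqref{adeltapos}.

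The main obstacle is the pretentious step. The concentration must hold uniformly over a large-measure family of $f$, the error term $C_N$ in Theorem~\ref{TK} has to be controlled, and the ideal-class issue (working with extensions $\tilde f'$ rather than with $f$ itself) has to be handled. We would first attempt this using Theorem~\ref{p25} with $h(\mathfrak p^k)=\tilde f'(\mathfrak p^k)-1$, restricted to primes with $\NN(\mathfrak p)>M$.
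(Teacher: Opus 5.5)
\paragraph{Aperiodic step.} This step agrees in spirit with Proposition~\ref{aperiodiclimit0}, with one correction. The weight $w_\delta$ is not built from additive characters and grid indicators: it is $F_\delta$ applied to $\NN(\cdot)^{i}$ of a ratio of the linear forms. The paper approximates $F_\delta$ by powers $z^k$ (Stone--Weierstrass). The weight then becomes an Archimedean twist, absorbed into $f_k=f\cdot\NN(\cdot)^{ik}$, which is still aperiodic.

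\paragraph{The gap: the pretentious step.} You claim that for a fixed, suitably divisible $Q$ one has $\operatorname{Re}\E_{m,n}A_\delta(f,Q;m,n)\gtrsim c\delta$ for $\sigma$-most pretentious $f$. This is false. Apply Proposition~\ref{p25} (with $a=1$) to $Q(m+\alpha n)+1$, $Q(m+\beta n)+1$ and $Qm+1$. Two of these sit in the numerator and one in the denominator, so one factor $\exp(F)$ survives. Worse, the fourth factor $\overline{f(Qn)}=\overline{f(Q)}\,\overline{f(n)}$ is not of the form $Qu+1$ and is not concentrated. What you actually get is
\[
\lim_{N\to\infty}\E_{\NN(m),\NN(n)\le N}A_\delta(f,Q;m,n)=c_f\,\overline{f(Q)}\,\NN(Q)^{i\tau}+O(\varepsilon).
\]
Here $c_f$ collects $f(\ell)\overline{f(\ell')}$, $e^{F}$ and an average of $w_\delta(m,n)\overline{f(n)}$ times an Archimedean factor. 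It depends on $Q$ only through its prime support, but its argument is arbitrary. The phase $\overline{f(Q)}\NN(Q)^{i\tau}$ is not controlled by any divisibility condition on $Q$.

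A concrete instance: take $f(u)=(-1)^{v_{\mathfrak p_0}(u)}$, where $v_{\mathfrak p_0}$ is the $\mathfrak p_0$-adic valuation, $\mathfrak p_0\nmid\ell\ell'$, and $P=\NN(\mathfrak p_0)$. This $f$ is pretentious with $\chi=1$, $\tau=0$. If $\mathfrak p_0\mid Q$, then $A_\delta(f,Q;m,n)=(-1)^{v_{\mathfrak p_0}(Q)}\,w_\delta(m,n)\,(-1)^{v_{\mathfrak p_0}(n)}$. Its average tends to $(-1)^{v_{\mathfrak p_0}(Q)}\mu_\delta\frac{P-1}{P+1}$, where $\mu_\delta:=\lim_N\E_{m,n}w_\delta(m,n)$. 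Now let $\sigma$ give mass $a<\frac{P-1}{2P}$ to the constant function $1$ and mass $1-a$ to $f$. Then the integral in \eqref{adeltapos} is negative for every $Q$ meeting your conditions with $v_{\mathfrak p_0}(Q)$ odd. When $\sigma$ is spread over many such $f$, no single $Q$ makes all the phases $\overline{f(Q)}\NN(Q)^{i\tau}$ close to $1$.

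\paragraph{The missing idea.} This is how the paper proceeds. Split $\mathcal M_p$ further into the Archimedean characters $\mathcal A$ and $\mathcal M_p\setminus\mathcal A$, and \emph{average} over $Q$ in the F{\o}lner sets $\Phi_M$ of \eqref{folnerMult} instead of fixing $Q$.
\begin{enumerate}
\item All $Q\in\Phi_M$ have the same prime support, so $c_f$ is the same for all of them up to $O(\varepsilon)$. Lemma~\ref{multav0} then gives $\E_{Q\in\Phi_M}\overline{f(Q)}\NN(Q)^{i\tau}\to0$ for $f\notin\mathcal A$ (Proposition~\ref{zeroaveragesadeltabdelta}). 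Thus non-Archimedean pretentious functions contribute nothing on average, rather than something positive.
\item Positivity comes only from $\mathcal A\ni 1$. There $A_\delta(f,Q;m,n)\approx w_\delta(m,n)\NN\Bigl(\frac{\ell(m+\alpha n)(m+\beta n)}{\ell' mn}\Bigr)^{i\tau}$ uniformly in $Q$, and $\delta$ is tuned to $|\tau|\le T_0$ (Proposition~\ref{fffss}).
\item Together with the aperiodic part this gives $\liminf_M\E_{Q\in\Phi_M}\lim_N\E_{m,n}\int_{\mathcal M}A_\delta\,d\sigma>0$, hence some good $Q$.
\end{enumerate}
In the Archimedean step, bound the exceptional set by $|A_\delta|\le w_\delta$, not $|A_\delta|\le 1$. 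Otherwise your parameters are circular: $\varepsilon$ fixes the $\tau$-range $T$, $T$ fixes $\delta$, yet you need $\varepsilon\ll\delta\,\sigma(\{1\})$. With $w_\delta$, both the main term and the error are multiples of $\mu_\delta$.
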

In order to analyse the limit in \eqref{adeltapos}, we begin by looking into the case where $f$ is aperiodic (the precise definition will be given later in Section~\ref{SEC: numthybackg}). 

\begin{proposition}\label{aperiodiclimit0}
Let $f: \mathcal{O}_K^{\times} \to \U$ be an aperiodic completely multiplicative function. Then, for every $\delta>0$ and $Q \in \mathcal{O}_K^{\times}$ we have
\begin{equation}
\lim_{N \to \infty} \E_{\NN(n),\NN(m) \leq N} A_{\delta}(f,Q; m,n)=0.
\end{equation}
\end{proposition}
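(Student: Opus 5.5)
Since $f$ is completely multiplicative and $f(\e)\overline{f(\e)}=1$ for $f$ with values in the unit circle (for $\U$-valued $f$ we interpret the quotient as $f(\cdot)\overline{f(\cdot)}$), I will rewrite
\[
A_{\delta}(f,Q;m,n)=w_{\delta}(m,n)\, f(\ell)\overline{f(\ell')}\, f(Qm+1+Q\alpha n)\, f(Qm+1+Q\beta n)\,\overline{f(Qm+1)}\,\overline{f(Q n)}.
\]
This is a product of $f$ and $\overline f$ evaluated at four pairwise independent linear forms in $(m,n)$ over $\OK$, multiplied by the weight $w_\delta$. The constants $f(\ell)\overline{f(\ell')}$ are harmless. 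The plan is to show that the weighted average of such a multilinear expression is controlled by a Gowers-type uniformity norm of $f$, or more precisely of its restrictions to residue classes. Then I will use that aperiodic multiplicative functions on $\OK$ have vanishing Gowers norms for ball averages. The paper asserts this in the Remark after Definition~\ref{aperiodicdef} and the Appendix, building on \cite{S23}.

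\textbf{Step 1: remove the weight.} The weight $w_\delta$ from Lemma~\ref{weights} is presumably a finite combination of indicators of nice regions, such as balls, sectors, or norm conditions on the linear forms, or a function that can be approximated uniformly by such combinations. I will approximate $w_\delta$ in $L^1$ by a finite linear combination of products of indicator functions of convex regions and additive characters $e(\theta\cdot(m,n))$ on $\Z^4\cong\OK^2$. Up to an error $\varepsilon$, the average then splits into finitely many unweighted averages over convex regions $\Omega_N\subset\{\NN(m),\NN(n)\le N\}$, twisted by linear phases.

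\textbf{Step 2: generalized von Neumann.} I will view $\OK^2$ as $\Z^4$. The maps $(m,n)\mapsto Qm+1+Q\alpha n$ and the others are affine maps $\Z^4\to\Z^2$ that are pairwise non-proportional. This is where the pairwise linear independence of $L_1,\dots,L_4$ is used; after the reductions it also requires $\alpha\ne\beta$ and $\alpha,\beta\neq 0$. The linear phase can be absorbed into one of the functions. Repeated Cauchy--Schwarz (PET / generalized von Neumann) then bounds each averaged term by
\[
\|f\cdot\mathbf 1_{P}\|_{U^s(\text{ball of radius }\sqrt{CN})}
\]
for some $s\le 3$ and residue classes $P\in\mathcal{AP}[\tau_d]$ modulo $Q$, up to $o(1)$. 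The shifts by $1$ and the dilations by $Q$ turn into restrictions of $f$ to grids, and the $Q$ factor in $f(Qn)=f(Q)f(n)$ is constant.

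\textbf{Step 3: vanishing of Gowers norms.} Finally I need $\|f\mathbf 1_P\|_{U^s}\to0$ for aperiodic $f$ along ball averages. I expect this to be the main obstacle. The inverse theorem reduces it to showing that $f$ has vanishing correlation with nilsequences on balls in $\Z^2$. For multiplicative $f$, this follows from aperiodicity by the number field Kátai/Bourgain--Sarnak--Ziegler orthogonality argument of \cite{S23}, which is stated there for boxes. The missing point is the comparison between box and ball averaging. Following the Appendix, I will approximate a ball by a union of boxes of side $\varepsilon\sqrt N$, and approximate an aperiodic ball-average function by box averages along grids on these small boxes. The key point is that the Gowers norm estimate of \cite{S23} holds uniformly for boxes of comparable scale. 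With that in hand, letting $\varepsilon\to0$ completes the proof.
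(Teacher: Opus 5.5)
\textbf{Verdict: Step 3 has a genuine gap, and the claim it relies on is false.} Steps 1 and 2 are reasonable. Step 1 differs from the paper, which uses Stone--Weierstrass to replace $F_\delta$ by powers $z^k$, so that the weight becomes the twist $\NN(\cdot)^{ik}$ and is absorbed into $f$. Your $L^1$ approximation by convex cut-offs also works, because $w_\delta$ is homogeneous of degree $0$ and, after rescaling by $\sqrt N$, is Riemann integrable on $B\times B$.

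The problem is Step 3. It is \emph{not} true that a completely multiplicative $f$ which is aperiodic in the sense of Definition~\ref{aperiodicdef} has vanishing Gowers norms on balls. Let $T$ be the number of units of $\OK$ and put $f(u):=u^{T}\NN(u)^{-T/2}$, i.e.\ $f(u)=(u/|u|)^{T}$; for $d\neq 1,3$ this is $f(u)=u^2/\NN(u)$.
\begin{itemize}
\item This $f$ is completely multiplicative, $\S$-valued and trivial on units.
\item For every $P\in\mathcal{AP}[\tau_d]$, the averages $\E_{u\in P,\NN(u)\le N}f(u)$ are Riemann sums for $\frac1\pi\int_{|z|\le 1}(z/|z|)^{T}\,dz=0$. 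So $f$ is aperiodic.
\item However, $u\mapsto f(u)\mathds{1}_{\NN(u)\le N}$ is the sampling at scale $\sqrt N$ of a fixed nonzero function that is continuous off the origin. It therefore has a Fourier coefficient at a bounded frequency that stays bounded away from $0$, so its $U^2$ (hence $U^3$) norm does not tend to $0$.
\end{itemize}
Your small-box reduction fails at exactly this point. On a box of side $\varepsilon\sqrt N$ centred at $z_0$ with $|z_0|\asymp\sqrt N$, $f$ is within $O(\varepsilon)$ of the constant $(z_0/|z_0|)^{T}$. Ball-aperiodicity gives no control of grid averages over such off-centre boxes, and that control is the input \cite{S23} would need on each piece. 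For the same reason, the claim that aperiodicity forces vanishing correlation with nilsequences on balls is false: this $f$ correlates with a low-frequency linear phase.

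\textbf{The gap is shared with the paper, and the statement itself fails.} The paper's proof also reduces the statement to Proposition~\ref{P: appendix}, which is the same uniformity claim, and the $f$ above contradicts it as well. For this $f$, outside a set of $(m,n)$ of arbitrarily small density, the shifts by $1$ and the factors $Q$ change the arguments of all four linear factors by $o(1)$. Hence $\E_{\NN(m),\NN(n)\le N}A_\delta(f,Q;m,n)$ converges to $f(\ell/\ell')$ times the normalized integral $\iint_{B\times B}G_\delta(w,z)\,(h/|h|)^{T}\,dw\,dz$ from the proof of Lemma~\ref{weights}, where $h=(w+\alpha z)(w+\beta z)/(wz)$. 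This limit need not vanish. Take $d\notin\{1,3\}$ and the Pythagorean data $\ell=1$, $\ell'=2$, $\alpha=-1$, $\beta=1$, and write $\zeta=w/z$, so $h=\zeta-\zeta^{-1}$.
\begin{itemize}
\item The points $\zeta=\pm i$ are nondegenerate critical points of $h$, with $\NN(h/2)=1$ and $(h/|h|)^2=-1$ there.
\item So $\NN(h/2)$ has saddles at level $1$, and the region near them where $G_\delta=1$ has measure $\asymp\delta\log(1/\delta)$; on it, $(h/|h|)^2$ is close to $-1$.
\item The support of $G_\delta$ away from these points has measure $O(\delta)$.
\end{itemize}
The limit is therefore nonzero for small $\delta$, which is the range the paper needs later.

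\textbf{Missing idea.} Hecke characters with an angular component, $u\mapsto\NN(u)^{i\tau}(u/|u|)^{k}$, must be removed from the ``aperiodic'' class and treated on the structured side. One option is to define aperiodicity through averages over sectors or arbitrary convex sets, where a Gowers-uniformity statement can hold. They would then need to be handled much as $w_\delta$ handles the Archimedean characters.
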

Proposition \ref{aperiodiclimit0} is an analog of \cite[Proposition~4.1]{FKM1}.
The proof of Proposition \ref{aperiodiclimit0} is based on Proposition~\ref{P: appendix}, a variation of which was essentially proved in \cite{S23}. We postpone the details until Section \ref{sec: proofmain}.

We now turn our attention to the complement of the aperiodic completely multiplicative functions. As we will deduce from Theorem~\ref{C22intro}, these are exactly the pretentious completely multiplicative functions, so we introduce the following notation:
\begin{equation}
\mathcal{M}_p:=\{ f: \mathcal{O}_K^{\times} \to \S^1 : f \text{ is a pretentious completely multiplicative function}\}.
\end{equation}
Lemma~\ref{measurability} establishes that $\mathcal{M}_p$ is a Borel measurable subset of $\mathcal{M}$. Thus, by Proposition~\ref{aperiodiclimit0} and the dominated convergence theorem, we see that Theorem~\ref{thirdreduction} will follow if we find $\delta>0$ and $Q \in \mathcal{O}_K^{\times}$ such that the analog of \eqref{adeltapos} holds, replacing $\mathcal{M}$ with $\mathcal{M}_p$, i.e.,
\begin{equation}\label{adeltaposPret}
\lim_{N \to \infty} \E_{\NN(n),\NN(m) \leq N} \int_{\mathcal{M}_p} A_{\delta}(f,Q;m,n) \ d\sigma(f)>0.
\end{equation}

When $f$ is pretentious, it exhibits periodicity, which can be exploited with a suitable choice of $Q$. This simplifies matters considerably for the integrand that appears in \eqref{adeltaposPret}. In order to take advantage of this periodicity, we develop, in Section~\ref{SECconcest} a concentration estimate in Proposition~\ref{p25}. We do not restate it here, as it is fairly long and it requires introducing notation and technical terms that we will see later.
We mention in passing that we will use it mostly for the value $a=1$, and that it will be relevant for us that the implicit constant in the statement is independent of $K$; as well as having independence of the function $F_N(f,k)$ from $Q$ (all these terms will be introduced later).

To establish \eqref{adeltaposPret}, we further split the integral into two parts. On the one hand, we consider the multiplicative functions that are not purely Archimedean characters $(\NN(u)^{i\tau})_{u \in \mathcal{O}_K^{\times}}$, $\tau \in \R$, where the concentration estimate in Proposition~\ref{p25} allows us to show that their contribution is essentially non-negative if $Q$ is highly divisible. The other part is supported on Archimedean characters $\mathcal{A}$, which we define in \eqref{archimedean} below. Using the fact that $\sigma(\{1\})>0$ and for some $\delta>0$ small enough, the weight $w_{\delta}$ nullifies the effect of non-trivial Archimedean characters. 

To carry this out we will make good use of the properties of the multiplicative F\o lner sequence defined in \eqref{folnerMult} below. On a first reading, one can think of it as a suitable analog of the multiplicative F\o lner sequence
\[
\Psi_K:=\{ p_1^{a_1}\dots p_k^{a_k} : K \leq a_i \leq 2K, i=1,\dots,K\}
\]
in the integers.
Let
\begin{equation}\label{archimedean}
\mathcal{A}:= \left\{ (\NN(u)^{it})_{u \in \mathcal{O}_K^{\times}} : t \in \R\right\}.
\end{equation}
We will later show (it will follow from the proof of Proposition~\ref{fffss} given in Section~\ref{sec: proofmain}) that $\mathcal{A}$ is a Borel measurable subset of $\mathcal{M}$, but assuming that this is the case for now, the next step is to use the concentration estimate in Proposition~\ref{p25} to obtain the following result.

\begin{proposition}\label{zeroaveragesadeltabdelta}
Let $f \in \mathcal{M}_p \setminus \mathcal{A}$, $\delta>0$, $\ell,\ell' \in \mathcal{O}_K^{\times}$, and $\Phi_M$ as in \eqref{folnerMult}. Then,
\[
\lim_{M \to \infty} \E_{Q \in \Phi_M} \lim_{N \to \infty} \E_{\NN(n),\NN(m) \leq N} A_{\delta}(f,Q; m,n)=0.
\]
\end{proposition}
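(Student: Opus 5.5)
We follow the strategy of \cite[Proposition~4.3]{FKM1}, adapted to $\OK$. Fix $f\in\mathcal{M}_p\setminus\mathcal{A}$. By definition there are a modified Dirichlet character $\chi$ with some period $I$, a real $\tau$, and an extension $\tilde{f'}$ of $f'=f\overline{\chi}\NN^{-i\tau}$ with $\D(\tilde{f'},1)<\infty$.

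The first step handles the inner limit for fixed $Q$ using the concentration estimate Proposition~\ref{p25}. Applied with $a=1$ to the progression $Qm+1$, and similarly to $Qm+1+Q\alpha n$ and $Qm+1+Q\beta n$ (for fixed $n$, or jointly in $(m,n)$), it should say that for most $m$ we have $f(Qu+1)\approx \NN(Qu)^{i\tau}\,\exp(F_N(f,Q))$. Here $F_N$ is the sum over primes $\mf p\nmid Q$ of $(\tilde{f'}(\mf p)-1)/\NN(\mf p)$ up to norm $N$. The error is controlled by $\D$ restricted to primes of norm $>M$, which tends to $0$ as $Q$ becomes divisible by all small primes. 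Since $Q\in\Phi_M$ is divisible by every prime ideal of norm $\le M$ to high powers, the character $\chi$ evaluated at $Qm+1$ is $1$ once $I\mid Q$. The main terms in numerator and denominator then cancel: they give $\exp(F)\exp(F)/\exp(F)=\exp(F)$, against the $\exp(F)$ coming from $Qm+1$. What remains from the three ``$+1$'' linear factors is an Archimedean factor $\NN\bigl((m+\alpha n)(m+\beta n)/m\bigr)^{i\tau}$ times a bounded smooth factor.

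The surviving term is therefore
\[
f\Bigl(\frac{\ell}{\ell' Q n}\Bigr)\cdot \NN\Bigl(\frac{(Qm+1+Q\alpha n)(Qm+1+Q\beta n)}{Qm+1}\Bigr)^{i\tau} w_\delta(m,n),
\]
up to $o_{M\to\infty}(1)$. Its $(m,n)$ average is bounded in modulus by a constant, uniformly in $Q$, times the factor $f(Q)^{-1}$ coming from $\overline{f}(Qn)$. Indeed, $f(Qn)=f(Q)f(n)$ and the $n$-dependence is absorbed into the inner average, which no longer depends on $Q$ except through $o(1)$ terms. It thus suffices to show
\[
\lim_{M\to\infty}\E_{Q\in\Phi_M}\overline{f}(Q)\,c(Q)=0
\]
for a bounded coefficient $c(Q)$ that is asymptotically independent of $Q$.

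The second step is the F\o lner averaging. By the multiplicative invariance of $\Phi_M$ from \eqref{folnerMult}, and since $f$ is a completely multiplicative character of $(\OK^{\times},\times)$ that is not identically $1$, we get $\E_{Q\in\Phi_M}\overline{f}(Q)\to0$. This uses the mean ergodic theorem for the unitary action $Q\mapsto f(Q)$: take $x$ with $f(x)\ne1$, and use $|\Phi_M\triangle x\Phi_M|=o(|\Phi_M|)$.

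We expect the main obstacle to be precisely why $f\notin\mathcal{A}$, rather than $f\neq1$, is needed. For $f=\NN^{it}$, the Archimedean factor coming from $Qn$ recombines with the $Qm+1$ factors so that $Q$ cancels: $\NN(Q)^{it}$ appears in both numerator and denominator. We must check that in the pretentious, non-Archimedean case the $Q$-dependence genuinely survives as $\overline{f'\chi}(Q)$ up to terms independent of $Q$. This comes down to showing that $f\overline{\NN^{i\tau}}$ restricted to $\Phi_M$ is not asymptotically trivial. For that we would use that $\Phi_M$ is built from high powers of many primes and that $f\overline{\NN}^{-i\tau}\neq1$ at some prime, or on units or classes.

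Another delicate point is making the concentration estimate uniform in $Q$ in the sense required by Proposition~\ref{p25}: the function $F_N(f,k)$ must be independent of $Q$. We also need the weight $w_\delta$ to allow replacing $\NN(Qm+1)$ by $\NN(Qm)$ with small error.
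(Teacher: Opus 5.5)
Your route is the paper's: Proposition~\ref{p25} with $a=1$ for $Q\in\Phi_M\subseteq I$ (so $\chi(Qm+1)=1$), transfer to the linear forms, factor out the $Q$-dependence, and kill it with the F\o lner averaging of Lemma~\ref{multav0}. However, the bookkeeping of the $Q$-dependence in your first step is wrong, and as written your second step proves too much.

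The Archimedean factor you keep, $\NN\bigl(\frac{(Qm+1+Q\alpha n)(Qm+1+Q\beta n)}{Qm+1}\bigr)^{i\tau}$, equals $\NN(Q)^{i\tau}\NN\bigl(\frac{(m+\alpha n)(m+\beta n)}{m}\bigr)^{i\tau}$ up to an error that vanishes on average. So your coefficient $c(Q)$ is \emph{not} asymptotically independent of $Q$. The inner limit has the form $\overline{f}(Q)\NN(Q)^{i\tau}\,\tilde L_\delta(Q)$, where $\tilde L_\delta(Q)$ varies only by $O(\varepsilon)$ over $Q\in\Phi_M$ once $M\ge M_0(\varepsilon)$. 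Averaging $\overline f(Q)$ alone and using only $f\neq 1$ would equally ``prove'' the statement for $f=\NN^{it}$ with $t\neq 0$. There the $Q$-dependence cancels completely and the limit is in general nonzero, which is why $\mathcal A$ is handled separately in Proposition~\ref{fffss}.

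You diagnose this in your last paragraph, but it is not an obstacle. Apply your F\o lner argument to $g:=\overline{f}\,\NN^{i\tau}$, which is completely multiplicative on $\OK^{\times}$. The condition $g\neq 1$ is literally the hypothesis $f\notin\mathcal A$, for the $\tau$ supplied by pretentiousness. Lemma~\ref{multav0} needs only one $v$ with $g(v)\neq 1$ plus the F\o lner property; nothing about high prime powers, units or ideal classes is required.

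Likewise, the $Q$-independence of the concentration centre is immediate. Every $Q\in\Phi_M$ is divisible by exactly the prime ideals of norm at most $M$, so the condition $\mf p\nmid Q$ is vacuous in the range $M<\NN(\mf p)\le N$. Hence $F(\tilde{f'},M,N;Q)$ is the same for all $Q\in\Phi_M$.

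Note also that the exponential factors do not cancel. What survives is $\exp\bigl(F(\tilde{f'},M,l_\alpha N;Q)+F(\tilde{f'},M,l_\beta N;Q)-F(\tilde{f'},M,N;Q)\bigr)$, at the different scales produced by Lemma~\ref{sequenceavfact}. This is harmless precisely because it does not depend on $Q$. It is what gives $|\tilde L_\delta(Q)-\tilde L_\delta(Q')|\ll\varepsilon$ for $Q,Q'\in\Phi_M$, after which you replace $\tilde L_\delta(Q)$ by $\tilde L_\delta(Q_M)$ and average $g$.

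Finally, do the transfer to the linear forms jointly in $(m,n)$ via Lemma~\ref{sequenceavfact}. For fixed $n$ the shift $Q\alpha n$ is as large as the range of $m$, so the one-variable estimate (which needs $\NN(a)\le\NN(Q)$) does not apply directly.
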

Proposition~\ref{zeroaveragesadeltabdelta} can be viewed as a variation of \cite[Lemma~2.6]{FKM1} 
and its detail is postponed to Section \ref{sec: proofmain}. The essential idea is to use the concentration estimate in Proposition~\ref{p25} to replace it by an expression of the form $C_{\ell,\ell'}\cdot g(Q)\cdot \NN(Q)^{i\tau}$, for some $C_{\ell,\ell'} \in \U$ and $\tau \in \R$. Moreover, we can have $g \notin \mathcal{A}$, so the remaining outer limit will give us convergence to $0$ as $M \to \infty$ using Lemma~\ref{multav0}, whose proof is also deferred to Section~\ref{sec: proofmain}.

With all this taken into consideration we can deduce, using the dominated convergence theorem twice (as the two relevant limits exist), we have the following.

\begin{corollary}
Let $(\Phi_M)$ and $\mathcal{A}$ be given by \eqref{folnerMult} and \eqref{archimedean} respectively. Let $\sigma$ be a Borel probability measure on $\mathcal{M}_p$. Then, for every $\delta>0$ we have
\[
\lim_{M \to \infty} \E_{Q \in \Phi_M} \lim_{N \to \infty} \E_{\NN(n),\NN(m) \leq N} \int_{\mathcal{M}_p \setminus \mathcal{A}} A_{\delta}(f,Q; m,n) \ d\sigma(f)=0. 
\]
\end{corollary}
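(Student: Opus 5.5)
The plan is to deduce the corollary from Proposition~\ref{zeroaveragesadeltabdelta} by two applications of the dominated convergence theorem, exchanging each limit with the integral over $\mathcal{M}_p\setminus\mathcal{A}$ in turn. First, measurability. By Lemma~\ref{measurability}, $\mathcal{M}_p$ is Borel, and we take for granted that $\mathcal{A}$ is Borel (as indicated, this comes out of the proof of Proposition~\ref{fffss}). For fixed $Q,m,n$, the map $f\mapsto A_\delta(f,Q;m,n)$ evaluates $f$ at a single element of $K^{\times}$ (extending $f$ to $K^{\times}$ by $f(u/v)=f(u)\overline{f(v)}$) and multiplies by a constant weight. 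Hence it is continuous in the product topology, therefore Borel, and bounded by $1$ in absolute value since $0\le w_\delta\le 1$ and $f$ is $\S^1$-valued.

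Next, the inner limit. For fixed $Q$ and $N$, Fubini lets us write
\[
\E_{\NN(n),\NN(m)\le N}\int_{\mathcal{M}_p\setminus\mathcal{A}}A_\delta(f,Q;m,n)\,d\sigma(f)=\int_{\mathcal{M}_p\setminus\mathcal{A}}\E_{\NN(n),\NN(m)\le N}A_\delta(f,Q;m,n)\,d\sigma(f),
\]
because the average is a finite sum. For each $f$, the limit $L(f,Q):=\lim_{N\to\infty}\E_{\NN(n),\NN(m)\le N}A_\delta(f,Q;m,n)$ exists. This follows from \cite[Theorem~1.12]{S23}, the same fact quoted in the remark after Theorem~\ref{secondreduction}: $A_\delta$ is a weight times a multiplicative function evaluated along products of linear forms, and the paper's standing assertion is that the relevant limits exist. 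The integrands are bounded by $1$ and $\sigma$ is a probability measure, so dominated convergence gives that the inner limit equals $\int_{\mathcal{M}_p\setminus\mathcal{A}}L(f,Q)\,d\sigma(f)$. As a byproduct, $f\mapsto L(f,Q)$ is measurable, being a pointwise limit of measurable functions.

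Finally, the outer limit. $\E_{Q\in\Phi_M}$ is again a finite average, so it commutes with the integral, and we need
\[
\lim_{M\to\infty}\int_{\mathcal{M}_p\setminus\mathcal{A}}\E_{Q\in\Phi_M}L(f,Q)\,d\sigma(f)=0 .
\]
The integrand is bounded by $1$, and for every $f\in\mathcal{M}_p\setminus\mathcal{A}$ it tends to $0$ by Proposition~\ref{zeroaveragesadeltabdelta}. A second application of dominated convergence finishes the proof.

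The only genuine obstacle is justifying that the pointwise limit $L(f,Q)$ exists for every $f$ and not merely for almost every $f$. I would rely on \cite[Theorem~1.12]{S23} for this, since it applies to every completely multiplicative $f$ and to weights $w_\delta$ of the type in Lemma~\ref{weights}. If that result turns out to cover only the unweighted averages, the fallback is to approximate $w_\delta$ uniformly by finite linear combinations of functions for which existence is known. Failing that, one can replace $\lim_N$ by $\limsup_N$ and $\liminf_N$ and apply Fatou's lemma to both the real and imaginary parts; this works provided Proposition~\ref{zeroaveragesadeltabdelta} gives the same pointwise vanishing for the $\limsup$ version.
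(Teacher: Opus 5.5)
Your proposal is correct and follows the paper's own route. The paper obtains the corollary from Proposition~\ref{zeroaveragesadeltabdelta} by applying the dominated convergence theorem twice: the integrands are bounded by $1$, $\sigma$ is a probability measure, the inner $N$-limit exists by \cite[Theorem~1.12]{S23}, and the outer $M$-limit vanishes pointwise on $\mathcal{M}_p\setminus\mathcal{A}$ by that proposition. One minor remark: dominated convergence only needs $L(f,Q)$ to exist for $\sigma$-almost every $f$, so the obstacle you flag is milder than you suggest.
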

Finally, only the functions from $\mathcal{A}$ remain. Here is where the weight $w_{\delta}$ helps with the positivity.

 \begin{proposition}\label{fffss}
Let $\sigma$ be a Borel probability measure on $\mathcal{M}$ such that $\sigma(\{1\})>0$ and $\mathcal{A}$ be as in \eqref{archimedean}. Then, there exist $\delta$ and $\rho$, depending only on $\sigma$, such that
\begin{equation}
\liminf_{N \to \infty} \inf_{Q \in \mathcal{O}_K^{\times}} \text{Re} \left( \E_{\NN(n),\NN(m) \leq N} \int_{\mathcal{A}} A_{\delta}(f,Q;m,n)\right) \geq \rho.
\end{equation}
\end{proposition}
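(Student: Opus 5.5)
We will parametrize $\mathcal{A}$ by $t\in\R$ via $f_t(u)=\NN(u)^{it}$. For $f=f_t$ the integrand factors explicitly, so everything reduces to analysing
\[
\E_{\NN(n),\NN(m)\leq N} w_\delta(m,n)\,\NN\Bigl(\frac{\ell(Qm+1+Q\alpha n)(Qm+1+Q\beta n)}{\ell'(Qm+1)Qn}\Bigr)^{it}.
\]
The key point is that the weight $w_\delta$ from Lemma~\ref{weights} is presumably designed to localize $(m,n)$ to a region where the ratio of norms inside the Archimedean character is close to a fixed constant.

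On the support of $w_\delta$ we want
\[
\NN\Bigl(\frac{(Qm+1+Q\alpha n)(Qm+1+Q\beta n)}{(Qm+1)Qn}\Bigr)\in[1-\delta',1+\delta']\cdot c
\]
for some constant $c$ independent of $Q$. For instance, restrict to $m,n$ with $m/n$ in a small sector near a fixed direction $\zeta$ with $(\zeta+\alpha)(\zeta+\beta)/\zeta$ of norm $1$. Then, for large norms, $Qm+1\approx Qm$, so the ratio is $\approx (m+\alpha n)(m+\beta n)/(mn)$. This expression is homogeneous of degree $0$, so its dependence on $Q$ disappears up to an error $O(1/\NN(Qm)^{1/2})$, uniform in $Q$ since $\NN(Q)\geq 1$. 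Accordingly, $\NN(\cdot)^{it}=\NN(\ell/\ell')^{it}c^{it}e^{it\,O(\delta')}$, with the phase error tending to zero in $\delta'$ uniformly for $|t|\leq T$. We also need the weighted average $\E w_\delta$ to converge to a positive constant $c_\delta$ that depends only on $\delta$ (the measure of the sector region), uniformly in $Q$.

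We split $\mathcal{A}$ into $\{1\}$, the set $\{0<|t|\leq T\}$, and the set $\{|t|>T\}$. The atom at $t=0$ contributes $\sigma(\{1\})c_\delta$. For $|t|>T$, the average $\E w_\delta\,\NN(\cdot)^{it}$ is an oscillatory integral, and its limsup in $N$ is $O(1/(\delta |t|))$ by integration by parts in the radial variable. Alternatively, choose $T$ so that $\sigma(\{|t|>T\})$ is small, since $\sigma$ is finite. For $0<|t|\leq T$, choose $\delta$ tiny so the phase is nearly constant, equal to $e^{it\log c'}$, where $c'=c\,\NN(\ell/\ell')$.

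The main obstacle is that this phase need not have positive real part, and $\sigma$ may put mass near $t$ with $t\log c'\approx\pi$. I would handle this by choosing the sector so that $c'=1$, that is, by choosing $\zeta$ with $\NN(\ell(\zeta+\alpha)(\zeta+\beta)/(\ell'\zeta))=1$. Such $\zeta$ exists by continuity: the norm tends to $0$ near $\zeta=-\alpha$ and to $\infty$ as $\zeta\to\infty$, and $\C$ minus finitely many points is connected. Then the contribution of $0<|t|\leq T$ has real part at least $(1-\epsilon)c_\delta\,\sigma(\{0<|t|\leq T\})\geq 0$.

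Putting the pieces together, with $T$ chosen so that $\sigma(\{|t|>T\})\leq\sigma(\{1\})/4$, then $\delta$ small, and then $\rho:=c_\delta\sigma(\{1\})/2$, we get the claim. Uniformity in $Q$ follows because all errors depend only on $\NN(Qm)\geq\NN(m)$. Measurability of $\mathcal{A}$ follows since $t\mapsto f_t$ is a continuous injection from $\R$ (modulo periods, if any) into $\mathcal{M}$. The remaining technical verification is that $w_\delta$ as constructed in Lemma~\ref{weights} indeed realizes this sector localization; if it does not, I would modify the weight accordingly, since it only needs to be supported on $S_q$ and take values in $[0,1]$.
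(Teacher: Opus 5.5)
Your outline matches the paper's proof. You truncate to $\mathcal{A}_T$ and bound the tail trivially via $|A_\delta|\le w_\delta$. You replace $Qm+1$ by $Qm$ uniformly in $Q$ (this is \eqref{e78}) and use degree-$0$ homogeneity to remove $Q$. Finally you use the localization built into $w_\delta$ to make $f$ of the ratio close to $1$ for $f\in\mathcal{A}_T$ (this is \eqref{e79}).

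The verification you deferred, however, fails for the weight of Lemma~\ref{weights}. Write $R(m,n):=\frac{\ell(m+\alpha n)(m+\beta n)}{\ell' mn}$. The factor $F_\delta(\NN(R)^i)$ only forces $\log\NN(R)\in 2\pi\Z+(-2\pi\delta,2\pi\delta)$. Since $\NN(R)$ takes every value in $(0,\infty)$ as $m/n$ varies, the support contains neighbourhoods of directions with $\NN(R)=e^{2\pi k}$ for every $k\in\Z$; the proof of Lemma~\ref{weights} itself picks such a $k\in\N$. There $\NN(R)^{i\tau}\approx e^{2\pi i k\tau}$, which is exactly your $c'\neq 1$ obstacle. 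For $\tau=1/2$ the phase is $\approx(-1)^k$, so mass of $\sigma$ at $\NN(u)^{i/2}$ can cancel the atom at $1$.

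The paper's \eqref{e79} glosses over the same point. Its claim that $f(R)\approx 1$ on the support of $w_\delta$ for all $f\in\mathcal{A}_{T_0}$ holds only on the $k=0$ part. Moreover, an error that is small in absolute terms as $\delta\to 0$ does not help, because the main term $\sigma(\{1\})\mu_\delta$ also tends to $0$ with $\delta$. What is needed is an error $o(\mu_\delta)$, which is what your relative bound $(1-\epsilon)c_\delta$ provides. So with $w_\delta$ as literally defined, neither your argument nor the paper's establishes the proposition.

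Your fallback is therefore not optional, and with it your proof is correct. Take, for instance, $w_\delta(m,n):=G_\delta(\log\NN(R(m,n)))\,\mathds{1}_{S_q}(m,n)$, where $G_\delta\colon\R\to[0,1]$ is a trapezoid equal to $1$ on $[-\pi\delta,\pi\delta]$ and vanishing outside $(-2\pi\delta,2\pi\delta)$. Then each step of your argument goes through:
\begin{itemize}
\item Your intermediate value argument gives $\zeta$ with $\NN(R(\zeta,1))=1$, hence $\mu_\delta>0$.
\item The directions $m/n$ in the support lie in a compact subset of $\C\setminus\{0,-\alpha,-\beta\}$, so your $Q$-uniform $O(\NN(m)^{-1/2})$ error for replacing $Qm+1$ by $Qm$ is legitimate.
\item For $|t|\le T$ you have $|\NN(R)^{it}-1|\le 2\pi T\delta$ on the support.
\end{itemize}

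Prefer such a function of $\NN(R)$ to a sector cutoff in $m/n$, because the weight must also survive Proposition~\ref{aperiodiclimit0}. There the paper expands $F_\delta$ into powers $\NN(R)^{ik}$ and absorbs them into $f$. For $G_\delta(\log\NN(R))$ the same works through $G_\delta(x)=\int\widehat{G_\delta}(\xi)e^{i\xi x}\,d\xi$ together with dominated convergence. This twists $f$ by $\NN^{i\xi}$, which preserves aperiodicity exactly as for $f_k$ there. A cutoff in the direction of $(m,n)$, by contrast, is not a multiplicative twist. Proposition~\ref{zeroaveragesadeltabdelta} only uses $0\le w_\delta\le 1$ and the $Q$-independence of $w_\delta$, so it is unaffected.
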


Proposition \ref{fffss} can be viewed as a variation of 
\cite[Lemma 2.8]{FKM1} and
again we defer its proof  to Section \ref{sec: proofmain}. It is here where we make essential use of the weight function $w_{\delta}$ to obtain positivity for the averages whose limit does not necessarily exist because of the nature of the archimedean characters.

Assuming the previous results, Theorem~\ref{thirdreduction} easily follows from the fact that we can find $\delta_0,\rho_0>0$ so that
\[
\liminf_{M \to \infty}\E_{Q \in \Phi_M} \lim_{N \to \infty} \E_{\NN(n),\NN(m) \leq N} \int_{\mathcal{M}_p \setminus \mathcal{A}} A_{\delta_0}(f,Q; m,n) \ d\sigma(f) \geq \rho_0.
\]

\subsection{Averaging schemes}\label{s:as}
In this subsection we wish to highlight the fact that the choice of the averaging scheme is very important to us for a number of reasons. 
In order to make the discussion easier to follow, we will focus our attention on the Gaussian integers, but the same points we shall discuss equally apply to other quadratic imaginary fields. 
To study the averages of a multiplicative function of the form
$$\lim_{N\to\infty}\frac{1}{\vert\Phi_{N}\vert}\sum_{u\in\Phi_{N}}f(u),$$
there are at least 4 natural choices for the sequence $(\Phi_{N})_{N}$:
\begin{enumerate}
    \item   symmetric  balls $D_{N}:=\{a+bi\colon a,b\in\Z, a^{2}+b^{2}\leq N\}$;
    \item   symmetric boxes $B_{N}:=\{a+bi \colon -N\leq a,b\leq N\}$;
     \item  asymmetric  balls $D_{N}^{+}:=\{a+bi\colon a,b\in\N, a^{2}+b^{2}\leq N\}$;
    \item   asymmetric boxes $B_{N}^{+}:=\{a+bi \colon 1\leq a,b\leq N\}$.
\end{enumerate}

In \cite{S18,S23}, all the averages are taken over boxes. The  advantage  in doing so  is that the box average is well defined for any number field while the ball averages can only be defined for quadratic imaginary fields, as for a general number field $K$, the set $\{u\in\OK\colon \NN(u)\leq N\}$ can be infinite. However,  we are unable to prove an analog of Theorem \ref{characterization} if we define aperiodic functions using box averages as was done in \cite{S18,S23}, mainly because we do not know if an analog of Hal\'asz's Theorem (Theorem \ref{C22intro}) holds for box averages. Therefore, in this paper we favor ball averages over box averages.

Next we explain why it is more convenient to work with symmetric balls instead of the asymmetric ones. Consider the multiplicative function $f(u)=\frac{u}{\NN(u)^{1/2}}$. One can show that $f$ does not have finite distance to the product of Dirichlet characters and Archimedean characters either.  On the other hand, 
the average of $f$ over asymmetric boxes does not converge to 0 as $f$ takes values only in the first quadrant. So an analog of Hal\'asz's theorem fails to hold in this setting, unless we expand the definition of pretentious functions to include functions $f$ of this form (however we do not have this issue for symmetric ball averages as the symmetric ball average of $f$ does converge to 0, as can be checked with a Riemann sums argument).
Because of the above reasons, in this paper we choose to work with (i): averages over symmetric balls.

\section{Number theoretical background}\label{SEC: numthybackg}
In this section we will review some basic number theoretical notions and notation, and also introduce some basic estimates we will need to make use of in the sequel.

An \emph{(algebraic) number field} $K$ is a finite degree (and hence algebraic) field extension of the field of rational numbers $\Q$. The \emph{ring of integers} $\OK$ of a number field $K$ is the ring of all \emph{integral elements} in $K$ (i.e., roots of polynomials with integer coefficients and leading coefficient 1). Let $D=[K:\Q]$ denote the degree of the extension. It is classical that there exists an \emph{integral basis} $\mathcal{B}=\{b_{1},\dots,b_{D}\}$ of $\OK$, i.e., a basis of the $\Q$-vector space $K$ such that each element $x\in\OK$ can be uniquely represented as $x=\sum_{i=1}^{D}c_{i}b_{i}$ for some $c_{i}\in\Z$.   

Let $\iota\colon\Z^{D}\to\OK$ be the map given by $\iota(n_{1},\dots,n_{D})=n_{1}b_{1}+\dots+n_{D}b_{D}$. For $x\in K$, let $A_{x}$ be the unique $d\times d$ matrix such that $\begin{bmatrix}
    xb_{1} \\ \vdots\\ xb_{D}
\end{bmatrix}=A_{x}\begin{bmatrix}
    b_{1} \\ \vdots\\ b_{D}
\end{bmatrix}$. The $K$-\emph{norm} of $x\in K$ is defined to be $\NN_{K}(x):=\det(A_{x})$. When there is no risk of confusion regarding the underlying field $K$, we simply write $\NN$ instead of $\NN_{K}$.

In this paper, our main focus is on quadratic fields, i.e., number fields of the form $K=\Q(\sqrt{-d})$ for some squarefree $d\in\Z$. 
In this case, we have $\OK=\Z[\tau_{d}]$ and every $z\in\OK$ can be written as $m+n\tau_{d}$ for some $m,n\in\Z$ in a unique way. The element $\tau_d$ is given by $\tau_{d}=\sqrt{-d}$ if $d\equiv 1,2 \mod 4$ and $\tau_{d}=\frac{1+\sqrt{-d}}{2}$ if $d\equiv 3 \mod 4$. Moreover, we have that $\NN(m+n\tau_{d})$ is equal to $m^{2}+dn^{2}$ if $d\equiv 1,2 \mod 4$ and equal to $m^{2}+mn+\frac{d+1}{4}n^{2}$ if $d\equiv 3 \mod 4$.

We will use the following classical result on properties of the norm $\NN$.

\begin{lemma}
    Let $K$ be a number field. Then $\NN(xy)=\NN(x)\NN(y)$ for all $x,y\in K$. Also, for any $x \in \OK$ we have $\NN(x)\in\Z$.
\end{lemma}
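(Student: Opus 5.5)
The plan is to work with the matrix description of the norm. Fix the integral basis $\mathcal{B}=\{b_1,\dots,b_D\}$. For $x\in K$, multiplication by $x$ is a $\Q$-linear endomorphism $m_x\colon K\to K$, and $A_x$ is its matrix in the basis $\mathcal{B}$ (up to a transpose convention, which does not affect determinants). Both claims then reduce to linear algebra plus integrality.

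For multiplicativity, observe that $m_{xy}=m_x\circ m_y$, since $(xy)z=x(yz)$ for all $z\in K$. Hence $A_{xy}=A_xA_y$; with the row convention of the definition one gets $A_{xy}=A_yA_x$ instead, which is harmless because the determinant is insensitive to order. Taking determinants then gives $\NN(xy)=\det(A_x)\det(A_y)=\NN(x)\NN(y)$.

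For integrality, take $x\in\OK$. Then each $xb_j$ lies in $\OK$, because $\OK$ is a ring. Since $\mathcal{B}$ is an integral basis, each $xb_j$ is a $\Z$-linear combination of the $b_i$. So every entry of $A_x$ is an integer, and therefore $\NN(x)=\det A_x\in\Z$.

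I do not expect any real obstacle here. The only point requiring care is that $\OK$ is closed under multiplication; this is the classical fact that sums and products of algebraic integers are algebraic integers, which is standard and can be taken as given. For the quadratic case used later, one can also check directly that $\NN(m+n\tau_d)$ is the stated binary quadratic form with integer coefficients.
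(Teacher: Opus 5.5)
Your argument is correct. The paper gives no proof of this lemma: it is quoted as a classical property of the norm. What you write is the standard textbook proof that fills this in.

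\begin{itemize}
\item $\NN(x)=\det A_x$ is the determinant of the $\Q$-linear map $z\mapsto xz$.
\item Multiplicativity follows from $A_{xy}=A_yA_x$ together with multiplicativity of $\det$. Your row-convention computation is right, and by commutativity of $K$ this matrix also equals $A_xA_y$.
\item Integrality follows because $A_x$ has integer entries when $x\in\OK$.
\end{itemize}

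Your integrality step silently uses that the basis elements $b_j$ themselves lie in $\OK$. This is part of the usual meaning of an integral basis, namely $\OK=\bigoplus_j \Z b_j$. The paper's paraphrase of the definition, however, only records the unique-representation property. You can avoid the issue entirely by noting that $\det A_x$ does not depend on which $\Q$-basis of $K$ is used. One can therefore always compute it in a genuine $\Z$-basis of $\OK$.
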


We say that $\e\in \OK$ is a unit if $\NN(\e)=\pm 1$. 
It follows from Dirichlet's unit theorem \cite[Theorem~8.1]{neukirch99} that $\OK$ has finitely many units if and only if $K=\Q$ or $\Q(\sqrt{-d})$ for some square-free $d\in\N$. Moreover, 
we have a complete description for the units in this case.

\begin{lemma}\label{l32}
    Let $K=\Q(\sqrt{-d})$ for some squarefree $d\in\Z$. 
    \begin{enumerate}
        \item If $d=1$, then the units of $\OK$ are $\pm 1, \pm i$.
        \item If $d=3$, then the units of $\OK$ are $e^{k\pi i/3}, 0\leq k\leq 5$.
        \item If $d>0, d\neq 1,3$, then the units of $\OK$ are $\pm 1$.
    \end{enumerate}
\end{lemma}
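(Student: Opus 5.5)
The plan is to reduce the question to a Diophantine equation for the norm. Since $d>0$, the norm form is positive definite. Write $z\in\OK$ as $z=m+n\tau_d$ with $m,n\in\Z$. Units are exactly the elements with $\NN(z)=\pm1$, so the task is to solve $\NN(m+n\tau_d)=1$. The value $-1$ cannot occur, because the norm is a sum of squares up to positive coefficients.

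\emph{Case $d\equiv 1,2 \pmod 4$.} Here $\NN(m+n\tau_d)=m^2+dn^2$.
\begin{itemize}
\item If $d\geq 2$, then $m^2+dn^2=1$ forces $n=0$ and $m=\pm1$, so the only units are $\pm1$.
\item If $d=1$, the equation $m^2+n^2=1$ gives $(m,n)\in\{(\pm1,0),(0,\pm1)\}$, i.e.\ the units $\pm1,\pm i$.
\end{itemize}

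\emph{Case $d\equiv 3\pmod 4$.} Here $\NN(m+n\tau_d)=m^2+mn+\frac{d+1}{4}n^2$. Multiplying by $4$, the equation $\NN=1$ becomes
\[
(2m+n)^2+d\,n^2=4.
\]
\begin{itemize}
\item If $d\geq 7$, then $|n|\geq1$ would give $dn^2\geq 7>4$, so $n=0$, $2m=\pm2$, and the only units are $\pm1$.
\item If $d=3$, we need $(2m+n)^2+3n^2=4$. Either $n=0$, giving $m=\pm1$, or $n=\pm1$ with $2m+n=\pm1$. This yields exactly six solutions: $\pm1$, $\pm\tau_3$, and $\pm(\tau_3-1)$.
\end{itemize}

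It remains to identify the $d=3$ solutions with the sixth roots of unity. We have $\tau_3=\frac{1+\sqrt{-3}}{2}=e^{\pi i/3}$, and $\tau_3-1=e^{2\pi i/3}$. Together with their negatives, these give $e^{k\pi i/3}$ for $0\leq k\leq5$.

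There is one point to flag. The lemma says ``squarefree $d\in\Z$'', but the statement clearly intends $d>0$, matching item (3). Relatedly, the residue classes in the definition of $\tau_d$ differ between the introduction (where $d\equiv1 \pmod 4$ gives the half-integer generator) and this section. The correct convention is that the half-integer generator $\frac{1+\sqrt{-d}}{2}$ occurs when $-d\equiv1\pmod4$, i.e.\ $d\equiv3\pmod4$. That is the convention used here, and it is consistent with $d=3$ giving six units.

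No step is a real obstacle. The argument is a finite case check once the norm formula is written out.
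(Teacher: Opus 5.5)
Your proof is correct and follows essentially the same route as the paper, which simply says to solve the Diophantine equation $\NN(z)=\pm1$ using the explicit (positive definite) norm form. You carry out that case check in full, and your remarks on the $\tau_d$ convention (Section~4 is right, the introduction has the residue classes swapped) and on the implicit restriction $d>0$ are accurate.
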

This follows easily from computing the elements in $\OK$ whose norm is equal to $\pm 1$ and solving the resulting diophantine equations, which is straightforward, as in the imaginary quadratic case the norm is non-negative. 

Let $K$ be a number field. We use $\mathfrak{I}(K)$ to denote the set of all  ideals of $\OK$, $\mathfrak{Q}(K)$ to denote the set of all fractional ideals of $K$ (recall that a fractional ideal has the form $x^{-1}I$ for some $x \in \mathcal{O}_K^{\times}$ and non-trivial ideal $I \subseteq \OK)$. 
For $I\in \mathfrak{I}(K)$, we say that its \emph{norm} is $\NN(I):=[\OK:I]$. In the case where $x^{-1}I\in \mathfrak{I}(K)$, the \emph{norm} is given by $\NN(x^{-1}I):=\NN(I)/\NN(x)$.

We say that two ideals $I, J \in \mf I(K)$ are \emph{coprime} if $I+J=(1)$. For Dedekind domains, this is equivalent to $I,J$ sharing no elements in their factorization into prime ideals.
 Let $a,b\in\OK^{\times}$, $\mf p\in \mf I(K)$ and $k\in\N$. We write $\mf p\vert a$ if $a\in\mf p$ (or equivalently $\mf p\vert (a)$). Write $\mf p^{k}|| a$ if $k$ is the largest integer for which $\mf p^{k}\vert a$.

We recall a definition.
\begin{definition}
Let $K$ be a number field. 
We say that the quotient $\mf Q(K)/\mf I(K)$ is the \emph{ideal class group of} $K$.
\end{definition}
Given a number field $K$, its ideal class group $G$ is always finite (e.g., see \cite[Theorem~6.3]{neukirch99}), and we say that the order of $G$ is the \emph{class number} of $K$.

The ideal class group of a number field $K$ depends on the algebraic properties of the number field under consideration. For example, if $K=\Q(i)$, its ideal class group is trivial (this is a consequence of $\Z[i]$ being a PID), but for $\Q(\sqrt{-5})$ it is isomorphic to $\Z/2\Z$.

The following lemma will be used in the sequel.
\begin{lemma}\label{cl2}
    Let $K$ be a number field and $D=[K:\Q]$. For any $I\in\mathfrak{I}(K)$, there exists a finite union of $D$-dimensional infinite arithmetic progressions $P$ such that for $u,v\in\OK^{\times}$
$$\frac{(u)}{(v)}I\in\mathfrak{I}(K)\Leftrightarrow \frac{(u)}{(v)}=\frac{(u')}{(\NN(I))} \text{ for some } u'\in P.$$
In fact, $P$ is the set of $u'$ such that $\NN(I)^{D-1}\vert\NN(u')$.
\end{lemma}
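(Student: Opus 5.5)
The plan is to reduce everything to the ideal $J:=(\NN(I))\,I^{-1}$, the ``complement'' of $I$ in the principal ideal $(\NN(I))$. I then show that the admissible $u'$ are exactly the elements of $J$, and that $J$, being a full-rank sublattice of $\OK\cong\Z^D$, is a finite union of $D$-dimensional arithmetic progressions.

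\emph{Step 1: $J$ is an integral ideal of norm $\NN(I)^{D-1}$.} Since $\NN(I)=[\OK:I]$, the integer $\NN(I)$ annihilates the additive group $\OK/I$. Hence $\NN(I)\in I$, i.e.\ $I\mid(\NN(I))$. So $J=(\NN(I))I^{-1}\in\mathfrak I(K)$, and $(\NN(I))=IJ$. By multiplicativity of the ideal norm, $\NN(J)=\NN((\NN(I)))/\NN(I)=\NN(I)^{D}/\NN(I)=\NN(I)^{D-1}$.

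\emph{Step 2: the equivalence.} Given $u,v\in\OK^{\times}$, suppose first that $\frac{(u)}{(v)}I$ is integral. Then
\[
\frac{(u)}{(v)}(\NN(I))=\frac{(u)}{(v)}I\cdot J
\]
is an integral ideal, and it is principal, generated by $u\NN(I)/v$. Hence $u':=u\NN(I)/v$ lies in $\OK$ and $\frac{(u)}{(v)}=\frac{(u')}{(\NN(I))}$. Moreover
\[
(u')=\frac{(u)}{(v)}I\cdot J\subseteq J,
\]
so $u'\in J$. Conversely, if $\frac{(u)}{(v)}=\frac{(u')}{(\NN(I))}$ with $u'\in J$, then
\[
\frac{(u)}{(v)}I=\frac{(u')\,I}{IJ}=(u')J^{-1},
\]
which is integral because $J\mid(u')$. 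So the statement holds with $P:=J\setminus\{0\}$.

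\emph{Step 3: the shape of $P$.} Since $\NN(J)\in J$ by Step 1 applied to $J$, the ideal $J$ contains the lattice $\NN(J)\OK=\iota(\NN(J)\Z^D)$. Therefore $J$ is a union of at most $\NN(J)^D$ cosets $c+\iota(\NN(J)\Z^D)$, each of which is a $D$-dimensional infinite arithmetic progression. This gives the required finite union.

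For the final sentence of the lemma, every $u'\in J$ has $\NN(u')=\NN((u'))$ divisible by $\NN(J)=\NN(I)^{D-1}$, because $J\mid(u')$. So $P$ lies inside the set of $u'$ with $\NN(I)^{D-1}\mid\NN(u')$, and this norm condition is the natural description of $P$.

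The main obstacle is this last point. In general, norm divisibility is weaker than membership in $J$: a different ideal of the same norm can divide $(u')$. I would therefore take $P=J$ as the precise statement, and read the norm description as an equivalence only where it holds. For instance, when $I$ is a power of a prime ideal lying over a prime that is inert or ramified, $J$ is the unique ideal of its norm dividing a suitable power of that prime, so the two descriptions agree. Otherwise the norm condition should be regarded as a necessary condition; for later applications only the arithmetic-progression structure from Step 3 is used.
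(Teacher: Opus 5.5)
Your proof is correct. It takes a different route from the paper's, and it also fixes a flaw in the paper's argument.

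\textbf{The paper's route.} The paper works only with element norms. It makes $v$ a rational integer, cancels rational primes from $v$ until $v=\NN(I)$, and asserts that $\frac{(u)}{(v)}I\in\mathfrak{I}(K)$ is equivalent to $v^{D}\mid\NN(u)\NN(I)$, hence to $\NN(I)^{D-1}\mid\NN(u')$. Only the forward implication holds, because norm divisibility cannot detect which prime ideal above a rational prime divides $(u')$. The cancellation step has the same defect: it infers $t\mid u$ from $t^{D}\mid\NN(u)$, which fails for $t=5$, $u=3+4i$ in $\Z[i]$.

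\textbf{Your route.} The complementary ideal $J=(\NN(I))I^{-1}$ turns integrality into the exact condition $u'\in J$. The progression structure then follows at once from $\NN(J)\OK\subseteq J$. This exact version is all the paper needs later: normalizing denominators to $\NN(I)$ in Proposition~\ref{prop: extensions}, and the progression structure of the admissible $u$ used after \eqref{fweopf0}.

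\textbf{The final sentence of the lemma.} Your doubt is justified. In $\Z[i]$ take $I=(2+i)$, so $\NN(I)=5$ and $J=(2-i)$. Then $u'=2+i$ satisfies $5\mid\NN(u')$, yet $\frac{(2+i)}{(5)}I=\frac{(2+i)}{(2-i)}$ is not integral. So the norm condition is only necessary, and $P=J$ is the correct description.

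One caveat on your aside: ``inert or ramified'' does guarantee that the two descriptions agree in quadratic fields. In higher degree, however, you need $\mathfrak{p}$ to be the only prime of $\OK$ above $p$.
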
 
\begin{proof}
By multiplying with conjugates of $v$ if necessary, we may assume without loss of generality that $v\in\Z$. Then $\frac{(u)}{(v)}I\in\mathfrak{I}(K)\Leftrightarrow v^{D}\vert\NN(u)\NN(I)$.
Write $\NN(I)=u_{1}^{t_{1}}\dots u_{n}^{t_{n}}$ for some disticnt prime numbers $u_{i}$ and some powers $t_{i}\in\N$. 

We now proceed by cases. If a prime number $t$ is such that $t\vert v$ but $t\nmid \NN(I)$, then $t^{D}\vert \NN(u)$ and so $u/t\in\mathcal{O}_{K}$. So $\frac{(u)}{(v)}=\frac{(u/t)}{(v/t)}$. If $u_{i}^{t_{i}+1}\vert q$ for some $i$, then $p^{D}\vert\NN(u)\NN(I)$ implies that $u_{i}^{D}\vert \NN(p)$ and again $\frac{(u)}{(v)}=\frac{(u/u_{i})}{(v/u_{i})}$.
Thus, we may freely assume that $v=\NN(I)$.

We have that $v^{D}\vert\NN(u)\NN(I)\Leftrightarrow \NN(I)^{D-1}\vert\NN(u)$. The set of such $u$ is clearly a finite union of $D$-dimensional infinite arithmetic progressions. This completes the proof. 
\end{proof}

We conclude this section with a counting property on the number of 
 ideals in a given ideal class. These results are given in the following lemmas.

 \begin{lemma}[Theorem~2, \cite{MVO07}]\label{mv7}
     Let $K$ be a number field of degree $D$ and $C$ be an ideal class of it. For $x>0$, let $N(x,C)$ denote the number of ideals in the class $C$ whose norms are at most $x$. Then there exist constants $C_1, C_2>0$ depending on the number field $K$ only, such that 
     $$C_{1}(x^{\frac{1}{D}}-C_{2})^{D}\leq N(x,C)\leq C_{1}(x^{\frac{1}{D}}+C_{2})^{D}$$
      for all $x>1$ for some $C_{1}>0$ and $0<C_{2}<1$ depending only on $K$.
 \end{lemma}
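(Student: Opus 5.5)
The plan is to count lattice points. Fix an ideal class $C$ and an ideal $\mathfrak{a}$ in the inverse class $C^{-1}$. For an ideal $I\in C$, the product $I\mathfrak{a}$ is principal, say $I\mathfrak{a}=(\alpha)$ with $\alpha\in\mathfrak{a}$. Conversely, every nonzero $\alpha\in\mathfrak{a}$ gives the ideal $I=(\alpha)\mathfrak{a}^{-1}\in C$. Two elements give the same ideal exactly when they differ by a unit. Since $\NN(I)=\NN(\alpha)/\NN(\mathfrak{a})$, we get
\[
N(x,C)=\frac{1}{w}\#\{\alpha\in\mathfrak{a}\setminus\{0\}:\ |\NN(\alpha)|\le x\NN(\mathfrak{a})\}
\]
whenever the unit group is finite, with $w$ its order. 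This is the case relevant to the paper, $K=\Q(\sqrt{-d})$ with $D=2$. For general $K$, with infinitely many units, I would count $\alpha$ modulo units by restricting to a fundamental domain of the unit action on $\R^{r_1}\times\C^{r_2}$. This is the cone construction from the standard proof of the ideal-counting asymptotic.

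Embed $\mathfrak{a}$ as a full lattice $\Lambda\subset\R^D$ via the Minkowski embedding. Its covolume is $2^{-r_2}\sqrt{|d_K|}\,\NN(\mathfrak{a})$. The counting region $R(y)=\{z: |\NN(z)|\le y\}$ (intersected with the fundamental cone when needed) is homogeneous: $R(y)=y^{1/D}R(1)$. In the imaginary quadratic case, $R(y)$ is simply a disc, or an ellipse in the $(m,n)$ coordinates, of radius $\sqrt{y}$. I would then prove a two-sided comparison: the number of lattice points in $tR(1)$ lies between $\mathrm{vol}((t-c)R(1))/\mathrm{covol}$ and $\mathrm{vol}((t+c)R(1))/\mathrm{covol}$. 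To do this, tile by fundamental parallelepipeds $P$ of $\Lambda$, each of diameter at most some $\delta$. Each translate $\lambda+P$ meeting $tR(1)$ lies inside $(t+c)R(1)$. Each point of $(t-c)R(1)$ lies in a translate $\lambda+P$ whose $\lambda$ lies in $tR(1)$. Here $c$ is chosen so that the $\delta$-neighbourhood of $tR(1)$ is contained in $(t+c)R(1)$, and the $\delta$-neighbourhood of $(t-c)R(1)$ is contained in $tR(1)$. For a disc, or any convex star body containing a ball around the origin, one can take $c=\delta/r_0$, where $r_0$ is the inradius of $R(1)$. This constant is uniform in $t$.

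Setting $t=(x\NN(\mathfrak{a}))^{1/D}$, dividing by $w$, and rescaling gives
\[
C_1\big(x^{1/D}-C_2\big)^D\le N(x,C)\le C_1\big(x^{1/D}+C_2\big)^D,
\]
with $C_1=\mathrm{vol}(R(1))/(w\,\mathrm{covol}(\Lambda)/\NN(\mathfrak{a}))$, which is independent of $C$ because $\NN(\mathfrak{a})$ cancels. The constant $C_2=c\,\NN(\mathfrak{a})^{-1/D}$ can be made uniform over the finitely many classes by taking the maximum. The requirement $C_2<1$ needs an extra step. I would choose $\mathfrak{a}$ in its class with $\NN(\mathfrak{a})$ large, for instance by multiplying by a principal ideal of large norm, since $c$ scales like the lattice mesh, roughly $\NN(\mathfrak{a})^{1/D}$. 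If that scaling is not favourable, I would instead use that for $x>1$ one may absorb constants: replace $C_1$ by a suitable constant and note that the bounds only need to hold for $x>1$. The zero element excluded from the count costs only $O(1)$, which the same slack absorbs.

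The main obstacle is the general-degree case, where the region $\{|\NN|\le y\}$ modulo units is a cone truncated near the walls and is not convex. There, boundary control needs a Lipschitz-parametrizability argument (Davenport's lemma) rather than the simple neighbourhood containment. In the paper's imaginary quadratic setting this is avoided entirely, since the region is an ellipse.
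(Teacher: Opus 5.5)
Your lattice-point count is the standard argument; the paper gives no proof and only cites Murty--Van Order. For $K=\Q(\sqrt{-d})$ it correctly gives the two-sided bound with $C_1=2\pi/(w\sqrt{|d_K|})$ and \emph{some} $C_2>0$ depending on $K$. The step that fails is the extra requirement $0<C_2<1$, and neither of your remedies can work.
\begin{itemize}
\item $C_1$ is not adjustable. Dividing both bounds by $x$ and letting $x\to\infty$ forces $C_1=\lim_{x\to\infty}N(x,C)/x$.
\item Rescaling $\mathfrak a$ does not help. Replacing $\mathfrak a$ by $\beta\mathfrak a$ multiplies the lattice by $\beta$, a similarity of ratio $\NN(\beta)^{1/2}$. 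So $c$ and $\NN(\mathfrak a)^{1/2}$ scale together, and $C_2=c\,\NN(\mathfrak a)^{-1/2}$ is unchanged. In fact every fundamental parallelogram of $\mathfrak a$ has diameter at least $\sqrt{2\,\mathrm{covol}}=|d_K|^{1/4}\NN(\mathfrak a)^{1/2}$. Hence your method always outputs $C_2\ge |d_K|^{1/4}>1$.
\end{itemize}

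More to the point, the requirement is false as stated (for all $x>1$), so no argument can supply it.
\begin{itemize}
\item Lower bound. Take $K=\Q(\sqrt{-5})$ and $C$ its non-principal class. The only ideal of norm $1$ is $\OK$, so $N(x,C)=0$ for $1<x<2$. But $C_1(x^{1/2}-C_2)^2>0$ there, since $x^{1/2}>1>C_2$.
\item Upper bound. In $\Q(\sqrt{-163})$ the primes $2$ and $3$ are inert, so the unique (principal) class has $N(x,C)=1$ for $1<x<4$. With $C_1=\pi/\sqrt{163}$, letting $x\to 1^+$ forces $(1+C_2)^2\ge\sqrt{163}/\pi>4$, i.e.\ $C_2>1$.
\item $K=\Q$. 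Here $N(x)=\lfloor x\rfloor$ and $C_1=1$, which force $C_2\ge 1$.
\end{itemize}

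What your argument does prove, and what is actually true, is the bound with some $C_2>0$, with the lower bound read as $C_1\max(x^{1/D}-C_2,0)^D$. Your covering step only makes sense for $t\ge c$. Moreover, for even $D$ the literal expression $C_1(x^{1/D}-C_2)^D$ is positive on $(1,2)$ except at most at one point. So for a non-principal class it fails as a lower bound for every choice of $C_2$.

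With that reading, and with $C_2$ enlarged to absorb the excluded zero vector as you indicate, you get $N(x,C)=C_1x+O(x^{1-1/D})$. This is all the paper uses downstream, in the corollary on counting ideals in a class. So you should drop the clause $C_2<1$ rather than try to prove it.
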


 As a consequence of Lemma \ref{mv7}, we have (see  also \cite[Chapter 6, Theorem 39]{marcusbook} for a reference):
\begin{corollary}\label{lem: bounds for number of ideals of norm at most x}
Let $K$ be a number field of degree $D$. Then, there exists a universal constant $\gamma_K>0$ and some $0 \leq \eta<1$ 
such that for any ideal class $C$ of $K$, we have that 
\[
\frac{1}{x}\sum_{\mf u\in C\colon \NN(\mf u) \leq x}1 = \gamma_K+o(x^{\eta}).
\]
\end{corollary}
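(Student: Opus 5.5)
The plan is to deduce this from Lemma~\ref{mv7} by expanding the two-sided bound. Fix an ideal class $C$ and write $N(x,C)$ for the number of ideals in $C$ of norm at most $x$. By Lemma~\ref{mv7},
\[
C_1\bigl(x^{1/D}-C_2\bigr)^D\le N(x,C)\le C_1\bigl(x^{1/D}+C_2\bigr)^D,
\]
with $C_1,C_2$ depending only on $K$, and in particular not on $C$.

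First I will expand both sides by the binomial theorem:
\[
\bigl(x^{1/D}\pm C_2\bigr)^D=x+\sum_{j=1}^{D}\binom{D}{j}(\pm C_2)^j x^{(D-j)/D}.
\]
For $x>1$ the sum is bounded in absolute value by $(1+C_2)^D x^{(D-1)/D}$, using $0<C_2<1$. Dividing by $x$ then gives
\[
\Bigl|\frac{N(x,C)}{x}-C_1\Bigr|\le C_1(1+C_2)^D x^{-1/D}.
\]

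Next I set $\gamma_K:=C_1$. This is independent of the class $C$, because $C_1$ in Lemma~\ref{mv7} depends only on $K$. This is the "universal" claim in the statement; equivalently, each class contains asymptotically the same proportion of ideals. The error term is $O(x^{-1/D})$. This decays, so it is in particular $o(1)$ and certainly $o(x^{\eta})$ for any $\eta\in[0,1)$; for instance $\eta=0$ works. The written error $o(x^\eta)$ is weaker than what we actually obtain, so any admissible $\eta$ can be chosen. For later use I will record the sharper form $\gamma_K+O(x^{-1/D})$; for imaginary quadratic fields ($D=2$) this is $\gamma_K+O(x^{-1/2})$.

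The only potential obstacle is making sure the constants really are uniform in $C$. Lemma~\ref{mv7} asserts they depend on $K$ only, and the class group is finite, so even if the constants depended on $C$ one could take the worst over the finitely many classes for the error. Independence of the leading constant is the one point that cannot be recovered this way, and it is exactly what Lemma~\ref{mv7} provides. As a consistency check, summing over all $h_K$ classes recovers the classical ideal count $h_K\gamma_K x+O(x^{1-1/D})$, in agreement with \cite[Chapter 6, Theorem 39]{marcusbook}.
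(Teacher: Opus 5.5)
Your proposal is correct and takes the same route as the paper, which states this corollary simply as a consequence of Lemma~\ref{mv7}. Your binomial expansion of the two-sided bound supplies the omitted computation: you set $\gamma_K:=C_1$, which is independent of the class because $C_1$ depends only on $K$, and you obtain an error of $O(x^{-1/D})$, so $\eta=0$ already suffices.
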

\begin{remark}
In particular, with the terminology from \cite{lucht2001}, Corollary~\ref{lem: bounds for number of ideals of norm at most x} above implies that $\mathfrak I(K)$ is an Axiom A arithmetic group using the standard norm of an ideal for $A=\gamma_{K}$. 
\end{remark}

 \begin{lemma}\label{mv8}
     For any number field $K$, there exists a constant $C$ such that for any  $x>0$, the number of ideals of $\OK$ which is a power of a prime ideal and whose norm is $x$ is at most $[K\colon\Q]$.
 \end{lemma}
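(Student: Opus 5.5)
The plan is to reduce the count to the splitting behaviour of rational primes in $\OK$. Fix $x>0$ and suppose $\mf p^{k}$ is a prime ideal power with $\NN(\mf p^{k})=x$. Every nonzero prime ideal $\mf p$ of $\OK$ lies over a unique rational prime $p$, namely the generator of $\mf p\cap\Z$. Its norm is then $\NN(\mf p)=p^{f}$, where $f\geq 1$ is the residue degree. Hence $x=p^{fk}$ is a prime power.

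This already pins down $p$. By unique factorization in $\Z$, the rational prime $p$ is determined by $x$. If $x$ is not a prime power, no such ideal exists and the count is $0$.

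Next, I would invoke the standard decomposition of $p$ in a Dedekind domain:
\[
(p)=\mf p_{1}^{e_{1}}\cdots\mf p_{g}^{e_{g}},\qquad \sum_{i=1}^{g}e_{i}f_{i}=[K\colon\Q],
\]
where $\NN(\mf p_{i})=p^{f_{i}}$ (e.g. \cite{neukirch99}). In particular $g\leq [K\colon\Q]$. Any prime ideal $\mf p$ with $\NN(\mf p)$ a power of $p$ contains $p$, so it divides $(p)$ and is one of the $\mf p_{i}$.

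Now I count the candidates. For each $i$, the equation $p^{f_{i}k}=x$ determines $k$ uniquely: $k=\log_{p}(x)/f_{i}$, which is admissible only if it is a positive integer. So each $\mf p_{i}$ contributes at most one ideal $\mf p_{i}^{k}$ of norm $x$. Moreover, distinct pairs $(\mf p_{i},k)$ give distinct ideals, by unique factorization of ideals. The total number is therefore at most $g\leq[K\colon\Q]$.

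For the imaginary quadratic fields used in the paper this is explicit: $p$ is split ($g=2$), inert or ramified ($g=1$). The constant $C$ in the statement can simply be taken to be $[K\colon\Q]$. I expect no real obstacle; the only input is the fundamental identity $\sum e_{i}f_{i}=[K\colon\Q]$, which I take as standard.
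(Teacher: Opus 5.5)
Your proposal is correct and follows essentially the same route as the paper: reduce to $x=p^{r}$ for a rational prime $p$, observe that only prime ideals lying above $p$ can occur (at most $[K\colon\Q]$ of them), and note that each such $\mf p$ admits at most one exponent $k$ with $\NN(\mf p^{k})=x$. The only difference is that you derive the bound on the number of primes above $p$ directly from the fundamental identity $\sum_{i} e_{i}f_{i}=[K\colon\Q]$, whereas the paper cites \cite[Lemma~2.9]{S23} for this fact.
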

 \begin{proof}
 Let $D:=[K\colon\Q]$.
     By \cite[Lemma~2.9]{S23}, the norm of every prime ideal is a power of prime in $\N$. So we may assume that $x=p^{r}$ for some $r\in\N$ and prime $p\in\N$. Suppose that $\NN(\mf p^{k})=p^{r}$, then we must have that $r=kt$ for some $t\in\N$ and $\NN(\mf p)=p^{t}$. Again by \cite[Lemma~2.9]{S23}, we must have that $t\leq D$ and there are at most $D$ such $\mf p$. Now for each such $\mf p$, there is at most one choice of $k\in\N$ such that $\NN(\mf p^{k})=p^{r}$. So the number of ideals of the form $\mf p^{k}$ with norm equal to $p^{r}$ is at most $D$.
 \end{proof}


\begin{convention}
Throughout the paper, $\mf p$ and $\mf q$ always denote prime ideals of $\mathcal{O}_K$. Whenever we sum over $\mf p$ or $\mf q$, this sum is assumed to be taken along prime ideals.
\end{convention}

We conclude this section by some estimates related to the distributions of prime ideals which will be used in later sections.

\begin{lemma}\label{l2}
As $N\to\infty$, we have
\begin{enumerate}
    \item $\sum_{\NN(\mf p^{k})\leq N} 1=O\left(\frac{N}{\log N}\right);$
    \item $\sum_{\NN(\mf p^{k})\leq N} \NN(\mf p^{k})^{-1}=O(\log\log N);$
 \item $\sum_{\NN(\mf p^{k}\mf q^{\ell})\leq N, \mf p\neq \mf q} 1=O\left(\frac{N\log\log N}{\log N}\right)$.
\end{enumerate}
%
%
\end{lemma}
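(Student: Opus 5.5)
We will reduce all three estimates to counting rational primes. The reduction uses the fact (from \cite[Lemma~2.9]{S23}, already invoked in Lemma~\ref{mv8}) that every prime ideal $\mf p$ of $\OK$ has norm $\NN(\mf p)=p^{t}$ for a rational prime $p$ and $1\le t\le D=2$, and that at most $D$ prime ideals lie above each $p$. By Lemma~\ref{mv8}, for each prime power $p^{r}$ there are at most $D$ ideals of the form $\mf p^{k}$ with $\NN(\mf p^{k})=p^{r}$. Consequently, any sum over prime ideal powers $\mf p^{k}$ of a function depending only on $\NN(\mf p^{k})$ is bounded by $D$ times the corresponding sum over rational prime powers.

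\textbf{Part (1).} Lemma~\ref{mv8} gives
\[
\sum_{\NN(\mf p^{k})\le N}1\le D\,\#\{p^{r}\le N\},
\]
and the right-hand side is $O(N/\log N)$ by Chebyshev's bound. The higher prime powers contribute only $O(\sqrt N\log N)$.

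\textbf{Part (2).} In the same way we obtain $\sum_{\NN(\mf p^{k})\le N}\NN(\mf p^{k})^{-1}\le D\sum_{p^{r}\le N}p^{-r}$. The terms with $r\ge2$ sum to a convergent series. The terms with $r=1$ equal $\sum_{p\le N}1/p=\log\log N+O(1)$ by Mertens' theorem.

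\textbf{Part (3).} Again by Lemma~\ref{mv8}, each integer $m=p^{r}q^{s}$ with $p\ne q$ arises from at most $D^{2}$ ordered pairs $(\mf p^{k},\mf q^{\ell})$. Ordered pairs $(\mf p^{k},\mf q^{\ell})$ with $\NN(\mf p)$ and $\NN(\mf q)$ powers of the same rational prime give norms $p^{r}$, which are only $O(N/\log N)$ in number, times a bounded multiplicity $\le D^{2}\cdot r\ll \log N$. To avoid the logarithmic loss this crude multiplicity bound would cause, we count this case directly: the ideals above $p$ number at most $D$, so the pairs number at most $D^{2}\,\#\{(k,\ell):p^{k+\ell}\le N\}$, and summing over $p$ the terms with $k+\ell\ge3$ are negligible. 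Thus the sum is at most $D^{2}\cdot\#\{m\le N:\omega(m)\le2\}$ plus a negligible term. The classical bound $\pi_{2}(N)\ll N\log\log N/\log N$ for integers with exactly two distinct prime factors then finishes. To prove this bound from (1), sum over the smaller prime power $p^{r}\le\sqrt N$ the count $O\bigl(N/(p^{r}\log(N/p^{r}))\bigr)\ll N/(p^{r}\log N)$ of prime powers $q^{s}\le N/p^{r}$, where $\log(N/p^{r})\ge\frac12\log N$. The resulting sum $\sum_{p^{r}\le \sqrt N}p^{-r}$ is $O(\log\log N)$ by (2).

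The only step needing care is this final bookkeeping in (3): bounding the multiplicity of pairs with a given norm uniformly, and using $p^{r}\le\sqrt N$ so that the logarithm in the denominator is $\gg\log N$. The rest is standard.
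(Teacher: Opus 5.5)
Your proof is correct and follows essentially the same route as the paper. Both arguments use Lemma~\ref{mv8} and the fact that prime-ideal norms are rational prime powers to reduce to counting rational prime powers and integers with at most two distinct prime factors, and then apply Chebyshev, Mertens, and the bound $\pi_{2}(N)\ll N\log\log N/\log N$. The differences are minor and in your favour: you prove the $\pi_{2}$ bound from (1)--(2) rather than citing Tenenbaum, and you count the pairs $\mf p\neq\mf q$ lying over the same rational prime directly, which avoids the paper's inaccurate claim that this multiplicity is at most $D^{2}$ (it actually grows like $r$). One small correction: for ordered pairs over two distinct rational primes the multiplicity is $2D^{2}$, not $D^{2}$, which is harmless.
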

\begin{proof}
For $i\in\N$, let  $\Pi_{i}(N)$ denote the set of integers  $x\in\{1,\dots,n\}$ with at most $i$ distinct factors. Let $\pi_{i}(N):=\vert\Pi_{i}(N)\vert$. 
Throughout the proof, we make use of the following facts without explicitly stating them:
\begin{itemize}
    \item It is known that (see \cite[Chapter II.6]{tenebaum_2015}, for example)
$$\pi_{i}(N)\sim C_{i}\frac{N(\log\log N)^{i-1}}{\log N};$$
\item By the Mertens' second theorem (see for example \cite[Chapter I.1, Theorem~9]{tenebaum_2015}), 
$$\sum_{p\leq N, \text{ $p$ is a prime}} p^{-1}=\log\log N+O(1);$$
\item By \cite[Lemma~2.9]{S23}, the norm of every prime ideal is of the form $p^{r}$ for some prime $p\in\N$ and some $1\leq i\leq [K:\Q]$;
\item By Lemma \ref{mv8}, we have 
$\sum_{\NN(\mf p^{k})=a} 1\ll 1$ for all $a\in\N$;
\item For any $I\subset\N\backslash\{1\}$ and $s\geq 1$, we have $$\sum_{a\in I, \text{ $a$ is a power of prime}}a^{-s}\leq \sum_{a\in I, \text{ $a$ is a prime}}a^{-s}+O(1),$$
since $\sum_{k=2}^{\infty}\sum_{n=2}^{\infty}n^{-sk}\leq \sum_{k=2}^{\infty}\sum_{n=2}^{\infty}n^{-k}=\sum_{n=2}^{\infty}\frac{1}{n(n-1)}=1.$
\end{itemize}
Let us show each of the asserted statements. 
First, for part (i), we have
$$\sum_{\NN(\mf p^{k})\leq N} 1=\sum_{a\in \Pi_{1}(N)}\sum_{\NN(\mf p^{k})=a} 1\ll \sum_{a\in \Pi_{1}(N)} 1=\pi_{1}(N)=O\left(\frac{N}{\log N}\right).$$
Next, part (ii) follows from the fact that
\begin{eqnarray*}
    \sum_{\NN(\mf p^{k})\leq N} \NN(\mf p^{k})^{-1} & = & \sum_{a\in \Pi_{1}(N)}\sum_{\NN(\mf p^{k})=a} a^{-1}
      \\&\ll & \sum_{a\in \Pi_{1}(N)} a^{-1}\ll \sum_{p\leq N, p \text{ is a prime}} p^{-1}=O(\log\log N).
\end{eqnarray*}
Lastly, for part (iii), let $\Pi_{i,T}(N)$ denote the set of natural numbers no larger than $N$ which are of the form $p^{k}q^{\ell}$ for some primes $p,q$ and some $0\leq k, \ell \leq T$. Then,
\begin{equation}\label{mmvv6}
    \begin{split}
      &\qquad \sum_{\NN(\mf p^{k}\mf q^{\ell})\leq N, \mf p\neq \mf q} 1=\sum_{a,b\in \Pi_{1,D}(N),k,\ell\in\N,a^{k}b^{\ell}\leq N}\sum_{\NN(\mf p)=a, \NN(\mf q)=b}1
       \ll \sum_{a,b\in \Pi_{1,D}(N),k,\ell\in\N,a^{k}b^{\ell}\leq N} 1,
    \end{split}
\end{equation}
where $D:=[K:\Q]$.
Fix any $a,b\in \Pi_{1, D}(N),k,\ell\in\N$ with $a^{k}b^{\ell}\leq N$. If $a,b$ are powers of different primes, then $a^{k}b^{\ell}$ belongs to $\pi_{2}(N)$, and for each $x\in \pi_{2}(N)$, $x$ can be written in the form $a^{k}b^{\ell}$ for some  $a,b\in \Pi_{1,D}(N),k,\ell\in\N$ in at most $2D^{2}$ ways. If $a,b$ are powers of the same prime, then $a^{k}b^{\ell}$ belongs to $\pi_{1}(N)$. In this case, for each $x=p^{r}\in \pi_{1}(N)$ with $p$ being a prime, the number of ways $x$ can be written in the form $a^{k}b^{\ell}$ for some $a,b\in \Pi_{1,D}(N),k,\ell\in\N$ is  at most $D^{2}$. So  the right hand side of (\ref{mmvv6}) can be bounded by
 $$2D^{2}\pi_{2}(N)+D^{2}\sum_{p^{r}\in \pi_{1}(N)} 1
       \leq O\left(\frac{N\log\log N}{\log N}\right)+\sum_{r=1}^{\lceil\frac{\log N}{\log 2}\rceil}\pi_{1}({N}^{1/r}).$$
The estimate
\begin{eqnarray*}
    \sum_{r=1}^{\lceil\frac{\log N}{\log 2}\rceil}\pi_{1}({N}^{1/r})
      & \ll & \sum_{r=1}^{\lceil\frac{\log N}{\log 2}\rceil}\frac{r{N}^{1/r}}{\log N}
      \leq \frac{N}{\log N}+\sum_{r=2}^{\lceil\frac{\log N}{\log 2}\rceil}\frac{r\sqrt{N}}{\log N}
      \\& \leq & \frac{N}{\log N}+\log N\sqrt{N}
      \leq O\left(\frac{N\log\log N}{\log N}\right),
\end{eqnarray*} 
gives the conclusion.
\end{proof}

\section{A classification result for completely multiplicative functions}\label{ss5}

The purpose of this section is to obtain a useful classification result for multiplicative functions, which will be the cornerstone of our analysis of the set $\mathcal{M}$ which we must split into different pieces according to the asymptotic behavior of the multiplicative function $f$. We begin with extensions of multiplicative functions.

\subsection{Extensions of multiplicative functions}
In this section, we prove Theorem \ref{characterization}.
Before that, we need to classify all the extensions of a completely multiplicative function $f$. 
Let $D=\mathcal{O}_K^{\times}$ or $\mathfrak{I}(K)$.
We say that $f\colon D\to\U$ is \emph{completely multiplicative} (on $D$) if $f(mn)=f(m)f(n)$ for all $m,n\in D$. 
%
%
For any completely multiplicative function $f\colon \mathcal{O}_K^{\times} \to\U$
with $f(\e)=1$ for all units $\e$,\footnote{It is clear that if $f$ admits an extension, then   $f(\e)=\tilde{f}((\e))=\tilde{f}((1))=f(1)=1$ for all units $\e$, so naturally we only define extensions of multiplicative functions $f$ if they satisfy this additional condition.} we say that a completely multiplicative function $\tilde{f}\colon \mathfrak{Q}(K)\to\U$ is an \emph{extension} of $f$ if  $\tilde{f}((x))=f(x)$ for all $x\in \mathcal{O}_K^{\times}$. One of the key ideas in this paper is the study of averages of completely multiplicative functions on a non-uniquely factorizable domain which is done by passing to extended versions of the original multiplicative functions.

Let $G$ be the ideal class group of $K$. By \cite[Theorem~6.3]{neukirch99}, $G$ is a finite group, and it is also clearly abelian by construction. Therefore, by the classification of finite abelian groups, we can find $k\in\N$, 
    and generators $g_{1},\dots,g_{k}$ of $G$ of orders $d_{1},\dots,d_{k}\in\N$ such that $G=\langle g_1,\dots,g_k\rangle$.
   
    For each $1\leq i\leq k$, let $I_{i}\in \mathfrak{Q}(K)$ be a representative of $g_{i}$. We may assume without loss of generality that $I_{i}\in\mathfrak{I}(K)$. If so, then we say that $(I_{1},\dots,I_{k})$ is an \emph{ideal class group representation} for $\OK$. 
    It follows that $I_{i}^{d_{i}}=(x_{i})$ for some $x_{i}\in \mathcal{O}_{K}^{\times}$.
It is clear that if $\tilde{f}$ is an extension of $f$, then we must have that $\tilde{f}(I_{i})^{d_{i}}=f(x_{i})$ for all $1\leq i\leq k$. 
Conversely, we have the following.

\begin{proposition}\label{prop: extensions}
    Let $f\colon \mathcal{O}_K^{\times}\to\S$ be a completely multiplicative function  with $f(\e)=1$ for all units $\e$. Suppose that 
    $(I_{1},\dots,I_{k})$ is an ideal class representation of $\OK$ 
 with
 $I_{i}^{d_{i}}=(x_{i})$ for some  $x_{i}\in \mathcal{O}_{K}^{\times}$ for  $1\leq i\leq k$. Then for any $a_{1},\dots,a_{k}\in\C$ with $a_{i}^{d_{i}}=f(x_{i}), 1\leq i\leq k$, there exists a unique extension $\tilde{f}$ of $f$ such that $\tilde{f}(I_{i})=a_{i}$ for all $1\leq i\leq k$. 
\end{proposition}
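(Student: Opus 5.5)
Both existence and uniqueness come from one observation: every ideal can be written uniquely, up to a principal factor, as a product of the chosen class representatives. Let $G$ be the class group, and assume (as the phrase "classification of finite abelian groups" intends) that $G=\langle g_1\rangle\times\dots\times\langle g_k\rangle$ is a direct product with $g_i$ of order $d_i$. Then every ideal $J\in\mf I(K)$ (more generally every fractional ideal) lies in the class of exactly one product $I_1^{e_1}\cdots I_k^{e_k}$ with $0\le e_i<d_i$. Hence we can write
\[
J=I_1^{e_1}\cdots I_k^{e_k}\cdot\frac{(u)}{(v)}
\]
for some $u,v\in\mathcal{O}_K^{\times}$. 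Here the exponent vector $(e_i)$ is determined by $J$, and the principal fractional ideal $(u)/(v)$ is then determined too. We define
\[
\tilde f(J):=a_1^{e_1}\cdots a_k^{e_k}\,f(u)\overline{f(v)}.
\]
This makes sense because $f$ takes values in $\S$, so $\overline{f(v)}=f(v)^{-1}$.

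\textbf{Well-definedness.} Suppose $(u)/(v)=(u')/(v')$. Then $(uv')=(u'v)$, so $uv'=\e u'v$ for some unit $\e$. Complete multiplicativity of $f$ together with $f(\e)=1$ gives $f(u)f(v')=f(u')f(v)$, hence $f(u)\overline{f(v)}=f(u')\overline{f(v')}$.

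\textbf{Extension property.} For $J=(x)$ the exponents are all $0$, so $\tilde f((x))=f(x)$. For $J=I_i$ we have $e_i=1$, the other exponents vanish, and the principal part is $(1)$, so $\tilde f(I_i)=a_i$.

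\textbf{Complete multiplicativity.} Take $J=\prod I_i^{e_i}\,(u)/(v)$ and $J'=\prod I_i^{e_i'}\,(u')/(v')$. Write $e_i+e_i'=e_i''+\delta_i d_i$ with $0\le e_i''<d_i$ and $\delta_i\in\{0,1\}$. Using $I_i^{d_i}=(x_i)$, we get
\[
JJ'=\prod I_i^{e_i''}\cdot\frac{(uu'\prod x_i^{\delta_i})}{(vv')}.
\]
Therefore
\[
\tilde f(JJ')=\prod a_i^{e_i''}\,f(u)f(u')\prod f(x_i)^{\delta_i}\,\overline{f(v)f(v')}.
\]
Since $a_i^{d_i}=f(x_i)$, we have $a_i^{e_i''}f(x_i)^{\delta_i}=a_i^{e_i+e_i'}$, and so $\tilde f(JJ')=\tilde f(J)\tilde f(J')$.

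\textbf{Uniqueness.} Let $\tilde g$ be any completely multiplicative extension with $\tilde g(I_i)=a_i$. Applying it to $J\cdot(v)=\prod I_i^{e_i}\cdot(u)$ gives $\tilde g(J)f(v)=\prod a_i^{e_i}f(u)$. Since $|f(v)|=1$, this forces $\tilde g(J)=\tilde f(J)$. The argument is identical whether $\tilde f$ is considered on $\mf I(K)$ or on $\mf Q(K)$.

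\textbf{Main obstacle.} The delicate point is the direct-product decomposition of $G$. The carry computation in the multiplicativity step needs each relation $I_i^{d_i}=(x_i)$ to be the only relation among the representatives. If the $g_i$ merely generated $G$ with extra relations between them, consistency would need further compatibility conditions on the $a_i$. I would therefore state explicitly at the start of the proof that the $g_i$ are chosen as an internal direct product decomposition of $G$.
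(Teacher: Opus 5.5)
Your proposal is correct and follows essentially the same construction as the paper: write each ideal as $I_1^{e_1}\cdots I_k^{e_k}$ (with $0\le e_i<d_i$) times a principal part, define $\tilde f$ from the $a_i$ and from $f$ on the principal part, and verify multiplicativity by carrying through $I_i^{d_i}=(x_i)$. The differences are only improvements in presentation. You check well-definedness of $f(u)\overline{f(v)}$ directly instead of normalizing the denominator to $\NN(I)$ via Lemma~\ref{cl2}, and you prove the uniqueness that the paper calls obvious. You also make explicit that the $g_i$ must give a direct-product decomposition of the class group, which the paper (and indeed the statement itself) tacitly needs; strictly speaking, it is the uniqueness of the exponent vector, i.e.\ well-definedness, rather than the carry step that uses this.
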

\begin{proof} 
The uniqueness part is obvious and so we now prove the existence of such an extension. Note that every ideal can be written as $\frac{(x)}{(y)}I$, where $I=I_{1}^{b_{1}}\dots I_{k}^{b_{k}}$, and for some $x,y\in \mathcal{O}_K^{\times}$ and $0\leq b_{i}\leq d_{i}-1$. 
By   Lemma \ref{cl2}, we may assume that $y=\NN(I)$. For such an ideal,
define
    $$\tilde{f}\left(\frac{(x)}{(\NN(I))}I\right):=f(x)f(\NN(I))^{-1}a_{1}^{b_{1}}\dots a_{k}^{b_{k}}$$
for all $x,y\in \mathcal{O}_K^{\times}$ and $0\leq b_{i}\leq d_{i}-1$, which is well defined since $x$ is uniquely determined up to a unit and $f(\e)=1$ for all units $\e$. 
Since clearly $a_{1},\dots,a_{k}\in\S$, it follows that $\tilde{f}$ takes values in $\S$.

It is clear that $\tilde{f}((x))=f(x)$ for all $x\in \mathcal{O}_K^{\times}$ (in which case we must have that $b_{1}=\dots=b_{k}=0$). 
We now show that $\tilde{f}$ is multiplicative.

Let $x,x'\in \mathcal{O}_K^{\times}$ and $0\leq b_{i},b'_{i}\leq d_{i}-1$ be such that $J:=\frac{(x)}{(\NN(I))}I$ and $J':=\frac{(x')}{(\NN(I'))}I'$ are ideals, where $I=I_{1}^{b_{1}}\dots I_{k}^{b_{k}}$ and $I'=I_{1}^{b'_{1}}\dots I_{k}^{b'_{k}}$.
Put $c_{i}=b_{i}+b'_{i}$ if $c_{i}\leq d_{i}-1$ and $c_{i}=b_{i}+b'_{i}-d_i$ otherwise. Denote $r_{i}=b_{i}+b'_{i}-c_{i}$.
Then 
$$JJ'=\frac{(xx'x_{1}^{r_{1}}\dots x_{k}^{r_{k}})}{(\NN(I_{1}^{b_{1}+b'_{1}}\dots I_{k}^{b_{k}+b'_{k}}))}I_{1}^{c_{1}}\dots I_{k}^{c_{k}}.$$
Since $JJ'$ is an ideal, by  Lemma \ref{cl2}, we have that 
$$JJ'=\frac{(y)}{(\NN(I_{1}^{c_{1}}\dots I_{k}^{c_{k}}))}I_{1}^{c_{1}}\dots I_{k}^{c_{k}}$$
for some $y\in\mathcal{O}_K^{\times}$.
Therefore, we have that 
$$y=xx'x_{1}^{r_{1}}\dots x_{k}^{r_{k}}/\NN(I_{1}^{r_{1}})\dots \NN(I_{k}^{r_{k}}).$$
By definition,
\begin{equation*}
    \begin{split}
        &\qquad \tilde{f}(JJ')
        =f(y)f(\NN(I_{1}^{c_{1}}\dots I_{k}^{c_{k}}))^{-1}a_{1}^{c_{1}}\dots a_{k}^{c_{k}}.
    \end{split}
\end{equation*}
On the other hand,
$$\tilde{f}(J)\tilde{f}(J')=f(x)f(\NN(I_{1}^{b_{1}}\dots I_{k}^{b_{k}}))^{-1}f(x')f(I_{1}^{b'_{1}}\dots I_{k}^{b'_{k}})a_{1}^{b_{1}+b'_{1}}\dots a_{k}^{b_{k}+b'_{k}}.$$
One can easily check that $\tilde{f}(JJ')=\tilde{f}(J)\tilde{f}(J')$, completing the proof.
%
\end{proof}

\subsection{Characterizations for aperiodic completely multiplicative functions}
 
We now begin to prove Theorem \ref{characterization} by providing a list of equivalent definitions of aperiodic multiplicative functions.
We begin with a short lemma that justifies an assumption we will make many times throughout the paper: that $f$ can be assumed to be trivial on units.
\begin{lemma}\label{tv}
    Let $d\in\N$ be squarefree and $f\colon \Z[\tau_{d}]^{\times}\to\C$ be a  completely multiplicative function. If $f(\e)\neq 1$ for some unit $\e$. Then $f$ is aperiodic.
 \end{lemma}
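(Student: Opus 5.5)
The plan is to exploit the symmetry of the norm balls under multiplication by units. Fix a unit $\e$ with $f(\e)\neq 1$ and write $\zeta:=f(\e)$. Since $\e$ has finite order $r$ by Lemma~\ref{l32} and $f$ is completely multiplicative, $\zeta^{r}=f(\e^{r})=f(1)=1$, so $\zeta$ is a nontrivial $r$-th root of unity. The identity driving everything is
\[
f(\e u)=\zeta f(u), \qquad u\in \Z[\tau_d]^{\times}.
\]
The map $u\mapsto \e u$ is a norm-preserving bijection of $\{u:\NN(u)\le N\}$. Hence, for any set $S\subseteq \OK$ with $\e S=S$, substituting $u\mapsto \e u$ gives
\[
\E_{u\in S,\NN(u)\le N} f(u)=\zeta\,\E_{u\in S,\NN(u)\le N} f(u).
\]
This forces the average to be exactly $0$ for every $N$. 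In particular the average over the whole ball vanishes.

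To handle an arbitrary grid $P\in\mathcal{AP}[\tau_d]$ from \eqref{eq:adcd}, I would first refine $P$. Set $Q:=a_1a_2\in\N$. Then $P$ is a finite disjoint union of cosets $u_0+Q\OK$, and since $Q\in\Z$ we have $\e(u_0+Q\OK)=\e u_0+Q\OK$. So multiplication by $\e$ permutes the finitely many cosets of $Q\OK$, preserving norms. I would then group the cosets into orbits $\mathcal{C}=\{C,\e C,\dots,\e^{r-1}C\}$ and write $\Sigma_N(C):=\sum_{u\in C,\NN(u)\le N}f(u)$. The identity above gives $\Sigma_N(\e C)=\zeta\,\Sigma_N(C)$, so
\[
\sum_{j=0}^{r-1}\Sigma_N(\e^{j}C)=\Bigl(\sum_{j=0}^{r-1}\zeta^{j}\Bigr)\Sigma_N(C)=0.
\]
Consequently the sum over any $\e$-invariant union of cosets vanishes identically. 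The cosets have asymptotically equal counts by a lattice-point count in the ball, so the normalisation causes no trouble.

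The main obstacle is that a general grid $P$ need not be $\e$-invariant: for instance $u_0=1$ with $\e=-1$ and $Q=4$ gives $-1+4\OK\neq 1+4\OK$. Even on a single coset, the orbit relation does not control $\Sigma_N(C)$ by itself. What I would try first is to find, inside each coset, an involution-type bijection that preserves the norm up to $o(1)$ proportion and multiplies $f$ by $\zeta$. The only natural candidates are multiplication by units, or by elements $w\equiv 1 \pmod{Q}$ with $\NN(w)=1$, and the latter are only units.

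Before writing the proof I would test this step on a concrete example, and I suspect it fails. Take $d=2$, where $\OK$ is a PID, and let $\chi$ be a character of $(\OK/4\OK)^{\times}$ with $\chi(-1)=-1$. Define $f$ on prime elements by $f(p):=\chi(p)$ for $p\nmid 2$, which is consistent with $f(-p)=-f(p)$, and set $f(\sqrt{-2}):=1$. Then $f(-1)=-1$, yet $f\equiv 1$ on the grid $1+4\OK$, so the average of $f$ over that grid does not tend to $0$.

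So I expect the argument to establish only the vanishing over $\e$-invariant unions of cosets, such as the full balls and the symmetrised grids $\bigcup_j \e^{j}P$. I would then check whether that weaker form is all the later applications need, for example in Proposition~\ref{prop: dircharequivalence} and Theorem~\ref{characterization}, where one may reduce to functions trivial on units. As literally stated, with every grid $P$, the lemma appears to need an extra hypothesis.
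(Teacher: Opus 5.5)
Your analysis is correct, and the flaw you found is in the paper, not in your proposal. The paper's proof is exactly your first step. It observes $\e B_N=B_N$ and concludes $L_N=f(\e)L_N$ for the average $L_N$ in \eqref{eq: aperiodicdef}. But that average runs over $u\in P$, and the substitution $u\mapsto \e u$ sends $P\cap B_N$ to $\e P\cap B_N$, which equals $P\cap B_N$ only when $\e P=P$. So the paper's argument proves only what yours proves: the averages vanish over unit-invariant sets, such as the full balls or the symmetrised grids $\bigcup_j\e^jP$. It does not cover an arbitrary $P\in\mathcal{AP}[\tau_d]$.

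Your counterexample checks out.
\begin{itemize}
\item Since $2\notin 4\Z[\sqrt{-2}]$, the class of $-1$ is nontrivial in $(\Z[\sqrt{-2}]/4)^{\times}$, so a character $\chi$ with $\chi(-1)=-1$ exists.
\item Unique factorisation in $\Z[\sqrt{-2}]$ makes your $f$ a well-defined $\S$-valued completely multiplicative function with $f(-1)=-1$.
\item Every $u\equiv 1 \pmod 4$ is prime to $\sqrt{-2}$ (because $\sqrt{-2}\mid 4$), so $f(u)=\chi(u)=1$.
\item The set $1+4\Z[\sqrt{-2}]$ is the grid \eqref{eq:adcd} with $a_1=a_2=4$, $b_1=1$, $b_2=0$.
\end{itemize}
So the averages over this grid are identically $1$, and Lemma~\ref{tv} is false as stated.

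The same $f$ also refutes Theorem~\ref{characterization} as stated. It is not aperiodic, and it is not pretentious, because that definition requires $f(\e)=1$. The proof of the theorem invokes Lemma~\ref{tv} precisely to obtain $f(\e)=1$. Proposition~\ref{prop: dircharequivalence} is unaffected, since it already assumes $f$ is trivial on units. The dichotomy, however, needs a corrected notion of pretentiousness. One option is to replace the requirement $f(\e)=1$ by the weaker requirement that $f\overline{\chi}$ be trivial on units; your $f$ satisfies this with $\chi$ the character modulo $4$.
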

 \begin{proof}
 This follows from the F\o lner property that the balls $B_N:= \{ u \in \OK : \NN(u)\leq N\}$ enjoy. Indeed, notice that each $B_N$ has the property that for any unit $\e \in \Z[\tau_{d}]^{\times}$, it is $\e B_N=B_N$, $N \in \N$. 
Let $L_{N}$ be the average in (\ref{eq: aperiodicdef}) without taking the limit over $N$. It is clear that $L_{N}=f(\e) L_{N}$. Thus, if $f(\e)\neq 1$ for some unit $\e$, then $L_{N}=0$ for all $N$. So $f$ is aperiodic. 
 \end{proof}

The following proposition gives a list of equivalent definitions for aperiodic functions.
\begin{proposition}\label{prop: dircharequivalence}
 Let $d\in\N$ be squarefree and $f\colon \Z[\tau_{d}]^{\times}\to\S$ be a completely multiplicative function with  $f(\e)=1$ for all units $\e$. 
    The following are equivalent. 
    \begin{enumerate}
        \item $f$ is aperiodic.
        \item The following limit vanishes.
        $$\lim_{N\to\infty}\sup_{P\in \mathcal{AP}[\tau_{d}]}\left|\E_{u\in\Z[\tau_{d}]^{\times}\colon \NN(u)\leq N}\mathds{1}_{P}(u)f(u)\right|=0.$$
        \item For any Dirichlet character $\chi\colon \Z[\tau_{d}]^{\times}\to\U$, we have that 
        \begin{equation}\nonumber
        \lim_{N\to\infty}\E_{u\in\Z[\tau_{d}]^{\times}\colon \NN(u)\leq N}(f\chi)(u)=0.
        \end{equation}
         \item For any modified Dirichlet character $\chi'\colon \Z[\tau_{d}]^{\times}\to\S$, we have that $$\lim_{N\to\infty}\E_{u\in\Z[\tau_{d}]^{\times}\colon \NN(u)\leq N}(f\chi')(u)=0.$$
        \item For any modified Dirichlet character $\chi'\colon \Z[\tau_{d}]^{\times}\to\S$ 
        and any extension $\widetilde{f\chi'}$ of $f\chi'$,
       \footnote{By Proposition \ref{prop: extensions}, $f\chi'$ admits $\vert G\vert$ extensions.}
        we have that 
        $$\lim_{N\to\infty}\E_{\mathfrak{v}\in \mathfrak{I}(\Q(\sqrt{-d}))\colon \NN(\mathfrak{v})\leq N}\widetilde{f\chi'}(\mathfrak{v})=0.$$
    \end{enumerate}
\end{proposition}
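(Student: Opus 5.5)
I plan to prove the cycle of implications (ii)$\Rightarrow$(i)$\Rightarrow$(iii)$\Leftrightarrow$(iv)$\Rightarrow$(ii), together with (iv)$\Leftrightarrow$(v).

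\emph{(ii)$\Rightarrow$(i).} For fixed $P\in\mathcal{AP}[\tau_d]$ we have $\E_{u\in P,\NN(u)\leq N}f(u)=\E_{\NN(u)\leq N}\mathds 1_P(u)f(u)\cdot |B_N|/|P\cap B_N|$. The ratio $|B_N|/|P\cap B_N|$ is bounded, because a lattice coset has positive density in the norm balls. So (ii) gives (i).

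\emph{(i)$\Rightarrow$(iii).} A Dirichlet character $\chi$ of period $I$ is periodic modulo the lattice $I$. Choose $q\in\N$ with $q\in I$ (for instance $q=\NN(I)$). Then $\chi$ is constant on each residue class modulo $q\OK$, and each such class is an element of $\mathcal{AP}[\tau_d]$. Hence $\E_{\NN(u)\leq N}(f\chi)(u)$ is a finite linear combination of averages of $f$ over grids, weighted by bounded densities, and each of these tends to $0$.

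\emph{(iii)$\Leftrightarrow$(iv).} Take $\chi'$ with period $I$, and write $\mathds 1_{\gcd(u,I)=1}$ via the prime ideals $\mf p\mid I$. The function $\chi'$ agrees with $\chi$ on units modulo $I$. On elements divisible by some $\mf p\mid I$, we use complete multiplicativity of $f\chi'$ to factor out the $\mf p$-part. This is cleanest if we choose the period so that it is of the form $(q)$, $q\in\N$, after enlarging. We then obtain an inclusion–exclusion or Möbius-type decomposition: the average of $f\chi'$ is a convergent series of averages of $f\chi_j$ over dilated balls, for characters $\chi_j$ of the same or smaller period. The tails are controlled by the geometric decay of $\NN(\mf p)^{-k}$. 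The reverse direction is the same inversion. For (iv)$\Leftrightarrow$(v), I will use Proposition \ref{prop: extensions}: the principal ideals are exactly the trivial ideal class. Averaging $\widetilde{f\chi'}$ twisted by the characters $\psi$ of the class group $G$, the values $\psi\circ\text{class}$ multiply the extension by class-group characters, and this permutes the $|G|$ extensions. Orthogonality on $G$ then gives
\[
\E_{\NN(u)\leq N}(f\chi')(u)=\frac{c_N}{|G|}\sum_{\text{extensions } \tilde g}\E_{\NN(\mf v)\leq N}\tilde g(\mf v),
\]
where $c_N$ is bounded above and below by Corollary \ref{lem: bounds for number of ideals of norm at most x}, with the unit count adjusting the constant. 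So (v) implies (iv). For the converse, apply (iv) to each $\psi$-twist; this is only possible if $\psi\circ$class can itself be absorbed into $\chi'$. Otherwise I will instead run the argument via Theorem \ref{C22intro}: if some extension has nonzero mean, Hal\'asz gives $\D(\tilde g,\NN^{i\tau})<\infty$, and its mean is asymptotic to a main term $\asymp x^{1+i\tau}$ times a non-vanishing Euler product. One then checks that the same holds for the sum over principal ideals, since twisting by a nontrivial $\psi$ changes the distance.

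\emph{(iii)$\Rightarrow$(ii).} This is the hardest step, because it needs uniformity over all grids $P$, including grids with huge moduli. I would first reduce to grids of modulus $q\OK$ with $q\in\N$ and a fixed residue $b$. For a fixed modulus, I expand $\mathds 1_{u\equiv b}$ by splitting off the common part $g=\gcd(b,q)$ and using characters of $(\OK/q')^\times$; this handles all $P$ with bounded modulus via (iii). For large modulus $Q$, the trivial bound $|P\cap B_N|/|B_N|\ll 1/\NN(Q)$ makes the contribution small uniformly. The remaining issue is the interchange of $\sup$ and $\lim$ when $Q$ grows with $N$. I would handle this by a Cauchy–Schwarz/Turán–Kubilius-style argument, or by a Daboussi-type large sieve over grids, mimicking \cite{daboussidelange82}. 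If this proves too delicate, a fallback is to prove the contrapositive of (iii)$\Rightarrow$(ii) through the pretentious structure given by Theorem \ref{C22intro}. This step is the main obstacle.
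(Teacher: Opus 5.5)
Your cycle does not close. You leave (iii)$\Rightarrow$(ii) open, name the interchange of $\sup_P$ and $\lim_N$ as the main obstacle, and propose Tur\'an--Kubilius or a large sieve for it. None of that is needed, because the counting bound you already invoke is uniform in both $P$ and $N$.

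If $P$ has moduli $a_1,a_2$, then on $\{\NN(u)\le N\}$ the parameters $m,n$ take at most $C_d\sqrt N/a_1+1$ and $C_d\sqrt N/a_2+1$ values. Hence $\bigl|\E_{\NN(u)\le N}\mathds 1_P(u)f(u)\bigr|\ll_d (a_1^{-1}+N^{-1/2})(a_2^{-1}+N^{-1/2})$ for every $P$ and every $N$, including moduli that grow with $N$. So for fixed $\varepsilon>0$ and all large $N$, only grids with $a_1,a_2\le A(\varepsilon)$ can give an average of size $\ge\varepsilon$. After reducing offsets modulo $a_i$ these form a finite family, and each member's average tends to $0$ by (i). This is exactly the paper's proof of (i)$\Rightarrow$(ii), phrased there by contradiction and pigeonhole. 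Combined with your fixed-modulus reduction, it closes (iii)$\Rightarrow$(ii).

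The step in that part of your plan that actually needs care is the bounded-modulus one. Splitting off $\gcd(b,q)$ as an element presupposes that the ideal $(b)+(q)$ is principal. That fails in class number $>1$: for $q=2$, $b=1+\sqrt{-5}$ in $\Z[\sqrt{-5}]$, the ideal $(b)+(q)$ is the non-principal prime above $2$. The paper's ``factoring out the common factors'' in (iii)$\Rightarrow$(i) relies on the same move, so neither argument covers that case as written.

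For (v)$\Rightarrow$(iv), your orthogonality over the $|G|$ extensions is the paper's computation isolating $w_{\vec 0,N}C_{\vec 0,N}(f\chi')$.

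For (iv)$\Rightarrow$(v), your first idea cannot work, since $\psi\circ\mathrm{class}$ is identically $1$ on principal ideals. Your Hal\'asz fallback rests on an input you only assert: that $\D(\psi,\NN(\mf u)^{it})=\infty$ for every nontrivial class-group character $\psi$ and every $t\in\R$, i.e.\ non-vanishing of the Hecke $L$-functions $L(s,\psi)$ on $\text{Re}\, s=1$. This is true but external to the paper. The paper avoids it by going through (i). By Lemma~\ref{cl2}, the ideals in the class of $I_{\vec i}$ are the $\frac{(u)}{(\NN(I_{\vec i}))}I_{\vec i}$ with $u$ in a finite union of grids. So \eqref{fweopf0} writes the mean of \emph{every} extension of $f\chi'$ as a bounded combination of grid averages of $f\chi'$ over dilated balls. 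These vanish because $f\chi'$ inherits aperiodicity from $f$, $\chi'$ being periodic.

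Similarly, (i)$\Rightarrow$(iv) follows at once from the pointwise identity $\chi'=\chi+\mathds 1_B$, with $B$ a finite union of grids. This needs neither multiplicativity of $\chi'$ nor factoring a $\mf p$-part out of an element, which is impossible when $\mf p$ is non-principal. So in that direction your M\"obius-type inversion is unnecessary.
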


We remark that in (v), one can not replace modified Dirichlet characters by Dirichlet characters, since a $\U$-valued multiplicative function may not admit an extension.

We will split the proof of Proposition~\ref{prop: dircharequivalence} into a series of lemmas.
\begin{lemma}
In Proposition \ref{prop: dircharequivalence}, $(i)$ is equivalent to $(ii)$.
\end{lemma}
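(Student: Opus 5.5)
The plan is to compare the two normalizations directly. Averaging over the grid $P$ rescales an average over the whole ball $B_N:=\{u\in\Z[\tau_{d}]^{\times}\colon \NN(u)\leq N\}$ by the relative density of $P$ in $B_N$:
\[
\E_{u\in P,\NN(u)\leq N}f(u)=\frac{|B_N|}{|P\cap B_N|}\,\E_{u\in B_N}\mathds{1}_{P}(u)f(u).
\]
So the two statements differ only in how this density factor behaves and in uniformity over $P$. Throughout we identify $u=x+y\tau_{d}$ with $(x,y)\in\Z^2$. Then $B_N$ becomes the lattice points of an ellipse of area $\asymp_d N$, contained in a box $[-c\sqrt N,c\sqrt N]^2$ with $c$ depending only on $d$. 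The grid $P$ becomes $\{x\equiv b_1 \ (\mathrm{mod}\ a_1),\ y\equiv b_2 \ (\mathrm{mod}\ a_2)\}$; we may reduce $b_i$ modulo $a_i$ without changing $P$. The single point $u=0$ is irrelevant, since its weight is $O(1/N)$.

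For (ii)$\Rightarrow$(i), fix $P$. Standard lattice point counting in a dilated ellipse (area term plus boundary error) gives $|P\cap B_N|/|B_N|\to 1/(a_1a_2)>0$. The prefactor $|B_N|/|P\cap B_N|$ is therefore bounded for large $N$. Since (ii) makes the remaining factor tend to $0$, the average over $P$ tends to $0$, which is (i).

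For (i)$\Rightarrow$(ii) the issue is uniformity in $P$, and this is the main step. I will use the trivial bound
\[
\left|\E_{u\in B_N}\mathds{1}_P(u)f(u)\right|\leq \frac{|P\cap B_N|}{|B_N|},
\]
together with a uniform count from the box containment. An interval of length $2c\sqrt N$ contains at most $2c\sqrt N/a_i+1$ terms of a progression with difference $a_i$, so
\[
\frac{|P\cap B_N|}{|B_N|}\ll_d \frac{1}{N}\Bigl(\frac{\sqrt N}{a_1}+1\Bigr)\Bigl(\frac{\sqrt N}{a_2}+1\Bigr)\ll \Bigl(\frac1{\min(a_1,a_2)}+\frac{1}{\sqrt N}\Bigr)\Bigl(1+\frac1{\sqrt N}\Bigr).
\]

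Given $\e>0$, choose $L$ with $1/L<\e$. Every grid with $\max(a_1,a_2)\geq L$ then contributes $\ll \e+N^{-1/2}$, uniformly in $P$. I will double-check here that the bound uses the larger modulus: with $a_1\geq L$ the first factor alone gives $(1/L+1/\sqrt N)$ and the second factor is $O(1)$, so this works.

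The grids with $a_1,a_2<L$ form, after reducing $b_i$ modulo $a_i$, a finite family. For each member, (i) and the convergence of $|P\cap B_N|/|B_N|$ from the first step give $\E_{u\in B_N}\mathds{1}_Pf\to0$. A finite maximum of sequences tending to $0$ tends to $0$. Hence $\limsup_N\sup_P|\E_{u\in B_N}\mathds{1}_Pf|\ll\e$, and letting $\e\to0$ gives (ii).
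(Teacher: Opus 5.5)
Your proof is correct and follows essentially the paper's argument: the trivial bound $|\E_{u\in B_N}\mathds{1}_P(u)f(u)|\le |P\cap B_N|/|B_N|$, combined with the box-containment count, forces both moduli to be bounded, and this leaves finitely many grids to which (i) applies. The paper phrases this by contradiction with a pigeonhole step, whereas you argue directly and also justify (ii)$\Rightarrow$(i) via the positive density $1/(a_1a_2)$, a direction the paper simply calls trivial. As you noticed yourself, the useful form of your displayed bound has $\max(a_1,a_2)$ rather than $\min(a_1,a_2)$. The paper's write-up has the same imprecision: it states only a bound on the minimum, while the pigeonhole step needs both moduli bounded.
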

\begin{proof}
As was the case in \cite[Appendix A]{S23}, 
the direction (ii)$\Rightarrow$(i) is trivial. We show that (i)$\Rightarrow$(ii).

Suppose that we can find an infinite sequence $(N_k)$ of natural numbers, a sequence of 
$(P_k)$ in $\mathcal{AP}[\tau_{d}]$ and $\varepsilon>0$ such that
\[
\left| \E_{ \NN(u) \leq N_k} \mathds{1}_{P_k}(u) f(u)\right| \geq \varepsilon.
\]


 Assume that $P_{k}=
\{(a_{k,1}m+b_{k,1})+(a_{k,2}n+b_{k,2})\tau_{d}\in \Z[\tau_{d}]\colon m,n\in\Z\}$
        for some $a_{k,1},b_{k,1},a_{k,2},b_{k,2}\in\Z$ with $a_{k,1},a_{k,2}\in\N$. Then $\frac{\vert P_{k}\vert}{\vert \{u\colon \NN(u)\leq N_{k}\}\vert}\geq \varepsilon$. On the other hand, since $m$ and $n$ can take at most $\lceil\frac{10\sqrt{N_{k}}}{a_{k,1}}\rceil$ and $\lceil\frac{10\sqrt{N_{k}}}{a_{k,2}}\rceil$ consecutive integers in $P_{k}$, we have that
        $$\frac{\vert P_{k}\vert}{\vert \{u\colon \NN(u)\leq N_{k}\}\vert}\leq \frac{(\frac{10\sqrt{N}_{k}}{a_{k,1}}+1)(\frac{10\sqrt{N}_{k}}{a_{k,2}}+1)}{C_{d}N_{k}}\leq \frac{(\frac{10}{a_{k,1}}+\frac{1}{\sqrt{N_{k}}})(\frac{10}{a_{k,2}}+\frac{1}{\sqrt{N_{k}}})}{C_{d}}$$
        for some $C_{d}>0$ depending only on $d$. So 
        $(\varepsilon C_{d})^{1/2}\leq \frac{10}{\min\{a_{k,1},a_{k,2}\}}.$
If $k$ is sufficiently large, then $\min\{a_{k,1},a_{k,2}\}\ll_{d, \varepsilon} 1$.
By the pigeonhole principle, we may assume that all the $P_{k}$ are the same by passing to a subsequence if necessary. This contradicts to (i) and we are done.      
\end{proof}

Next we show the equivalence between (i) and (iii).
\begin{lemma}\label{lem: dircharequivalence}
In Proposition \ref{prop: dircharequivalence}, $(i)$ is equivalent to $(iii)$.
\end{lemma}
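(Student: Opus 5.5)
We will prove the two implications separately; both reduce to finite Fourier analysis on the finite ring $\OK/I$.

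\emph{(i)$\Rightarrow$(iii).} Let $\chi$ be a Dirichlet character with period $I$, where $I$ is a nontrivial ideal. Pick a nonzero integer $r\in I$; for instance $r=\NN(I)$ works, since $\NN(I)\in I$. Then $\chi$ is constant on residue classes modulo $r\OK$. Each residue class $b_1+b_2\tau_d+r\OK$ is the grid
\[
\{(rm+b_1)+(rn+b_2)\tau_d\colon m,n\in\Z\}\in\mathcal{AP}[\tau_d].
\]
The average of $f\chi$ over the ball $\{\NN(u)\le N\}$ therefore splits as a finite sum, over the $r^2$ classes, of $\chi(b_1+b_2\tau_d)$ times the average of $\mathds 1_P f$ over the ball. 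By (i), each of these averages tends to $0$, and since there are only finitely many classes, the whole average tends to $0$.

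\emph{(iii)$\Rightarrow$(i).} Fix a grid $P$ with parameters $a_1,a_2,b_1,b_2$, and set $r=a_1a_2$. Then $P$ is a finite union of residue classes $c+r\OK$, so it suffices to show that averages of $f$ along a single class $c+r\OK$ vanish. We induct on $r$, or more precisely on the gcd structure. Write $g$ for the gcd ideal $(c)+(r)$. We split into two cases.

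\begin{enumerate}
\item If $(c)$ and $(r)$ are coprime, then $c$ is a unit modulo $r$. Orthogonality of characters on the finite abelian group $(\OK/r\OK)^\times$ gives
\[
\mathds 1_{u\equiv c}=\frac{1}{|(\OK/r\OK)^\times|}\sum_\chi\overline{\chi(c)}\chi(u),
\]
where $\chi$ ranges over Dirichlet characters of period $(r)$. Each resulting term is an average of $f\chi$, which vanishes by (iii).
\item If they are not coprime, some prime ideal $\mf p$ divides both $(c)$ and $(r)$. When $\OK$ is not a PID we cannot simply factor out an element, and we handle this by a reduction. The set $\{u\equiv c\bmod r,\ \NN(u)\le N\}$ can be partitioned further, by residues modulo $r^2$ say, according to the exact $\mf p$-adic valuations. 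Alternatively, and more simply, there is an element $\pi\in\OK$ with $\pi\mid c$ and $\pi\mid r$ whenever $c$ and $r$ share a nonunit common divisor. In that case every $u$ in the class is $\pi u'$ with $u'\equiv c/\pi\bmod r/\pi$. By complete multiplicativity, $f(u)=f(\pi)f(u')$, and $\NN(u)\le N$ iff $\NN(u')\le N/\NN(\pi)$. The average then reduces to the analogous average for a strictly smaller modulus, and the induction applies.
\end{enumerate}

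The main obstacle is case (ii) when $\gcd$ in the ideal sense is a non-principal ideal: $c\OK+r\OK$ need not be generated by an element. To handle it, refine the class $c+r\OK$ into classes modulo $r^k$, for $k$ large. By a Möbius-type inclusion–exclusion over ideal divisors of $(r)$, express $\mathds 1_{c+r\OK}$ as a finite linear combination of functions of the form $u\mapsto\mathds 1_{\mf d\mid u}\,\psi(u)$, where $\psi$ is a Dirichlet character modulo $r$ evaluated on a "coprime part." The alternative is to first show that for $u\in\mf d$ with $\mf d$ non-principal, multiplying by a fixed element of the inverse class reduces to the principal case. That reduction uses $f(\e)=1$ and the finiteness of the class group.

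A cleaner route, which we will try first, is this. Choose $w\in\OK$ with $w\mathfrak d^{-1}$ integral and coprime to $r$; this is possible by weak approximation or the Chinese remainder theorem. The map $u\mapsto wu$ then sends the class injectively into a class modulo $wr$ whose representative lies in a principal multiple of $\mf d w\mf d^{-1}$. Since norms scale by $\NN(w)$ and the resulting averages differ by the constant $f(w)$ and a density factor, this gives a reduction to the principal-gcd case above.
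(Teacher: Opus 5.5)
Your direction (i)$\Rightarrow$(iii) is correct and is the paper's argument; choosing $r=\NN(I)\in I$ makes it cleaner. For (iii)$\Rightarrow$(i), your coprime case and your reduction by a common element divisor $\pi$ are exactly the paper's steps: it "factors out the common factors" and then expands $\mathds 1_{\rho+\langle\alpha\rangle}$ in characters. This settles the PID case.

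\textbf{The gap.} It is the case you flag yourself, $\mf g:=(c)+(r)$ not principal, and none of your proposed fixes can work. Every $u\in c+r\OK$ satisfies $(u)+(r)=\mf g$. Both of your operations preserve the ideal class $[\mf g]$:
\begin{itemize}
\item multiplying the class by $w$ gives new gcd $(w)\mf g$;
\item dividing out a common element $\pi$ gives new gcd $\mf g(\pi)^{-1}$.
\end{itemize}
Your case (i) needs gcd $(1)$, so when $[\mf g]\neq 1$ no sequence of these steps reaches it. In your "cleaner route" the new gcd is $(w)\mf d$, again in the class of $\mf d$, so the "reduction to the principal-gcd case" is circular.

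Inclusion--exclusion in Dirichlet characters alone does not help either. At a prime $\mf p$, every Dirichlet character's local factor is constant ($0$ or $1$) on $\mf p$. So characters cannot separate $v_{\mf p}(u)=1$ from $v_{\mf p}(u)\ge 2$; for instance $\mathds 1_{\mf p^2\mid u}$ with $\mf p^2$ non-principal is not in their span.

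The paper's "factor out common irreducible factors" sentence silently skips this case too. For example, in $\Z[\sqrt{-5}]$ take $\rho=1+\sqrt{-5}$ and $\alpha=2$. They are non-associate irreducibles with no common non-unit factor, yet $\rho^2\in 2\OK$, so $\rho$ is not invertible mod $2$.

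\textbf{The missing idea.} Do not aim for gcd $(1)$; aim for a gcd that is a prime $\mf q\nmid (r)$. Divisibility by such a $\mf q$ is detected by the principal character: $\mathds 1_{\mf q\mid v}=1-\chi_0^{(\mf q)}(v)$, where $\chi_0^{(\mf q)}$ is the principal character modulo $\mf q$.

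Concretely:
\begin{enumerate}
\item Pick a prime ideal $\mf q$ in the class of $\mf g$ with $\mf q\nmid (r)$. This exists because every ideal class contains infinitely many prime ideals.
\item Pick an integral ideal $\mf a$ with $\mf g\mf a=(\pi)$, and $w$ with $(w)=\mf q\mf a$.
\item The map $u\mapsto v=wu/\pi$ is a bijection from $c+r\OK$ onto $c'+r'\OK$, where $c'=wc/\pi$ and $r'=wr/\pi$ lie in $\OK$. It satisfies $\NN(v)=\NN(u)\NN(w)/\NN(\pi)$ and $f(u)=f(\pi)\overline{f(w)}f(v)$.
\item We have $(r')=\mf q\mf m$ with $\mf m:=(r)\mf g^{-1}$ coprime to $\mf q$, and $(c')+(r')=\mf q$. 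Hence $c'\in\mf q$, $c'$ is a unit mod $\mf m$, and by the Chinese remainder theorem
\[
\mathds 1_{c'+r'\OK}(v)=\bigl(1-\chi_0^{(\mf q)}(v)\bigr)\cdot\frac{1}{|(\OK/\mf m)^\times|}\sum_{\chi \bmod \mf m}\overline{\chi}(c')\,\chi(v).
\]
This is a linear combination of Dirichlet characters, of periods $\mf m$ and $\mf q\mf m$.
\end{enumerate}
Condition (iii) then kills the average directly, with no induction needed.
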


\begin{proof}


We first show that (iii)$\Rightarrow$(i).
Let $P$ be the set given by (\ref{eq:adcd}). 
It suffices to show that
\[
\lim_{N\to\infty}\E_{\NN(n_1+n_2\tau_{d}) \leq N}\mathds{1}_{a_{1}\Z+b_{1}}(n_1)\mathds{1}_{a_{2}\Z+b_{2}}(n_2)f(n_1+n_2\tau_{d})=0.
\]
By taking common denominators, one can write $\mathds{1}_{a_{1}\Z+b_{1}}(n_1)\mathds{1}_{a_{2}\Z+b_{2}}(n_2)$ as a linear combination of characteristic functions of the form $\mathds{1}_{\alpha\Z+b}(n_1)\mathds{1}_{\alpha\Z+b'}(n_2)$ for some $\alpha\in\Z[\tau_{d}]^{\times}$ and $b, b'\in\Z$,\footnote{Here we can take $\alpha$ to be in $\Z^{\times}$; later we need to convert $\alpha$ to a number in $\Z[\tau_{d}]^{\times}.$} which means it is enough to show that
\[
\lim_{N\to\infty}\E_{\NN(n_1+n_2\tau_{d}) \leq N}\mathds{1}_{\alpha\Z+b}(n_1)\mathds{1}_{\alpha\Z+b'}(n_2)f(n_1+n_2\tau_{d})=0.
\]
Again, up to multiples, this is the same as showing that
\begin{equation}\label{progression0eq}
\lim_{N\to\infty}\E_{\NN(\alpha u+\rho) \leq N} f(\alpha u+\rho)=0,
\end{equation}
 where $\rho=b+b'\tau_{d}$.
 If $\rho=0$, then (\ref{progression0eq}) holds since $f(\alpha u)=f(\alpha)f(u)$ and since (iii) holds for $\chi\equiv 1$. Now we assume that $\rho\neq 0$.
 Because $d\in\N$, one can factor $\rho$ into irreducibles (although not necessarily in a unique way), so factoring out the common factors, it suffices to show that for every principal ideal $\langle \alpha\rangle \subseteq \Z[\tau_{d}]$, and every $\rho \in (\Z[\tau_{d}]/\langle \alpha \rangle)^{\times}$, the average in \eqref{progression0eq} is 0. We now simply notice that, $\mathds{1}_{\langle \alpha \rangle +\rho}(u)$ can be written as a finite linear combination of Dirichlet characters of period $\langle \alpha\rangle$, since $\rho$ is now necessarily invertible in $\Z[\tau_{d}]/\langle\alpha\rangle$ by construction. Thus we have that (iii)$\Rightarrow$(i).

Now we show that (i)$\Rightarrow$(iii).
 Let $\chi$ be a non-trivial Dirichlet character with period $I$. We observe that   there exists some non-unit $n \in \mathcal{O}_K^{\times}$   such that $nx \in I$ for every $x \in \OK$. Thus, we may write $\lim_{N\to\infty}\E_{u\in\Z[\tau_{d}]^{\times}\colon \NN(u)\leq N}(f\chi)(u)$ as a linear combination of averages of the form
\[
\lim_{N \to \infty} \E_{\NN(nu+\rho) \leq N} \chi(nu+\rho)f(nu+\rho)=\lim_{N \to \infty} \E_{\NN(nu+\rho) \leq N} \chi(\rho)f(nu+\rho).
\]
So as before, by using the basis $\{1,\tau_{d}\}$ we may easily convert this  into a linear combination of expressions of the form \eqref{eq: aperiodicdef}, completing the proof.
\end{proof}

\begin{lemma}\label{dircharequivalence}
In Proposition \ref{prop: dircharequivalence}, $(i)$ is equivalent to $(iv)$.
\end{lemma}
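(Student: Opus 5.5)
Since (i) and (iii) are already known to be equivalent (Lemma~\ref{lem: dircharequivalence}), the plan is to compare (iii) and (iv) directly and prove (i)$\Leftrightarrow$(iv) through that comparison. The key observation is that a Dirichlet character $\chi$ of period $I$ and its modified version $\chi'$ agree on every $u$ with $u \bmod I\in(\OK/I)^{\times}$. On the remaining residues, $\chi$ vanishes while $\chi'$ takes the value $1$. Hence
\[
\chi'(u)=\chi(u)+\mathds{1}_{E_I}(u),\qquad E_I:=\{u\in\OK^{\times}\colon u \bmod I\notin (\OK/I)^{\times}\},
\]
and therefore
\[
\E_{\NN(u)\leq N}(f\chi')(u)=\E_{\NN(u)\leq N}(f\chi)(u)+\E_{\NN(u)\leq N}\mathds{1}_{E_I}(u)f(u).
\]

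For (i)$\Rightarrow$(iv), assume $f$ is aperiodic. The first term on the right tends to $0$ by (iii), which holds via Lemma~\ref{lem: dircharequivalence}. For the second term, $I$ contains a nonzero integer $n\in I\cap\Z$, for instance the norm of any nonzero element of $I$. Membership in $E_I$ depends only on $u \bmod n\OK$. In the basis $\{1,\tau_d\}$, the set $E_I$ is therefore a finite union of grids $\{(nm+b_1)+(nm'+b_2)\tau_d\}$ from $\mathcal{AP}[\tau_d]$. The average of $f$ over each such grid tends to $0$ by (i), since averaging $\mathds 1_P f$ over the ball differs from averaging over $P\cap$ ball only by the positive density factor of $P$. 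So the second term also tends to $0$, and (iv) follows.

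For (iv)$\Rightarrow$(i), I would show (iii) and then apply Lemma~\ref{lem: dircharequivalence}. Fix a Dirichlet character $\chi$ of period $I$; by the identity above it suffices to show $\E_{\NN(u)\leq N}\mathds 1_{E_I}(u)f(u)\to 0$. I will run an induction on $\NN(I)$. Each $u\in E_I$ shares a prime ideal factor $\mathfrak p\mid I$ with $I$, so by inclusion--exclusion over the finitely many prime ideals $\mathfrak p\mid I$, the indicator $\mathds 1_{E_I}$ is a $\pm$ combination of indicators $\mathds 1_{\mathfrak a}(u)$ of nontrivial ideals $\mathfrak a\mid I$. The delicate point, and the main obstacle, is that $\mathfrak a$ need not be principal, so one cannot simply write $u=au'$ and use complete multiplicativity. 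To handle it, I would replace $\mathfrak a$ by a principal ideal $(a)\subseteq\mathfrak a$ up to a residue condition: $\mathds 1_{\mathfrak a}(u)$ is periodic mod $(a)$ for any $a\in\mathfrak a$, in particular for $a=\NN(\mathfrak a)\in\Z$. Its average against $f$ then splits over residues $\rho$ mod $(a)$. The residue $\rho=0$ contributes $f(a)\,\E f(u')$ over a rescaled ball, and $\E f(u')\to0$ by (iv) with $\chi'=1$. For the other residues I would factor out $g=\gcd$-type common elements where possible. Where the residue is coprime to the modulus, I would expand in Dirichlet characters of smaller-norm period and invoke (iv) together with the induction hypothesis (via the identity $\chi=\chi'-\mathds 1_{E}$). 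Residues that are neither $0$ nor invertible are handled by repeating the reduction with a strictly smaller ideal.

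If the induction bookkeeping becomes messy, a fallback is to note that (iv) for all $\chi'$ gives, by the same identity, that $\E(f\chi)-\E(f\chi'')\to0$ for pairs of characters differing only on $E_I$. Summing over all characters mod $(n)$ then isolates averages of $f$ over residue classes $\rho+(n)$ with $\rho$ invertible, which yields (i) directly on invertible classes. Noninvertible classes are reduced by pulling out a common integer factor, exactly as in the proof of (iii)$\Rightarrow$(i).
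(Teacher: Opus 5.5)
Your (i)$\Rightarrow$(iv) is fine and is essentially the paper's argument. The gap is in (iv)$\Rightarrow$(i), at exactly the obstacle you flag, and your proposed workaround does not resolve it.

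After inclusion--exclusion you need $\E_{\NN(u)\le N}\mathds{1}_{\mathfrak a}(u)f(u)\to 0$ for squarefree $\mathfrak a\mid I$. For non-principal $\mathfrak a$, splitting into residues modulo $(\NN(\mathfrak a))$ makes no progress. Since $\NN(\mathfrak a)\in\mathfrak a$, every $\rho\in\mathfrak a$ has $(\rho)+(\NN(\mathfrak a))\subseteq\mathfrak a\neq\OK$, so the ``coprime residue'' case never occurs. For the residues with $(\rho)+(\NN(\mathfrak a))=\mathfrak a$ (which exist) there is no non-unit element to factor out and no smaller ideal to pass to. The induction is therefore circular.

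The fallback fails in the same way. The identity $\chi'=\chi+\mathds{1}_{E_I}$ only tells you that every $\E f\chi$ ($\chi$ of period $I$) equals $-X_N+o(1)$, where $X_N:=\E_{\NN(u)\le N}\mathds{1}_{E_I}(u)f(u)$. Fourier inversion then gives $\E\mathds{1}_{\rho+I}f=-X_N\mathds{1}_{\rho\equiv 1}+o(1)$ for invertible $\rho$. So the residue class $1+I$ is not isolated unless you already know $X_N\to 0$, which is the point at issue. Moreover, non-invertible classes cannot in general be reduced by pulling out a common factor. In $\Z[\sqrt{-5}]$, $1+\sqrt{-5}$ is non-invertible modulo $2$ (both lie in the non-principal prime $(2,1+\sqrt{-5})$), yet they have no common non-unit divisor. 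Even in $\Z[i]$, $1+i$ and $2$ share no \emph{integer} factor.

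For comparison, the paper's induction on $\NN(\alpha)$ handles the non-invertible classes $u_i+\langle\alpha\rangle$ by asserting a common non-unit factor $t$ of $u_i$ and $\alpha$. That is valid when $\Z[\tau_d]$ is a PID, and in that case your route is actually cleaner. With $B_N=\{u\colon \NN(u)\le N\}$, one has $\E_{u\in B_N}\mathds{1}_{(a)}(u)f(u)=f(a)\frac{|B_{N/\NN(a)}|}{|B_N|}\E_{u'\in B_{N/\NN(a)}}f(u')\to 0$ by (iv) with $\chi'\equiv 1$, and no induction is needed.

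What removes the principality issue from your step (iv)$\Rightarrow$(iii) is the units:
\begin{itemize}
\item Given a proper ideal $I$, choose $k$ with $2\notin I^k$ (e.g.\ $k=3$), and a Dirichlet character $\chi$ of period $I^k$ with $\chi(-1)=-1$; such $\chi$ exists because $-1\neq 1$ in $(\OK/I^k)^{\times}$.
\item Since $f(-1)=1$ and $B_N=-B_N$, you get $\sum_{u\in B_N}f(u)\chi(u)=0$ for every $N$, while $E_{I^k}=E_I$.
\item Applying (iv) to the modification of $\chi$ therefore gives $X_N\to 0$ directly, and your identity then yields (iii) for every Dirichlet character.
\end{itemize}
Be aware that the cited step (iii)$\Rightarrow$(i) also factors out common elements. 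For non-PIDs that step needs its own justification as well.
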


\begin{proof}
We first show that (i)$\Rightarrow$(iv).
Let $\chi'$ be a modified Dirichlet character of period $I$. Then we may write $\chi'(u)=\chi(u)+\mathds{1}_{B}(u)$ for some  Dirichlet character $\chi$ of period $I$, where $B$ is the set of $u$ with $u \mod I\notin (\OK/I)^{\times}$. Note that the set of $u\in\OK$ for which $I\subseteq (u)$ is the set of $u$ for which $(\NN(I),\NN(u))\neq 1$, which is clearly the union of finitely many elements in $\mathcal{AP}[\tau_{d}]$.
So we may write $\chi'=\chi+\sum_{i=1}^{k}\mathds{1}_{P_{i}}$ for some $k\in\N\cup\{0\}$ and $P_{1},\dots,P_{k}\in \mathcal{AP}[\tau_{d}]$. Since Condition (i) implies Conditions (ii) and (iii), we have that 
\begin{eqnarray*}
\lim_{N\to\infty}\E_{u\in\mathcal{O}_{K}\colon \NN(u)\leq N}(f\chi')(u) & = & \lim_{N\to\infty}\E_{u\in\mathcal{O}_{K}\colon \NN(u)\leq N}(f\chi)(u)\\
& + & \sum_{i=1}^{k}\lim_{N\to\infty}\E_{u\in\mathcal{O}_{K}\colon \NN(u)\leq N}\mathds{1}_{P_{i}}(u)f(u)=0.
\end{eqnarray*}
So (iv) holds.

We now show that (iv)$\Rightarrow$(i). 
Similar to the argument in Lemma~\ref{lem: dircharequivalence},
it suffices to show that for every principal ideal $\langle \alpha\rangle \subseteq \Z[\tau_{d}]$, and every $\rho \in (\Z[\tau_{d}]/\langle \alpha \rangle)^{\times}$, the average in \eqref{progression0eq} is 0. Clearly \eqref{progression0eq} holds  when $\alpha=1$. Now assume that \eqref{progression0eq} holds  when $\NN(\alpha)\leq k$ for some $k\in\N$. Take $\alpha$ with $\NN(\alpha)\geq k+1$ and the additional property (that can be assumed without loss of generality) that if $\alpha=t\alpha'$ for some non-unit $t$, then $\NN(\alpha')\leq k$. Similar to the argument in Lemma~\ref{lem: dircharequivalence}, to show that \eqref{progression0eq} holds for this $\alpha$, it suffices to show that 
 \begin{equation}\nonumber
        \lim_{N\to\infty}\E_{u\in\Z[\tau_{d}]^{\times}\colon \NN(u)\leq N}(f\chi)(u)=0
        \end{equation}
for all Dirichlet characters $\chi$ of period $\langle\alpha\rangle$. We may rewrite  $\chi=\chi'-\mathds{1}_{B}$ for some modified Dirichlet character $\chi'$ of period $\langle \alpha\rangle$,  where $B$ is the set of $u$ with $u \mod \langle \alpha\rangle\notin (\OK/\langle \alpha\rangle)^{\times}$. It is clear that the set $B$ can be expressed as the disjoint union $B=\sqcup_{i=1}^{\ell}(u_{i}+\langle \alpha\rangle)$ for some $\ell\in\N$ and $u_{i}\in\OK$. Since (iv) holds, it suffices to show that 
 \begin{equation}\nonumber
\lim_{N\to\infty}\E_{u\in\Z[\tau_{d}]^{\times}\colon \NN(u)\leq N}\mathds{1}_{u_{i}+\langle \alpha\rangle}(u)f(u)=0
        \end{equation}
for all $1\leq i\leq \ell$. Since $u_{i}\notin (\OK/\langle \alpha\rangle)^{\times}$, there exists a non-unit $t\in\OK$ such that $u_{i}=tu'_{i}$ and $\alpha=t\alpha'$ for some $u'_{i},\alpha'\in\OK^{\times}$. So it suffices to show that
\begin{equation}\nonumber
\lim_{N\to\infty}\E_{u'\in\Z[\tau_{d}]^{\times}\colon \NN(u')\leq N}\mathds{1}_{u'_{i}+\langle \alpha'\rangle}(u')f(tu')=0.
        \end{equation} 
 Since $\NN(\alpha')<\NN(\alpha)$, the conclusion follows from the induction hypothesis and the fact that $f(tu')=f(t)f(u')$.
\end{proof}

 We are now in position to complete the proof of Proposition \ref{characterization}. 
 As usual, we let $K=\Q(\sqrt{d})$. Assume that $f\colon\mathcal{O}_K^{\times}\to\S$ is a completely multiplicative function and that $(I_{1},\dots,I_{k})$ is an ideal class representation. Let $\tilde{f}$ be an extension of $f$ (which exists by Proposition~\ref{prop: extensions}).
 Assume that $I_{i}^{d_{i}}=(x_{i})$ for some $x_{i}\in\mathcal{O}_K^{\times}$ and set $A=[d_{1}]\times \dots\times [d_{k}]$. For $\vec{i}=(i_{1},\dots,i_{k})\in A$, let $I_{\vec{i}}:=I_{1}^{i_{1}}\dots I_{k}^{i_{k}}$ and let $\mathfrak{I}_{\vec{i}}(K)$ denote the set of integer ideals in the same ideal class as $I_{\vec{i}}$.

   Since $f(x_{i})\neq 0$, we have that $f(\NN(I_{i}))\neq 0$.
Denote $$w_{\vec{i},N}:=\frac{\vert\{\mathfrak{v}\in \mathfrak{I}_{\vec{i}}(K)\colon \NN(\mathfrak{v})\leq N\}\vert}{\vert\{\mathfrak{v}\in \mathfrak{I}(K)\colon \NN(\mathfrak{v})\leq N\}\vert}.$$
 Then by  Lemma \ref{cl2},
    \begin{equation}\label{fweopf0}
    \begin{split}
        &\qquad \E_{\mathfrak{v}\in \mathfrak{I}(K), \NN(\mathfrak{v})\leq N}\tilde{f}(\mathfrak{v})
        \\&=\sum_{\vec{i}\in A} \E_{\mathfrak{v}\in \mathfrak{I}(K), \NN(\mathfrak{v})\leq N}\mathds{1}_{\mathfrak{v}\in \mathfrak{I}_{\vec{i}}(K)}\tilde{f}(\mathfrak{v})
        \\&=\sum_{\vec{i}\in A}w_{\vec{i},N} \E_{\frac{(u)}{(\NN(I_{\vec{i}}))}\in \mathfrak{Q}(K)\colon \frac{(u)}{(\NN(I_{\vec{i}}))}I_{\vec{i}}\in \mathfrak{I}(K), \NN(\frac{(u)}{(\NN(I_{\vec{i}}))}I_{\vec{i}})\leq N} \tilde{f}\left(\frac{(u)}{(\NN(I_{\vec{i}}))}I_{\vec{i}}\right)
        \\&=\sum_{\vec{i}\in A}w_{\vec{i},N}\tilde{f}(I_{\vec{i}})\cdot  \frac{1}{T}\E_{u\in \mathcal{O}_K^{\times}\colon \frac{(u)}{(\NN(I_{\vec{i}}))}I_{\vec{i}}\in \mathfrak{I}(K), \NN(\frac{(u)}{(\NN(I_{\vec{i}}))}I_{\vec{i}})\leq N} f(u)f(\NN(I_{\vec{i}}))^{-1}
        \\&=\sum_{\vec{i}\in A}w_{\vec{i},N}C_{\vec{i},N}(f)\tilde{f}(I_{1})^{i_{1}}\dots \tilde{f}(I_{k})^{i_{k}}, 
    \end{split}
\end{equation}
where  $T$ is the number of units of $\Q(\sqrt{-d})$ which is finite by Lemma \ref{l32}.
$$C_{\vec{i},N}(f):=\frac{1}{T}\E_{u\in \mathcal{O}_K^{\times}\colon \frac{(u)}{(\NN(I_{\vec{i}}))}I_{\vec{i}}\in \mathfrak{I}(K), \NN(\frac{(u)}{(\NN(I_{\vec{i}}))}I_{\vec{i}})\leq N} f(u)f(\NN(I_{\vec{i}}))^{-1},$$
which is independent of the choices of the extension $\tilde{f}$.

Suppose first that (i) holds.  Now let $\chi'$ be a modified Dirichlet character. 
We first claim that the average of $f\chi'$ along every arithmetic progression in $\mathcal{AP}[\tau_{d}]$ is 0. Since the indicator function of every set in $\mathcal{AP}[\tau_{d}]$ is a linear combination Dirichlet characters, it suffices to show that the average of $f\chi'\chi$ is 0 for all Dirichlet characters $\chi$. However, since $\chi'\chi$ is a Dirichlet character, the claim follows from the fact that (i)$\Rightarrow$(iii).

Since the set of $u$ for which  $\frac{(u)}{(\NN(I_{\vec{i}}))}I_{\vec{i}}\in \mathfrak{I}(K)$ is the disjoint union of finitely many elements in $\mathcal{AP}[\tau_{d}]$, it follows from (iv) (which is equivalent to (i)) that   $\lim_{N\to\infty}C_{\vec{i},N}(f\chi')=0$ for all $\vec{i}$. 

So for any extension $\widetilde{f\chi'}$ of $f\chi'$, it follows from \eqref{fweopf0} that 
$$\lim_{N\to\infty} \E_{\mathfrak{v}\in \mathfrak{I}(K), \NN(\mathfrak{v})\leq N}\widetilde{f\chi'}(\mathfrak{v})=0.$$
This implies (v).

Conversely, assume that (v) holds. We show that this implies (iv). Let $\chi'$ be a modified Dirichlet character. 
Let $\mathcal{F}$ denote the set of all extensions of $f\chi'$.
Then it follows from (v) that $$\lim_{N\to\infty} \sum_{g\in\mathcal{F}}\E_{\mathfrak{v}\in \mathfrak{I}(K), \NN(\mathfrak{v})\leq N}g(\mathfrak{v})=0.$$
On the other hand, by \eqref{fweopf0}, we have that 
   \begin{eqnarray*}
 \sum_{g\in\mathcal{F}}\E_{\mathfrak{v}\in \mathfrak{I}(K), \NN(\mathfrak{v})\leq N}g(\mathfrak{v}) 
    & = & \sum_{g\in\mathcal{F}}\sum_{\vec{i}\in A}w_{\vec{i},N}C_{\vec{i},N}(f\chi')g(I_{1})^{i_{1}}\dots g(I_{k})^{i_{k}}
        \\& = &\sum_{a_{j}^{d_{j}}=(f\chi')(x_{j}), 1\leq j\leq k}\sum_{\vec{i}\in A}w_{\vec{i},N}C_{\vec{i},N}(f\chi')a_{1}^{i_{1}}\dots a_{k}^{i_{k}}
        \\& = &\sum_{\vec{i}\in A}w_{\vec{i},N}C_{\vec{i},N}(f\chi')\cdot\sum_{a_{j}^{d_{j}}=(f\chi')(x_{j}), 1\leq j\leq k}a_{1}^{i_{1}}\dots a_{k}^{i_{k}}
        \\& = & w_{\vec{0},N}C_{\vec{0},N}(f\chi').
 \end{eqnarray*}
So,
$$\lim_{N\to\infty}w_{\vec{0},N}C_{\vec{0},N}(f\chi')
        =\lim_{N\to\infty} \sum_{g\in\mathcal{F}}\E_{\mathfrak{v}\in \mathfrak{I}(K), \NN(\mathfrak{v})\leq N}g(\mathfrak{v})=0.$$
        On the other hand, it follows from \cite[Theorem~11.1.5]{murtyesmondebook} that
        $\lim_{N\to\infty}w_{\vec{0},N}=\frac{1}{|A|}$. So
        $\lim_{N\to\infty}C_{\vec{0},N}(f\chi')=0$ by taking their quotients. So (iv) holds for such $\chi'$ and 
we are done.

We are now ready to prove Theorem \ref{characterization}:

 
\begin{proof}[Proof of Theorem \ref{characterization}]  
Suppose that $f$ is not aperiodic. By  Lemma \ref{tv}, we have that $f(\e)=1$ for all units $\e$. By Proposition \ref{prop: dircharequivalence}, 
there exists a modified Dirichlet character $\chi\colon \Z[\tau_{d}]^{\times}\to\S$ and some 
         extension $\widetilde{f\chi}$ of $f\chi$ 
        such that the following fails to be true
        $$\lim_{N\to\infty}\E_{\mathfrak{v}\in \mathfrak{I}(\Q(\sqrt{-d}))\colon \NN(\mathfrak{v})\leq N}\widetilde{f\chi}(\mathfrak{v})=0.$$
 By Theorem \ref{C22intro}, we must have that $\D(\widetilde{f\chi},\NN(\mf u)^{i\tau})<\infty$ for some $\tau\in\R$.
 It is not hard to see that this implies that $f$ is  pretentious.
 %
\end{proof}
A consequence of Theorem~\ref{characterization} above is that it allows us to easily obtain the following measurability result in $\mathcal{M}$:
\begin{lemma}\label{measurability}
The set of pretentious completely multiplicative functions $\mathcal{M}_p$ is Borel.
\end{lemma}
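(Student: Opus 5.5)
By Theorem~\ref{characterization}, every $f\in\mathcal{M}$ is either aperiodic or pretentious. Any pretentious $f$ satisfies $f(\e)=1$ on units, and Proposition~\ref{prop: dircharequivalence} then shows it is not aperiodic. So $\mathcal{M}_p=\mathcal{M}\setminus\mathcal{M}_{ap}$, where $\mathcal{M}_{ap}$ is the set of aperiodic functions. It therefore suffices to show that $\mathcal{M}_{ap}$ is Borel; the plan is to write it as a countable Boolean combination of Borel sets using the definition \eqref{eq: aperiodicdef}.

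First, for each fixed $N$ and each grid $P\in\mathcal{AP}[\tau_d]$, consider the map
\[
f\mapsto \E_{u\in P,\NN(u)\le N}f(u).
\]
It is a finite average of the evaluation maps $f\mapsto f(u)$. These are continuous in the product topology, so the average is continuous on $\mathcal{M}$.

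Second, note that $\mathcal{AP}[\tau_d]$ is countable, since it is parametrized by $(a_1,b_1,a_2,b_2)\in\N\times\Z\times\N\times\Z$. A function $f$ is aperiodic if and only if, for every $P\in\mathcal{AP}[\tau_d]$ and every $k\in\N$, there is an $N_0$ such that for all $N\ge N_0$
\[
\Bigl|\E_{u\in P,\NN(u)\le N}f(u)\Bigr|\le 1/k.
\]
This condition covers both the existence of the limit and its vanishing, since a limit equal to $0$ is exactly this Cauchy-type statement with target $0$. Hence
\[
\mathcal{M}_{ap}=\bigcap_{P}\bigcap_{k}\bigcup_{N_0}\bigcap_{N\ge N_0}\Bigl\{f:\bigl|\E_{u\in P,\NN(u)\le N}f(u)\bigr|\le 1/k\Bigr\}.
\]
Each set in the innermost intersection is closed, being the preimage of a closed set under a continuous map. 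So $\mathcal{M}_{ap}$ is Borel (in fact $F_{\sigma\delta}$), and therefore $\mathcal{M}_p$ is Borel.

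The only genuine input is the dichotomy from Theorem~\ref{characterization} together with the disjointness of the two classes. Disjointness is the step to check carefully. Let $f$ be pretentious, witnessed by a modified character $\chi$, a real $\tau$ and an extension $\widetilde{f'}$ of $f'=f\overline{\chi}\NN^{-i\tau}$. Then $f\overline{\chi}$ has an extension, namely $\widetilde{f'}\cdot\NN^{i\tau}$, and this extension is at finite distance from $\NN(\mf u)^{i\tau}$. By Theorem~\ref{C22intro}(ii), its ideal averages are asymptotic to a main term of size $\gamma_K\frac{x^{1+i\tau}}{1+i\tau}\prod_{\mf p}(\cdots)$. If the Euler product does not vanish, this main term has modulus $\asymp x$, so by Proposition~\ref{prop: dircharequivalence}(v) $f$ is not aperiodic. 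If the Euler product does vanish because of small primes, I would replace $\chi$ by a modified character of larger period that kills those primes. This adjustment is where some care is needed.

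If disjointness turns out to be delicate, there is a simpler fallback that avoids it. The displayed formula shows directly that the set of non-aperiodic functions is Borel. Theorem~\ref{characterization} then places every such function in $\mathcal{M}_p$, and one takes $\mathcal{M}_p$ in the sequel to mean this Borel complement $\mathcal{M}\setminus\mathcal{M}_{ap}$.
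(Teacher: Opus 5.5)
Your argument is correct and is essentially the paper's proof: the paper also uses Theorem~\ref{characterization} to identify $\mathcal{M}_p$ with the complement of the aperiodic functions. It writes this complement as the countable union over $P\in\mathcal{AP}[\tau_d]$ of the sets $\{f:\limsup_{N}|\E_{u\in P,\NN(u)\le N}f(u)|>0\}$, each of which is Borel because the finite averages are continuous in $f$. Your explicit check that pretentious functions are not aperiodic is a point the paper leaves implicit, and your worry about a vanishing Euler product is unfounded. Since $\widetilde{f'}$ is completely multiplicative and unimodular, each Euler factor equals $(1-\NN(\mf p)^{-1})/(1-\widetilde{f'}(\mf p)\NN(\mf p)^{-1})\neq 0$, and the partial products have modulus $\gg \exp(-\D(\widetilde{f'},1))$ uniformly in $x$. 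So neither the change of $\chi$ nor the fallback redefinition of $\mathcal{M}_p$ is needed.
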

\begin{proof}
By Theorem~\ref{characterization} and the definition of aperiodic functions, we may write 
\[
\mathcal{M}_p = \bigcup_{P\in\mathcal{AP}[\tau_{d}]} \left\{ f \in \mathcal{M} : \limsup_{N \to \infty} \left| \E_{u \in P, \NN(u) \leq N } f(u)\right|>0\right\}.
\]
Since $\mathcal{AP}[\tau_{d}]$ is countable, the displayed union is countable. It now follows by standard methods that $\mathcal{M}_p$ is a countable union of subsets that are each Borel measurable, since they are achieved as the nullset of $\limsup$ of sequences of continuous functions on $\mathcal{M}$, and the set $\mathcal{M}$ comes equipped with the topology of pointwise convergence.
\end{proof}

\section{Proof of the Tur\'an-Kubilius inequality and applications}\label{SECconcest}
In this section we prove the  Tur\'an-Kubilius inequality Theorem \ref{TK} and then use it to obtain an analog concentration estimates of \cite[Proposition~2.5]{FKM1}.
%
%
%
Throughout this section, when $K=\Q(\sqrt{-d})$ for some squarefree $d\in\N$,
let $\gamma_{K}:=\lim_{N \to \infty} \frac{1}{N}\sum_{\NN(u) \leq N} 1$
 be the constant given by Corollary~\ref{lem: bounds for number of ideals of norm at most x}.
 We will need a counting lemma, but in order to properly show it, we must first have a certain estimate for $C$-Lipschitz regions of $\R^2$, the definition of which we recall next.

\subsection{Proof of the Tur\'an-Kubilius inequality} In this section we prove Theorem \ref{TK}.

First we need the definition of $C$-Lipschitz regions of $\R^2$.
\begin{definition}
Let $S\subseteq \mathbb{R}^{2}$ be the region given in polar coordinates by
     $$S:=\{(\rho\cos\theta,\rho\sin\theta)\colon \theta\in [0,2\pi), 0\leq \rho\leq r(\theta)\}$$
     for some continuous $2\pi$-periodic map $r\colon \R\to \R_{\geq 0}$, i.e., with $r(\cdot+2\pi)=r(\cdot)$. We say that $S$ is \emph{$C$-Lipschitz} if
     $$r(\theta)\leq C\text{ and } \vert r(\theta)-r(\theta')\vert\leq C\vert\theta-\theta'\vert$$
      for all $\theta,\theta'\in\R$.
\end{definition}
We can now give a counting lemma, based on \cite[Lemma~10.1]{kubiliusbook}.
 \begin{lemma}\label{dafp}
 Let $C\geq 1$.
 For any  $C$-Lipschitz region $S\subseteq \mathbb{R}^{2}$, any $v\in\R^{2}$ and any $N\in\R_{+}, N\geq 1$,  we have that
$$\left|\left| (S_{N}+v)\cap \Z^{2}\right|-N^{2}\cdot m(S)\right|\leq O(CN),$$
where $S_{N}+v:=\{x\in\R^{2}\colon \frac{1}{N}(x-v)\in S\}$ and $m(S)$ is the area of $S$.
 \end{lemma}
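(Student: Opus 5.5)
The plan is the standard lattice-point counting argument for regions with Lipschitz boundary: compare the lattice count with the area by covering $\R^2$ with unit squares. To each $z\in\Z^2$ attach the half-open unit square $Q_z:=z+[0,1)^2$. These squares tile the plane, so $|(S_N+v)\cap\Z^2|$ equals the total area of the squares $Q_z$ with $z\in S_N+v$. Since $m(S_N+v)=N^2m(S)$, the difference in question is controlled by the area of the squares meeting the boundary. More precisely,
\[
\Bigl||(S_N+v)\cap\Z^2|-N^2m(S)\Bigr|\le 2\,\#\{z\in\Z^2: Q_z\cap\partial(S_N+v)\neq\emptyset\}.
\]
Indeed, a square lying entirely inside or entirely outside $S_N+v$ contributes equally to both quantities, and every other square meets the boundary. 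So it suffices to show that the number of unit squares meeting $\partial(S_N+v)$ is $O(CN)$.

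To bound this number, use the polar parametrization. The boundary $\partial(S_N+v)$ is contained in the curve
\[
\gamma(\theta)=v+N r(\theta)(\cos\theta,\sin\theta),\qquad \theta\in[0,2\pi),
\]
together with possibly the point $v$ itself (when $r$ vanishes somewhere), and that extra point meets at most $4$ squares. For the curve, the hypotheses give, for $|\theta-\theta'|\le\delta$,
\[
|\gamma(\theta)-\gamma(\theta')|\le N|r(\theta)-r(\theta')|+N r(\theta')\,|\theta-\theta'|\le 2CN\delta .
\]
Now split $[0,2\pi)$ into $\lceil 4\pi CN\rceil$ subintervals, each of length at most $(2CN)^{-1}$. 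On each subinterval the image of $\gamma$ has diameter at most $1$, so it meets at most $9$ unit squares (those adjacent to the square containing one of its points). Summing over subintervals, the boundary meets $O(CN)$ squares, using $CN\ge 1$. This gives the stated bound with an absolute implied constant.

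The only point needing care is the claim that $\partial(S_N+v)$ lies in the image of $\gamma$ together with $\{v\}$. This follows from the continuity of $r$. A point $x=v+N\rho(\cos\theta,\sin\theta)$ with $0<\rho<r(\theta)$ is interior, since by continuity $\rho<r(\theta')$ for all $\theta'$ near $\theta$, so a neighbourhood of $x$ lies in $S_N+v$. A point with $\rho>r(\theta)$ is exterior by the same argument. Hence every boundary point other than $v$ has $\rho=r(\theta)$. I expect this to be the main (mild) obstacle; everything else is routine.
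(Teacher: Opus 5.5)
Your proof is correct and follows essentially the same route as the paper: both identify the lattice count with the area of the unit squares anchored at lattice points, confine the discrepancy to a bounded neighbourhood of $\partial(S_N+v)$, and cover that boundary by $O(CN)$ angular pieces of bounded size using the Lipschitz bound on $r$. The difference is only cosmetic. You count the at most $9$ squares meeting each arc of diameter at most $1$, whereas the paper bounds the area of the $2$-neighbourhoods of the annular sectors $D_{N,i}$. Your version also handles the shift $v$ and the inclusion $\partial(S_N+v)\subseteq\gamma([0,2\pi))\cup\{v\}$ more carefully than the paper does.
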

 \begin{proof}

     Since $\lceil C\rceil\leq 2C$, we may assume without loss of generality that $C\in\N$.
      Let $B_{N}$ be the union of unit boxes in $\R^{2}$ whose lower left corner belongs is a lattice point belonging to $(S_{N}+v)\cap \Z^{2}$. Then, by construction, we ensured that $m(B_{N})=\vert(S_{N}+v)\cap \Z^{2}\vert$. Fix $N$. Divide $[0,2\pi)$ into intervals $I_{i}=[\frac{2\pi(i-1)}{CN},\frac{2\pi i}{CN})$
for $1\leq i\leq CN$, let $r_{i,\inf}:=\inf_{\theta\in I_{i}}r(\theta)$ and $r_{i,\sup}:=\sup_{\theta\in I_{i}}r(\theta)$, and define 
$$D_{N,i}:=\{(\rho\cos \theta,\rho\sin\theta)\colon \theta\in I_{i}, \rho\in [r_{i,\inf}N,r_{i,\sup}N]\}$$
and $D'_{N,i}$ be the set of points whose distance to $D_{N,i}$ is at most 2.

We first claim that $m(D'_{N,i})=O(1)$.
Fix $N$ and $i$. Let $$E_{1}:=\{(\rho\cos \theta,\rho\sin\theta)\colon \theta\in I_{i}, \rho\in [r_{i,\inf}N-10,r_{i,\sup}N+10]\},$$
$E_{2}$ be the rectangle adjacent to $E_{1}$ with one of its edge being $$\left\{(\rho\cos \theta,\rho\sin\theta)\colon \theta=\frac{2\pi (i-1)}{CN}, \rho\in [r_{i,\inf}N-10,r_{i,\sup}N+10]\right\}$$  and the other edge of length 10 pointing out of $E_{1}$, and $E_{3}$ be the rectangle adjacent to $E_{1}$ with one of its edge being $$\{(\rho\cos \theta,\rho\sin\theta)\colon \theta=\frac{2\pi i}{CN}, \rho\in [r_{i,\inf}N-10,r_{i,\sup}N+10]\}$$  and the other edge of length 10 pointing out of $E_{1}$. Then $D'_{N,i}\subseteq E_{1}\cup E_{2}\cup E_{3}$.

Since
\begin{equation*}
    \begin{split}
        \qquad &
        m(E_{1})=\frac{2\pi}{CN}((r_{i,\sup}+10)^{2}-(r_{i,\inf}-10)^{2})
        =\frac{2\pi}{CN}(r_{i,\sup}+r_{i,\inf})(r_{i,\sup}-r_{i,\inf}+20)
       \\& = \frac{2\pi}{CN}2CN(CN\cdot\frac{2\pi}{CN}+20)=O(1)
    \end{split}
\end{equation*}
and
$$m(E_{i})=10 (r_{i,\sup}-r_{i,\inf}+20)\leq 10 (CN\cdot\frac{2\pi}{CN}+20)=O(1)$$
for $i=2,3$. The claim follows.

\

Note that if $x\in B_{N}\Delta (S_{N}+v)$. Then there exists $1\leq i\leq CN$ such that the distance between $x$ and the boundary
$$\partial S_{N,i}:=\{(Nr(\theta)\cos\theta,Nr(\theta)\sin\theta)\colon \theta\in I_{i}\}$$ is at most 2. Since $\partial  S_{N,i}\subseteq D_{N,i}$, we have that the distance between $x$ and $D_{N,i}$ is at most 2 and thus $x\in\cup_{i=1}^{CN}D'_{N,i}$.
Thus, by the previous claim we obtain the desired bound
\begin{equation*}
    \begin{split}
        m(B_{N}\Delta (S_{N}+v))\leq \sum_{i=1}^{CN}m(D'_{N,i})=O(CN)
    \end{split}
\end{equation*}
and we are done. 
 \end{proof}
 

\begin{lemma}\label{countingCRT}
Let $K=\Q(\sqrt{-d})$ for some squarefree $d\in\N$, $k, l \in \N, a,Q\in\mathcal{O}_K^{\times}$, and $I$ be an ideal of $\OK$ such that $(Q)$ is coprime to $I$. Let $B_N:=\{ u\in\mathcal{O}_{K} : \NN(u) \leq N\}$. There exists $C>0$ depending only on $d$   such that for any $N>0$, we have that 
\begin{equation}\nonumber
    \Bigl\vert \frac{\gamma_{K} N}{\NN(I)}-\vert(QB_N+a)\cap I|\Bigr\vert\leq C\sqrt{N/\NN(I)}.
\end{equation}
\end{lemma}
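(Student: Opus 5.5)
The plan is to reduce the count to counting lattice points of a translate of the lattice $I$ inside an ellipse, and then apply Lemma~\ref{dafp}.

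\textbf{Step 1 (reduction to a coset of $I$).} Since $(Q)$ and $I$ are coprime, $Q$ is invertible in $\OK/I$. Hence the map $u\mapsto Qu+a \pmod I$ is a bijection of $\OK/I$, and there is a unique class $u_0+I$ with
\[
Qu+a\in I \iff u\in u_0+I .
\]
Therefore $\vert(QB_N+a)\cap I\vert=\vert B_N\cap(u_0+I)\vert$, and $Q$ and $a$ play no further role. This explains why $C$ can be taken independent of $Q$ and $a$.

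\textbf{Step 2 (geometry of $I$ as a lattice).} Identify $\OK=\Z[\tau_d]$ with a lattice in $\C\cong\R^2$. Then $I$ is a sublattice of index $\NN(I)$, so its covolume is $\NN(I)\cdot\mathrm{covol}(\OK)$. The key observation is that $I$ is uniformly well-rounded. If $0\neq x\in I$, then $(x)\subseteq I$, so $\NN(I)\mid\NN(x)$ and in particular $|x|^2=\NN(x)\ge\NN(I)$. The first successive minimum $\lambda_1$ of $I$ is therefore at least $\sqrt{\NN(I)}$. Minkowski's second theorem gives $\lambda_1\lambda_2\asymp_d \NN(I)$, so $\lambda_2\ll_d\sqrt{\NN(I)}$ as well. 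Take a reduced basis $w_1,w_2$ of $I$, so $|w_j|\asymp_d\sqrt{\NN(I)}$ and the angle between $w_1$ and $w_2$ is bounded away from $0$ and $\pi$ in terms of $d$. Let $A$ be the linear map $\R^2\to\R^2$ sending $e_j$ to $w_j/\sqrt{\NN(I)}$. Then $A$ and $A^{-1}$ have operator norms bounded in terms of $d$ only.

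\textbf{Step 3 (applying Lemma~\ref{dafp}).} The ball $B_N$ is $\sqrt N\cdot E$, where $E=\{z:\NN(z)\le1\}$ is a fixed ellipse depending on $d$. In the coordinates $x=A^{-1}(z)/\sqrt{\NN(I)}$ we have $u_0+I\leftrightarrow \Z^2+v$ for a suitable shift $v$. Also $B_N$ corresponds to $S_{M}$ with $S=A^{-1}E$ and $M=\sqrt{N/\NN(I)}$. Because $A^{\pm1}$ are uniformly bounded, $S$ is a star-shaped region about the origin that is $C_d$-Lipschitz in the sense of the definition above, with $C_d$ depending only on $d$. Lemma~\ref{dafp} then yields
\[
\bigl|\,\vert B_N\cap(u_0+I)\vert - M^2 m(S)\bigr| \ll_d M=\sqrt{N/\NN(I)} .
\]
Finally,
\[
M^2m(S)=\frac{N\,m(E)}{\NN(I)\,\mathrm{covol}(\OK)}=\frac{\gamma_K N}{\NN(I)},
\]
since $\gamma_K=m(E)/\mathrm{covol}(\OK)$ is exactly the density $\lim_{N\to\infty}\frac1N\vert B_N\vert$.

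\textbf{Main obstacle.} The delicate point is uniformity in $I$: the error must be $O_d(\sqrt{N/\NN(I)})$ and not merely $O_I(\sqrt N)$. This is precisely what the well-roundedness in Step 2 provides. Without it, a skewed lattice would give a very thin transformed region and destroy the Lipschitz constant. I would check this step carefully, including that the transformed ellipse stays Lipschitz in polar coordinates uniformly.

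A second issue is the degenerate range $N<\NN(I)$ (that is, $M<1$, outside the hypothesis $N\ge1$ of Lemma~\ref{dafp}). There, any two points of $B_N\cap(u_0+I)$ differ by an element of $I$ of norm at most $4N<4\NN(I)$, so the count is $O(1)$. In that range I would either absorb this into the constant where the main term makes it harmless, or note that the lemma is only needed for $N\ge\NN(I)$, which is the regime used in the Tur\'an--Kubilius argument.
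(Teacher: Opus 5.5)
Your argument is correct and follows the paper's route: realize $I$ as a lattice with a basis of length $O_d(\sqrt{\NN(I)})$, change variables to $\Z^2$, and apply Lemma~\ref{dafp}. Your version is tighter than the written proof in three places:
\begin{itemize}
\item Step~1, which reduces to the single coset $u_0+I$ inside $B_N$, avoids the paper's description of a coset of $QI$ as ``$m\equiv m_0,\ n\equiv n_0 \bmod Q$''. That description only makes literal sense for $Q\in\Z$.
\item The lower bound $\lambda_1\ge\sqrt{\NN(I)}$, coming from $\NN(I)\mid\NN(x)$, is exactly what the paper's appeal to Minkowski leaves implicit. Covolume alone does not give a basis with both vectors of length $O_d(\sqrt{\NN(I)})$.
\item Rescaling so that the region is $O_d(1)$-Lipschitz and the dilation parameter is $\sqrt{N/\NN(I)}$ is what actually produces the error $O(\sqrt{N/\NN(I)})$. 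Applying Lemma~\ref{dafp} to the unnormalized region with dilation $\sqrt N$, as the paper writes it, gives at best $O(\sqrt N)$, because that lemma requires $C\ge 1$.
\end{itemize}

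One correction concerns the degenerate range $N<\NN(I)$. There the inequality is not merely unproved by this method; it is false, so your first option of absorbing it into a constant depending only on $d$ cannot work. For instance, if $a\in I$ then $u_0+I=I\ni 0$, so the count is at least $1$. Meanwhile both $\gamma_K N/\NN(I)$ and $C\sqrt{N/\NN(I)}$ tend to $0$ as $N/\NN(I)\to 0$. The lemma should be stated for $N\ge \NN(I)$, or with error $C\bigl(1+\sqrt{N/\NN(I)}\bigr)$.

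Your second option also needs adjusting. In the proof of Theorem~\ref{TK} the lemma is applied to $I=\mf p^k$ with $\NN(\mf p^k)$ up to $N'=2\NN(Q)(N+1)$, not only up to $N$. So the regime actually used is $N\gg_Q\NN(I)$ rather than $N\ge\NN(I)$. In that regime the extra $O(1)$ is $O_Q(\sqrt{N/\NN(I)})$, which fits the $Q$-dependent constant the paper allows in \eqref{firstTK2}.
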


\begin{proof}

 Given that $I$ is an ideal in $K$, an imaginary extension or $\Q$, it follows that $I$ is a free $\Z$-module of rank 2. Thus, we may write $I$ as 
     $$I=\{m\alpha+n\beta\colon m,n\in\Z\}$$
     for some $\alpha=a_{1}+b_{1}\tau_{d},\beta=a_{2}+b_{2}\tau_{d}\in\Z[\tau_{d}]$. As a consequence of Minkowski's bound (see \cite[Theorems 6.6 and 7.4]{neukirch99}), we see that the fundamental parallelepiped that generates the ideal lattice $I$ has volume proportional to the norm of the ideal, where the constant only depends on the number field $K$. Thus, we may take generators $\alpha, \beta$ whose norm is $O(\NN(I))$. 
     This means that we may further assume that $\vert a_{i}\vert, \vert b_{i}\vert\leq C\sqrt{\NN(I)}$ for some universal constant $C$ depending only on $d$. 
     Since $(Q)$ is coprime to $I$, there exist unique $(m_{0},n_{0})\in\{0,\dots,Q-1\}^{2}$ such that $Q\vert m\alpha+n\beta-a$ if and only if $m\equiv m_{0} \mod Q$ and $n\equiv n_{0} \mod Q$.
     Therefore, the set $(QB_{N}+a)\cap I$ consists of elements of the form $(Qm+m_{0})\alpha+(Qn+n_{0})\beta$ with $m,n\in\Z$ such that $\NN((Qm+m_{0})\alpha+(Qn+n_{0})\beta-a)\leq Q^{2}N$.
     

      
      Let $S$ be the set of $(m,n)\in\R^{2}$ such that $\NN(m\alpha+n\beta)\leq 1$ (where $\NN$ is the canonical extension of the norm on $\Q(\sqrt{-d})$ to $\C$). Then 
     $$\vert (QB_{N}+a)\cap I\vert=\Bigl\vert (S_{\sqrt{N}}+\frac{1}{Q}(a-(m_{0}\alpha+n_{0}\beta)))\cap\Z^{2}\Bigr\vert.$$ Let $T\colon \Q^{2}\to\Q^{2}$ be the bijective linear transformation given by 
      $$T(m,n):=\iota(m\alpha+n\beta),$$
      where $\iota\colon\Q(\sqrt{-d})\to\Q^{2}$ is the natural map given by $\iota(m+n\tau_{d}):=(m,n)$.
      Let $S':=\{(m,n)\in\R^{2}\colon \NN(m+\tau_{d} n)\leq 1\}$. Clearly $S=T^{-1}S'$.
      Thus 
      $$m(S)=\NN(I)^{-1}m(S')=\gamma_{K}/\NN(I).$$

       Using polar coordinates for ellipses, it is not hard to see that the Lipschitz consant of an ellipse is bounded by its diameter. So $S'$ is $O_{d}(1)$-Lipschitz. Since $\vert a_{i}\vert, \vert b_{i}\vert\leq O_{d}(\sqrt{\NN(I)})$, we have that $S$ is $O_{d}(\sqrt{\NN(I)})$-Lipschitz. By Lemma~\ref{dafp}, we have 
      \[\left\vert\vert (QB_{N}+a)\cap I\vert-\frac{\gamma_{K}N}{\NN(I)}\right\vert=\Bigl\vert\vert (S_{\sqrt{N}}+\frac{1}{Q}(a-(m_{0}\alpha+n_{0}\beta)))\cap \Z^{2}\vert-N\cdot m(S)\Bigr\vert\leq O_{d}\left(\sqrt{\frac{N}{\NN(I)}}\right),\]
      as was to be shown.
\end{proof}

\begin{proof}[Proof of Theorem \ref{TK}]
We first assume that $h$ is non-negative.
 Note that
\begin{equation}\label{firstTK}
    \begin{split}
        &\quad \E_{\NN(u) \leq N}h((Qu+a))
        =\frac{1}{\gamma_{K} N}\sum_{\NN(\frac{u-a}{Q})\leq N, u\equiv a \mod Q}h((u))
        \\&\quad\quad\quad\quad\quad\quad\quad\quad\quad\quad\;\;=\frac{1}{\gamma_{K} N}\sum_{\NN(\frac{u-a}{Q})\leq N, u\equiv a \mod Q}\sum_{\mf p^{k}|| u} h(\mf p^{k})
        \\&\quad\quad\quad\quad\quad\quad\quad\quad\quad\quad\;\;=\frac{1}{\gamma_{K} N}\sum_{\NN(\frac{u-a}{Q})\leq N, u\equiv a \mod Q}\sum_{\mf p^{k}|| u, \mf p\nmid Q} h(\mf p^{k}).
    \end{split}
\end{equation}
%
 %
 Note that if $\mf p^{k}\vert u$ for some $u$ with $\NN(\frac{u-a}{Q})\leq N$, then
$$\NN(\mf p^{k})\leq \NN(u)\leq 2(\NN(u-a)+\NN(a))\leq 2(\NN(Q)N+\NN(Q))=N'.$$ 
By Lemma~\ref{countingCRT},
\begin{equation}\label{firstTK2}
    \begin{split}
        &\quad 
        \frac{1}{\gamma_{K} N}\sum_{\NN(\frac{u-a}{Q})\leq N, u\equiv a \mod Q}\sum_{\mf p^{k}|| u, \mf p\nmid Q} h(\mf p^{k})
        \\&=\frac{1}{\gamma_{K} N}\sum_{\NN(\mf p^{k})\leq N', \mf p\nmid Q}h(\mf p^{k})\sum_{\NN(\frac{u-a}{Q})\leq N, \mf p^{k}||u, u\equiv a \mod Q} 1
    \\& =\frac{1}{\gamma_{K} N}\sum_{\NN(\mf p^{k})\leq N', \mf p\nmid Q}h(\mf p^{k})\left(\sum_{\NN(\frac{u-a}{Q})\leq N, \mf p^{k}|u, u\equiv a \mod Q} 1-\sum_{\NN(\frac{u-a}{Q})\leq N, \mf p^{k+1}|u, u\equiv a \mod Q} 1\right)
    \\&=\sum_{\NN(\mf p^{k})\leq N', \mf p\nmid Q}h(\mf p^{k})\left((\NN(\mf p)^{-k}-\NN(\mf p)^{-(k+1)})+O\left(\frac{1}{\sqrt{N\NN(\mf p^{k})}}\right)\right).
    \end{split}
\end{equation}
for some $C>0$ depending only on $K$ and $Q$. Therefore,
combining (\ref{firstTK}) and (\ref{firstTK2}), we have that
\begin{equation}\label{firstTK0}
    \begin{split}
       &\qquad \left| A_{Q,N}\cdot\left(\E_{u\in\OK\colon\NN(u) \leq N}h((Qu+a))-A_{N'}\right)\right|
       \\&\ll \frac{1}{\sqrt{N}}\sum_{\NN(\mf p^{k})\leq N', \mf p\nmid Q}\vert h(\mf p^{k})\vert \cdot \NN(\mf p^{k})^{-1/2}\cdot \sum_{\NN(\mf p^{k})\leq N', \mf p\nmid Q}\vert h(\mf p^{k})\vert\cdot \NN(\mf p^{k})^{-1}
        \\&\leq \frac{1}{\sqrt{N}}\sum_{\NN(\mf p^{k})\leq N', \NN(\mf p)>M}\vert h(\mf p^{k})\vert\cdot \NN(\mf p^{k})^{-1/2}\cdot \sum_{\NN(\mf p^{k})\leq N', \NN(\mf p)>M}\vert h(\mf p^{k})\vert\cdot \NN(\mf p^{k})^{-1}
          \\&\leq \frac{1}{\sqrt{N}}\left(B_{N'}\cdot\sum_{\NN(\mf p^{k})\leq N'} 1\right)^{1/2}\cdot\left(B_{N'}\cdot\sum_{\NN(\mf p^{k})\leq N'}\NN(\mf p^{k})^{-1}\right)^{1/2}
          \\&=B_{N'}\cdot \frac{1}{\sqrt{N}}\left(\sum_{\NN(\mf p^{k}),\NN(\mf q^{\ell})\leq N'}\NN(\mf q^{\ell})^{-1}\right)^{1/2}.
          \end{split}
\end{equation}

Now we estimate the last term of (\ref{ltk}). We have
\begin{equation}\label{secondTK}
    \begin{split}
        &\quad \E_{\NN(u) \leq N}\vert h((Qu+a))\vert^{2}
        =\frac{1}{\gamma_{K} N}\sum_{\NN(\frac{u-a}{Q})\leq N, u\equiv a \mod Q}\vert h((u))\vert^{2}
        \\&=\frac{1}{\gamma_{K} N}\sum_{\NN(\frac{u-a}{Q})\leq N, u\equiv a \mod Q}\left| \sum_{\mf p^{k}|| u} h(\mf p^{k})\right|^{2}
        \\&=\frac{1}{\gamma_{K} N}\sum_{\NN(\frac{u-a}{Q})\leq N, u\equiv a \mod Q}\left(\sum_{\mf p^{k}|| u} \vert h(\mf p^{k})\vert^{2}+\sum_{\mf p^{k},\mf q^{\ell}|| u, \mf p\neq \mf q} h(\mf p^{k})h(\mf q^{\ell})\right)
          \\&=\frac{1}{\gamma_{K} N}\sum_{\NN(\frac{u-a}{Q})\leq N, u\equiv a \mod Q}\left(\sum_{\mf p^{k}|| u, \mf p\nmid Q} \vert h(\mf p^{k})\vert^{2}+\sum_{\mf p^{k},\mf q^{\ell}|| u, \mf p\neq \mf q, \mf p,\mf q\nmid Q} h(\mf p^{k})h(\mf q^{\ell})\right).
    \end{split}
\end{equation}
The first term in the last line of \eqref{secondTK} can be dealt with in the same way as in  \eqref{firstTK2} 
(replacing $h$ by $h^{2}$), namely 
\begin{equation}\label{secondTK2}
    \begin{split}
        &\quad 
        \frac{1}{\gamma_{K} N}\sum_{\NN(\frac{u-a}{Q})\leq N, u\equiv a \mod Q}\sum_{\mf p^{k}|| u, \mf p\nmid Q} h(\mf p^{k})^{2}
     \\&=\frac{1}{\gamma_{K} N}\sum_{\NN(\mf p^{k})\leq N', \mf p\nmid Q}h(\mf p^{k})^{2}\sum_{\NN(\frac{u-a}{Q})\leq N, \mf p^{k}||u, u\equiv a \mod Q} 1
     \\&=\sum_{\NN(\mf p^{k})\leq N', \mf p\nmid Q}h(\mf p^{k})^{2}\NN(\mf p)^{-k}+O(1)\cdot\frac{1}{\sqrt{N}}\sum_{\NN(\mf p^{k})\leq N', \mf p\nmid Q}h(\mf p^{k})^{2}\cdot \NN(\mf p^{k})^{-1/2}
      \\&\leq\sum_{\NN(\mf p^{k})\leq N', \mf p\nmid Q}h(\mf p^{k})^{2}\NN(\mf p)^{-k}+O(C_{N}).
    \end{split}
\end{equation}

For the second term in \eqref{secondTK}, similarly to (\ref{firstTK}), we also swap the order of the sums and rewrite it as
\begin{equation}\nonumber
    \begin{split}
        &\quad \frac{1}{\gamma_{K} N}\sum_{\NN(\mf p^{k}\mf q^{\ell}) \leq N', \mf p\neq \mf q,\mf p,\mf q\nmid Q} h(\mf p^{k})h(\mf q^{\ell})\sum_{\NN(\frac{u-a}{Q})\leq N, u\equiv a \mod Q, \mf p^{k}||u, \mf q^{\ell}||u}1.
    \end{split}
\end{equation}
As was the case in \eqref{firstTK2}, we have that
\begin{equation}\label{secondTK3}
    \begin{split}
        &\quad \frac{1}{\gamma_{K} N}\sum_{\NN(\mf p^{k}\mf q^{\ell}) \leq N', \mf p\neq \mf q,\mf p,\mf q\nmid Q} h(\mf p^{k})h(\mf q^{\ell})\sum_{\NN(\frac{u-a}{Q})\leq N, u\equiv a \mod Q, \mf p^{k}||u, \mf q^{\ell}||u}1
        \\&=\sum_{\NN(\mf p^{k}\mf q^{\ell})\leq N', \mf p\neq \mf q, \mf p,\mf q\nmid Q} h(\mf p^{k})h(\mf q^{\ell})\left(\NN(\mf p^k)^{-1}\NN(\mf q^l)^{-1}\left(1-\frac{1}{\NN(\mf p)}\right) \left(1-\frac{1}{\NN(\mf q)}\right)\right. \\&\quad\quad\quad\quad\quad\quad\quad\quad\quad\quad\quad\quad\quad\quad\quad\quad\quad\quad\quad\quad\quad\quad\quad\quad\quad\left.+O\left(\frac{1}{\sqrt{N\NN(\mf p^{k}\mf q^{\ell})}}\right)\right)
        \\&\leq A_{Q,N'}^{2}+\frac{C}{\sqrt{N}}\sum_{\NN(\mf p^{k}\mf q^{\ell})\leq N', \mf p\neq \mf q, \NN(\mf p),\NN(\mf q)>M} h(\mf p^{k})h(\mf q^{\ell})\NN(\mf p^{k})^{-1/2}\NN(\mf q^{\ell})^{-1/2}     
        \\&\leq A_{Q,N'}^{2}+B_{N'}\cdot\frac{C}{\sqrt{N}}\left(\sum_{\NN(\mf p^{k}\mf q^{\ell})\leq N', \mf p\neq \mf q} 1\right)^{1/2}.
            \end{split}
\end{equation}

Combining (\ref{secondTK}), (\ref{secondTK2}) and (\ref{secondTK3}), we have that
\begin{equation}\label{secondTK0}
    \begin{split}
        &\quad \E_{\NN(u) \leq N}\vert h((Qu+a))\vert^{2}
        \\&\leq 
        B_{N'}+A_{Q,N'}^{2}+O(C_{N})+\frac{C}{N}\sum_{\NN(\mf p^{k})\leq N', \NN(\mf p)>M}h(\mf p^{k})^{2}
        \\&
        \quad\quad\quad\quad\quad\quad\quad\quad\quad+B_{N'}\cdot\frac{C}{N}\left(\sum_{\NN(\mf p^{k}\mf q^{\ell})\leq N', \mf p\neq \mf q} \NN(\mf p^{k})\cdot \NN(\mf q^{\ell})\right)^{1/2}.
    \end{split}
\end{equation}   

Finally, by Part (i) of Lemma \ref{l2}, 
\begin{equation}\label{snnn2}
    \begin{split}
        &\quad A_{Q,N'}-A_{Q,N}
        =\sum_{N<\NN(\mf p^{k})\leq N',\mf p\nmid Q}h(\mf p^{k})\NN(\mf p)^{-k}(1-\NN(\mf p)^{-1})
        \\&\leq \frac{2}{N}\sum_{N<\NN(\mf p^{k})\leq N',\mf p\nmid Q}h(\mf p^{k})
        \leq \frac{2}{N}\sum_{N<\NN(\mf p^{k})\leq N',\mf \NN(\mf p)>M}h(\mf p^{k})
        \\&\leq B_{N'}^{1/2}\cdot \frac{2}{N}\left(\sum_{N<\NN(\mf p^{k})\leq N'}\NN(\mf p^{k})\right)^{1/2}
        \leq B_{N'}^{1/2}\cdot \frac{2}{N}\left(N'\sum_{N<\NN(\mf p^{k})\leq N'}1\right)^{1/2}
        \\&\ll B_{N'}^{1/2}\cdot \frac{2}{N}\left(\frac{{N'}^{2}}{\log N'}\right)^{1/2}=o_{Q;N\to\infty}(B_{N'}^{1/2}).
        \end{split}
\end{equation} 
Combining 
(\ref{firstTK0}), (\ref{secondTK0}) and (\ref{snnn2}), we have
%
%
 \begin{equation}\label{snnn}
    \begin{split}
        &\quad \E_{\NN(u) \leq N}\left| h((Qu+a))-A_{Q,N}\right|^{2}  
        \\&=(A_{Q,N}-A_{Q,N'})^{2}-2A_Q,{N}\left(\E_{\NN(u) \leq N}h((Qu+a))-A_{Q,N'}\right)\\&\quad\quad\quad\quad\quad\quad\quad\quad\quad\quad\quad\quad\quad\quad\quad\quad\quad+\E_{\NN(u) \leq N}\vert h((Qu+a))\vert^{2}-A_{Q,N'}^{2}
        \\&\leq B_{N'}\cdot\left(1+C\left(\frac{1}{N}\sum_{\NN(\mf p^{k}),\NN(\mf q^{\ell})\leq N'}\NN(\mf q^{\ell})^{-1}\right)^{1/2}\right.\\&\quad\quad\quad\quad\quad\quad\quad\quad\quad\left.+C\left(\frac{1}{N}\sum_{\NN(\mf p^{k}\mf q^{\ell})\leq N', \mf p\neq \mf q} 1\right)^{1/2}+o_{Q;N\to\infty}(1)\right)+O(C_{N})
        \\&=B_{N'}(
        1+o_{Q;N\to\infty}(1))+O(C_{N}),
        \end{split}
\end{equation}
where the last inequality follows from 
Lemma \ref{l2}.

When $h$ is real valued, then we let $h_{\pm}$ be the additive function defined by $h_{\pm}(\mf p^{k}):=\max\{\pm h(\mf p^{k}),0\}$. Then 
$$\left| h((Qu+a))-A_{Q,N}\right|^{2}\leq 2\left| h_{+}((Qu+a))-A_{Q,N,+}\right|^{2}+2\left| h_{-}((Qu+a))-A_{Q,N,-}\right|^{2},$$
$B_{N'}=B_{N',+}+B_{N',-}$ and $C_{N'}=C_{N',+}+C_{N',-}$, where $A_{N,\pm}, B_{N,\pm}$ and $C_{N,\pm}$ are defined similar to $A_{N}, B_{N}$ and $C_{N}$ but with $h$ replaced by $h_{\pm}$. So it follows from (\ref{snnn}) that (\ref{ltk}) holds when  $h$ is real valued.

When $h$ is complex valued, (\ref{ltk}) also holds by considering the real and imaginary part separately. We are done.
\end{proof}

 \subsection{The concentration estimate}
In this section we state and prove the concentration estimate we need.
  Let 
    \begin{equation}\label{folnerMult}
    \Phi_{M}:=\left\{\prod_{\NN(\mathfrak{p})\leq M}\mathfrak{p}^{a_{\mathfrak{p}}}\colon M<a_{\mathfrak{p}}\leq 2M, \prod_{\NN(\mathfrak{p})\leq M}\mathfrak{p}^{a_{\mathfrak{p}}} \text{ is principal} \right\}.
       \end{equation} 
    Note that since we can identify principal ideals with elements of $\mathcal{O}_K$ by using their generators, the subsets $\Phi_M$ can be seen as subsets of $\mathcal{O}_K$. 

\begin{lemma}
    The family of sets $(\Phi_{M})_{M}$ forms a F\o lner sequence in $\mathcal{O}_K$.
\end{lemma}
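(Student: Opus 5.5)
We must show that for each fixed $x\in\mathcal{O}_K^{\times}$,
\[
\frac{|\Phi_M\cap x^{-1}\Phi_M|}{|\Phi_M|}\to 1 .
\]
We identify elements of $\Phi_M$ with principal ideals (generators up to units); since $(\e u)=(u)$, units are harmless. The plan has three steps: fix the factorization of $(x)$, reduce membership in $x^{-1}\Phi_M$ to a condition on exponents, and count exponent vectors within each ideal class.

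\emph{Step 1 (reduce to exponent vectors).} Factor $(x)=\prod_{j=1}^{r}\mathfrak{q}_j^{e_j}$. Take $M\geq \max_j \NN(\mathfrak q_j)$ and $M>\max_j e_j$. An ideal $\prod_{\NN(\mf p)\le M}\mf p^{a_{\mf p}}$ lies in $\Phi_M\cap x^{-1}\Phi_M$ exactly when $(x)\prod\mf p^{a_{\mf p}}\in \Phi_M$. The product is automatically principal, so this amounts to $M<a_{\mathfrak q_j}+e_j\le 2M$ for every $j$, i.e.\ $a_{\mathfrak q_j}\le 2M-e_j$. (Here $x^{-1}\Phi_M$ is read as $\{u: xu\in\Phi_M\}$.) Hence the complement of $\Phi_M\cap x^{-1}\Phi_M$ inside $\Phi_M$ is contained in
\[
\bigcup_{j=1}^{r}\{\text{vectors in }\Phi_M \text{ with } a_{\mathfrak q_j}>2M-e_j\}.
\]

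\emph{Step 2 (count via the class group; this is the main obstacle).} Principality is a condition on the class-group image $\sum_{\mf p} a_{\mf p}[\mf p]$ in the finite abelian group $G$, so we must show principal vectors are roughly equidistributed with respect to one coordinate. Let $\mathcal{B}_M=\{(a_{\mf p}): M<a_{\mf p}\le 2M\}$ be the full box. Fix $j$ and a value $t$ of $a_{\mathfrak q_j}$. The remaining coordinates range over a box $\mathcal{B}'$, and we need that each class $g\in G$ is hit by about a $1/|G|$ fraction of $\mathcal{B}'$. To see this, note that for each prime $\mf p$ the image of $a\mapsto a[\mf p]$ over $M$ consecutive values is nearly uniform on the cyclic subgroup $\langle[\mf p]\rangle$, with error $O(|G|/M)$. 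By Chebotarev (equivalently, Hecke's theorem that prime ideals equidistribute among classes), every class contains primes of norm $\le M$ for $M$ large, so the subgroups $\langle[\mf p]\rangle$ with $\mf p\neq\mathfrak q_j$ generate $G$. A convolution/Fourier argument on $G$ then applies: for a nontrivial character $\psi$ of $G$, there is some $\mf p$ with $\psi([\mf p])\neq 1$, and that coordinate's average of $\psi(a[\mf p])$ is $O(|G|/M)$. Consequently the number of vectors in $\mathcal B'$ landing in class $g$ is $|\mathcal B'|(1/|G|+O(|G|/M))$, uniformly in $g$.

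\emph{Step 3 (conclude).} By Step 2, $|\Phi_M|=|\mathcal B_M|(1/|G|+o(1))$. The number of vectors in $\Phi_M$ with $a_{\mathfrak q_j}>2M-e_j$ is at most $e_j\cdot|\mathcal B'|(1/|G|+o(1))=|\mathcal B_M|\cdot O(e_j/M)$. Summing over the finitely many $j$, the bad proportion is $O(\sum_j e_j/M)\to 0$, which gives the Følner property. The fact that the number of coordinates grows with $M$ causes no trouble, because the estimate in Step 2 only uses one coordinate with $\psi([\mf p])\ne1$ for each nontrivial $\psi$.
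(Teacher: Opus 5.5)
Your proof is correct and follows the same route as the paper. Both arguments pass to exponent vectors in the box $\prod_{\NN(\mathfrak p)\le M}(M,2M]$, observe that translating by $x$ only shifts the finitely many coordinates at primes dividing $(x)$ (so principality is preserved), and bound the resulting boundary strips against $|\Phi_M|\approx|\mathcal B_M|/|G|$. Your Step 2, using character sums on $G$ together with the fact that classes of small primes other than $\mathfrak q_j$ generate $G$, proves the cardinality estimate $|\Phi_M|\approx M^{b_M}/|G|$ that the paper only asserts. Strictly, only the lower bound $|\Phi_M|\gg_G|\mathcal B_M|$ is needed, since each strip can be bounded trivially by $e_j|\mathcal B'|$.
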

\begin{proof}
    let $\{\mathfrak{p}_{1},\dots,\mf p_k, \dots\}$ be an ordering of all the prime ideals with non-decreasing norms, and suppose that the set of $\mathfrak{p}$ with $\NN(\mathfrak{p})\leq M$ is $\mathfrak{p}_{1},\dots,\mathfrak{p}_{b_{M}}$. 
Let $G$ denote the ideal class group of $K$. For any ideal $I$, let $v(I)\in G$ denote the ideal class containing $I$.  




Let $u \in \mathcal{O}_K$, and suppose that $(u)=\prod_{i=1}^m \mf p_i^{c_i}$.
    Let $M$ be large so that $m\leq b_{M}$. 
z5   
Since $v((u))=\prod_{i=1}^{b_{K}}v(\mathfrak{p}_{i})^{c_{i}}=e_{G}$, we have
\begin{equation*}
    \begin{split}
     &   u\cdot\Phi_{M}=\left\{\prod_{i=1}^{b_{M}}\mathfrak{p}^{a_{i}+c_{i}}_{i}\colon M<a_{i}\leq 2M, \prod_{i=1}^{b_{M}}v(\mathfrak{p}_{i})^{a_{i}+c_{i}}=e_{G}\right\}
     \\&\quad\;\;\;\;\quad=\left\{\prod_{i=1}^{b_{M}}\mathfrak{p}^{a'_{i}}_{i}\colon M+c_{i}<a'_{i}\leq 2M+c_{i}, \prod_{i=1}^{b_{M}}v(\mathfrak{p}_{i})^{a'_{i}}=e_{G}\right\},
    \end{split}
\end{equation*}
   where $c_{i}=0$ for $i>m$. 
   Then $\Phi_{M}$ is nonempty as its cardinality is approximately $M^{b_{M}}/\vert G\vert$, and we have that
    $$\frac{\vert\Phi_{M}\Delta (u\cdot \Phi_{M})\vert}{\vert\Phi_{M}\vert}\lesssim\frac{\vert G\vert c^{m}(M+c)^{b_{M}-1}}{M^{b_{M}}}\to 0$$
    as $M\to\infty$,  where $c=2\cdot\max_{1\leq i\leq m}c_{i}$. So $(\Phi_{M})_M$ is a F\o lner sequence. 
\end{proof}

Let $f,g\colon\mf  I(K)\to\C$ be multiplicative functions. The truncated distance between $f$ and $g$ is given by:
$$\D(f,g;M_{1},M_{2}):=\sum_{M_{1}<\NN(\mf p)\leq M_{2}}\frac{1}{\NN(\mf p)}(1-\text{Re}(f(\mf p)\overline{g}(\mf p))).$$


We are now ready to state the main concentration estimate of the paper:

\begin{proposition}\label{p25}
    Let $K=\Q(\sqrt{-d})$ for some squarefree $d\in\N$ and $f\colon \mathcal{O}_K^{\times}\to\S$ be a multiplicative function that is trivial on units.\footnote{We have already discussed in Lemma~\ref{tv} that this can be assumed without loss of generality.}
    Suppose that there exist a modified Dirichlet character $\chi$ of period $I$, some $\tau\in\R$ and  some extension $\tilde{f'}$ of the function 
    $$f'(u):=f(u)\overline{\chi}(u)\NN(u)^{-i\tau}$$
    with
    $\D(\tilde{f'}, 1)<\infty$. 
  Let also $a \in \mathcal{O}_K^{\times}$ with $\NN(a) \leq \NN(Q)$ and $(a)$ being coprime to $(Q)$.
  %
   Suppose that $M$ is large enough so that $\Phi_M \subseteq I$. 
   We have 
\begin{equation}\nonumber
    \begin{split}
       & \limsup_{N\to\infty}\max_{Q\in \Phi_{M}, a\in\mathcal{O}_K^{\times},\atop \NN(a)\leq \NN(Q), (a,Q)=1}\E_{ \NN(u)\leq N}\vert f(Qu+a)-\chi(a)\NN(Qu)^{i\tau}\exp(F(\tilde{f'},M,N;Q))\vert
     \\&\quad\quad\quad\quad\quad\quad\quad\quad\quad\quad\quad\quad\quad\quad\quad\quad\quad\quad\quad\quad\quad\quad\quad\ll \D(\tilde{f'},1;M,\infty)+M^{-1/2},
    \end{split}
\end{equation}
where  the implicit constant depends only on $K$ and
$$F(\tilde{f'},M,N;Q):=\sum_{M<\NN(\mf p)\leq N, \mf p \nmid Q}\frac{1}{\NN(\mf p)}(1-\text{Re}(\tilde{f'}(\mf p))).$$
 \end{proposition}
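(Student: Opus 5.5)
The plan is to follow the proof of \cite[Proposition~2.5]{FKM1}: factor out the character and Archimedean parts, then apply the Tur\'an-Kubilius inequality (Theorem~\ref{TK}) to a logarithm of $\tilde{f'}$.

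\emph{Reduction to $\tilde{f'}$.} Fix $Q\in\Phi_M$ and $a$ with $(a,Q)=1$ and $\NN(a)\le\NN(Q)$. Since $\Phi_M\subseteq I$, we have $Qu+a\equiv a \bmod I$, so $\chi(Qu+a)=\chi(a)$. Note that $(a)$ is coprime to $I$, because every prime of norm at most $M$ divides $Q$; hence $\chi(a)$ is a genuine unimodular character value. Next, $\NN(Qu+a)^{i\tau}=\NN(Qu)^{i\tau}\NN(1+a/(Qu))^{i\tau}$. For all but a proportion $O(M^{-1/2})$ (indeed $o_{N}(1)$) of $u$ with $\NN(u)\le N$, we have $\NN(u)$ large, and then $|\NN(1+a/(Qu))^{i\tau}-1|\ll |\tau|\,\NN(u)^{-1/2}$, which is negligible. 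It therefore suffices to bound
\[
\E_{\NN(u)\le N}\bigl|\tilde{f'}((Qu+a))-\exp(F)\bigr|,
\]
using $f(Qu+a)=\chi(a)\NN(Qu+a)^{i\tau}f'(Qu+a)$ and $f'(Qu+a)=\tilde{f'}((Qu+a))$.

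\emph{Logarithm and Tur\'an-Kubilius.} Write $\tilde{f'}(\mf p^k)=e^{i\theta(\mf p^k)}$ and define an additive function $h$ on $\mf I(K)$ by $h(\mf p^k)=0$ when $\NN(\mf p)\le M$, and otherwise $h(\mf p^k)=\tilde{f'}(\mf p^k)-1$. This $h$ approximates the logarithm, in the sense that $\prod(1+h(\mf p^k))=\tilde{f'}$ on the ideals $(Qu+a)$. Such ideals have no prime factor of norm at most $M$, since all those primes divide $Q$. Then use the standard inequality $|z_1\cdots z_r-e^{w_1+\dots+w_r}|\le\sum|z_j-e^{w_j}|+|e^{\sum w_j}-e^{A}|$ style estimates, as in FKM1 (via $|e^{x}-e^{y}|\le|x-y|$ for $\mathrm{Re}\le0$), to reduce to two quantities. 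The first is $\E|h((Qu+a))-A_{Q,N}|$, handled by Cauchy--Schwarz and Theorem~\ref{TK} with this $h$. The second is the difference between $A_{Q,N}$ and $F(\tilde{f'},M,N;Q)$. Here $B_{N'}\ll\sum_{\NN(\mf p)>M}|1-\tilde{f'}(\mf p)|^2/\NN(\mf p)+\sum_{k\ge2,\NN(\mf p)>M}\NN(\mf p)^{-k}\ll\D(\tilde{f'},1;M,\infty)+M^{-1}$, using $|1-z|^2=2(1-\mathrm{Re}z)$ for $|z|=1$. The term $C_N$ tends to $0$ as $N\to\infty$, because $h$ is bounded and $\sum_{\NN(\mf p^k)\le N'}\NN(\mf p^k)^{-1/2}\ll\sqrt{N'}/\log N'$ by Lemma~\ref{l2}. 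The factor $2+o(1)$ is absorbed. For the mean, $A_{Q,N}=\sum_{M<\NN(\mf p)\le N,\mf p\nmid Q}(\tilde{f'}(\mf p)-1)/\NN(\mf p)+O(M^{-1})$. Its real part is $-F$ (matching the statement's sign convention up to the exponent's sign), and its imaginary part should be handled by taking $h$ to be the imaginary part of the logarithm separately, or simply accepted as a bounded phase. Here I would compare with FKM1 and, if needed, define $F$ with the full complex sum; the stated $F$ only tracks the modulus.

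\emph{Main obstacle.} The step I expect to be delicate is precisely this passage from additive to multiplicative while keeping the implied constant uniform in $Q\in\Phi_M$ and in $a$. Theorem~\ref{TK} has an $o_{Q;N\to\infty}(1)$ term and $N'$ depends on $Q$. Because we take $\limsup_N$ with $M$ fixed and $\Phi_M$ finite, the maximum over $Q$ is over a finite set, so the $o_Q$ terms vanish in the limit. The maximum over $a$ is also harmless, since Theorem~\ref{TK} is uniform in $a$ with $\NN(a)\le\NN(Q)$. The final bound is then $\ll(\D(\tilde{f'},1;M,\infty)+M^{-1})^{1/2}+M^{-1/2}$. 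To obtain the linear form in $\D$ claimed, I would instead use the first moment directly where possible; otherwise the square-root form is what the argument yields, and this suffices for the applications.
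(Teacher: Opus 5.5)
Your route is the paper's own proof. It uses the same reduction via $\chi(Qu+a)=\chi(a)$ and $\NN(Qu+a)^{i\tau}=\NN(Qu)^{i\tau}(1+o(1))$, and the same additive $h=\tilde{f'}-1$ on prime powers of norm $>M$. It uses the same product-versus-exponential comparison (valid because $\mathrm{Re}\,h\le 0$), then Theorem~\ref{TK} with Cauchy--Schwarz, $B_{N'}\le 2\D(\tilde{f'},1;M,\infty)+O(M^{-1})$ and $C_N\to 0$. Your remark that the $o_{Q;N\to\infty}(1)$ terms are harmless, because $\Phi_M$ and the set of admissible $a$ are finite, makes explicit a point the paper passes over.

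The two discrepancies you noticed are real. They are defects of the statement, not of your argument, and the paper's proof has exactly the same limitations. You should therefore commit to the corrected version and drop both fallbacks.

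\emph{(1) The definition of $F$.} The paper's proof silently identifies $F$ with $\mu_{h,M,N}$. That identification is only correct for the complex sum $\sum_{M<\NN(\mf p)\le N,\,\mf p\nmid Q}\NN(\mf p)^{-1}(\tilde{f'}(\mf p)-1)$, and it is this $F$, not the real one in the statement, for which the estimate holds. The imaginary part cannot be discarded as a ``bounded phase''. Take $\arg\tilde{f'}(\mf p)=1/\log\log\NN(\mf p)$. Then $\D(\tilde{f'},1)<\infty$, but $\sum_{M<\NN(\mf p)\le N}\NN(\mf p)^{-1}\sin\arg\tilde{f'}(\mf p)\to\infty$. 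So $f'(Qu+a)$ concentrates around a phase that keeps rotating with $N$, and the version with the real $F$ fails.

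\emph{(2) Square root versus linear in $\D$.} Cauchy--Schwarz after Theorem~\ref{TK} yields only $\D(\tilde{f'},1;M,\infty)^{1/2}+M^{-1/2}$. Indeed, the paper's $M^{-1/2}$ term is itself the square root of an $O(M^{-1})$ error. No first-moment argument can do better, because the linear bound is false. Take the following example:
\begin{itemize}
\item $\chi\equiv 1$ and $\tau=0$;
\item $\tilde{f'}(\mf p)=e^{i\theta}$ for $M<\NN(\mf p)\le M^{2}$, and $\tilde{f'}(\mf p)=1$ otherwise;
\item $\theta=M^{-1/4}$.
\end{itemize}
Then $f(Qu+a)=e^{i\theta\Omega(u)}$ with $\Omega(u)\in\Z_{\ge 0}$. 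The phase of $\exp(F)$ is $\theta S+O(\theta^2)$, where $S=\sum_{M<\NN(\mf p)\le M^2}\NN(\mf p)^{-1}\approx\log 2$. Since $|\Omega(u)-S|\ge 0.3$, the left-hand side is $\gg\theta=M^{-1/4}$. On the other hand, $\D(\tilde{f'},1;M,\infty)+M^{-1/2}\ll\theta^{2}+M^{-1/2}\ll M^{-1/2}$.

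So the correct statement is your square-root bound with complex $F$. This is also all that Proposition~\ref{zeroaveragesadeltabdelta} uses: a bound tending to $0$ as $M\to\infty$, and an $F$ independent of $Q\in\Phi_M$.
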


 
\begin{proof} The proof is based on \cite[Lemma~2.5]{KMPT}. 
Note that if $Q \in I$, then
    $$f(Qu+a)=f'(Qu+a)\chi(a)\NN(Qu+a)^{i\tau}=f'(Qu+a)\chi(a)\NN(Qu)^{i\tau}+O_{\tau}(\log (\NN(1+u^{-1})).$$
    because $|e^{ix}-e^{iy}| \leq |x-y|$ for real numbers $x, y$ and $\ln \frac{\NN(Qu+a)}{\NN(Qu)}$ converges to 0 as $\NN(u) \to \infty$. 
    On the other hand, since  $\log \NN(1+u^{-1}) \to 0$ as $u \to \infty$ in the sense of leaving compact sets, we have that 
    $\lim_{N\to\infty}\E_{\NN(u) \leq N}\log \NN(1+u^{-1})=0$.
  So it suffices to show that 
\begin{equation}
    \begin{split}
       & \limsup_{N\to\infty}\max_{Q\in \Phi_{M}, a\in\mathcal{O}_K^{\times},\atop \NN(a)\leq \NN(Q), (a,Q)=1}\E_{ \NN(u)\leq N}\vert f'(Qu+a)-\exp(F(\tilde{f'},M,N;Q))\vert
     \\&\quad\quad\quad\quad\quad\quad\quad\quad\quad\quad\quad\quad\quad\quad\quad\quad\quad\quad\quad\quad\quad\quad\quad\ll \D(\tilde{f'},1;M,\infty)+M^{-1/2},
    \end{split}
\end{equation}

Let $h: \mf I(K)\to\C$ be the additive function whose values on powers of prime ideals are given by $h(\mf p^{k}):=\tilde{f'}(\mf p^{k})-1$. Since $z=e^{z-1}+O(\vert z-1\vert^{2})$, we have
$$f'(Qu+a)=\prod_{\mf p^{k}\Vert Qu+a, \NN(\mf p)>M} \tilde{f'}(\mf p^{k})=\prod_{\mf p^{k}\Vert Qu+a, \NN(\mf p)>M}\Bigl(\exp(h(\mf p^{k}))+O(\vert h(\mf p^{k})\vert^{2})\Bigr),$$
where the first equality follows from the unique factorization of ideals and the fact that $\mf p\vert Qu+a\Rightarrow \NN(\mf p)\geq M$, and $p^{k}\Vert Qu+a$ means that $k\in\N$ is the largest natural number for which $Qu+a\in{\mf p}^{k}$.
By the inequality $\left|\prod_{i=1}^{k}z_{i}-\prod_{i=1}^{k}w_{i}\right|\leq\sum_{i=1}^{k}\vert z_{i}-w_{i}\vert$ for  $z_{i},w_{i}\in\U$, which can be proved by induction, we have that
\begin{equation}\label{ffd}
    \begin{split}
       f'(Qu+a)=\exp(h(Qu+a))+O\left(\sum_{\mf p^{k}\Vert Qu+a, \NN(\mf p)>M}\vert h(\mf p^{k})\vert^{2}\right).
    \end{split}
\end{equation}
Also
note that
\begin{equation}\label{ddff}
    \begin{split}
        &\quad F(\tilde{f'},M,N;Q)=\sum_{M<\NN(\mf p^{k})\leq N, \mf p \nmid Q}h(\mf p^{k})\NN(\mf p)^{-k}(1-\NN(\mf p)^{-1})+O(M^{-1})\\&\quad\quad\quad\quad\quad\quad\quad\;:=\mu_{h,M,N}+O(M^{-1}).
    \end{split}
\end{equation}
Indeed, the term with $k=1$ can be compared to the actual definition of $F(\tilde{f'},M,N;Q)$, giving us an error bounded above by 
\begin{equation}\label{ddff3}
    \begin{split}
        \sum_{M<\NN(\mf p)\leq N} \sum_{k=2}^{\infty}\NN(\mf p)^{-k}=\sum_{M<\NN(\mf p)\leq N} \frac{O(1)}{\NN(p)^{2}}=O(1/M).
    \end{split}
\end{equation}
%

Combining (\ref{ffd}), (\ref{ddff}) together with the fact that 
  $\vert e^{w}-e^{z}\vert\ll \vert w-z\vert$, we have 
\begin{equation}\label{firstO1Kbound}
    \begin{split}
        &\quad\E_{\NN(u) \leq N}\vert f'(Qu+a)-\exp(F(\tilde{f'},M,N;Q))\vert
        \\&\ll \E_{\NN(u) \leq N}\vert \exp(h(Qu+a))-\exp(F(\tilde{f'},M,N;Q))\vert+\E_{\NN(u) \leq N}\sum_{\mf p^{k}\Vert Qu+a, \NN(\mf p)>M}\vert h(\mf p^{k})\vert^{2}
        \\&\ll \E_{\NN(u) \leq N}\vert h(Qu+a)-F(\tilde{f'},M,N;Q)\vert+\E_{\NN(u) \leq N}\sum_{\mf p^{k}\Vert Qu+a, \NN(\mf p)>M}\vert h(\mf p^{k})\vert^{2}
        \\&=\E_{\NN(u) \leq N}\vert h(Qu+a)-\mu_{h,M,N}\vert+\E_{\NN(u) \leq N}\sum_{\mf p^{k}\Vert Qu+a, \NN(\mf p)>M}\vert h(\mf p^{k})\vert^{2}+O(M^{-1}).
    \end{split}
\end{equation}

For the second term, we have
\begin{equation}\label{204}
    \begin{split}
        &\quad \E_{\NN(u) \leq N}\sum_{\mf p^{k}\Vert Qu+a, \NN(\mf p)>M}\vert h(\mf p^{k})\vert^{2}
        =\sum_{\NN(\mf p)>M, k\geq 1}\vert h(\mf p^{k})\vert^{2}\frac{1}{\gamma_{K} N}\sum_{\NN(u) \leq N}\mathds{1}_{\mf p^{k}\Vert Qu+a}
        \\&=\sum_{\NN(\mf p)>M, k\geq 1}\vert h(\mf p^{k})\vert^{2}\NN(\mf p^{k})^{-1}+o_{N\to\infty}(1) \text{ (by Lemma \ref{countingCRT})}.
    \end{split}
\end{equation}


For the first term, by Theorem \ref{TK},
we have 
\begin{equation}\label{20}
    \begin{split}
        &\quad \E_{\NN(u) \leq N}\vert h(Qu+a)-\mu_{h,M,N}\vert 
        \\&=\E_{\NN(u) \leq N}\left| h(Qu+a)-\sum_{M<\NN(\mf p^{k})\leq N, \mf p\nmid Q}h(\mf p^{k})\NN(\mf p)^{-k}(1-\NN(\mf p)^{-1})\right|
        \\&\leq \left(\E_{\NN(u) \leq N}\left| h(Qu+a)-\sum_{M<\NN(\mf p^{k})\leq N,\mf p\nmid Q}h(\mf p^{k})\NN(\mf p)^{-k}(1-\NN(\mf p)^{-1})\right|^{2}\right)^{1/2}
        \\&\leq \left((2+o_{M;N\to\infty}(1))\cdot\sum_{\NN(\mf p)>M, k\geq 1}\vert h(\mf p^{k})\vert^{2}\NN(\mf p^{k})^{-1}+\right. 
        \\&\quad\quad\quad\quad\quad\quad\quad\quad\quad\;\quad \left. O(1)\cdot \frac{1}{\sqrt{N}}\sum_{\NN(\mf p^{k})\leq N', \NN(\mf p)>M}h(\mf p^{k})^{2}\cdot \NN(\mf p^{k})^{-1/2}\right)^{1/2}.
    \end{split}
\end{equation}



Note that by the Cauchy-Schwarz inequality and Parts (i) and (ii) of Lemma~\ref{l2},
\begin{equation}\label{208}
    \begin{split}
      &\qquad\frac{1}{\sqrt{N}}\sum_{\NN(\mf p^{k})\leq N', \NN(\mf p)>M}\vert h(\mf p^{k})\vert^{2}\cdot \NN(\mf p^{k})^{-1/2}
    \leq \frac{4}{\sqrt{N}}\sum_{\NN(\mf p^{k})\leq N'} \NN(\mf p^{k})^{-1/2}
       \\ & \quad\quad\quad\quad\quad\quad\quad\quad\quad\quad\quad\leq \frac{4}{\sqrt{N}}\left(\sum_{\NN(\mf p^{k})\leq N'} \NN(\mf p^{k})^{-1}\right)^{1/2}\cdot \left(\sum_{\NN(\mf p^{k})\leq N'} 1\right)^{1/2}
       \\&\quad\quad\quad\quad\quad\quad\quad\quad\quad\quad\;\;\;=O\left(\frac{\log\log N'}{\log N'}\right)^{1/2}=o_{M;N\to\infty}(1).
    \end{split}
\end{equation}
 So now it remains to control $\sum_{\NN(\mf p)>M, k\geq 1}\vert h(\mf p^{k})\vert^{2}\NN(\mf p^{k})^{-1}$. Similarly to (\ref{ddff3}), the trivial bound for $|h(\mf p^k)|$ gives
\begin{equation}\label{201}
    \begin{split}
       \sum_{\NN(\mf p)>M, k\geq 2}\vert h(\mf p^{k})\vert^{2}\NN(\mf p^{k})^{-1}=O(M^{-1}).
    \end{split}
\end{equation}
 On the other hand,
\begin{equation}\label{202}
    \begin{split}
\sum_{\NN(\mf p)>M}\vert h(\mf p)\vert^{2}\NN(\mf p)^{-1}=\sum_{\NN(\mf p)>M}2(1-\text{Re}\tilde{f'}( \mf p))\NN(\mf p)^{-1}=2\D(\tilde{f'},1;M,\infty),    
\end{split}
\end{equation}
The conclusion follows by
combining (\ref{firstO1Kbound}), (\ref{204}), (\ref{20}), (\ref{208}), (\ref{201}) and (\ref{202}).
\end{proof}

\section{Proofs of the recurrence results}\label{sec: proofmain}

In this section, we prove Propositions \ref{aperiodiclimit0},
\ref{zeroaveragesadeltabdelta}, and
\ref{fffss}.
  As we saw in Section~\ref{sec: proofstrat}, they will imply the partition regularity results promised in Section \ref{s1}.

Throughout this section, we assume that $K=\Q(\sqrt{-d})$ with $d\in\N$ squarefree.
We will make use of the next two simple lemmas:
\begin{lemma}\label{multav0}
Let $f: \mathcal{O}_K^{\times} \to \U$ be a non-trivial multiplicative function (that is, $f \neq 1$). Let $\Phi_M$ be a multiplicative F\o lner sequence in $\mathcal{O}_K^{\times}$. Then,
\[
\lim_{M \to \infty} \E_{u \in \Phi_M} f(u) = 0.
\]
\end{lemma}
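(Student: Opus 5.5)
The argument is the standard "Følner averages of a nontrivial character vanish" trick. One point about the hypotheses matters. In the context where the lemma is used, $f$ is completely multiplicative, so $f$ is a homomorphism of the semigroup $(\mathcal{O}_K^{\times},\times)$ into $\U$. Also, $f\neq 1$ should be read as: there is $x\in\mathcal{O}_K^{\times}$ with $f(x)\neq 1$. I will also use the fact that $f$ takes values in $\S^1$, since that is how the lemma is applied to elements of $\mathcal{M}$. If one wants the $\U$-valued case, the only property needed is complete multiplicativity together with some $x$ satisfying $f(x)\neq 1$.

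Fix such an $x$ and set $L_M:=\E_{u\in\Phi_M}f(u)$. Using $f(xu)=f(x)f(u)$, we get
\[
f(x)L_M=\frac{1}{|\Phi_M|}\sum_{u\in\Phi_M}f(xu)=\frac{1}{|\Phi_M|}\sum_{w\in x\Phi_M}f(w),
\]
where the last step uses injectivity of multiplication by $x$ in the domain $\mathcal{O}_K$. Since $|f|\le 1$, it follows that
\[
|f(x)L_M-L_M|\le \frac{|x\Phi_M\,\Delta\,\Phi_M|}{|\Phi_M|}.
\]

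Next, I will check that the Følner condition in the form stated in the paper, $|\Phi_M\cap x^{-1}\Phi_M|/|\Phi_M|\to 1$, controls this symmetric difference. We have $|\Phi_M\cap x^{-1}\Phi_M|=|x\Phi_M\cap\Phi_M|$, and $|x\Phi_M|=|\Phi_M|$ by injectivity. Hence
\[
|x\Phi_M\,\Delta\,\Phi_M|=2\bigl(|\Phi_M|-|x\Phi_M\cap\Phi_M|\bigr)=o(|\Phi_M|).
\]
Therefore $|1-f(x)|\,|L_M|\to 0$, and since $f(x)\neq 1$ this gives $L_M\to 0$.

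There is one caveat. If $f$ were merely multiplicative (multiplicative only on coprime arguments), the identity $f(xu)=f(x)f(u)$ could fail. In that case I would restrict to the completely multiplicative situation, which is the only one needed in the applications of this lemma (for instance, to $g\notin\mathcal{A}$ in Proposition~\ref{zeroaveragesadeltabdelta}, with $g$ coming from $\mathcal{M}$). Alternatively, with the concrete sequence \eqref{folnerMult}, one can take $x$ to be a prime-power generator and use the product structure of $\Phi_M$ in the exponents. This interpretation issue is the only real obstacle; the analytic content is just the one-line invariance computation above.
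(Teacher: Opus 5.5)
Your argument is correct and is essentially the paper's proof. The paper also uses $f(vu)=f(v)f(u)$ and the F\o lner property to obtain $\lim \E_{u\in\Phi_M} f(u)=f(v)\lim\E_{u\in\Phi_M} f(u)$, but it first passes to a convergent subsequence using compactness of $\U$; your bound $|1-f(x)|\,|L_M|\le |x\Phi_M\,\Delta\,\Phi_M|/|\Phi_M|$ makes the same step quantitative and avoids the subsequence argument. Your restriction to completely multiplicative $f$ is in fact necessary, so drop the ``alternatively'' remark about prime-power generators. For instance, in $\Z[i]$, a multiplicative $f$ with $f(\pi)=-1$, $f(\pi^k)=1$ for $k\ge 2$, and $f=1$ on all other prime powers satisfies $f\equiv 1$ on every $\Phi_M$ from \eqref{folnerMult}, so the lemma fails for merely multiplicative functions.
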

\begin{proof}
Since $\U$ is compact, we consider limit points of the sequence $(\E_{u \in \Phi_M}f(u))_{M \in \N}$. Abusing notation, suppose that the limit of this sequence exists. 

Since $f\neq 1$, let $v \in \mathcal{O}_K^{\times}$ be such that $f(v)\neq 1$. Since $\Phi_M$ is a F\o lner sequence, we can write
\[
\lim_{M \to \infty} \E_{u \in \Phi_M} f(u) = \lim_{K \to \infty} \E_{u \in v\Phi_M} f(u) = \lim_{M \to \infty} \E_{u \in \Phi_M} f(vu) = f(v) \lim_{M \to \infty} \E_{u \in \Phi_M} f(u).
\]
Since $f(v) \neq 1$, this implies that the limit in question must be $0$. Therefore, $0$ is the only possible accumulation point for this sequence, so compactness of $\U$ completes the proof.
\end{proof}
\begin{lemma}\label{sequenceavfact}
Let $d\in\N$ be squarefree and $a: \OK^{\times} \to \U$ be a sequence. Let $l_1, l_2 \in \OK$ not both $0$. Suppose that for some $\varepsilon>0$ and some sequence $b: \N \to \U$ we have
\[
\limsup_{N \to \infty} \E_{\NN(u) \leq N} |a(u)-b(N)| \leq \varepsilon.
\]
Then,
\[
\limsup_{N \to \infty} \E_{\NN(n), \NN(m) \leq N} |a(l_1m+l_2n)-b(lN)| \ll_{K,l} \varepsilon,
\]
where $l:=2(\NN(l_1)+\NN(l_2))$.
\end{lemma}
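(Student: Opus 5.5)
The plan is to reduce the two-variable average to a one-variable average of $a$ over a ball of radius comparable to $lN$, by counting how often each value $w=l_1m+l_2n$ is hit. Without loss of generality assume $l_2\neq 0$ (otherwise swap roles). For $\NN(m),\NN(n)\leq N$ we have $\NN(l_1m+l_2n)\leq 2(\NN(l_1)\NN(m)+\NN(l_2)\NN(n))\leq lN$, using $\NN(x+y)\leq 2(\NN(x)+\NN(y))$ for the (positive definite) norm of an imaginary quadratic field. So the map $(m,n)\mapsto w=l_1m+l_2n$ sends the product of balls into $B_{lN}=\{w:\NN(w)\leq lN\}$.

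Next I bound multiplicities. Fix $w$; the solutions $(m,n)$ of $l_1m+l_2n=w$ with $\NN(m)\leq N$ are parametrized by $m$ (then $n=(w-l_1m)/l_2$ is determined), so there are at most $|B_N|\ll_K N$ of them. Hence
\[
\E_{\NN(m),\NN(n)\leq N}|a(l_1m+l_2n)-b(lN)|\leq \frac{|B_N|}{|B_N|^2}\sum_{w\in B_{lN}}|a(w)-b(lN)|
=\frac{|B_{lN}|}{|B_N|}\,\E_{\NN(w)\leq lN}|a(w)-b(lN)|.
\]
(One must also handle the pairs where $l_1m+l_2n=0$, where $a$ is undefined; there are $O(N)$ such pairs out of $\asymp N^2$, contributing $O(1/N)\to0$, or we simply restrict to $w\neq 0$.) By Corollary~\ref{lem: bounds for number of ideals of norm at most x} (or directly the lattice-point count of Lemma~\ref{dafp}, since $B_N$ is an ellipse), $|B_{lN}|/|B_N|\to l$, a constant depending on $K,l$.

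Finally take $\limsup_{N\to\infty}$: since $lN\to\infty$ along the subsequence $\{lN\}$, the hypothesis gives $\limsup_N\E_{\NN(w)\leq lN}|a(w)-b(lN)|\leq\varepsilon$ (the hypothesis is stated for $N\in\N$ and $lN\in\N$ as $l\in\N$). Thus the limsup is at most $l\varepsilon\ll_{K,l}\varepsilon$. There is no serious obstacle; the only points needing care are the norm inequality $\NN(x+y)\leq 2(\NN(x)+\NN(y))$ (a parallelogram-law consequence of $\NN(z)=|z|^2$ under the complex embedding) and the multiplicity count, which is where the bound $|B_N|\ll N$ is used.
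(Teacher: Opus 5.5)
Your proposal is correct and follows essentially the same route as the paper: you bound the multiplicity of each value $u=l_1m+l_2n$ by $O_K(N)$ (for each $m$ there is at most one $n$), observe that every such $u$ satisfies $\NN(u)\le lN$, and compare with the one-variable average at scale $lN$. You are also more careful than the paper on several points it leaves implicit: the inequality $\NN(x+y)\le 2(\NN(x)+\NN(y))$, the reduction to $l_2\neq 0$, the negligible pairs with $l_1m+l_2n=0$, and the passage to the subsequence $lN$ in the hypothesis.
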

\begin{proof}
We can estimate
\begin{equation}\label{sequenceavfactE1}
\E_{\NN(n), \NN(m) \leq N} |a(l_1m+l_2n)-b(lN)| \ll_{K,l} \frac{1}{N^2}\sum_{\NN(u) \leq lN} w_N(u) |a(u)-b(lN)|,
\end{equation}
%
%
%
where for $u \in \OK^{\times}$ we put $$w_N(u):=\left|\{(m,n)\in (\OK)^{2}: \NN(n), \NN(m) \leq N \text{ and }l_1m+l_2n=u\}\right|.$$ Since, given an $m$ with $\NN(m) \leq N$, there is only one solution to $l_1m+l_2n=u$ (as an equation in $n$), we see that $w_N(u) \ll_{K} N$. On the other hand, it is not hard to see that for the $u\in\OK$ for which $w_{N}(u)$ is non-empty has size at most $lN$. So the right hand side of \eqref{sequenceavfactE1} is bounded above   by
\[
\ll_{K,l}\E_{\NN(u) \leq lN} |a(u)-b(lN)|,
\]
so we are done, given our starting hypothesis.
\end{proof}
Before moving onto the main proof, we need a couple of preliminary results. Consider the sets on $\S^1$ given by
\[
I(\delta):=\{\exp(2\pi i\phi) : \phi \in (-\delta,\delta)\}.
\]

\begin{lemma}\label{weights}
Let $0<\delta<\frac{1}{2}$ and consider the trapezoidal function $F_{\delta}: \S^1 \to [0,1]$ which is equal to $1$ on $I_{\delta/2}$ and $0$ outside of $I_{\delta}$. For $\ell,\ell' \in \mathcal{O}_K^{\times}$ and $\alpha,\beta \in \OK$, let
\begin{equation}\label{weight1}
w_{\delta}(m,n):= F_{\delta}\left(\NN\Bigl(\frac{\ell (m+\alpha n)(m+\beta n)}{\ell' mn}\Bigr)^i \right)\cdot \mathds{1}_{S_q}(m,n), \quad m, n \in \mathcal{O}_K^{\times}.
\end{equation}
Then,
\begin{equation}\label{weight2}
\lim_{N \to \infty} \E_{\NN(m), \NN(n) \leq N } w_{\delta}(m,n)>0.
\end{equation}
\end{lemma}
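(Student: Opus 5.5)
We pass from the lattice average to an integral over $\C^2\cong\R^4$, using the homogeneity of the expression inside $F_\delta$. Write
\[
R(m,n):=\frac{\NN(\ell)\,\NN(m+\alpha n)\,\NN(m+\beta n)}{\NN(\ell')\,\NN(m)\,\NN(n)},
\]
so that $w_\delta(m,n)=F_\delta(R(m,n)^i)\mathds{1}_{S_q}(m,n)$. We view $m,n$ as points of $\C$ via the embedding $\Z[\tau_d]\hookrightarrow\C$ and $\NN(z)=|z|^2$.

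The function $R$ extends to $\C^2$ minus the union $Z$ of the four complex lines $\{mn(m+\alpha n)(m+\beta n)=0\}$, and there it is continuous and homogeneous of degree $0$. Its zero set and poles lie inside $Z$, so $\Psi(m,n):=F_\delta(R(m,n)^i)$ is a bounded function on $\C^2\setminus Z$. It is continuous off $Z$, and $Z$ has Lebesgue measure zero.

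The set $S_q^c$ consists of $Z$ together with $\{\ell(m+\alpha n)(m+\beta n)=\ell' mn\}$. The latter is a proper algebraic subvariety: the polynomial $\ell(m+\alpha n)(m+\beta n)-\ell' mn$ is nonzero, since its $m^2$-coefficient is $\ell\neq0$. So $S_q^c\cap\Z[\tau_d]^2$ consists of lattice points on finitely many proper subvarieties. Such sets have $O(N)$ points in the box $\{\NN(m),\NN(n)\le N\}$, which contains $\asymp N^2$ points, so they have density zero. Removing $\mathds{1}_{S_q}$ therefore does not affect the limit.

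Next we compute the limit by a Riemann sum. Rescale $(m,n)=\sqrt N(x,y)$. By homogeneity, $\Psi(m,n)=\Psi(x,y)$, and the average becomes a Riemann sum for $\Psi$ over the compact set $D:=\{|x|,|y|\le1\}$, sampled on the lattice $N^{-1/2}\Z[\tau_d]^2$. The function $\Psi$ is bounded and continuous off the null set $Z$, so it is Riemann integrable on $D$. Hence the Riemann sum converges:
\[
\lim_{N\to\infty}\E_{\NN(m),\NN(n)\le N}w_\delta(m,n)=\frac{1}{m(D)}\int_D F_\delta(R(x,y)^i)\,dx\,dy.
\]
Alternatively, one can approximate $\Psi$ on $D$ minus a small neighbourhood of $Z$ by step functions and count lattice points with Lemma \ref{dafp}. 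This gives both the existence of the limit and its value.

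Finally we show the integral is positive. Since $F_\delta=1$ on $I(\delta/2)$, it suffices to show that the open set $U:=\{(x,y)\in D\setminus Z:\ \log R(x,y)\in 2\pi\Z+(-\pi\delta,\pi\delta)\}$ is nonempty. The main point is that $\log R$ takes the value $0$, or more generally some value in $2\pi\Z$, somewhere on $D\setminus Z$. This follows from the intermediate value theorem, because $D\setminus Z$ is connected (the complement of a complex hypersurface is connected) and $\log R$ is unbounded in both directions on it:
\begin{itemize}
\item near a generic point of the line $\{n=0\}$, with $m\neq0$, we have $R\to+\infty$;
\item near a generic point of the line $\{m+\alpha n=0\}$ we have $R\to0$.
\end{itemize}
If $\alpha=0$ or $\beta=0$, some factors cancel, and we instead use the remaining factor. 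If the linear forms coincide and $R$ is constant, we take $F_\delta$-positivity from the constant value only when it lies in the window; pairwise independence of the $L_i$ excludes total cancellation, so $R$ is nonconstant.

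By continuity, $\log R$ stays within $\pi\delta$ of the attained value on a neighbourhood of the point where it is attained. That neighbourhood is an open set of positive measure on which $F_\delta(R^i)=1$, so the integral is positive.

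The step I expect to need the most care is justifying the Riemann-sum limit near the singular set $Z$, where $R^i$ oscillates wildly. Since $F_\delta\le1$, this is handled by excising an $\eta$-neighbourhood of $Z$. That neighbourhood has measure $O(\eta)$ and contains a proportion $O(\eta)$ of the lattice points, and we then let $\eta\to0$.
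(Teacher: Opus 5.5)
Your proposal is correct and follows essentially the same route as the paper. Both arguments pass to the integral over the product of unit discs by a Riemann-sum argument, using degree-$0$ homogeneity and the fact that the singular set has measure zero, and then find a point where $\NN(\cdot)^i$ equals $1$ and use continuity to get an open set of positive measure. The only real difference is how that point is located. The paper restricts to a real ray $m=an$ and uses $R\to\infty$ as $a\to\infty$ to hit $e^{2k\pi}$ for some large $k$; only continuity of $H$ and $H(a)\to\infty$ are needed there, not the monotonicity the paper claims. You instead use connectedness of $\C^2\setminus Z$ and two-sided unboundedness of $\log R$ to hit $\log R=0$, and your handling of $S_q^c$ and of the excision near $Z$ is more careful than the paper's.
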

\begin{proof}
The left hand side of (\ref{weight2}) can be rewritten as 
\begin{equation}\label{weight3}
\lim_{N \to \infty} \E_{m',n'\in\frac{1}{N}\OK\colon\NN(m'), \NN(n') \leq 1}F_{\delta}\left(\NN\Bigl(\frac{\ell (m'+\alpha n')(m'+\beta n')}{\ell' m'n'}\Bigr)^i \right) \mathds{1}_{S_q}(m',n').
\end{equation}

We extend the domain of $\NN$ from $K$ to $\C$ by continuity, since $K$ is dense in $\C$.
Let $B:=\{ w \in \C : \NN(w) \leq 1\}$. Define
  $$G_{\delta}(w,z):=F_{\delta}\left(\NN\Bigl(\frac{\ell (w+\alpha z)(w+\beta z)}{\ell' wz}\Bigr)^i \right)\cdot\mathds{1}_{B\times B, (w,z) \in S_q}(w,z).$$ 
  Then $G_{\delta}$ is continuous except for a set of measure zero with respect to the Lebesgue measure on $B \times B$, where $B:=\{ w \in \C : \NN(w) \leq 1\}$ -- here $(w,z) \in S_q$ is checked with the same definition, which also makes sense for $w,z \in \C$. Reinterpreting (\ref{weight3}) as Riemann sums, we may rewrite (\ref{weight3}) as   
\[
\int_{B}\int_{B} G_{\delta}(w,z) \ dwdz,
\]
where $dwdz$ is the appropriately normalized Lebesgue measure on $B \times B$. 

It remains to show that the integral of $G_{\delta}$ over $B \times B$ is positive. Since $G_{\delta}$ is non-negative, it suffices to show that $G_{\delta}$ is not identically $0$ outside the set of points where either $z$ or $w$ is equal to $0$; more particularly, we will show it is non-zero along a line of the form $w=az$, for some real number $a>1$. It follows that $G_{\delta}$ is non-zero for a small tube around said line (since $a>1$ implies that $w,z$ are distinct and non-zero), which implies the asserted positivity. 

In order to see that this is the case, observe that if we set $w=az$ for $a \in \R$ and $k \in \N$, the property $\NN(\ell(w+\alpha z)(w+\beta z))=\NN(\ell'wz)e^{2k\pi}$ will be satisfied, provided that we can solve the equation 
\[
\NN(\ell(a+\alpha)(a+\beta))=\NN(\ell'a)e^{2k\pi}.
\]
It is easy to see that if we move $\NN(a)$ to the left hand side of this equation, the function $H(a):=\NN((1+\beta a^{-1})(a+\alpha))$ is well defined for $a>0$, and because it is increasing, its range contains a ray of the form $[x_0,\infty)$. Now, simply pick $k \in \N$ such that $\frac{\NN(\ell')}{\NN(\ell)}e^{2k\pi}$ falls in this ray.

After raising such choices of norms of $w$ and $z$ to the $i$ and $-i$ powers respectively, we find that the function $F_{\delta}$ takes on values that are very close to $1$. Thus, by continuity, all points on a tubular neighbhorhood around it, will have value at least $1/2$, and also be non-zero, and such that $w \neq z$. This implies, as we argued, that $\int_B\int_B G_{\delta} \ dwdz>0$, as desired.



\end{proof}

\begin{proof}[Proof of Proposition \ref{aperiodiclimit0}]
By the definition of $w_{\delta}(m,n)$, we can use the Stone-Weierstrass theorem on $\S^1$, so that together with linearity of the limit, the continuous function $F_{\delta}$ appearing in the weights can be replaced by a power $z^k$. This will simplify matters considerably, transforming the averages we need to show converge to $0$ into 
\[
\lim_{N \to \infty} \E_{\NN(m), \NN(n) \leq N} \mathds{1}_{S_q}(m,n)\NN\Bigl(\frac{\ell(m+\alpha n)(m+\beta n)}{\ell' mn}\Bigr)^{ki}  \times\]
\[
f(\ell(Qm+1+\alpha Qn)(Qm+1+\beta Qn))\cdot \overline{f(\ell'(Qm+1)(Qn))},
\]
for $k \in \Z$. Note that, with the same trick we used in Lemma~\ref{weights} we may as well replace $n, m$ by $Qn$ and $Qm$ respectively inside the norms, because the changes cancel out with the complex conjugate.

We next argue that, in fact, we can change $Qm$ by $Qm+1$ without affecting the averages. Indeed, for given $n \in \N$, it is not hard to see that 
$$\lim_{\NN(m) \to \infty} \log \frac{\NN(\ell(Qm+Q\alpha n)(Qm+Q\beta n))}{\NN(\ell(Qm+1+\alpha Qn)(Qm+1+\beta Qn))}=\lim_{\NN(m) \to \infty} \log \frac{\NN(\ell'(Q^2mn))}{\NN(\ell'(Qm+1)Qn)}=0.$$ So this change is valid. 

Therefore, it is enough to establish the convergence to $0$ of the averages
\[
\E_{\NN(m), \NN(n) \leq N} \mathds{1}_{S_q}(m,n)f_k(\ell(Qm+1+\alpha Qn)(Qm+1+\beta Qn))\cdot \overline{f_k(\ell'(Qm+1)(Qn))},
\]
where $f_k(n):=f(n) \cdot \NN(n)^{ik}$.
Since $f$ is aperiodic, so are $f_k$ and $\overline{f_k}$ by Theorem \ref{characterization}. So the conclusion follows from Proposition~\ref{P: appendix}. 
%
%
%
\end{proof}

\begin{proof}[Proof of Proposition \ref{zeroaveragesadeltabdelta}]
By hypothesis on $f$,  $f(\e)=1$ for all units $\e$ and there exist some modified Dirichlet character $\chi$ of period $I$, some $\tau\in\R$, and some extension $\tilde{f'}$ of the function
$$f'(u):=f(u)\overline{\chi}(u)\NN(u)^{-i\tau}$$
such that 
  $\D(\tilde{f'}, 1)<\infty$.
Fix $\varepsilon>0$ and take $M_{0}=M_{0}(\varepsilon,f)$ so that 
\begin{equation}\label{tail}
    \sum_{\NN(\mf p)>M_{0}}\frac{1}{\NN( \mf p)}(1-\text{Re}\tilde{f'}(\mf p))+M_{0}^{-1/2}\leq \varepsilon
\end{equation}
and that the set $\Phi_{M_{0}}$ defined in (\ref{folnerMult}) is contained in $I$.  
By Proposition \ref{p25} and (\ref{tail}), 
$$\limsup_{N\to\infty}\E_{\NN(u)\leq N}\vert f(Qu+1)-\NN(Qu)^{i\tau}\exp(F(\tilde{f'},M,N;Q))\vert\ll\varepsilon$$
for every $M>M_{0}$ and $Q\in\Phi_{M}$.
For fixed  $\alpha \in \mathcal{O}_K$, it follows from Lemma~\ref{sequenceavfact}  that
\[
\limsup_{N\to\infty}\E_{\NN(m), \NN(n) \leq N}\left| f(Q(m+ \alpha n))+1)-\NN(Q(m+\alpha n))^{i\tau}\exp(F(\tilde{f'},M,lN;Q))\right|\ll\varepsilon,
\]
where $l_{\alpha}:=2(\NN(\alpha)+1)$.
Combining the above two estimates together with the triangle inequality, 
we have that
 \begin{equation}\label{fweqfwf} 
    \begin{split}
&\limsup_{N\to\infty}\E_{\NN(m),\NN(n)\leq N}\Bigl\vert f\Bigl(\frac{(Qm+1+Q\alpha n)(Qm+1+Q\beta n)}{Qm+1}\Bigr) 
       \\&-\NN\Bigl(\frac{(Qm+Q\alpha n)(Qm+Q\beta n)}{Qm}\Bigr)^{i\tau}\exp(F(\tilde{f'},M,l_{\alpha}N;Q)+F(\tilde{f'},M,l_{\beta}N;Q)\\&\quad\quad\quad\quad\quad\quad\quad\quad\quad\quad\quad\quad\quad\quad\quad\quad\quad\quad\quad\quad\quad\quad\quad\quad\quad\quad\quad-F(\tilde{f'},M,N;Q))\Bigr\vert\ll\varepsilon. 
    \end{split}
\end{equation}
We now multiply both sides of (\ref{fweqfwf}) by $w_{\delta}(m,n)\cdot f(\frac{\ell}{\ell'n})\NN(Q)^{i\tau}$, which is bounded in absolute value by $1$, to obtain
 \begin{equation}\label{fpwepfw} 
    \begin{split}
       &\limsup_{N\to\infty}\E_{\NN(m), \NN(n) \leq N}  
         \Bigl\vert w_{\delta}(m,n)\cdot f\Bigl(\frac{\ell(Qm+1+Q\alpha n)(Qm+1+Q\beta n)}{\ell'(Qm+1)Qn}\Bigr)(f(Q)\NN(Q)^{-i\tau})
       \\&-w_{\delta}(m,n)\cdot f(\frac{\ell}{\ell'n}) \NN\left(\frac{(m+\alpha n)(m+\beta n)}{mn}\right)^{i\tau} 
       \cdot\exp(F(\tilde{f'},M,l_{\alpha}N;Q)+F(\tilde{f'},M,l_{\beta}N;Q)\\&\quad\quad\quad\quad\quad\quad\quad\quad\quad\quad\quad\quad\quad\quad\quad\quad\quad\quad\quad\quad\quad\quad\quad\quad\quad\quad\quad\quad\; -F(\tilde{f'},M,N;Q))\Bigr\vert\ll\varepsilon. 
    \end{split}
\end{equation}
Let
    $$\tilde{L}_{\delta}(Q):=\lim_{N \to \infty} \E_{\NN(m), \NN(n) \leq N}  w_{\delta}(m,n) \cdot f\Bigl(\frac{\ell(Qm+1+Q\alpha n)(Qm+1+Q\beta n)}{\ell'(Qm+1)Qn}\Bigr)(f(Q)\NN(Q)^{-i\tau}).$$
    Note that if $Q\in\Phi_{M}$, then $F(\tilde{f'},M,N;Q)$ is independent of $Q$ for all $N$. So  (\ref{fpwepfw}) implies that $|\tilde{L}_{\delta}(Q)-\tilde{L}_{\delta}(Q')|\ll \varepsilon$  for all $Q, Q' \in \Phi_M$. 
%
%
%
%
Therefore, letting $M\to \infty$ we obtain
$$\limsup_{M\to\infty}\max_{Q,Q'\in\Phi_{M}}\vert\tilde{L}_{\delta}(Q)-\tilde{L}_{\delta}(Q')\vert=0.$$
For each $M\in\N$, pick some $Q_{M}\in\Phi_{M}$. Then
$$\lim_{M \to \infty} \E_{Q \in \Phi_M} \lim_{N \to \infty} \E_{\NN(n),\NN(m) \leq N} A_{\delta}(f,Q; m,n)=\limsup_{M\to\infty}\tilde{L}_{\delta}(Q_{M})\E_{Q\in\Phi_M}\overline{f}(Q)\NN(Q)^{i\tau}.$$
Since $Q\mapsto \overline{f}(Q)Q^{i\tau}$ is a nontrivial multiplicative function, it follows from Lemma~\ref{multav0} that $\lim_{M \to \infty} \E_{Q\in\Phi_{M}}\overline{f}(Q)\NN(Q)^{i\tau}=0$ and we are done.

\end{proof}

\begin{proof}[Proof of Proposition \ref{fffss}]


Let $a:=\sigma(\{1\})>0$ and for $\delta>0$ set
    $$\mu_{\delta}:=\lim_{N\to\infty}\E_{\NN(m), \NN(n) \leq N}w_{\delta}(m,n).$$
By Lemma~\ref{weights}, we have that the limit exists and $\mu_{\delta}>0$. For $T>0$, denote
$$\mathcal{A}_{T}:=\{(\NN(u)^{i\tau})_{u\in\mathcal O_K^{\times}}\colon \tau\in [-T,T]\}\subseteq \mathcal{A},$$
which we claim is a closed Borel set. Indeed, we can write
\[
\mathcal{A}_T= \bigcap_{u \in \mathcal O_K^{\times}} \pi_u^{-1}\left(\{ \NN(u)^{i\tau} : \tau \in [-T,T] \}\right),
\]
where $\pi_u$ is the projection onto the $u$-th coordinate, with $u \in \mathcal O_K^{\times}$. Now, given $\tau \in \R$, the set $\{\NN(u)^{i\tau} : \tau \in [-T,T] \}$ is the image of the compact set $[-T,T]$ under the continuous map $\tau \mapsto\NN(u)^{-i\tau}$, which means it is compact, and since the target space is Hausdorff, it must be closed. The topology taken on $\mathcal{M}$ makes each of the $\pi_u$ continuous, and an aribtrary intersection of closed sets is closed. Thus, $\mathcal{A}_T$ is a Borel set.

By the monotone convergence theorem for sets, there exists $T_{0}=T_{0}(\sigma)>0$ such that $\sigma(\mathcal{A}\backslash\mathcal{A}_{T_{0}})\leq a/2$.
Since $\lim_{\NN(u)\to\infty}\sup_{Q\in \mathcal O_K^{\times}}\vert\log\NN(Qu+1)-\log\NN(Qu)\vert=0$,
we have that 
\begin{equation}\label{e78}
    \begin{split}
        &\lim_{N\to\infty}\sup_{f\in \mathcal{A}_{T_{0}},Q\in\mathcal O_K^{\times}}\E_{\NN(m), \NN(n) \leq N}w_{\delta}(m,n)\cdot\Bigl\vert f\Bigl(\frac{\ell(Qm+1+\alpha Qn)(Qm+1+\beta Qn)}{\ell'(Qm+1)Qn}\Bigr) 
        \\&\quad\quad\quad\quad\quad\quad\quad\quad\quad\quad\quad\quad\quad\quad\quad\quad\quad\quad\quad\quad\quad\quad\;\;\;\;-f\Bigl(\frac{\ell(m+\alpha n)(m+\beta n)}{\ell'mn}\Bigr)\Bigr\vert=0.
    \end{split}
\end{equation}
On the other hand, by the definition of $w_{\delta}$ above, the limit in $N$ after the supremum will only have contributions close to the value of $w_{\delta}$ when the value of $f\Bigl(\frac{\ell(m+\alpha n)(m+\beta n)}{\ell'mn}\Bigr)$ is very close to $1$. This will be independent of the choice of multiplicative function $f$ provided it is in the set $\mathcal{A}_{T_0}$, so it follows that
\begin{equation}\label{e79}
    \begin{split}
        \lim_{N\to\infty}\sup_{f\in \mathcal{A}_{T_{0}}} \left| \E_{\NN(m), \NN(n) \leq N}w_{\delta}(m,n)\cdot\Bigl( f\Bigl(\frac{\ell(m+\alpha n)(m+\beta n)}{\ell'mn}\Bigr)-1\Bigr)\right|=O(\delta).
    \end{split}
\end{equation}
 Combining (\ref{e78}) and (\ref{e79}), we have that 
Thus, taking the last limit in $\delta$ yields the equality that was claimed.
    \begin{equation}
    \begin{split}
       \lim_{\delta\to 0^{+}}\lim_{N\to\infty}\sup_{f\in \mathcal{A}_{T_{0}}} \left| \E_{\NN(m), \NN(n) \leq N}w_{\delta}(m,n)\cdot\Bigl( f\Bigl(\frac{\ell(Qm+1+\alpha Qn)(Qm+1+\beta Qn)}{\ell'(Qm+1)Qn}\Bigr)-1\Bigr)\right|=0.
    \end{split}
\end{equation}
So if $\delta_{0}$ is sufficiently small depending only on $T_{0}$, we have that
$$\liminf_{N\to\infty}\inf_{Q\in\mathcal O_K^{\times}}\text{Re}\left(\E_{\NN(m), \NN(n) \leq N}\int_{\mathcal{A}_{T_{0}}}w_{\delta}(m,n)f\Bigl(\frac{\ell(Qm+1+\alpha Qn)(Qm+1+\beta Qn)}{\ell'(Qm+1)Qn}\Bigr)\,d\sigma(f)\right)$$
is at least $\sigma(\mathcal{A}_{T_{0}})\mu_{\delta_{0}}\geq \sigma(\{1\})\mu_{\delta_{0}}=a\mu_{\delta_{0}}$.
On the other hand, 
$$\liminf_{N\to\infty}\sup_{Q\in\mathcal O_K^{\times}}\left|\E_{\NN(m), \NN(n) \leq N}\int_{\mathcal{A}\backslash\mathcal{A}_{T_{0}}}w_{\delta}(m,n)f\Bigl(\frac{\ell(Qm+1+\alpha Qn)(Qm+1+\beta Qn)}{\ell'(Qm+1)Qn}\Bigr)\,d\sigma(f)\right|$$
is bounded above by $a\mu_{\delta_{0}}/2$.  We are done by setting $\rho_{0}:=a\mu_{\delta_{0}}/2$.
\end{proof}

\subsection{Further discussions}\label{SEC: further}
We end this article with a discussion of potential directions that can be studied next. More particularly, we will discuss the case of real quadratic fields, and also comment on the potential to develop quadratic concentration estimates (as a generalization of the linear concentration estimates that were obtained in Section~\ref{SECconcest}).

\subsubsection{Real quadratic fields}
The main issue with real quadratic fields is the fact that they have infinitely many units. This breaks down many of our arguments irreparably, especially because the balls one could hope to define with the norm (or even its absolute value) have infinitely many elements. This means that the connection with the generalized version of Hal\'asz's theorem, one of the key ingredients of this work, cannot be made.

A potential approach one may consider is to average along a suitable cut of the ball, where we only take a few representatives of each element times powers of the units. Next, it would be key to be able to apply a suitable version of Hal\'asz's theorem in this context, switching to boxes instead of balls, if necessary (and then showing that the two averaging methods can be compared). It might by useful to start by assuming that $f(\e)=1$ for all units $\e$, and then upgrade to an arbitrary $f$.

\subsubsection{Quadratic concentration estimates for rings of integers}
We conjecture that it is possible to develop quadratic concentration estimates in the spirit of \cite{FKM2}, but due to space considerations we have not attempted to develop them in the present paper. As a first step, one could try to prove them for $\Z[i]$ given that it is the closest to $\Z$, having a norm that is easy to work with, and still preserving the UFD property. Then, one can see if they can be extended to arbitrary quadratic imaginary fields, and from there try to move to general number fields (of course, the issue we raised in the previous subsubsection above of the existence of infinitely many units still needs to be circumvented).

\appendix
\section{Seminorm estimates}\label{appapp}
The purpose of this appendix is to deal with the apparent disconnect between Halász's theorem (Theorem~\ref{C22intro}), which deals with averages along balls, and a result that is incredibly important to us: \cite[Theorem~1.12]{S23}, which works for averages along boxes, instead of balls.
%
%
%
To this end, we prove the following.
\begin{proposition}\label{P: appendix}
Let $K=\Q(\sqrt{-d})$ for some squarefree $d\in\N$, $s \in \N$ with $s \geq 3$ and $L_j(m,n)$, $j=1,\dots, s$ be linear forms with coefficients in $\OK$ such that the linear forms $L_1, L_j$ are linearly independent for $j=2,\dots, s$. For $j=1,\dots, s$, let $h_j: \OK \to \C$. If  $h_{1}$ is aperiodic, 
then
\begin{equation}\nonumber
   \lim_{N \to \infty} \E_{\NN(m),\NN(n) \leq  N} \prod_{j=1}^s h_j(L_j(m,n))  = 0.
\end{equation}
 \end{proposition}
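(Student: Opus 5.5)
The plan is a standard Gowers-norm argument that moves between ball averages and box averages. Throughout, the $h_j$ are taken to be bounded by $1$; this is implicit in the statement.

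\textbf{Step 1: Gowers-norm control.} We identify $\OK\cong\Z^2$ via the basis $\{1,\tau_d\}$, so that $(m,n)\in\OK^2$ becomes a point of $\Z^4$. Each $L_j(m,n)$ is then an integer-linear map $\Z^4\to\Z^2$. The pairwise independence of $L_1$ from each $L_j$ over $K$ gives the usual ``complexity'' condition. We write the ball average as a box average with a weight: the ball $\{\NN(m),\NN(n)\le N\}$ is a convex region of $\R^4$ of size $\asymp N^{1/2}$ in each coordinate. Its indicator can be approximated, with $L^1$ error $O(\varepsilon)$, by a finite combination of indicators of boxes. This uses the Lipschitz boundary estimate of Lemma~\ref{dafp} applied fibrewise, or simply the fact that the boundary has small measure. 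Each resulting box average of $\prod_j h_j(L_j(m,n))$ is then bounded by the standard generalized von Neumann theorem. After embedding the box in some $\Z_{N'}^4$ with $N'\asymp N^{1/2}$ prime, we get a bound of the form $\|h_1\cdot\mathds{1}_{[\text{box}]}\|_{U^{s-1}(\Z_{N'}^2)}+o(1)$. So it suffices to show that the Gowers norms $U^{k}$ of $h_1$ restricted to boxes of side $\asymp N^{1/2}$ tend to $0$.

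\textbf{Step 2: ball-aperiodicity implies box-aperiodicity.} This step is the disconnect the appendix is meant to address. We use the ball version of aperiodicity in its uniform form, Proposition~\ref{prop: dircharequivalence}(ii), which says that averages over grids $P\in\mathcal{AP}[\tau_d]$ intersected with balls of all radii up to $N$ tend to $0$ uniformly. A box (or, after rescaling, a sector, or a ball intersected with a half-plane) is approximated by differences and unions of balls and of grid pieces of such balls. To make this work, the aperiodic averages need to vanish on the sets $\{u\in P:\NN(u)\le N,\ \arg u\in J\}$ for arcs $J$, and not only on the full ball. For a completely multiplicative $h_1$ this is where the argument really happens. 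My first attempt is to twist by the angular characters $(u/|u|)^{k}$, which are multiplicative on $\OK^\times$, and reduce to aperiodicity of $h_1\cdot(u/|u|)^k$. For a general aperiodic $h_1$, though, there is no reason for this twisted function to be aperiodic; this is exactly the issue raised in Section~\ref{s:as}. So I would instead follow \cite[Appendix A]{S23}. I would prove directly that, for multiplicative $h_1$ that are aperiodic on balls, the box averages over arithmetic grids vanish. The idea is to invoke Theorem~\ref{characterization}: ball-aperiodicity of the twists $h_1\cdot(u/|u|)^k$ follows from non-pretentiousness of $h_1$, because twisting by a Hecke angular character preserves infinite distance, in the sense of Theorem~\ref{C22intro}, from all $\chi\,\NN^{i\tau}$, except in a degenerate case that would force $h_1$ itself to be pretentious. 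I expect this step to be the main obstacle.

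\textbf{Step 3: from aperiodicity to vanishing Gowers norms.} With box-aperiodicity along all grids in hand, we invoke \cite[Theorem~1.12]{S23}, or its Gowers-uniformity statement for aperiodic multiplicative functions on $\OK$ along boxes. This gives $\|h_1\|_{U^{s-1}}\to0$ on boxes. Combined with Step 1 and the $O(\varepsilon)$ approximation error, we get $\limsup_N|\E\prod_j h_j(L_j)|\ll\varepsilon$ for every $\varepsilon>0$, which proves the proposition.
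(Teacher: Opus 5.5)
\textbf{Step 2 has a genuine gap, and it cannot be closed.}

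Your claim that twisting a non-pretentious $h_1$ by $(u/|u|)^k$ preserves non-pretentiousness, except in a degenerate case that forces $h_1$ itself to be pretentious, is false. Take $d=1$ and $\lambda(u):=u^4/\NN(u)^2=(u/|u|)^4$. This function is completely multiplicative, unimodular and trivial on units. It is aperiodic in the sense of Definition~\ref{aperiodicdef}: for every grid $P$, Riemann sums give $\E_{u\in P,\,\NN(u)\le N}\lambda(u)\to\frac{1}{\pi}\int_{|z|\le 1}(z/|z|)^4\,dA(z)=0$. So $\lambda$ is not pretentious, yet $\lambda\cdot(u/|u|)^{-4}\equiv 1$. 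Your degenerate case is $h_1$ pretending to be $\chi\,(u/|u|)^{-k}\NN(u)^{i\tau}$, and that is not pretentiousness in the paper's sense.

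Worse, the goal of Step 2 is itself false. The average of $\lambda$ over the box $[-N,N]^2$ tends to $\frac14\int_{[-1,1]^2}\cos(4\theta)\,dA=3-\pi\neq 0$. So ball-aperiodicity does not imply box-aperiodicity, and the box-aperiodicity hypothesis of \cite[Theorem~1.12]{S23} that Step 3 needs is not available.

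The paper never tries to pass from balls to boxes. It keeps the cutoff $\mathds{1}_{\NN(u)\le C_1N}\,h_1$ inside the Gowers norm. It then reruns \cite[Theorems~7.1 and 8.1]{S23} over ``good'' sets, and balls are good by Lemma~\ref{7.1s23}. The aim is correlation of $h_1$ with a ball intersected with an infinite grid, which would contradict Proposition~\ref{prop: dircharequivalence}(ii).

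Be aware, though, that the same $\lambda$ refutes the statement itself, so that route cannot be completed either. Take $s=3$, $L_1=m$, $L_2=2m+n$, $L_3=n$, $h_1=\lambda$, $h_2=\overline{\lambda}$, $h_3=1$. Riemann sums and rotation invariance show the average tends to $\pi^{-2}\int_{|w|\le1}J(2|w|)\,dA(w)$, where $J(R):=\int_{|v-R|\le 1}(\bar v/|v|)^4\,dA(v)$. Write $(\bar v/|v|)^4=\partial_{\bar v}\bigl(\bar v^3/(3v^2)\bigr)$; Stokes' theorem and residues then give $J(R)=\pi(1-R^{-2})^2$ for $R\ge 1$ and $J(R)=0$ for $R\le 1$. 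The limit is therefore $\frac{15}{16}-\log 2>0$.

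In the paper's argument, the step that breaks is Proposition~\ref{8.1s2}. What the method of \cite{S23} actually delivers is correlation with a finite, box-shaped sub-progression. Intersected with a ball, such a set is not a ball intersected with an infinite grid, and ball-aperiodicity forces no cancellation on it.

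A correct statement needs a stronger hypothesis, such as vanishing averages along grids intersected with sectors. For multiplicative $h_1$, this amounts to non-pretentiousness relative to every $\chi\,(u/|u|)^k\,\NN(u)^{i\tau}$. Under such a hypothesis, your angular-twist reduction (Weyl's criterion in the angle) is exactly the right tool, and your Steps 1 and 3 then essentially go through.
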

The first step to proving Proposition \ref{P: appendix} is to bound the target average by the Gower's norm of a modified version of $h_{1}$. 
Clearly there exists $C_{1},C_{2}>0$ depending only on $K,L_{1},\dots,L_{s}$ such that
$$\NN(m),\NN(n)\leq N\Rightarrow \NN(L_{j}(m,n)) \leq  C_{1}N\Rightarrow m,n\in[\pm C_{2}\sqrt{N}]^{2},$$
where with a slight abuse of notations, we think of numbers $m,n\in\OK$ as a vector in $\Z^{2}$ (with $1,\tau_{d}$ being the basis).
Let $\tilde{N}$ be the smallest prime number which is greater than $10C_{2}\sqrt{N}$.
Let $h_{j,N}\colon \left[\pm\frac{\tilde{N}-1}{2}\right]^{2}\to\C$ be the map given by $h_{j,N}(u):=\mathds{1}_{\NN(u)\leq C_{1}N}\cdot h_{j}(u)$.
Then, for every $N \in \N$, 
\begin{equation}\label{fwewgb}
    \E_{\NN(m),\NN(n) \leq  N} \prod_{j=1}^s h_j(L_j(m,n))=W_N\cdot \E_{m,n \in  \left[\pm\frac{\tilde{N}-1}{2}\right]^{2}} \mathds{1}_{\NN(m),\NN(n)\leq N}\prod_{j=1}^s h_{j,N}(L_j(m,n)),
\end{equation}
where 
$$W_N:=\frac{\tilde{N}^{4}}{\vert\{(m,n)\in(\OK)^{2}\colon \NN(m),\NN(n) \leq  N\}\vert}$$
is a finite positive real number. 
%
%
By Bertrand's postulate and Corollary \ref{lem: bounds for number of ideals of norm at most x},
  there exist  $0<K_1<K_2$ depending only on $K,L_{1},\dots,L_{s}$ such that
 \begin{equation}\label{k1k2}
    0<K_1 \leq \liminf_{N \to \infty} W_N \leq \limsup_{N \to \infty } W_N \leq K_2 <\infty.
    \end{equation}


Let $\tilde{h}_{j,N}\colon \Z_{\tilde{N}}^{2}\to\C$ be the map given by $\tilde{h}_{j,N}:=h_{j,N}\circ\tau$, where $\tau\colon \Z_{\tilde{N}}^{2}\to \left[\pm\frac{\tilde{N}-1}{2}\right]^{2}$ is the natural embedding.
Then it follows from
 (\ref{fwewgb}) that
    \begin{equation}\label{eq: averagetobebdd}
    \E_{\NN(m),\NN(n) \leq  N} \prod_{j=1}^s h_j(L_j(m,n))=W_{N}\cdot\E_{m,n \in  \Z_{\tilde{N}}^{2}} \mathds{1}_{\NN(\tau(m)),\NN(\tau(n))\leq N}\prod_{j=1}^s \tilde{h}_{j,N}(\tilde{L}_j(m,n)),
    \end{equation}
    where $\tilde{L}_{j}\colon \Z_{\tilde{N}}^{4}\to \Z_{\tilde{N}}^{2}$ is the linear map induced by $L_{j}\colon \Z^{4}\to\Z^{2}$.

    
 By (\ref{k1k2}), it follows from \cite[Proposition 7.1$'$]{GT10b} that the limit of (\ref{eq: averagetobebdd}) as $N\to\infty$ is zero if $\lim_{N\to\infty}\norm{\tilde{h}_{1,N}}_{U^{s-1}(\Z_{\tilde{N}}^{2})}=0$, where $\norm{\cdot}_{U^{s-1}(\Z_{\tilde{N}}^{2})}$ is the $(s-1)$-th Gowers norm on $\Z_{\tilde{N}}^{2}$ (see for example \cite[Definition B.1]{GT10b} for the definition). 
    The reasons said Proposition 7.1' can be applied are as follows. First, the pseudorandomness assumption is unneeded in our case, since our functions are bounded by $1$. Second, without loss of generality, we can assume that our linear forms are in the $s$-normal form, because the operation as described in \cite[Lemma~4.4]{GT10b}.

To conclude the proof of Proposition~\ref{P: appendix}, it suffices to show $\lim_{N\to\infty}\norm{\tilde{h}_{1,N}}_{U^{s-1}(\Z_{\tilde{N}}^{2})}=0$.
A similar estimate was obtained in \cite{S23} where $h_{j,N}(u)$ is replaced by a function of the form (say) $\mathds{1}_{u\in[\pm \sqrt{N}]^{2}}\cdot h_{j}(u)$ (see \cite[Theorem~1.12]{S23}). Our goal is to show that an argument similar to the ones used in Sections 7 and 8 of \cite{S23} can be applied to prove Proposition \ref{P: appendix}.


In the rest of the proof we refer the reader to \cite{S23} for definitions.
Suppose on the countary that there exists an arbitraily large $N$ such that 
$\norm{\tilde{h}_{1,N}}_{U^{s-1}(\Z_{\tilde{N}}^{2})}>\e$. 
Then by the inverse theorem for Gowers norms (see for example \cite[Theorem~8.5]{S23}), there exist $\delta\gg_{\e,s} 1$ and an $(s-2)$-step $\tilde{N}$-periodic nilsequence $\phi_{N}\colon \Z_{\tilde{N}}\to\C$ of complexity $O_{\e,s}(1)$ such that 
$$\E_{u\in\Z_{\tilde{N}}^{2}}\tilde{h}_{1,N}(u)\phi_{N}(u)\gg \delta.$$
This is equivalent to saying that 
$$\E_{u\in\left[\pm\frac{\tilde{N}-1}{2}\right]^{2}}\mathds{1}_{\NN(u)\leq C_1N}h_{1}(u)\phi_{N}(u)\gg \delta.$$

Before continuing the rest of the proof, we  need to generalize a few properties and results from \cite{S23} that will be key for the rest of our arguments.

First, recall that for $P\subseteq \mathcal{O}_{K}$ and $p,q\in\mathcal{O}_{K}$, we denote by 
$$I_{p,q}(P):=\{u\in \mathcal{O}_{K}\colon pu,qu\in P\}.$$
Moreover, we say that a set $P\subseteq \mathcal{O}_{K}$ is \emph{good} if for every pair of primes $p,q\in\mathcal{O}_{K}$, every (finite length) 2-dimensional arithmetic progression $P'$, and every line $\ell$ in $\mathcal{O}_{K}$, the set $I_{p,q}(P\cap P')\cap \ell$ is a (one-dimensional) arithmetic progression.
 
We have the following generalization of \cite[Theorem~7.1]{S23}. 
\begin{proposition}\label{7.1s2}
    The analog of \cite[Theorem~7.1]{S23} holds if we replace $P$ by any good set (where the average is still taken along boxes).
\end{proposition}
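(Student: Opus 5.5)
The plan is to go back through the proof of \cite[Theorem~7.1]{S23} and locate the single place where the specific shape of $P$ (a box, or a finite union of 2-dimensional arithmetic progressions) is used. Everything else should then go through verbatim. That result is a Kátai-type orthogonality statement: a multiplicative function correlating with a nilsequence on $P$ forces correlation along a pair of dilates $pu, qu$ for many pairs of primes $p,q$. Its proof has three parts. First, one applies a Turán--Kubilius / Daboussi--Kátai type decoupling to the average of $\mathds{1}_P(u)h(u)\phi(u)$; this rewrites it in terms of sums over $u$ of $\mathds{1}_P(pu)\mathds{1}_P(qu)\,\phi(pu)\overline{\phi(qu)}$. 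Second, for each fixed pair of primes $p,q$, one studies the set $I_{p,q}(P)$ on which these products live. Third, one controls the averages of the nilsequence $u\mapsto \phi(pu)\overline{\phi(qu)}$ over that set by equidistribution (Green--Tao factorization) of polynomial sequences on 2-dimensional progressions.

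The first step makes no use of the geometry of $P$, since it only involves counting and Cauchy--Schwarz. So I would only revisit the third step. In \cite{S23}, the equidistribution input is applied after slicing $I_{p,q}(P\cap P')$ into lines, where $P'$ is a bounded-complexity progression coming from the factorization theorem, and then using that each slice is a one-dimensional arithmetic progression. On a one-dimensional progression, a totally equidistributed (or irrational) polynomial nilsequence averages to $o(1)$, uniformly in the progression's length once that length is large. The key point is that the definition of a \emph{good} set is designed to encode exactly this property. So I would replace every appeal to ``$I_{p,q}(P\cap P')\cap\ell$ is an arithmetic progression because $P$ is a box'' by the goodness hypothesis, and leave the rest unchanged.

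One point still needs checking: the lines $\ell$ should not carry a large share of mass in very short progressions. I would handle short slices separately. Slices of length at most $\eta\sqrt{N}$ contribute at most $O(\eta)$ of the total, because there are $O(\sqrt{N})$ parallel lines meeting a region of diameter $O(\sqrt{N})$. I expect this counting to be the main obstacle, since for an abstract good set one needs $I_{p,q}(P\cap P')$ to have a uniformly bounded ``number of pieces'' per line. The statement as phrased gives a single progression per line, which suffices. For our application, with $P=\{u:\NN(u)\le C_1N\}\cap[\pm\tfrac{\tilde N-1}{2}]^2$, this is automatic: $P$ is convex, $I_{p,q}(P\cap P')$ is the intersection of two dilated convex sets with a lattice coset, and so each line meets it in a single progression. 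Finally, the remaining steps of Sections 7--8 of \cite{S23} (the passage to the inverse theorem and the contradiction with aperiodicity via Proposition~\ref{prop: dircharequivalence}(ii)) carry over without change.
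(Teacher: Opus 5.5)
Your proposal is correct and follows the paper's own argument. The paper's proof likewise consists only of the observation that, in the proof of \cite[Theorem~7.1]{S23}, the sole property of $P$ used is that each slice $I_{p,q}(P\cap P')\cap\ell$ is a one-dimensional arithmetic progression (that is, goodness), with the K\'atai/Tur\'an--Kubilius decoupling over the box and everything else unchanged; your extra remarks on short slices and on convexity of norm balls (the latter is the content of Lemma~\ref{7.1s23}) add detail but do not change the route.
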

\begin{proof}
The proof goes along the same lines as that of \cite[Theorem~7.1]{S23}. In order to avoid unnecessary repetition, we just wish to highlight the fact that the only key property of the set $P$ that we use is that it is a good set, that is: that it satisfies the intersection property with any line; everything else remains unchanged.    
\end{proof}

As a consequence, we have the following generalization of \cite[Theorem 8.1]{S23}.
\begin{proposition}\label{8.1s2}
    The analog of \cite[Theorem 8.1]{S23} holds if we replace $P$ by any good set, and $P'$ by a set which is the intersection of $P$ and a 2-dimensional infinite arithmetic progression (where the average is still taken along boxes).
\end{proposition}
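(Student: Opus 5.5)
We will follow the proof of \cite[Theorem 8.1]{S23} step by step, and isolate the single place where the structure of the set $P$ enters. Recall that \cite[Theorem 8.1]{S23} is deduced from \cite[Theorem 7.1]{S23}. Theorem 7.1 gives the equidistribution, or the bilinear/Type II estimate, of a multiplicative function correlated with a nilsequence along $P$. Theorem 8.1 then combines it with a Kátai–Bourgain–Sarnak–Ziegler type orthogonality criterion. Concretely, one fixes a finite set of primes $p,q\in\OK$ and compares
\[
\E_{u}\mathds 1_{P'}(pu)h(pu)\phi(pu)\quad\text{with}\quad \E_u \mathds 1_{P'}(qu)h(qu)\phi(qu).
\]
By complete multiplicativity this reduces to showing that the correlations
\[
\E_{u\in I_{p,q}(P')}\phi(pu)\overline{\phi(qu)}
\]
are small for most pairs $p\neq q$. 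This is the nilsequence input furnished by Theorem 7.1.

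\emph{First step.} Proposition \ref{7.1s2} already replaces $P$ by a good set in Theorem 7.1. So the remaining task is to check that, in the Kátai-type argument of Theorem 8.1, every set on which an average is taken is of the form
\[
I_{p,q}(P\cap P'')\cap\ell
\]
for a 2-dimensional arithmetic progression $P''$ and a line $\ell$, or a finite union of such sets. The claim is that if $P'=P\cap R$ with $R$ an infinite 2-dimensional arithmetic progression, then $I_{p,q}(P'\cap P''')=I_{p,q}(P\cap(R\cap P'''))$ for any finite progression $P'''$. Since $R\cap P'''$ is again a finite 2-dimensional arithmetic progression (or empty), goodness of $P$ shows that the lines of these sets are still one-dimensional progressions. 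Hence the hypotheses of Proposition \ref{7.1s2} are met for every set produced during the argument.

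\emph{Second step.} Rerun the counting part of \cite[Theorem 8.1]{S23}. This is the Turán–Kubilius-type step, which shows that a typical $u\in P'$ has many prime divisors from a dyadic range. It needs only lower-density bounds for $P'$ and for $P'$ restricted to residue classes. For $P'=P\cap R$, these follow because $R$ is a finite union of cosets of a sublattice, and the densities of $P$ along lattice cosets are controlled exactly as in \cite{S23}. Here we use Lemma \ref{countingCRT}-style lattice point counting when $P$ is a ball, i.e.\ Lemma \ref{dafp}.

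\emph{Main obstacle.} The hard point is ensuring that $I_{p,q}(P\cap R)$ does not degenerate: its density must be comparable to $\NN(p)^{-1}$ or $\NN(q)^{-1}$ times the density of $P\cap R$, uniformly over the primes used. Otherwise the Cauchy–Schwarz weighting in the Kátai argument breaks. I would handle this by noting that $I_{p,q}(R)$ is again a 2-dimensional progression whose index divides $\NN(pq)\cdot[\OK:R]$. Combined with Lemma \ref{dafp} applied to the scaled Lipschitz region, this gives the needed asymptotic counts with error $O(\sqrt N)$, which is negligible. All other parts of the proof are then verbatim.
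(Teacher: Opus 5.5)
\textbf{Verdict: genuine gap.} Your outline matches the paper's own justification, which also just asserts that the proof of \cite[Theorem 8.1]{S23} goes through because only goodness of $P$ is used. That shared claim fails, and the failure is in the part of the statement you did not examine.

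In the statement, $P'$ is the \emph{output} of Theorem 8.1: it is the set on which $h$ is shown to have a large mean, not a domain you average over. So proving that $P\cap R$ is good, or that $I_{p,q}(P\cap R)$ is non-degenerate, checks the wrong thing. The non-degeneracy is not needed anyway: the K\'atai-type bound only uses upper bounds, and degenerate $I_{p,q}$ contribute trivially. It is also false. For $R=1+2\Z[i]$ and $p=1+i$ one gets $I_{p,q}(R)=\emptyset$.

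What actually needs proof is that the argument produces a set $P\cap R$ with $R$ an \emph{infinite} progression. In the general-nilsequence step, the sequence is factored into smooth, equidistributed and periodic parts. The smooth part is only nearly constant on short sub-boxes $B'$ of side $\asymp\tilde N/L$. So the set produced is $P\cap B'\cap R$. When $P$ is a box, $P\cap B'$ is again a box, so nothing is lost. When $P$ is a ball, $B'$ cannot be removed.

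In fact, for balls the conclusion with $P'=P\cap R$ is false, so no verbatim transfer can work. Here is a counterexample.
\begin{itemize}
\item Take $K=\Q(i)$ and $h(u)=(u/|u|)^4$. This $h$ is completely multiplicative, $\S$-valued and trivial on units.
\item Let $\phi(u)=G(u/\tilde N)$, where $G$ is a smooth function on $\R^2/\Z^2$ agreeing with $\overline{(x/|x|)^4}$ on the image of $B_{C_1N}$ outside a small disc around $0$. Then $\phi$ is a $1$-step nilsequence of bounded complexity, and $|\E_{\mathrm{box}}\mathds{1}_{B_{C_1N}}h\,\phi|\gg 1$.
\item Suppose $|\E_{\mathrm{box}}\mathds{1}_{B_{C_1N}\cap R}\,h|\gg 1$ for some infinite progression $R$. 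Since $|B_{C_1N}\cap R|$ is at most about $N/[\mathcal{O}_K:R]$, this forces $R$ to have bounded index.
\item For such $R$, the coset is equidistributed in angular sectors, and $\int_{|z|\le 1}(z/|z|)^4\,dz=0$. Hence $\E_{\mathrm{box}}\mathds{1}_{B_{C_1N}\cap R}\,h=o(1)$, a contradiction.
\end{itemize}

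This $h$ is aperiodic in the sense of Definition~\ref{aperiodicdef}, yet $\mathds{1}_{B_{C_1N}}h$ has Gowers norm bounded below. So the way the statement is used in the appendix breaks down too. A correct analogue would have to keep the sub-box, that is, $P'=P\cap(\text{finite progression})$. But the same example shows that such a conclusion does not contradict aperiodicity in the ball sense.
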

\begin{proof}
    The proof also goes along the same lines as that of \cite[Theorem~8.1]{S23}. In order to avoid needlessly repeating it, we just wish to mention that the only key property of the set $P$ that we use is that it is a good set, that is: that it satisfies the intersection property with any line; everything else remains unchanged.
\end{proof}
In order to actually use these propositions, we need to make sure that the balls we are working with are good sets, which is the content of the following lemma:
\begin{lemma}\label{7.1s23}
For every large enough $N \in \N$ and $C>0$, the set $B_{CN}:=\{u\colon\NN(u)\leq CN\}$ is good.
\end{lemma}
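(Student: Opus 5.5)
The plan is to check the definition of a good set directly, using convexity of the ellipse $\{z\in\C\colon \NN(z)\leq CN\}$. Identify $\OK$ with $\Z^2$ via the basis $\{1,\tau_d\}$. Then $B_{CN}$ is the set of lattice points in the closed convex region
\[
E:=\{(x,y)\in\R^{2}\colon \NN(x+y\tau_d)\leq CN\},
\]
which is a filled ellipse, since $\NN$ is a positive definite binary quadratic form.

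Fix primes $p,q\in\OK$, a finite 2-dimensional arithmetic progression $P'$, and a line $\ell$. First I unfold $I_{p,q}(B_{CN}\cap P')$. We have $u\in I_{p,q}(B_{CN}\cap P')$ if and only if $pu,qu\in B_{CN}$ and $pu,qu\in P'$. Since $\NN$ is multiplicative, $pu\in B_{CN}$ means $\NN(u)\leq CN/\NN(p)$, and $qu\in B_{CN}$ means $\NN(u)\leq CN/\NN(q)$. Together these say that $u$ lies in the smaller ellipse
\[
E':=\{z\colon \NN(z)\leq CN/\max(\NN(p),\NN(q))\},
\]
which is again convex.

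Next, the conditions $pu\in P'$ and $qu\in P'$ are the same conditions that appear in \cite{S23} for a box $P$. Write $P'=\{v_0+a v_1+b v_2\colon 0\le a<A,\,0\le b<B\}$. Multiplication by $p$ is a $\Z$-linear map $\Z^2\to\Z^2$. So $\{u\colon pu\in P'\}$ is the set of lattice points in a parallelogram (the preimage of the convex hull of $P'$) that also satisfy a congruence condition, i.e. lie in a translate of a sublattice. The same holds for $q$. Intersecting, $I_{p,q}(P')$ is the set of points of a coset of a sublattice $\Lambda\subseteq\Z^2$ lying in a convex polygon $R$.

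Now restrict to the line $\ell=\{u_0+t w\colon t\in\Z\}$. The lattice-coset condition restricted to $\ell$ is either empty or a congruence condition on $t$, i.e. $t\in t_0+m\Z$. The convex-set conditions from $E'$ and $R$ cut $\ell$ in a single interval of real parameters $t$, because the intersection of a line with a convex set is an interval. Hence $I_{p,q}(B_{CN}\cap P')\cap\ell$ is an interval of $t$-values intersected with $t_0+m\Z$, which is a one-dimensional arithmetic progression (possibly empty or a single point).

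The main obstacle is the middle step: making precise that the conditions $pu\in P'$ and $qu\in P'$ give a convex region intersected with a lattice coset, rather than something more irregular. I would justify this by checking that a finite 2-dimensional arithmetic progression is exactly the intersection of the lattice coset $v_0+\Z v_1+\Z v_2$ with the closed parallelogram spanned by its corners. Taking preimages under the injective linear maps $u\mapsto pu$ and $u\mapsto qu$ preserves both convexity and the coset structure. Taking intersections does too, since an intersection of cosets of sublattices is either empty or a coset of the intersection sublattice. The hypothesis ``$N$ large'' plays no essential role beyond ensuring $B_{CN}$ is nonempty.
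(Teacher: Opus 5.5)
Your proof is correct and follows the same route as the paper: multiplicativity of $\NN$ turns $pu,qu\in B_{CN}$ into membership in a smaller ball, the conditions $pu,qu\in P'$ pull back to a progression-type set, and convexity of the ball together with that structure gives an arithmetic progression on the line. Your description of the pulled-back set as a lattice coset intersected with a convex polygon is valid here because the grid progressions have linearly independent directions. It is in fact a more precise version of the paper's looser claim that $\{u\colon pu\in P\}$ is ``some other arithmetic progression.''
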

\begin{proof}
Given a pair of primes $p, q \in \OK$, a 2-dimensional arithmetic progression $P$, and a one-dimensional line $\ell \in \mathcal{O}$ we would like to show that the set 
\[
\{ u \in \OK : pu, qu \in B_{CN} \cap P\} \cap \ell
\]
is a one-dimensional arithmetic progression. Indeed, for quadratic imaginary extensions, multiplication always by a prime element always enlarges the norm, so since $\NN(pu)=\NN(p)\NN(u) \geq \NN(u)$, and the same for $qu$, it follows that the first condition of belonging to $B_{CN}$ is nothing but $u \in B_{cN}$ for some other constant $c>0$ which will depend on the norms of $p$ and $q$. For $B_{cN}$ to be non-trivial, we need $N$ to be large enough. 

On the other hand, using the characterization for arithmetic progressions (only for quadratic imaginary fields) coming from the last part of Section~\ref{ss5} we see that, $pu \in P$ if and only if $u \in P'$, some other arithmetic progression $P'$, and the same for $qu$. The intersection of these two arithmetic progressions is again an arithmetic progression, say $P'',$ so the statement becomes
\[
\{u \in \OK : u \in B_{cN} \cap P''\} \cap \ell
\]
is an arithmetic progression, which is now geometrically clear (the key properties are the convexity of the balls, and the fact that $P''$ is a 2D arithmetic progression).
\end{proof}
We now continue the proof. Since 
 $$\E_{u\in\left[\pm\frac{\tilde{N}-1}{2}\right]^{2}}\mathds{1}_{\NN(u)\leq C_1N}h_{1}(u)\phi_{N}(u)\gg \delta$$
 and since the set $B_{C_1N}=\{u\colon\NN(u)\leq C_1N\}$ is good, it follows from Proposition \ref{8.1s2} that 
 $$\left|\E_{u\in[\pm\frac{\tilde{N}-1}{2}]^{2}}\mathds{1}_{R_{N}}(u)h_{1}(u)\right|\gg_{\delta} 1$$
 for some $R_{N}$ which is the intersection of $B_{C_1N}$ and some infinite arithmetic progression $P_{N}$. Then 
  $$\left|\E_{u\in B_{N}}\mathds{1}_{R_{N}}(u)h_{1}(u)\right|\gg_{\delta} 1.$$
In conclusion, we have that 
$$\lim_{N\to\infty}\sup_{P\in\mathcal{AP}[\tau_{d}]}\left|\E_{\NN(u)\leq N}\mathds{1}_{P}(u)h_{1}(u)\right|\gg_{\delta} 1.$$
By Proposition \ref{prop: dircharequivalence}, this implies that $h_{1}$ is not aperiodic, a contradiction. This completes the proof of Proposition \ref{P: appendix}.

\bibliographystyle{abbrv}
\bibliography{library}

 
\end{document}